\documentclass[11pt]{article}

\usepackage{mathtools}
\usepackage{color}
\usepackage{microtype}
\usepackage[usenames,dvipsnames,svgnames,table]{xcolor}
\usepackage{amsfonts,amsmath, amssymb,amsthm,amscd,mathrsfs}
\usepackage[height=9in,width=6.5in]{geometry}
\usepackage{verbatim}
\usepackage{tikz}
\usetikzlibrary{decorations.markings}
\tikzset{->-/.style={decoration={
			markings,
			mark=at position .6 with {\arrow{>}}},postaction={decorate}}}
\usepackage{tkz-euclide}
\usepackage[margin=10pt,font=small,labelfont=bf]{caption}
\usepackage{latexsym}
\usepackage[pdfauthor={ },bookmarksnumbered,hyperfigures,colorlinks=true,citecolor=BrickRed,linkcolor=BrickRed,urlcolor=BrickRed,pdfstartview=FitH]{hyperref}

\usepackage{graphicx}
\usepackage{enumitem}

\usepackage[inline,nolabel]{showlabels}

\usepackage[capitalize]{cleveref}
\Crefname{fact}{Fact}{Facts}
\crefname{theorem}{Theorem}{Theorems}
\crefname{thm}{Theorem}{Theorems}
\crefname{lemma}{Lemma}{Lemmas}
\crefname{claim}{Claim}{Claims}
\crefname{lem}{Lemma}{Lemmas}
\crefname{remark}{Remark}{Remarks}
\crefname{prop}{Proposition}{Propositions}
\crefname{defn}{Definition}{Definitions}
\crefname{corollary}{Corollary}{Corollaries}
\crefname{conjecture}{Conjecture}{Conjectures}
\crefname{question}{Question}{Questions}
\crefname{chapter}{Chapter}{Chapters}
\crefname{section}{Section}{Sections}
\crefname{part}{Part}{Parts}
\crefname{figure}{Figure}{Figures}

\newtheorem{theorem}{Theorem}[section]
\newtheorem{corollary}[theorem]{Corollary}
\newtheorem{lemma}[theorem]{Lemma}
\newtheorem{proposition}[theorem]{Proposition}
\newtheorem{question}[theorem]{Question}
\newtheorem{conj}[theorem]{Conjecture}
\newtheorem{claim}[theorem]{Claim}
\newtheorem{example}[theorem]{Example}

\newtheorem{definition}[theorem]{Definition}
\newtheorem{remark}[theorem]{Remark}

\numberwithin{equation}{section}

\newtheorem*{theorem*}{Theorem}
\newtheorem*{claim*}{Claim}
\newtheorem*{conj*}{Conjecture}
\newtheorem*{corollary*}{Corollary}
\newtheorem*{definition*}{Definition}
\newtheorem*{example*}{Example}
\newtheorem*{exercise*}{Exercise}
\newtheorem*{lemma*}{Lemma}
\newtheorem*{observation*}{Observation}
\newtheorem*{proposition*}{Proposition}
\newtheorem*{question*}{Question}
\newtheorem*{remark*}{Remark}

\newcommand{\bbC}{{\ensuremath{\mathbb C}} }

\newcommand{\bbE}{{\ensuremath{\mathbb E}} }

\newcommand{\bbP}{{\ensuremath{\mathbb P}} }

\newcommand{\bbR}{{\ensuremath{\mathbb R}} }

\newcommand{\kcc}{\chi_k}
\newcommand{\khcc}{\hat{\chi}_k}
\newcommand{\kticc}{\tilde{\chi}_k}

\newcommand{\kacc}{\chi_{k-1}}

\newcommand{\katicc}{\tilde{\chi}_{k-1}}
\newcommand{\kbcc}{\chi_{k-2}}

\newcommand{\kicc}{\chi_{j}}

\newcommand{\kiticc}{\tilde{\chi}_{j}}

\newcommand{\kjcc}{\chi_{j}}

\newcommand{\kjticc}{\tilde{\chi}_{j}}

\newcommand{\kajcc}{\chi_{k-1-j}}
\newcommand{\kajccr}{\chi_{r-1-j}}

\newcommand{\kajticc}{\tilde{\chi}_{k-1-j}}
\newcommand{\kajticcr}{\tilde{\chi}_{r-1-j}}

\newcommand{\Rtwotwosum}{\Xi}
\newcommand{\bC}{{\ensuremath{\mathbf C}} }

\newcommand{\bE}{{\ensuremath{\mathbf E}} }

\newcommand{\bP}{{\ensuremath{\mathbf P}} }

\newcommand{\cF}{{\ensuremath{\mathcal F}} }

\newcommand{\cJ}{{\ensuremath{\mathcal J}} }

\newcommand{\cN}{{\ensuremath{\mathcal N}} }

\newcommand{\sC}{{\ensuremath{\mathscr C}} }
\newcommand{\sD}{{\ensuremath{\mathscr D}} }

\newcommand{\sF}{{\ensuremath{\mathscr F}} }

\newcommand{\sT}{{\ensuremath{\mathscr T}} }

\DeclareMathSymbol{\leqslant}{\mathalpha}{AMSa}{"36} % nicer `smaller or equal'
\DeclareMathSymbol{\geqslant}{\mathalpha}{AMSa}{"3E} % nicer `larger or equal'
\DeclareMathSymbol{\eset}{\mathalpha}{AMSb}{"3F}     % nicer `emptyset'
\renewcommand{\le}{\;\leqslant\;}                   % redef. of < or =
\renewcommand{\ge}{\;\geqslant\;}                   % redef. of > or =
\renewcommand{\leq}{\;\leqslant\;}                   % redef. of < or =
\renewcommand{\geq}{\;\geqslant\;}                   % redef. of > or =

\newcommand{\be}{\begin{equation}}
	\newcommand{\ee}{\end{equation}}
\newcommand\ba{\begin{align}}
	\newcommand\ea{\end{align}}

\usepackage{nicematrix}

\usepackage{indentfirst}

\usepackage{float}

\newcommand{\hO}{\mathbf{O}}

\newcommand{\UF}{\mathbb{P}_{\mathrm{UF}}}     %% uniform forest measure
\newcommand{\kUF}{\mathbb{P}_{\mathrm{kUF}}}   %% uniform probability measure on $k$ component spanning forest
\newcommand{\UC}{\mathbb{P}_{\mathrm{UC}}}     %% uniform probability measure on connected spanning subgraphs
\newcommand{\kUC}{\mathbb{P}_{\mathrm{kUC}}}   %% uniform probability  measure on connected spanning subgraphs with excess  $k$

\newcommand{\twoUC}{\mathbb{P}_{\mathrm{2UC}}}   %% uniform measure on  connected spanning subgraphs with excess  $2$

\newcommand{\twoUF}{\mathbb{P}_{\mathrm{2UF}}}   %% uniform measure on $2$ component spanning forests
\newcommand{\kF}{\sF^{(k)}}        %% the set of $k$-forests
\newcommand{\oneF}{\sF^{(1)}}                %% the set of $1$-forests, i.e., UST
\newcommand{\twoF}{\sF^{(2)}}                %% the set of $2$-forests
\newcommand{\threeF}{\sF^{(3)}}                %% the set of $3$-forests

\newcommand{\kC}{\sC^{(k)}}        %% the set of $k$-excess connected spanning subgraphs

\newcommand{\UST}{\mathbb{P}_\mathrm{UST}}  %% uniform spanning tree
\newcommand{\card}[1]{\left| #1 \right|}  %% cardinality of set $#1$

\newcommand{\ncc}[1]{\cN \left(#1 \right)}  %% number of connected components of $#1$

\begin {document}
\author{
	Pengfei Tang\thanks{Center for Applied Mathematics and KL-AAGDM, Tianjin University, Tianjin, 300072, China.
		Email: \textsf{pengfei\_tang@tju.edu.cn}. Supported by the National Natural Science Foundation of China No. 12571151.}
	\qquad
	Zibo Zhang	\thanks{School of Mathematics, Tianjin University, Tianjin, 300350, China.
		Email: \textsf{19932788533@163.com}.}
}
\date{\today}
\title{Pairwise Negative Correlation for Uniform Spanning\\ Subgraphs of the Complete Graph}
\maketitle

%%%%%%%%%%%%%%%%%%%%%%%%%%%%%%%%%%%%%%%%%%%%%%%%%%%%%%%%%%%%%%%%%%%%%%%%%%%%%%%

\tableofcontents

\begin{abstract}

We investigate the pairwise negative correlation (p-NC) property for uniform probability measures on several families of spanning subgraphs of the complete graph $K_n$. Motivated by conjectured negative dependence properties of the random-cluster model with $q<1$, we focus on three natural families: the set of all connected spanning subgraphs, the set of forests with exactly $k$ components, and the set of connected spanning subgraphs with excess $k$, where $k$ is a fixed integer. We prove that for each of these families, the associated uniform measure satisfies the p-NC property provided $n$ is sufficiently large. Our results extend earlier work on uniform forests and provide the first verification of the p-NC property for uniform connected subgraphs and their truncations on complete graphs.
\end{abstract}

%%%%%%%%%%%%%%%%%%%%%%%%%%%%%%%%%%%%%%%%%%%%%%%%%%%%%%%%%%%%%%%%%%%%%%%%
%%%%%%%%%%%%%%%%%%%%  Introduction and main results %%%%%%%%%%%%%%%%%%%%
%%%%%%%%%%%%%%%%%%%%%%%%%%%%%%%%%%%%%%%%%%%%%%%%%%%%%%%%%%%%%%%%%%%%%%%%
\section{Introduction}

%%%%%%%%    motivation  %%%%%%%%%
\subsection{Motivation}
Given a finite graph $G=(V,E)$, for a configuration $\omega\in\Omega\coloneq \{0,1\}^E$, we denote by $\eta(\omega)=\{e\in E\colon \omega(e)=1\}$  the set of \textit{open} edges in $\omega$ and by $\ncc{\omega}$  the number of connected components of the subgraph $(V,\eta(\omega))$ of $G$. The famous random-cluster model, introduced by Fortuin and Kasteleyn, is defined on the state space $\Omega$ as follows. Let $\cF$ be the $\sigma$-field on $\Omega$ consisting of all  subsets of $\Omega$. The random cluster measure $\phi_{p,q}$ depends on two parameters $p\in[0,1]$ and $q\in(0,\infty)$, and is defined on $(\Omega,\cF)$ by
\[
\phi_{p,q}(\omega)=\frac{1}{Z}\left\{ \prod_{e\in E}p^{\omega(e)}(1-p)^{1-\omega(e)} \right\}q^{\ncc{\omega}}\,,
\]
where $Z=\sum_{\omega\in\Omega}\big\{ \prod_{e\in E}p^{\omega(e)}(1-p)^{1-\omega(e)} \big\}q^{\ncc{\omega}}$  is the normalizing constant. A classical reference for the random-cluster model is \cite{Grimmett2006}.  

A well-known fact is that for $q\ge1$, the  random-cluster measures $\phi=\phi_{p,q}$ satisfy the  positive lattice condition (PLC): for all $\omega_1,\omega_2\in\{0,1\}^E$,
\be\label{eq: PLC}
\phi(\omega_1\vee \omega_2) \phi(\omega_1\wedge \omega_2) \geq \phi(\omega_1)\phi(\omega_2) \,, \tag{PLC}
\ee
where $\omega_1\vee \omega_2$ and $\omega_1\wedge \omega_2$ denote the configurations given by $(\omega_1\vee \omega_2)(e)=\max\{  \omega_1(e),\omega_2(e)\}$ and $(\omega_1\wedge \omega_2)(e)=\min\{\omega_1(e),\omega_2(e)  \}$, respectively. 
Consequently, these measures satisfy the celebrated Fortuin--Kasteleyn--Ginibre (FKG) inequality \cite{Fortuin_Kasteleyn_Ginibre1971}:
\begin{theorem}[FKG]\label{thm: FKG}
	Let $\mu$ be a strictly positive probability measure on $\Omega\coloneq \{0,1\}^E$ satisfying \eqref{eq: PLC}. Then 
	\[
	\bbE_\mu(XY)\geq \bbE_\mu(X)\bbE_\mu(Y)
	\] 	
	for all increasing functions $X,Y: \Omega\to\bbR\,.$
\end{theorem}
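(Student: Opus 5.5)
The plan is to prove the FKG inequality by induction on $|E|$. The base case and the inductive step both use the fact that conditioning a measure satisfying \eqref{eq: PLC} on the state of one edge keeps \eqref{eq: PLC}.

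\textbf{Base case, $|E|=1$.} Here $\Omega=\{0,1\}$ is totally ordered. For increasing $X,Y$ and any $\omega,\omega'$, the numbers $X(\omega)-X(\omega')$ and $Y(\omega)-Y(\omega')$ have the same sign. Hence
\[
0\le \sum_{\omega,\omega'}\mu(\omega)\mu(\omega')\bigl(X(\omega)-X(\omega')\bigr)\bigl(Y(\omega)-Y(\omega')\bigr)=2\bigl(\bbE_\mu(XY)-\bbE_\mu(X)\bbE_\mu(Y)\bigr).
\]

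\textbf{Inductive step.} Fix an edge $e$ and write $\omega=(\omega_e,\xi)$ with $\xi\in\{0,1\}^{E\setminus\{e\}}$. For $i\in\{0,1\}$ let $\mu_i(\xi)=\mu(\xi\mid\omega_e=i)$. This is strictly positive, and it satisfies \eqref{eq: PLC} on $\{0,1\}^{E\setminus\{e\}}$: apply \eqref{eq: PLC} for $\mu$ to the configurations $(i,\xi_1)$ and $(i,\xi_2)$, and note that the normalizing constants cancel. For fixed $\omega_e$, the functions $X(\omega_e,\cdot)$ and $Y(\omega_e,\cdot)$ are increasing, so the induction hypothesis gives
\[
\bbE_\mu(XY\mid\omega_e)\ge \bbE_\mu(X\mid\omega_e)\,\bbE_\mu(Y\mid\omega_e).
\]
Set $\bar X(\omega_e)=\bbE_\mu(X\mid\omega_e)$ and define $\bar Y$ similarly. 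If $\bar X$ and $\bar Y$ are increasing in $\omega_e$, the base case applied to the law of $\omega_e$ yields $\bbE(\bar X\bar Y)\ge\bbE(\bar X)\bbE(\bar Y)$. Combining this with the conditional inequality proves the claim.

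\textbf{Main obstacle: monotonicity of $\bar X$.} We need $\bbE_{\mu_1}(X(1,\cdot))\ge\bbE_{\mu_0}(X(0,\cdot))$. Since $X(1,\cdot)\ge X(0,\cdot)$ pointwise, it suffices to show $\bbE_{\mu_1}(f)\ge\bbE_{\mu_0}(f)$ for every increasing $f$ on $\{0,1\}^{E\setminus\{e\}}$, i.e., that $\mu_1$ stochastically dominates $\mu_0$. This is where \eqref{eq: PLC} is used across the two layers. Taking $\omega_1=(1,\xi_1)$ and $\omega_2=(0,\xi_2)$ gives
\[
\mu_1(\xi_1\vee\xi_2)\,\mu_0(\xi_1\wedge\xi_2)\ge\mu_1(\xi_1)\,\mu_0(\xi_2).
\]
That is exactly the hypothesis of Holley's inequality for the pair $(\mu_1,\mu_0)$. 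Invoking Holley's inequality (or proving it by constructing a monotone coupling via Markov chains) gives the required domination. If I want to avoid citing Holley, I would instead try an induction using only single-layer statements: prove directly, by the same induction on $|E|$, the stronger two-measure statement that $\mu_1(\xi_1\vee\xi_2)\mu_0(\xi_1\wedge\xi_2)\ge\mu_1(\xi_1)\mu_0(\xi_2)$ implies $\bbE_{\mu_1}f\ge\bbE_{\mu_0}f$ for increasing $f$. In that version the delicate point is checking that the two-measure lattice condition survives marginalization onto the last coordinate. That check is where I expect the bulk of the work.
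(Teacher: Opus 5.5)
The paper does not prove this statement; it quotes it as the classical theorem of \cite{Fortuin_Kasteleyn_Ginibre1971}, so your argument has to stand on its own. Its skeleton is sound. The base case is correct, and so is the fact that $\mu_i=\mu(\cdot\mid\omega_e=i)$ inherits \eqref{eq: PLC}. The final combination is correct, and your check that \eqref{eq: PLC} at $(1,\xi_1),(0,\xi_2)$ is exactly Holley's condition for $(\mu_1,\mu_0)$ is right.

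The trouble is that the only nontrivial step is handed to Holley's inequality, and Holley's inequality already contains the theorem. After adding a constant, take $Y>0$ and set $\nu(\omega)=Y(\omega)\mu(\omega)/\bbE_\mu(Y)$. Then \eqref{eq: PLC} together with $Y(\omega_1\vee\omega_2)\ge Y(\omega_1)$ gives $\nu(\omega_1\vee\omega_2)\mu(\omega_1\wedge\omega_2)\ge\nu(\omega_1)\mu(\omega_2)$. Holley then yields $\bbE_\nu(X)\ge\bbE_\mu(X)$, which is $\bbE_\mu(XY)\ge\bbE_\mu(X)\bbE_\mu(Y)$, with no induction at all. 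So with Holley as a black box your induction is superfluous. Without Holley the proof is incomplete: your fallback stops exactly at the step that carries all the content, namely that the lattice condition survives summing out a coordinate, and you give no argument for it.

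The missing idea is the elementary one-coordinate lemma (the case $n=1$ of the Ahlswede--Daykin four functions theorem). It says: if nonnegative numbers $a_s,b_t,c_t,d_t$, $s,t\in\{0,1\}$, satisfy $a_sb_t\le c_{s\vee t}d_{s\wedge t}$, then $(a_0+a_1)(b_0+b_1)\le(c_0+c_1)(d_0+d_1)$. To prove it, put $u=a_0b_1$, $v=a_1b_0$, $w=c_1d_0$, $z=c_0d_1$. Then $u,v\le w$ and $uv=(a_0b_0)(a_1b_1)\le wz$. Hence $(w-u)(w-v)\ge0$ gives $w(u+v)\le w^2+uv\le w(w+z)$, i.e.\ $u+v\le w+z$ (trivially so if $w=0$). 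Now add $a_0b_0\le c_0d_0$ and $a_1b_1\le c_1d_1$.

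Apply the lemma with $a_t=\mu(t,\xi)$, $b_t=\mu(t,\eta)$, $c_t=\mu(t,\xi\vee\eta)$, $d_t=\mu(t,\xi\wedge\eta)$. It shows that the marginal law of $\xi=\omega|_{E\setminus\{e\}}$ is strictly positive and satisfies \eqref{eq: PLC}. This lets you condition the other way round, which avoids stochastic domination altogether:
\begin{itemize}
\item Given $\xi$, the coordinate $\omega_e$ lives on a two-point chain, so $\bbE_\mu(XY\mid\xi)\ge\bbE_\mu(X\mid\xi)\,\bbE_\mu(Y\mid\xi)$.
\item \eqref{eq: PLC} at $(1,\xi)$ and $(0,\xi')$ with $\xi\le\xi'$ shows that $p(\xi)=\mu(\omega_e=1\mid\xi)$ is increasing.
\item Hence $\bbE_\mu(X\mid\xi)=(1-p(\xi))X(0,\xi)+p(\xi)X(1,\xi)$ is increasing in $\xi$: first use $X(t,\xi')\ge X(t,\xi)$, then $p(\xi')\ge p(\xi)$ and $X(1,\xi)\ge X(0,\xi)$.
\item The induction hypothesis applied to the marginal of $\xi$ finishes the proof.
\end{itemize}
The same lemma, with $\mu_1$ in the roles of $a,c$ and $\mu_0$ in the roles of $b,d$, is also precisely what your two-measure fallback would need.
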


On the other hand, for $q\in(0,1)$, the random-cluster measures $\phi_{p,q}$ satisfy the negative lattice condition (NLC): 
for all $\omega_1,\omega_2\in\{0,1\}^E$
\be\label{eq: NLC}
\phi(\omega_1\vee \omega_2) \phi(\omega_1\wedge \omega_2)  \leq   \phi(\omega_1)\phi(\omega_2) \,. \tag{NLC}
\ee
It is expected that such measures exhibit some form of negative dependence (see \cite[page~1374]{Pemantle2000} or \cite[Section 4]{Grimmett_Winkler2004}). One  weak version of negative dependence is the property of  \emph{pairwise negative correlation}  (p-NC). Here for a probability measure $\mu$ on $(\{0,1\}^E,\cF)$, this property requires that
\be\label{eq: def p-NC}
\mu\big[  \omega(e)=\omega(f)=1 \big]\leq \mu\big[\omega(e)=1\big]\cdot \mu\big[\omega(f)=1\big] %\tag*{p-NC}
\ee
for all distinct edges $e,f\in E$. However, condition \eqref{eq: NLC} is neither necessary nor sufficient for \eqref{eq: def p-NC}  (see \cite[Examples 2.1 and 2.2]{Borcea_Branden_Liggett2009}).
\begin{conj}[\cite{Kahn2000,Grimmett_Winkler2004}]\label{conj: p-NC for random cluster model}
	The random-cluster measures $\phi_{p,q}=\phi_{p,q}(G)$ on a finite graph $G=(V,E)$ with parameter $q\in(0,1)$ satisfy the p-NC property. 
\end{conj}

The  present work focuses on the p-NC property  for uniform probability measures on certain families of subgraphs of complete graphs. Our motivation  stems from the  conjectured validity of  \eqref{eq: def p-NC}  for random-cluster measures $\phi_{p,q}$ with $q<1$. Indeed, uniform probability measures on spanning forests, spanning connected subgraphs, and spanning trees arise as limits of $\phi_{p,q}$ in various regimes as $q\to 0$ (see \cite[Section 1.5, Theorem 1.23]{Grimmett2006}), and are therefore natural candidates to inherit the conjectured p-NC property. In particular, the  p-NC property is well known for the uniform spanning tree measure \cite[Chapter 4]{LP2016}. 

\subsection{Main results}
We now present our main results concerning the p-NC property for uniform probability measures on certain families of spanning subgraphs of the complete graph 
$K_n$. To begin, we fix some notation and terminology.

 Given a finite graph $G=(V,E)$,  a subgraph $(V',E')$ of $G$  is called \textbf{spanning} if $V'=V$. In the rest of the paper we  assume that the underlying graph $G$ is finite and connected. 
 We shall consider the following families of spanning subgraphs.
 \begin{enumerate}
 	\item[(1)]  A spanning subgraph $(V,E')$  is called a \textbf{spanning forest} of $G$ if it contains no cycles. Let 
 	\[
 	\sF=\sF(G)\coloneq \big\{ \omega\in \{0,1\}^E\colon  (V,\eta(\omega)) \text{ is a spanning forest }\big\}
 	\]
 	denote the set of spanning forests of $G$, and let $\UF$ be the uniform probability measure on $\sF$. We will call  $\UF$ the \textbf{uniform forest} measure; it should not be confused with   \textit{uniform spanning forest} measures on infinite graphs (see \cite[Chapter 10]{LP2016}), which arise as weak limits of \textit{uniform spanning tree} measures on  exhaustions of an infinite graph. 
 	
 	\item[(2)] A spanning subgraph $(V,E')$  is called a \textbf{connected subgraph} of $G$ if it is connected. Let 
 	\[
 	\sC=\sC(G)\coloneq \big\{ \omega\in \{0,1\}^E\colon  (V,\eta(\omega)) \text{ is a connected subgraph }\big\}
 	\]
 	denote the set of connected subgraphs of $G$, and let $\UC$ be the uniform probability measure on $\sC$. 
 	
 	 \item[(3)] A spanning subgraph $(V,E')$ is called a \textbf{spanning tree} of $G$ if it is both cycle-free and  connected. 
 	Thus $\sT=\sT(G)=\sF\cap \sC$ is the set of spanning trees of $G$, and we denote by $\UST$  the uniform probability measure on $\sT$. The measure $\UST$ is often referred to as  the \textit{uniform spanning tree} measure.  
 	
 	\item[(4)] For $k\in[1,|V|]$, a spanning forest $(V,E')$ of $G$ is called a \textbf{$k$-component forest} (or simply a \textbf{$k$-forest}) if it has exactly $k$ connected components. Denote by 
 	\[
 	\kF=\kF(G) \coloneq \big\{ \omega \in\sF\colon (V,\eta(\omega)) \text{ is a $k$-forest} \big\}
 	\]
 	and let $\kUF$ be the uniform probability measure on $\kF$. 
 	In particular  $\oneF=\sT$.

 	\item[(5)] 
 	For a finite graph $G=(V,E)$, the \textbf{excess} of $G$ is defined as $\card{E}-\card{V}$. For a connected graph the excess is at least $-1$; it equals the cyclomatic number  minus the number of connected components.  For a finite, connected graph $G=(V,E)$ and an integer $k\in[-1,|E|-|V|]$, let
 	\[
 	\kC=\kC(G)\coloneq \big\{  \omega\in\sC \colon (V,\eta(\omega)) \text{ has excess  }k \big\}
 	\]
 	be the set of connected subgraphs of $G$ with excess  $k$, and  let $\kUC$ be the uniform probability measure on $\kC$. In particular $\sC^{(-1)}=\sT$, and an element of $\sC^{(0)}$ is often called a \textbf{unicyclic} subgraph.

 \end{enumerate}

Progress towards \cref{conj: p-NC for random cluster model} has been rather limited. The following \cref{conj: NC for UF} can be viewed as a special case of \cref{conj: p-NC for random cluster model} or as a special case of the conjectured p-NC  property \eqref{eq: def p-NC} for the arboreal gas model   (cf. the $\beta=1$ case of \cite[Conjecture 1.8]{BCHS2021CMP}).
\begin{conj}\label{conj: NC for UF}
	If $G$ is a finite connected graph and $\UF$ is the uniform forest measure on $G$, then  $\UF$  satisfies the p-NC property. 
\end{conj}
Grimmett and Winkler \cite{Grimmett_Winkler2004} verified \cref{conj: NC for UF} numerically for graphs having eight or fewer vertices, or having nine vertices and no more than $18$ edges. 
Stark \cite{Stark2011} proved that \cref{conj: NC for UF} holds for complete graphs $K_n$ when $n$ is sufficiently large. Br\"and\'en and Huh \cite{Branden_Huh2020} proved an interesting related inequality valid for all finite connected graphs:
\[
\UF\big[  \omega(e)=\omega(f)=1 \big]\leq 2\cdot \UF\big[\omega(e)=1\big]\cdot \UF\big[\omega(f)=1\big] \,.
\]

As for the other limiting measure $\UC$ of the random-cluster measure $\phi_{p,q}$ as $q\to0$, little is known about its p-NC property. 
Our first result provides, for the first time, a family of graphs for which the associated uniform measure $\UC$   satisfies the p-NC property. 
\begin{theorem}\label{thm: p-NC for UC}
	There exists a constant $N>0$ such that for all $n\ge N$, the uniform probability measure $\UC$ on the connected spanning subgraphs of the complete graph $K_n$ satisfies the p-NC property. 	
\end{theorem}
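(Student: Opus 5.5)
The plan is to compare $\UC$ with the uniform measure on all of $\{0,1\}^E$, $E=E(K_n)$, which is the product Bernoulli$(1/2)$ measure $\mathbb{P}$. Under $\mathbb{P}$ the configuration is $G(n,1/2)$, and it is disconnected only with probability $O(n2^{-n})$. So $\UC$ is an exponentially small perturbation of a product measure, and the p-NC inequality should reduce to a sign statement about the leading correction.

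\textbf{Reduction.} Let $D$ be the event that $(V,\eta(\omega))$ is disconnected, and write $X_e=\omega(e)$. Set $d=\mathbb{P}(D)$, $d_e=\mathbb{P}(D,X_e=1)$ and $d_{ef}=\mathbb{P}(D,X_e=X_f=1)$. Then
\[
\UC[X_e=1]=\frac{1/2-d_e}{1-d},\qquad \UC[X_e=X_f=1]=\frac{1/4-d_{ef}}{1-d}.
\]
Clearing denominators and cancelling the $1/4$ terms, p-NC for the pair $e,f$ becomes
\[
-\tfrac14\,\mathbb{E}\big[\mathbf 1_D\, s_e s_f\big]\le d_e d_f-d_{ef}\,d,\qquad s_e:=1-2X_e .
\]
This uses the identity $\tfrac d4-\tfrac{d_e+d_f}{2}+d_{ef}=\tfrac14\mathbb{E}[\mathbf 1_D s_es_f]$. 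By the symmetry of $K_n$ there are only two cases: $e,f$ share a vertex, or $e,f$ are disjoint. In each case I will compute both sides to leading order.

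\textbf{Asymptotics.} Decompose $D$ according to the smallest component. The event that a fixed set $S$ of size $s$ is a union of components has probability $2^{-s(n-s)}$. The contribution of components of size $s$ with $2\le s\le n/2$ to any of the quantities above is therefore $O(n^2 2^{-2n+2})$ at $s=2$ and much smaller beyond, and two isolated vertices contribute $O(n^2 4^{-n})$. I will write the main terms via inclusion--exclusion over isolated vertices and bound everything else by $O(n^{3}2^{-3n})$-type error terms.

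\textbf{Shared vertex.} Take $e=uv$, $f=uw$. The event that $u$ is isolated has probability $2^{-(n-1)}$ and forces $s_e=s_f=1$. Isolation of any other vertex leaves at least one of $s_e,s_f$ unbiased, so it contributes $0$ at first order. Hence $\mathbb{E}[\mathbf 1_D s_es_f]\ge 2^{-(n-1)}(1-o(1))$, and the left side is $\le -c2^{-n}$. The right side is $O(n^24^{-n})$, so the inequality holds for large $n$.

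\textbf{Disjoint edges.} This is the step I expect to be the main obstacle, since first-order terms cancel and one must track order-$4^{-n}$ terms on both sides. For the right side, the leading terms come from a single isolated vertex outside the relevant endpoints:
\[
d\approx n2^{-(n-1)},\qquad d_e\approx (n-2)2^{-n},\qquad d_{ef}\approx (n-4)2^{-(n+1)} .
\]
This gives $d_ed_f-d_{ef}d\approx 4\cdot 4^{-n}>0$. The corrections from pairs of isolated vertices, and from the overlap in inclusion--exclusion, are also of order $n^{O(1)}4^{-n}$ and must be included exactly, so I would compute these carefully.

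For the left side, the single-isolated-vertex terms vanish, because one of $s_e,s_f$ stays a fair sign. The surviving terms of order $4^{-n}$ come from an endpoint of $e$ and an endpoint of $f$ being isolated simultaneously. These contribute positively to $\mathbb{E}[\mathbf 1_Ds_es_f]$, so the left side is nonpositive to this order. Terms where $e$ itself is a $2$-vertex component carry a factor $2^{-2n}\cdot 2^{-n}$ and are negligible.

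Thus the left side is $\le o(4^{-n})$ while the right side is $\ge (4-o(1))4^{-n}$. If the exact second-order bookkeeping changes constants, the fallback is to verify that the combined $4^{-n}$ coefficient on the right minus the left remains positive, using a direct count via isolated-vertex inclusion--exclusion up to depth two. With a uniform tail bound for larger components, this gives a threshold $N$ beyond which p-NC holds.
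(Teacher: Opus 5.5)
Your reduction to $-\tfrac14\mathbb{E}[\mathbf 1_D s_es_f]\le d_ed_f-d_{ef}d$ is correct, and the shared-vertex case goes through. Your value $d_ed_f-d_{ef}d=4\cdot 4^{-n}(1+o(1))$ for disjoint edges is also right. No second-order bookkeeping is needed there: corrections of size $O(n^2 4^{-n})$ to $d,d_e,d_{ef}$ enter the right side only at order $n^3 8^{-n}$.

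The gap is the left side for disjoint $e=ab$, $f=cd$, where two of your claims are false.

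\emph{The sign.} Write $I_x$ for the event that $x$ is isolated. Then $\mathbb{E}[\mathbf 1_{I_x}s_es_f]=0$ exactly for every $x$. A pair $I_x\cap I_y$ has nonzero signed mean only when $x\in e$ and $y\in f$, so the pointwise Bonferroni expansion gives
\[
\mathbb{E}\big[\mathbf 1_{\bigcup_x I_x}\,s_es_f\big]=-\sum_{x\in\{a,b\},\,y\in\{c,d\}}\mathbb{P}(I_x\cap I_y)+O(n^38^{-n})=-32\cdot 4^{-n}+O(n^38^{-n}).
\]
Simultaneously isolated endpoints therefore enter with a minus sign. On their own they would make your left side $+8\cdot 4^{-n}$, which is larger than the right side $4\cdot 4^{-n}$.

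\emph{Components of size two.} These are not $O(n^32^{-3n})$ as your plan treats them. If $E_S$ is the event that $S$ is a component, then $\mathbb{P}(E_S)=2^{-(2n-3)}$ for $|S|=2$. The four cross sets $S=\{x,y\}$ with $x\in e$, $y\in f$ have $e$ and $f$ as boundary edges, so both are closed on $E_S$ and $s_es_f=1$ there. Together they contribute $+4\cdot 2^{-(2n-3)}(1-O(n2^{-n}))=+32\cdot 4^{-n}(1+o(1))$ to $\mathbb{E}[\mathbf 1_Ds_es_f]$. Every other two-element $S$, including $S=e$ and $S=f$, leaves one of $s_e,s_f$ a fair independent sign; all of them together contribute only $O(n^38^{-n})$.

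So the true $4^{-n}$-coefficient of $\mathbb{E}[\mathbf 1_Ds_es_f]$ is $-32+32=0$. The left side is $o(4^{-n})$ and p-NC does hold, but only through an exact cancellation your argument does not contain. Your fallback (isolated-vertex inclusion--exclusion to depth two, with everything else treated as error) would, once the sign is corrected, give left side $\approx 8\cdot 4^{-n}$ against right side $\approx 4\cdot 4^{-n}$, which is the wrong verdict.

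This is exactly what Step~2 of Lemmas~\ref{lem: disconnected one}, \ref{lem: disconnected two} and~\ref{lem: disconnected three} handles. There the paper computes the two-vertex-component contributions, using FKG to get $\mathbb{P}[A_S\mid E_S,\omega(e)=1]=1-O(nx_n)$, where $A_S$ is the event that no vertex outside $S$ is isolated. These contributions produce the net second-order coefficients $4x_n^2$ and $8x_n^2$ on which the final comparison rests.

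To repair your route, which is otherwise a clean reorganization of the same computation, expand $\mathbb{E}[\mathbf 1_Ds_es_f]$ through components of size two, with the $A_S$ correction bounded by $O(n2^{-n})$. Only components of size at least three may be discarded as error.
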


Our second and third results concern the p-NC property  for \textbf{truncations} of the uniform forest measure $\UF$ and the uniform connected subgraph measure $\UC$. For a probability measure $\mu$ on $(\{0,1\}^E,\cF)$ and an integer $k\in[0,|E|]$ such that $\mu\big(\card{\eta(\omega)}=k\big)>0$, we define the \textbf{$k$-truncation} measure $\mu_k(\cdot )$ as the conditional measure $\mu\big( \omega\in \cdot \,\big|\, \card{\eta(\omega)}=k \big)$. Here and  throughout, the notation $\card{\cdot }$ denotes cardinality of the argument.  This notion of $k$-truncation is a  special case of the framework  considered in \cite[Definition 2.15]{Borcea_Branden_Liggett2009}. In the language of truncations, for a finite connected graph $G=(V,E)$, the uniform $k$-forest measure $\kUF$  is precisely the $(|V|-k)$-truncation of the $\UF$ measure, and the uniform measure $\kUC$ on connected subgraphs with excess $k$ is  the $(|V|+k)$-truncation of  $\UC$. One motivation for studying  truncations comes from the law of total probability:
 \[
 \mu(\cdot )=\sum_k\mu\big(\card{\eta(\omega)}=k\big)\times \mu_k(\cdot )\,.
 \]
 Thus  understanding the rank sequence $\big(\mu\big(\card{\eta(\omega)}=k\big)\big)_{k\ge0}$ together with the truncations $(\mu_k)_{k\ge0}$ yields a complete description of $\mu$.

A further  motivation  lies in the intrinsic interest in these truncated models themselves,  particularly in the cases of $2$-forest and unicyclic subgraphs. For instance, the uniform $2$-component forest model has been studied in \cite{Kassel_Kenyon_Wu2015AIHP}, and its connection to  the abelian sandpile model and cycle-rooted spanning trees is explored in \cite{Kassel_Kenyon_Wu2015AIHP,Kassel_Wilson2016}. As for unicyclic subgraphs, Sun and Wilson \cite{Sun_Wilson2016} studied the length and area of the cycle in the uniform unicyclic subgraphs on random planar maps.  Another notable contribution is \cite{Flajolet_Knuth_Pittel1989}, which analyses the typical sizes and fluctuations of the first cycles in evolving Erd\H{o}s--R\'enyi random graphs.

Our next results, \cref{thm: p-NC for kUF} and \cref{thm: p-NC for kUC},  show that certain truncations of the $\UF$ and $\UC$ measures on complete graphs satisfy the p-NC property. We emphasize, however, that for general connected graphs, the uniform measures on 
$2$-forests or on unicyclic subgraphs may fail to satisfy \eqref{eq: def p-NC}; see Example~\ref{example:HSW2022}.

 \begin{theorem}\label{thm: p-NC for kUF}
 	For each fixed integer $k\ge2$, there exists a constant $N=N(k)>0$ such that for all $n\geq N$, the uniform probability measure $\kUF$ on $\kF(K_n)$ satisfies the p-NC property. 	
 \end{theorem}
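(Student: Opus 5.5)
By the symmetry of $K_n$, only two configurations of distinct edges $e,f$ matter: adjacent edges (sharing a vertex) and disjoint edges. Write $F_k(n)=|\kF(K_n)|$. Every $k$-forest has $n-k$ edges, so by symmetry $\kUF[\omega(e)=1]=(n-k)/\binom n2$. Hence p-NC is equivalent to
\[
\frac{N_k^{\mathrm{adj}}(n)}{F_k(n)}\le \frac{(n-k)^2}{\binom n2^2}
\qquad\text{and}\qquad
\frac{N_k^{\mathrm{dis}}(n)}{F_k(n)}\le \frac{(n-k)^2}{\binom n2^2},
\]
where $N_k^{\mathrm{adj}}$ and $N_k^{\mathrm{dis}}$ count $k$-forests containing a fixed adjacent pair, respectively a fixed disjoint pair. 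Since $\sum_{f\ne e}\kUF[\omega(e)=\omega(f)=1]=(n-k-1)\,\kUF[\omega(e)=1]$, the two joint probabilities are linked by one linear relation. So it suffices to get precise asymptotics for one of them, say the adjacent pair, and then deduce the other.

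The main tool is contraction. Forests in $\kF(K_n)$ containing a fixed edge set $A$ (a forest of $j$ edges) correspond to $k$-forests of the multigraph $K_n/A$. For the counting, I will use the generalized Cayley formula: the number of rooted forests on $[n]$ with prescribed trees containing given disjoint vertex sets of sizes $a_1,\dots,a_m$ is $a_1\cdots a_m\, n^{\,n-\sum a_i-1}\cdot(\text{explicit})$. Summing over which components of the forest are the roots gives $F_k(n)=\sum$ over choices of root sets. More concretely, I will use the classical formula: the number of forests on $[n]$ with $k$ trees in which the specified vertices $1,\dots,k$ lie in distinct trees is $k n^{n-k-1}$. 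Via set partitions, this gives
\[
F_k(n)=\frac{1}{k!}\sum_{n_1+\dots+n_k=n}\binom{n}{n_1,\dots,n_k}\prod n_i^{\,n_i-2}.
\]
The same method gives $N^{\mathrm{adj}}$ and $N^{\mathrm{dis}}$: contracting the fixed edges, the edge $e=\{u,v\}$ lies in a tree of size $m$ with weight $2m^{m-3}$ (the number of trees on $m$ labelled vertices containing a given edge), and an adjacent pair gives weight $3m^{m-4}$.

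For fixed $k$ and $n\to\infty$, these sums are dominated by configurations with one giant tree of size $n-O(1)$ and $k-1$ small trees, with weights $n_i^{n_i-2}/n_i!$ decaying like $e^{n_i}n_i^{-5/2}$. I will write each quantity as $n^{n-2}\cdot\binom{n}{k-1}$-type leading terms times an expansion $c_0+c_1/n+O(n^{-2})$. The constants come from the generating function of rooted trees $T(x)$ evaluated at $x=e^{-1}$, where $T(e^{-1})=1$ and the derivatives are singular. An alternative that avoids these sums is to use the exact expressions $\binom{n}{k}$-weighted Abel sums, which have closed asymptotics.

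The main obstacle is that the two sides agree at leading order. For a tree ($k=1$), $\UST[e,f]$ for adjacent $e,f$ is $3/n^2$ versus $(2/n)^2=4/n^2$, a genuine gap. For disjoint edges, however, $\UST[e,f]=4/n^2\cdot(1-O(1/n))$, so p-NC is decided at the $1/n^3$ order. The same happens for $k$-forests. I therefore need second-order expansions of $N_k^{\mathrm{dis}}/F_k$ and must show that the $1/n^3$ coefficient is strictly less than that of $4(n-k)^2/(n^2(n-1)^2)\approx 4/n^2-(8k-8)/n^3$. I would compute it via the linear relation above: the adjacent probability is about $3/n^2$, and there are $2(n-2)$ adjacent partners versus $\binom{n-2}{2}$ disjoint ones. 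This yields the disjoint probability to order $n^{-3}$ exactly, given $\kUF[e]$ exactly and the adjacent probability to order $n^{-2}$ with an $O(n^{-3})$ error. The needed strict inequality then reduces to an explicit comparison of constants depending on $k$, which I expect to hold with a margin proportional to $1/n^3$. The difficulty is bookkeeping the small-tree contributions uniformly, with tail bounds for trees of size larger than $\log n$, for example using $\sum_m m^{m-2}/m!\,e^{-m}<\infty$.
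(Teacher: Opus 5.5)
Your reduction matches the paper's. By symmetry only adjacent and disjoint pairs matter, $p_0=\kUF[\omega(e)=1]=\frac{2(n-k)}{n(n-1)}$, and the identity $2(n-2)p_1+\binom{n-2}{2}p_2=(n-k-1)p_0$ (equivalent to \cref{lem: second moment}) recovers the disjoint probability $p_2$ from the adjacent one $p_1$. Counting via tree-weight convolutions instead of Liu--Chow's formula on $K_n/\{e,f\}$ is a legitimate alternative.

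The gap is in the decisive step for disjoint edges. If you know only $p_1=3n^{-2}+O(n^{-3})$, the identity gives
\[
p_2=\frac{4\big[(n-k)(n-k-1)-n(n-1)(n-2)p_1\big]}{n(n-1)(n-2)(n-3)}=\frac{4}{n^2}-\frac{8(k-1)}{n^3}+O(n^{-4}),
\]
and $p_0^2=\frac{4(n-k)^2}{n^2(n-1)^2}$ has exactly the same expansion $\frac{4}{n^2}-\frac{8(k-1)}{n^3}+O(n^{-4})$. So the ``explicit comparison of constants'' you propose at order $n^{-3}$ is a tie, and there is no margin proportional to $n^{-3}$. This is structural. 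We have $\sum_{f\neq e}\mathrm{Cov}\big(\omega(e),\omega(f)\big)=-p_0(1-p_0)\approx-2/n$. The $2(n-2)$ adjacent partners, each with covariance $p_1-p_0^2\approx-1/n^2$, already exhaust this, so each of the $\binom{n-2}{2}$ disjoint covariances is only $O(n^{-4})$. Already for $k=1$, two disjoint edges of $K_n$ are exactly uncorrelated under $\UST$: the probability is $4n^{n-4}/n^{n-2}=4/n^2=p_0^2$, not $\frac{4}{n^2}(1-O(1/n))$.

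What is missing is one more order in the adjacent probability. Since $p_2$ is affine in $p_1$ with slope $-4/(n-3)$, one has $p_0^2-p_2=\frac{4}{n-3}(p_1-\tau_n)$, where $\tau_n=\frac{(n-k)[3n^2-(5+4k)n+6k]}{n^2(n-1)^2(n-2)}=\frac{3}{n^2}-\frac{7(k-1)}{n^3}+O(n^{-4})$ (\cref{cor: 3.3}). So you must compute the $n^{-3}$ coefficient of $p_1$ and show it exceeds $-7(k-1)$. This requires the second-order terms of both counts. The paper obtains
\[
|\kF(K_n)|=\frac{n^{n-2}+5(k-1)n^{n-3}}{2^{k-1}(k-1)!}+O(n^{n-4}),
\]
and, for an adjacent pair, the count $\frac{3n^{n-4}+11(k-1)n^{n-5}}{2^{k-1}(k-1)!}+O(n^{n-6})$. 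Hence $p_1=\frac{3}{n^2}-\frac{4(k-1)}{n^3}+O(n^{-4})$, and the true margin is $p_0^2-p_2\sim 12(k-1)n^{-4}$.

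Your counting route can reach this precision. With $T$ the rooted-tree generating function, your convolutions sum exactly to $|\kF(K_n)|=\frac{n!}{k!}[z^n]\big(T-\tfrac12T^2\big)^k$ and to $\frac{(n-3)!}{(k-1)!}[z^n]T^3\big(T-\tfrac12T^2\big)^{k-1}$ for the adjacent count. Both are finite sums via $[z^n]T^m=\frac{mn^{n-m-1}}{(n-m)!}$. But the plan as written explicitly discards exactly the term that decides the inequality.
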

 We remark that for small values of $k$ (e.g. $k=2,3,4$), one can choose $N(k)=k$; in other words, the p-NC property holds for $\kUF$ on $\kF(K_n)$ as soon as the set  $\kF(K_n)\neq\emptyset$. See Remark~\ref{rem: 3.10} for details.

  \begin{theorem}\label{thm: p-NC for kUC}
  	
  		For each fixed integer $k\ge0$, there exists a constant $N=N(k)>0$ such that for all $n\geq N$, the uniform probability measure $\kUC$ on $\kC(K_n)$ satisfies the p-NC property. 	

 \end{theorem}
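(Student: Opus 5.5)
By the symmetry of $K_n$, it suffices to compare two cases: two edges $e,f$ sharing a vertex, and two disjoint edges. Every edge has the same marginal, and since each graph in $\kC(K_n)$ has exactly $n+k$ edges,
\[
\kUC[\omega(e)=1]=\frac{n+k}{\binom n2}=\frac{2(n+k)}{n(n-1)} .
\]
Write $C_{n,k}=|\kC(K_n)|$. Then p-NC reduces to showing that
\[
\frac{|\{\omega\in\kC:\omega(e)=\omega(f)=1\}|}{C_{n,k}}\le\frac{4(n+k)^2}{n^2(n-1)^2}
\]
holds for both configurations of the pair $(e,f)$.

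The counts in the two cases are linked by a double-counting identity. Let $a$ be the number of graphs in $\kC$ containing a fixed adjacent pair, and $b$ the number containing a fixed disjoint pair. Counting ordered pairs of distinct edges inside each $\omega$ gives
\[
(n+k)(n+k-1)\,C_{n,k}=n(n-1)(n-2)\,a+\tfrac{1}{4}\,n(n-1)(n-2)(n-3)\,b .
\]
A similar identity comes from the degree sequence: the number of adjacent edge pairs in $\omega$ is $\sum_v\binom{\deg v}{2}$. So $a$ is determined by $\bbE_{\kUC}\big[\sum_v\binom{\deg_\omega v}{2}\big]$, and $b$ then follows from the identity above. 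Both conditions therefore reduce to two-sided asymptotics, with $1/n$ precision, for $\bbE\big[\sum_v \deg_\omega(v)^2\big]$ under $\kUC$.

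Concretely, the target is
\[
\frac{a}{C_{n,k}}=\frac{4}{n^2}\Big(1-\frac{c_1}{n}+\dots\Big),\qquad \frac{b}{C_{n,k}}=\frac{4}{n^2}\Big(1-\frac{c_2}{n}+\dots\Big).
\]
Compare these with the product of marginals, $\frac{4}{n^2}\big(1+\frac{2k+2}{n}+O(n^{-2})\big)$. What is needed is $-c_1$ and $-c_2$ to fall strictly below $2k+2$. This is a comparison of second-order terms, and a strict gap at that order is what allows us to take $n$ large. For trees ($k=-1$) the relevant quantities are exactly computable from Cayley-type formulas, and I expect the leading term of the degree distribution to be Poisson-like, with $\bbE\deg(\deg-1)\approx 1$ per vertex plus corrections.

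The enumeration uses the standard structure of connected graphs with fixed excess $k$. Each such graph is a kernel, meaning a multigraph with minimum degree 3 and excess $k$ (finitely many types for fixed $k$), in which each kernel edge is replaced by a path and rooted trees hang from every vertex. This is the Wright-type decomposition, with $C_{n,k}\sim\rho_k n^{n+(3k+1)/2}$, and $\rho_k$ is given explicitly by the Wright constants. Instead of computing $\bbE\sum_v\deg^2$ directly, I would compute $a$ (or $b$) directly: contracting $e$ and $f$ gives a bijection-style count of graphs containing them. A disjoint pair $e,f\subseteq\omega$ corresponds to a connected structure on $n-2$ merged vertices, but with multiplicities that depend on whether $e$ and $f$ lie on the cyclic core or in the tree parts. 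I would split $a$ and $b$ according to whether each of $e$ and $f$ is a bridge (in a hanging tree) or a core edge. The bridge parts can be counted through forest-type identities: removing a bridge gives two components, one of which is a tree. The core parts contribute only $O(n^{-1/2})$ relative weight.

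I expect the main obstacle to be obtaining the second-order term with the right sign uniformly across these contributions. The core/kernel contributions enter at relative order $n^{-1/2}$. These would have to match between the two sides of the p-NC inequality, for both the adjacent and disjoint cases, or else cancel through the double-counting identity. I would first check the identity-based reduction: it shows that if the adjacent case holds with a margin of order $1/n$, the disjoint case follows automatically, up to error terms. So the most effort should go into a precise expansion of $\bbE\sum_v\binom{\deg v}{2}$. I would obtain it from the exponential generating function of rooted trees with a marked degree, $T(z)=ze^{T(z)}$, together with the Wright generating functions $W_k$ via singularity analysis at $z=e^{-1}$.
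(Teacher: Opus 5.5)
Your reduction is sound and is the paper's own: the uniform marginal $p_0=2(n+k)/(n(n-1))$, plus the double-counting identity linking the adjacent count $a$ and the disjoint count $b$. The gap is in the target you then set. Your ansatz $a/C_{n,k}=\frac{4}{n^2}(1-c_1/n+\cdots)$ is wrong at leading order. In fact $a/C_{n,k}=3/n^2+O(n^{-5/2})$; for trees it is exactly $3n^{n-4}/n^{n-2}$. Degrees are roughly $1+\mathrm{Poisson}(1)$, so $\bbE[\deg(\deg-1)]\approx 3$ per vertex, not $1$. Hence the adjacent case is the easy one, with slack already at leading order.

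The identity also does not carry the adjacent case over to the disjoint one. Since $b$ is a \emph{decreasing} function of $a$, the adjacent inequality (an upper bound on $a$) only bounds $b$ from below. Disjoint p-NC is instead equivalent to a \emph{lower} bound, $a/C_{n,k}\ge 3/n^2+7(k+1)/n^3+O(n^{-4})$.

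Worse, your criterion of a strict gap between $-c_2$ and $2k+2$ can never be met. Write $a/C_{n,k}=3/n^2+\gamma/n^3+o(n^{-3})$ and substitute into the identity. You get $b/C_{n,k}=4/n^2+8(k+1)/n^3+O(n^{-4})$ whatever $\gamma$ is, so $-c_2=2k+2$ exactly, matching $p_0^2$. The sign is decided only at order $n^{-4}$, where $b/C_{n,k}-p_0^2=4(7(k+1)-\gamma)/n^4+o(n^{-4})$. For $k=-1$ one has $\gamma=0$ and $b/C_{n,k}=p_0^2$ exactly: disjoint edges of the uniform spanning tree of $K_n$ are uncorrelated, so there is no slack to spare.

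The theorem therefore hinges on computing, for every $k$, the exact $n^{-3}$ coefficient $\gamma$ of $p_1$ and showing $\gamma>7(k+1)$; the paper obtains $\gamma=9(k+1)$. Getting this requires three things:
\begin{itemize}
\item three-term expansions (relative orders $1,n^{-1/2},n^{-1}$) of both $C_{n,k}$ and $a$;
\item the cancellation of the $n^{-1/2}$ terms in the ratio;
\item for a closed form valid for all $k$, Wright's recursion relating the top coefficients of $W_k$, written as a Laurent polynomial in $\theta=1-T(z)$, to those of $W_{k-1},W_{k-2}$, together with convolution identities under which all $W_{k-1},W_{k-2}$ contributions cancel.
\end{itemize}

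Your bridge/core split is close in spirit to the paper's decomposition by the number of components of $\omega\setminus\{e,f\}$. However, the core pieces you treat as lower order do enter $\gamma$ and must be computed exactly. A kernel-by-kernel enumeration gives a different finite sum for each $k$, and you offer no mechanism for evaluating $\gamma$ uniformly in $k$.

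(Minor: $C_{n,k}\sim\rho_k n^{n+(3k-1)/2}$, not $n^{n+(3k+1)/2}$.)
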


 \begin{remark}\label{rem: failure of pNC for general graphs}
 	The question of whether truncations preserve various forms of negative dependence has been discussed in \cite{Dev_Proschan1983, Pemantle2000, Borcea_Branden_Liggett2009}. 
  For instance,  a strong form of negative dependence known as  the \textbf{strong Rayleigh property} \cite[Definition 2.10]{Borcea_Branden_Liggett2009} is actually preserved under  $k$-truncations \cite[Corollary 4.18]{Borcea_Branden_Liggett2009}. However, in general there is no implication between the validity of \eqref{eq: def p-NC}  for a measure $\mu$ and for its truncations $(\mu_k)_{k\ge0}$. Moreover, there exist  finite connected graphs for which the associated uniform probability measures on the set of $2$-forests or on unicyclic subgraphs do \textbf{not} satisfy the p-NC property. Concrete examples are given in Section~\ref{sec: remarks and ques}.  
 \end{remark}

 \subsection{Organization of the paper and a list of notation}
 
 The rest of the paper is organized as follows.
 
 In Section~\ref{sec: p-NC for UC} we prove Theorem~\ref{thm: p-NC for UC}. The main idea is to reformulate the problem in terms of a percolation problem (Lemma~\ref{lem: reformulation of p-NC for UC}) and then apply elementary results from random graph theory. Specifically, we rely on the fact that the Bernoulli$(1/2)$ bond percolation configuration on the complete graph $K_n$ is connected with high probability, and that the probability of disconnection is dominated by the event that an isolated vertex exists.
 
 Section~\ref{sec: p-NC for k-forests} is devoted to the proof of Theorem~\ref{thm: p-NC for kUF}. The key ingredient is an explicit counting formula for the number of $k$-component forests due to Liu and Chow \cite{Liu_Chow1981}. This formula, together with the symmetry of the complete graph, allows us to compute the joint probabilities $\kUF[\omega(e) = \omega(f) = 1]$. By comparing these with the square of the marginal probability $\kUF[\omega(e) = 1]^2$---which is easily obtained via the first moment method---we establish the desired negative correlation.
 
 In Section~\ref{sec: p-NC for kUC} we prove Theorem~\ref{thm: p-NC for kUC}. The proof relies on the singular analysis technique developed in \cite{FO1990}, applied to the associated exponential generating function. This approach has been used by Stark \cite{Stark2011} to establish the p-NC property for the uniform forest measure on $K_n$ for sufficiently large $n$. Our argument for Theorem~\ref{thm: p-NC for kUC} requires a delicate analysis of these generating functions, which in turn involves rather lengthy computations; some of these calculations are therefore deferred to the appendix.
 
 Finally, in Section~\ref{sec: remarks and ques} we conclude with some examples, remarks, and open questions.
 
 For the reader's convenience, we list below the notation frequently used throughout the paper.
 \subsubsection*{General notation}
 \begin{enumerate}
 	\item $K_n$: the complete graph on $n$ vertices.
 	\item $G=(V,E)$: a finite connected graph.
 	\item $\omega\in\{0,1\}^E$: a configuration (assignment of open/closed edges).
 	\item $\eta(\omega)=\{e\in E: \omega(e)=1\}$: the set of open edges in $\omega$.
 	\item $\ncc{\omega}$: the number of connected components of the subgraph $(V,\eta(\omega))$.
 	\item $f(n)\asymp g(n)$: there exist constants $c_1,c_2>0$ such that $c_1g(n)\leq f(n)\leq c_2g(n)$ for all $n$.
 	\item $f(n)=O(g(n))$: there exists a constant $c>0$ such that $f(n)\leq cg(n)$ for all $n$.
 	\item $f(n)=o(g(n))$: $\displaystyle\lim_{n\to\infty}\frac{f(n)}{g(n)}=0$.
 \end{enumerate}
 
 \subsubsection*{Families of subgraphs and associated measures}
 \begin{enumerate}
 	\item $\sF=\sF(G)$: the set of spanning forests of $G$.
 	\item $\kF=\kF(G)$: the set of $k$-component spanning forests ($k$-forests) of $G$.
 	\item $\kUF$: the uniform probability measure on $\kF$.
 	\item $\sC=\sC(G)$: the set of connected spanning subgraphs of $G$.
 	\item $\UC$: the uniform probability measure on $\sC$.
 	\item $\kC=\kC(G)$: the set of connected spanning subgraphs of $G$ with excess $k$ (i.e., with $|V|+k$ edges).
 	\item $\kUC$: the uniform probability measure on $\kC$.
 \end{enumerate}
 
 \subsubsection*{Notation specific to Section~\ref{sec: p-NC for UC}}
 \begin{enumerate}
 	\item $\sC^{(e)} = \{\omega\in\sC: \omega(e)=1\}$ and $\sC^{(e,f)} = \{\omega\in\sC: \omega(e)=\omega(f)=1\}$.
 	\item $x_n = (1/2)^{n-1}$.
 	\item $E^{(k)}$, $E_S$, $A_S$: events defined in Definition~\ref{def: E^k, E_S and A_S}.
 \end{enumerate}
 
 \subsubsection*{Notation specific to Section~\ref{sec: p-NC for k-forests}}
 \begin{enumerate}
 	\item $A(G)$: the adjacency matrix of $G$.
 	\item $M(G)$: the Kirchhoff matrix (Laplacian) of $G$.
 	\item $M(S)$: the principal submatrix of $M(G)$ obtained by deleting rows and columns corresponding to vertices in $S$.
 	\item $\nu_r(S)$: the number of matchings consisting of $r$ edges whose endpoints all lie in $S$.
 	\item $v_*$: an arbitrarily fixed vertex in $V(G)$.
 	\item $G_S$: the graph obtained from $G$ by identifying all vertices in $S\cup\{v_*\}$.
 \end{enumerate}
 
 \subsubsection*{Notation specific to Section~\ref{sec: p-NC for kUC} and the appendix}
 \begin{enumerate}
 	\item $C_{n,n+k}$: the number of connected spanning subgraphs of $K_n$ with excess $k$.
 	\item $C_{n,n+k}^{e,f}$: the number of such subgraphs that contain a fixed pair of adjacent edges $e$ and $f$.
 	\item $\bbC[[z]]$: the ring of formal power series with complex coefficients.
 	\item $[z^n]f(z)$: the coefficient of $z^n$ in $f(z)\in\bbC[[z]]$.
 	\item $W_k(z) = \sum_{n\ge1} C_{n,n+k}\frac{z^n}{n!}$: the exponential generating function for connected subgraphs with excess $k$.
 	\item $T(z) = \sum_{n\ge1}\frac{n^{n-1}}{n!}z^n$: the exponential generating function for rooted trees.
 	\item $\theta = 1 - T(z)$: a convenient formal power series.
 	\item $\hO(\theta^p)$: a finite sum of terms of the form $c\theta^s$ with $s\ge p$.
 \end{enumerate}
%%%%%%%%%%%%%%%%%%%%  Introduction and main results %%%%%%%%%%%%%%%%%%%%
%%%%%%%%%%%%%%%%%%%%%%%%%%%%%%%%%%%%%%%%%%%%%%%%%%%%%%%%%%%%%%%%%%%%%%%%

%%%%%%%%%%%%%%%%%%%%%%%%%%%%%%%%%%%%%%%%%%%%%%%%%%%%%%%%%%%%%%%%%%%%%%%%%%%%%%%%%
%%%%%%%%%%%%%%%%%%%%%    Uniform connected subgraphs       %%%%%%%%%%%%%%%%%%%%%
%%%%%%%%%%%%%%%%%%%%%%%%%%%%%%%%%%%%%%%%%%%%%%%%%%%%%%%%%%%%%%%%%%%%%%%%%%%%%%%%% 
 
\section{Uniform connected subgraphs of complete graphs} \label{sec: p-NC for UC}
 To prove Theorem~\ref{thm: p-NC for UC}, we begin with a simple observation that reformulates the problem in terms of Bernoulli percolation. Recall that Bernoulli$(p)$ bond percolation on a finite graph $G=(V,E)$ is the product measure on $\{0,1\}^E$ with Bernoulli$(p)$ marginals, i.e, $\bbP_p[\omega(e)=1]=p$ for every edge $e\in E$. Note also that the Bernoulli$(p)$ bond percolation coincides with the random-cluster measure $\phi_{p,q}$ measure when $q=1$.

\begin{lemma}\label{lem: reformulation of p-NC for UC}
	Let $\bbP_{1/2}$ be the Bernoulli$(1/2)$ bond percolation measure on a finite connected graph $G=(V,E)$. Then the p-NC property for $\UC$ on $G$ is equivalent to the following inequality:
	\begin{align}\label{eq: 2.1}
		&\bbP_{1/2}\big[ \omega(e)=\omega(f)=1,\omega\in\sC  \big] \cdot
		\bbP_{1/2}\big[ \omega\in\sC\big] \nonumber\\
		\leq &  \bbP_{1/2}\big[ \omega(e)=1, \omega\in\sC  \big]\cdot \bbP_{1/2}\big[ \omega(f)=1, \omega\in\sC  \big]\, \quad \forall \, e\neq f\,,
	\end{align}
	where $\{\omega\in\sC\}$ denotes the event that the subgraph $(V,\eta(\omega))$ is connected. 
\end{lemma}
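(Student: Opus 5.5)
The plan is to observe that under Bernoulli$(1/2)$ bond percolation every configuration $\omega\in\{0,1\}^E$ has the same probability $2^{-|E|}$. Consequently, for any event $A\subseteq\{0,1\}^E$ we have $\bbP_{1/2}[A]=|A|\cdot 2^{-|E|}$. Since $\UC$ is the uniform measure on $\sC$, we also have
\[
\UC[\omega(e)=1]=\frac{|\sC^{(e)}|}{|\sC|}=\frac{\bbP_{1/2}[\omega(e)=1,\omega\in\sC]}{\bbP_{1/2}[\omega\in\sC]},
\]
where $\sC^{(e)}=\{\omega\in\sC:\omega(e)=1\}$. The analogous identities hold for $f$ and for the joint event, with $\sC^{(e,f)}=\{\omega\in\sC:\omega(e)=\omega(f)=1\}$. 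In other words, $\UC$ is simply $\bbP_{1/2}$ conditioned on the event $\{\omega\in\sC\}$. Note that $\bbP_{1/2}[\omega\in\sC]>0$ because $G$ is connected, so the all-open configuration lies in $\sC$; hence the conditioning is well defined.

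Next, substitute these expressions into the p-NC inequality \eqref{eq: def p-NC} for $\mu=\UC$. This gives
\[
\frac{\bbP_{1/2}[\omega(e)=\omega(f)=1,\omega\in\sC]}{\bbP_{1/2}[\omega\in\sC]}\le \frac{\bbP_{1/2}[\omega(e)=1,\omega\in\sC]\,\bbP_{1/2}[\omega(f)=1,\omega\in\sC]}{\bbP_{1/2}[\omega\in\sC]^2}.
\]
Multiplying both sides by the positive quantity $\bbP_{1/2}[\omega\in\sC]^2$ yields exactly \eqref{eq: 2.1}. Every step is reversible because we only multiplied by a strictly positive number, so the two inequalities are equivalent for each pair $e\neq f$, and hence for all such pairs simultaneously.

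There is no real obstacle in this argument. The only point requiring care is to check that $\bbP_{1/2}[\omega\in\sC]>0$, which follows from the connectedness of $G$ as noted above. It is also worth remarking that the factors $2^{-|E|}$ cancel in the same way that the cardinalities $|\sC|$, $|\sC^{(e)}|$ and $|\sC^{(e,f)}|$ would. So \eqref{eq: 2.1} is simply the counting inequality $|\sC^{(e,f)}|\,|\sC|\le|\sC^{(e)}|\,|\sC^{(f)}|$ rewritten in probabilistic language. That probabilistic form is what makes random-graph estimates for $\bbP_{1/2}$ on $K_n$ available in the rest of the proof.
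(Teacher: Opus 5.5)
Your proof is correct and is essentially the paper's argument. Both rest on the fact that $\bbP_{1/2}$ gives each configuration mass $2^{-|E|}$, so the p-NC inequality for $\UC$ and the displayed percolation inequality both reduce, after multiplying by positive constants, to $|\sC^{(e,f)}|\,|\sC|\le|\sC^{(e)}|\,|\sC^{(f)}|$. Your explicit check that $\bbP_{1/2}[\omega\in\sC]>0$ is a detail the paper leaves implicit.
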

\begin{proof}

	For a pair of distinct edges \(e, f \in E\), let us denote by
	\[
	\sC^{(e)} = \{ \omega \in \sC : \omega(e) = 1 \},\quad
	\sC^{(f)} = \{ \omega \in \sC : \omega(f) = 1 \},\quad
	\sC^{(e,f)} = \{ \omega \in \sC : \omega(e) = \omega(f) = 1 \}
	\]
	the subsets of \(\sC\) consisting of configurations that contain \(e\), contain \(f\), and contain both \(e\) and \(f\), respectively. Then the  inequality \eqref{eq: def p-NC} for the pair \((e,f)\) becomes simply
	\begin{equation}\label{eq: 2.1a}
	\frac{|\sC^{(e,f)}|}{|\sC|} \le \frac{|\sC^{(e)}|}{|\sC|} \cdot \frac{|\sC^{(f)}|}{|\sC|}.
	\end{equation}
	
	On the other hand, under the Bernoulli\((1/2)\) measure \(\mathbb{P}_{1/2}\), each configuration has equal probability, i.e.,
	\[
	\mathbb{P}_{1/2}(\omega) = \frac{1}{2^{|E|}} \qquad \text{for all } \omega \in \{0,1\}^E.
	\]
	Hence inequality \eqref{eq: 2.1} is equivalent to
	\[
	\frac{|\sC^{(e,f)}|}{2^{|E|}} \cdot \frac{|\sC|}{2^{|E|}} \le \frac{|\sC^{(e)}|}{2^{|E|}} \cdot \frac{|\sC^{(f)}|}{2^{|E|}},
	\]
	which clearly reduces to \eqref{eq: 2.1a}.
\end{proof}

Another key ingredient used in the proof of \cref{thm: p-NC for UC} is  that the Bernoulli$(1/2)$ bond percolation on the complete graph $K_n$ is connected with high probability. In fact, Bernoulli$(p)$ bond percolation on the complete graph $K_n$ is precisely the Erd\H{o}s--R\'enyi  random graph $G(n,p)$, and it is well known that  the threshold for connectedness  is $p=\frac{\log n}{n}$ \cite{ER1959}. For our case $p=\frac{1}{2}$, the random subgraph  is therefore connected with  probability tending to $1$. This allow us  to estimate the quantities appearing in \eqref{eq: 2.1} with sufficient accuracy and to  establish the inequality \eqref{eq: 2.1} for large $n$. 

 There is an extensive literature on the connectedness of  Erd\H{o}s--R\'enyi  graphs;  we refer the interested reader to \cite[Chapter 7]{Bollobas2001_2nd_edition} for background and references. The key idea we need, however, is quite simple---for $p=\frac{1}{2}$, the probability that the Erd\H{o}s--R\'enyi  graph $G(n,p)$  is \textit{disconnected} is approximately the probability that there exists an isolated vertex.

 \subsection{Sketch of the proof of Theorem~\ref{thm: p-NC for UC}} \label{sec: main idea of thm UC}

 We first deal with the case where $e$ and $f$ are adjacent edges in $K_n$.   This case manifests the main idea of the proof of Theorem~\ref{thm: p-NC for UC} and involves considerably simpler computations than the non-adjacent case. For the remainder of Section~\ref{sec: p-NC for UC}, for simplicity we write $\sC$ to denote $\sC(K_n)$. 

In view of Lemma~\ref{lem: reformulation of p-NC for UC} and the symmetry of $K_n$, which implies $\bbP_{1/2}\big[ \omega(e)=1,\omega\in\sC  \big]=\bbP_{1/2}\big[ \omega(f)=1,\omega\in\sC  \big] $, it suffices to estimate the quantities $\bbP_{1/2}\big[ \omega(e)=\omega(f)=1,\omega\in\sC  \big]$, $\bbP_{1/2}\big[ \omega(e)=1,\omega\in\sC  \big]$ and $\bbP_{1/2}\big[ \omega\in\sC \big]$. For  $\bbP_{1/2}\big[ \omega(e)=\omega(f)=1,\omega\in\sC  \big]$, we use the following evaluation procedure:
\begin{align*}
\bbP_{1/2}\big[ \omega(e)=\omega(f)=1,\omega\in\sC  \big]
&=\bbP_{1/2}\big[ \omega(e)=\omega(f)=1\big]\bbP_{1/2}\big[\omega\in\sC \mid \omega(e)=\omega(f)=1  \big]\\
&=\frac{1}{4}\left(1- \bbP_{1/2}\big[\omega\notin\sC \mid \omega(e)=\omega(f)=1  \big]\right)\,.
\end{align*}
Similarly, 
\[
\bbP_{1/2}\big[ \omega(e)=1,\omega\in\sC  \big]
=\frac{1}{2}\left(1- \bbP_{1/2}\big[\omega\notin\sC \mid \omega(e)=1  \big]\right)
\]
and
\[
\bbP_{1/2}\big[ \omega\in\sC  \big] =1-\bbP_{1/2}\big[ \omega\notin\sC  \big]\,.
\]
As  noted earlier, the dominant contribution to   $\bbP_{1/2}\big[ \omega\notin\sC \big]$ comes from the event that  an isolated vertex exists. Specifically, one can show  that (see Lemma~\ref{lem: disconnected one}) that
\[
\bbP_{1/2}\big[ \omega\notin\sC  \big]=n\bigg(\frac{1}{2}\bigg)^{n-1}+o(1)\cdot \bigg(\frac{1}{2}\bigg)^{n-1}\,.
\]
Similarly, conditioning on $\omega(e)=1$ leaves only the remaining $n-2$ vertices as possible isolated vertices, leading to (see Lemma~\ref{lem: disconnected two})
\[
\bbP_{1/2}\big[ \omega\notin\sC \mid \omega(e)=1  \big]=(n-2)\bigg(\frac{1}{2}\bigg)^{n-1}+o(1)\cdot \bigg(\frac{1}{2}\bigg)^{n-1}\,.
\]
When $e$ and $f$ are adjacent, we obtain (see Lemma~\ref{lem: disconnected four}) 
\[
\bbP_{1/2}\big[ \omega\notin\sC \mid \omega(e)=\omega(f)=1  \big]=(n-3)\bigg(\frac{1}{2}\bigg)^{n-1}+o(1)\cdot \bigg(\frac{1}{2}\bigg)^{n-1}\,;
\]
 while for non-adjacent $e$ and $f$, the corresponding estimate is (see Lemma~\ref{lem: disconnected three})
\[
\bbP_{1/2}\big[ \omega\notin\sC \mid \omega(e)=\omega(f)=1  \big]=(n-4)\bigg(\frac{1}{2}\bigg)^{n-1}+o(1)\cdot \bigg(\frac{1}{2}\bigg)^{n-1}\,.
\]
These estimates will suffice to prove Theorem~\ref{thm: p-NC for UC} for adjacent  edges. Indeed, setting  $x_n=\big(\frac{1}{2}\big)^{n-1}$ for simplicity, we have for adjacent $e$ and $f$,  
\begin{align*}
	&\bbP_{1/2}\big[ \omega(e)=\omega(f)=1,\omega\in\sC  \big] \cdot
\bbP_{1/2}\big[ \omega\in\sC\big]\\
&=\frac{1}{4}\Big(1- (n-3)x_n+o(1)x_n\Big)\Big(1- nx_n+o(1)x_n\Big)=\frac{1}{4}\Big(1- (2n-3)x_n+o(1)x_n \Big)\\
&\leq \frac{1}{4}\Big(1- (2n-4)x_n+o(1)x_n \Big) \tag*{ for $n$ sufficiently large}\\
&=\bigg( \frac{1}{2}\Big[ 1- (n-2)x_n+o(1)x_n  \Big] \bigg)^2 = \bbP_{1/2}\big[ \omega(e)=1, \omega\in\sC  \big]\cdot \bbP_{1/2}\big[ \omega(f)=1, \omega\in\sC  \big]\,.
\end{align*}
However, the same estimates do not suffice for non-adjacent edges; a more refined analysis of the probabilities $\bbP_{1/2}\big[ \omega\notin\sC  \big]$, $\bbP_{1/2}\big[ \omega\notin\sC \mid \omega(e)=1  \big]$ and $\bbP_{1/2}\big[ \omega\notin\sC \mid \omega(e)=\omega(f)=1  \big]$ is required. These estimates will be our focus of the next subsection.

\subsection{Estimates of the disconnection probabilities}

We first introduce several events that will be used repeatedly throughout this subsection.
\begin{definition}\label{def: E^k, E_S and A_S}
	\begin{enumerate}
		\item[(a)]	For an integer $k\in[1,n]$, let $E^{(k)}$ denote the event that the smallest connected component of the random graph $\big(V(K_n),\eta(\omega)\big)$ contains exactly $k$ vertices. 
		
		\item[(b)] For a subset $S\subset V(K_n)$, let $E_S$  denote the event that  $S$ is the vertex set of some connected component of the random graph $\big(V(K_n),\eta(\omega)\big)$. 
		
		\item[(c)]
		 For a subset $S\subset V(K_n)$, let  $A_S$ denote the event that there are no isolated vertices in $V(K_n)\setminus S$ for the random graph $\big(V(K_n),\eta(\omega)\big)$.  
	\end{enumerate}

\end{definition}
Note that 
\be\label{eq: disc as union}
\big\{ \omega\notin\sC  \big\}=\bigcup_{k=1}^{\lfloor \frac{n}{2} \rfloor }E^{(k)}\,
\ee
and by definition the events $E^{(k)}$ are pairwise disjoint for $k\in \big\{1,\ldots,\lfloor \frac{n}{2} \rfloor \big\}$.

The following lemma is a straightforward consequence of Bonferroni's inequalities.
\begin{lemma}\label{lem: Bonferroni}
	Let $m\ge2$ and let $B_1,B_2,\ldots,B_m$ be $m$ events. Set $S_1\coloneq \sum_{i=1}^{m}\bbP\big[B_i\big]$ and for $k\in\{2,\ldots,m\}$, define 
	\[
	S_k\coloneq \sum_{1\leq i_1<\cdots<i_k\leq m}\bbP\big[B_{i_1}\cap \cdots\cap B_{i_k} \big]\,.
	\]
	Then for every $k\in\{1,\ldots,m\}$, 
	\be\label{eq: Bonferroni}
	\bigg| \sum_{j=k}^{m} (-1)^{j-1}S_j \bigg|\leq S_k\,.
	\ee
\end{lemma}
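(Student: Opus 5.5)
We derive \eqref{eq: Bonferroni} from the classical Bonferroni inequalities. These say that the truncated inclusion--exclusion sums alternately over- and under-estimate $\bbP\big[\bigcup_{i} B_i\big]$. Write $P\coloneq\bbP\big[\bigcup_{i=1}^m B_i\big]$ and $T_k\coloneq\sum_{j=1}^{k}(-1)^{j-1}S_j$ for $0\le k\le m$, with $T_0=0$. The inclusion--exclusion formula gives $P=T_m$, and Bonferroni's inequalities state
\[
T_{2r}\le P\le T_{2r+1}\qquad\text{for all admissible } r .
\]
To be self-contained, the plan is to recall their short proof. Fix an outcome lying in exactly $N\ge1$ of the events. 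It contributes $\binom{N}{j}$ to $S_j$, so its contribution to $P-T_k$ is
\[
1-\sum_{j=1}^{k}(-1)^{j-1}\binom{N}{j}=\sum_{j=0}^{k}(-1)^{j}\binom{N}{j}=(-1)^k\binom{N-1}{k}.
\]
The last equality is the standard identity for partial alternating sums of binomial coefficients. This quantity has sign $(-1)^k$ or is zero, and outcomes in none of the events contribute nothing. Integrating over outcomes shows that $(-1)^k(P-T_k)\ge0$ for every $k$.

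Now observe that
\[
\sum_{j=k}^{m}(-1)^{j-1}S_j=T_m-T_{k-1}=P-T_{k-1}.
\]
By the sign property applied with index $k-1$, the quantity $P-T_{k-1}$ has sign $(-1)^{k-1}$, and the same property with index $k$ says $P-T_k$ has sign $(-1)^k$. Moreover $P-T_{k-1}=(P-T_k)+(-1)^{k-1}S_k$.

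Multiply this identity by $(-1)^{k-1}$. The left side becomes $|P-T_{k-1}|$, and the term $(-1)^{k-1}(P-T_k)$ is $\le0$. Hence $|P-T_{k-1}|\le S_k$, which is exactly \eqref{eq: Bonferroni}. The case $k=1$ is just $0\le P\le S_1$, the union bound, and it is covered by the same argument since $T_0=0$.

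The only nontrivial ingredient is the identity $\sum_{j=0}^{k}(-1)^j\binom{N}{j}=(-1)^k\binom{N-1}{k}$. It follows by induction on $k$ from Pascal's rule $\binom{N}{j}=\binom{N-1}{j}+\binom{N-1}{j-1}$, since the sum telescopes. The rest is sign bookkeeping, so no real obstacle is expected.
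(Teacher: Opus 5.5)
Your proof is correct and follows the same route as the paper: you combine Bonferroni's inequalities $(-1)^k(P-T_k)\ge 0$ with inclusion--exclusion $P=T_m$ to get the sign of each tail sum, then split $(-1)^{k-1}S_k$ off the tail $P-T_{k-1}$ and use that $P-T_k$ has the opposite sign. The differences are cosmetic. You prove Bonferroni's inequalities yourself via $\sum_{j=0}^{k}(-1)^j\binom{N}{j}=(-1)^k\binom{N-1}{k}$, where the paper simply cites them. You also handle the even and odd cases together by multiplying through by $(-1)^{k-1}$, where the paper treats them separately.
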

\begin{proof}
	Bonferroni's inequalities state the following: 
	\begin{itemize}
		\item[(i)] if $k\in\{1,\ldots,m\}$ is odd, then  
		\[
		\bbP\big[B_1\cup \cdots \cup B_m\big]\leq \sum_{j=1}^{k} (-1)^{j-1}S_j\,. 
		\]
		
		\item[(ii)]  if $k\in\{1,\ldots,m\}$ is even, then  
		\[
		\bbP\big[B_1\cup \cdots \cup B_m\big]\geq \sum_{j=1}^{k} (-1)^{j-1}S_j\,. 
		\]
	\end{itemize} 
Since 
\[
\bbP\big[B_1\cup \cdots \cup B_m\big]=\sum_{j=1}^{m} (-1)^{j-1}S_j\,, 
\]
for $k\in\{1,\ldots,m-1\}$ we obtain
\[
\left\{
\begin{array}{ccc}
\sum_{j=k+1}^{m} (-1)^{j-1}S_j & \leq 0 & \text{ if } k \text{ is odd,} \\
&&\\
\sum_{j=k+1}^{m} (-1)^{j-1}S_j & \geq 0 & \text{ if } k \text{ is even.} 
\end{array}
\right.
\]
Shifting the index, this can be rewritten  for $k\in\{2,\ldots,m\}$ as
\be\label{eq: sign of alternating sum}
\left\{
\begin{array}{ccc}
\sum_{j=k}^{m} (-1)^{j-1}S_j & \leq 0 & \text{ if } k \text{ is even,} \\
&&\\
\sum_{j=k}^{m} (-1)^{j-1}S_j & \geq 0 & \text{ if } k \text{ is odd.} 
\end{array}
\right.
\ee
The case  $k=1$ of \eqref{eq: Bonferroni} follows directly from the $k=1$ case of  Bonferroni's inequality. Next  assume $k\geq2$. 
If $k$ is even, then we have
\begin{align*}
\bigg|\sum_{j=k}^{m} (-1)^{j-1}S_j \bigg|
&\stackrel{\eqref{eq: sign of alternating sum}}{=}-\sum_{j=k}^{m} (-1)^{j-1}S_j\\
&=S_k-\sum_{j=k+1}^{m}(-1)^{j-1}S_j\stackrel{\eqref{eq: sign of alternating sum}}{\leq }S_k\,.
\end{align*}
The case where $k$ is odd can be proved similarly.
\end{proof}

\begin{lemma}\label{lem: disconnected one}
	 Set $x_n=\big(\frac{1}{2}\big)^{n-1}$ and consider Bernoulli$(1/2)$ bond percolation on the complete graph $K_n$. Then  the following estimate holds:  
	\be\label{eq: disconnected 1}
	\bbP_{1/2}\big[ \omega\notin\sC\big] 
	=nx_n+o(1)\,x_n^2\,,
	\ee
	where the term $o(1)=o_n(1)$ denotes a quantity that tends to $0$ as $n\to \infty$.

\end{lemma}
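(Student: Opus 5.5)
The plan is to split the disconnection event by the size of the smallest component, as in \eqref{eq: disc as union}. This gives
\[
\bbP_{1/2}[\omega\notin\sC]=\bbP_{1/2}[E^{(1)}]+\bbP_{1/2}[E^{(2)}]+\sum_{k=3}^{\lfloor n/2\rfloor}\bbP_{1/2}[E^{(k)}]\,.
\]
I then estimate each piece to accuracy $o(1)x_n^2=o(4^{-n})$. The point to keep in mind is that $E^{(1)}$ and $E^{(2)}$ each have a second-order term of size $n(n-1)x_n^2$, which is \emph{not} negligible. The statement holds only because these two terms cancel. Tracking them exactly is therefore the main obstacle; everything else is crude union bounding.

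\textbf{The event $E^{(1)}$.} This is the event that some vertex is isolated, i.e.\ $E^{(1)}=\bigcup_v B_v$ with $B_v=\{v \text{ isolated}\}$. The inclusion–exclusion quantities are as follows.
\begin{itemize}
\item $\bbP[B_v]=2^{-(n-1)}=x_n$, so $S_1=nx_n$.
\item Two given vertices are isolated when $2n-3$ edges are closed, so $S_2=\binom n2 2^{-(2n-3)}=n(n-1)x_n^2$.
\item $S_3=\binom n3 2^{-(3n-6)}=O(n^3 8^{-n})=o(x_n^2)$.
\end{itemize}
Lemma~\ref{lem: Bonferroni} with $k=3$ then gives $\bbP[E^{(1)}]=nx_n-n(n-1)x_n^2+o(1)x_n^2$.

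\textbf{The event $E^{(2)}$.} Here $E^{(2)}=\bigcup_{|S|=2}(E_S\cap A_\emptyset)$, where $E_S$ for $S=\{u,v\}$ requires the edge $uv$ to be open and the $2(n-2)$ edges from $S$ to the rest to be closed. Hence $\bbP[E_S]=\tfrac12\, 2^{-2(n-2)}=2x_n^2$, and
\[
\sum_{|S|=2}\bbP[E_S]=n(n-1)x_n^2\,.
\]
I will show that three corrections are $o(x_n^2)$.
\begin{itemize}
\item Intersecting with the no-isolated-vertex condition $A_S$ costs only a relative factor $1-O(n2^{-n})$. Given $E_S$, the event $A_S$ depends only on the edges inside $V\setminus S$, and a union bound over isolated vertices there gives this factor.
\item Overlaps $E_S\cap E_{S'}$ with $S\neq S'$ force $S\cap S'=\emptyset$. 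Their probability is $O(2^{-4n})$ each, so $O(n^4 16^{-n})$ in total.
\end{itemize}
Thus Lemma~\ref{lem: Bonferroni} with $k=2$ (or a direct union bound) yields $\bbP[E^{(2)}]=n(n-1)x_n^2+o(1)x_n^2$.

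\textbf{The events $E^{(k)}$ with $k\ge3$.} I use the union bound $\bbP[E^{(k)}]\le\binom nk 2^{-k(n-k)}$, since some $k$-set must have all $k(n-k)$ boundary edges closed.
\begin{itemize}
\item For $k=3,4$ this is $O(n^4 2^{-3(n-3)})=o(4^{-n})$.
\item For $5\le k\le n/2$ we have $k(n-k)\ge kn/2$, so the term is at most $(n2^{-n/2})^k$. Summing this geometric series gives $O(n^5 2^{-5n/2})=o(4^{-n})$.
\end{itemize}

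Adding the three pieces, the $\pm n(n-1)x_n^2$ terms cancel, and this leaves $\bbP_{1/2}[\omega\notin\sC]=nx_n+o(1)x_n^2$, which is \eqref{eq: disconnected 1}.
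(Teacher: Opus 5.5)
Your proposal is correct and follows the paper's proof essentially step for step: the same split by smallest-component size, the same degree-three Bonferroni expansion for isolated vertices, and the same treatment of $E^{(2)}$ via $E_S\cap A_S$ with the $1-O(nx_n)$ conditional factor, so that the $\pm n(n-1)x_n^2$ terms cancel. The lower bound for $\bbP[E^{(2)}]$ needs the degree-two Bonferroni inequality rather than a union bound, so drop the parenthetical "or a direct union bound". The one real deviation is for $k\ge3$: you drop the Cayley factor $k^{k-2}(1/2)^{k-1}$, handle $k=3,4$ directly, and use $k(n-k)\ge kn/2$ for $k\ge5$; this is valid and slightly simpler than the paper's inequality $k\log k\le \log 2\cdot[k(n-k)-3(n-1)]+C$.
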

\begin{proof}
By \eqref{eq: disc as union} and the disjointness of the events $E^{(k)}$, we have $\bbP_{1/2}\big[ \omega\notin\sC \big] =\sum_{k=1}^{\lfloor \frac{n}{2}\rfloor}\bbP_{1/2}\big[E^{(k)} \big]$. We shall establish the following three estimates:
	\be\label{eq: dis esti 1}
	\bbP_{1/2}\big[E^{(1)}\big]=nx_n-n(n-1)x_n^2+o(1)x_n^2\,,
	\ee
	\be\label{eq: dis esti 2}
	\bbP_{1/2}\big[E^{(2)}\big]=n(n-1)x_n^2+o(1)x_n^2\,,
	\ee
	and 
	\be\label{eq: dis esti 3}
	\sum_{k=3}^{\lfloor \frac{n}{2}\rfloor}\bbP_{1/2}\big[E^{(k)}\big]=o(1)x_n^2\,,
	\ee
	 The desired estimate \eqref{eq: disconnected 1} then follows immediately from \eqref{eq: dis esti 1}, \eqref{eq: dis esti 2} and \eqref{eq: dis esti 3}. 
	
	\noindent\textbf{Step 1: Establishing \eqref{eq: dis esti 1}.}\\
	List $V(K_n)=\{v_1,v_2,\ldots,v_n\}$. Note that $E^{(1)}=\bigcup_{j=1}^{n}E_{v_j}$. Using the inclusion-exclusion formula and the symmetry of $K_n$ we have 
	\begin{align*}
	\bbP_{1/2}\big[E^{(1)}\big]&=\bbP_{1/2}\bigg[ \bigcup_{j=1}^{n}E_{v_j} \bigg]
	=\sum_{m=1}^{n}\binom{n}{m}(-1)^{m-1}\bbP_{1/2}\bigg[ \bigcap_{j=1}^{m}E_{v_j} \bigg]\\
	&=\sum_{m=1}^{n}\binom{n}{m}(-1)^{m-1}\bigg(\frac{1}{2}\bigg)^{\binom{m}{2}+m(n-m)}\,.
	\end{align*}
	Applying Lemma~\ref{lem: Bonferroni} with $k=3$ yields the estimate
	\begin{align*}
	\bigg| \sum_{m=3}^{n}\binom{n}{m}(-1)^{m-1}\bigg(\frac{1}{2}\bigg)^{\binom{m}{2}+m(n-m)}\bigg|
	&\leq \binom{n}{3}\bigg(\frac{1}{2}\bigg)^{3+3(n-3)}\\
	&=\binom{n}{3}\bigg(\frac{1}{2}\bigg)^{n-4}\, x_n^2=o(1)x_n^2\,.
	\end{align*}

Plugging this estimate into the  inclusion-exclusion expansion  gives \eqref{eq: dis esti 1}:
	\begin{align*}
	\bbP_{1/2}\big[E^{(1)}\big]&=\sum_{m=1}^{n}\binom{n}{m}(-1)^{m-1}\bigg(\frac{1}{2}\bigg)^{\binom{m}{2}+m(n-m)}\\&=n\bigg(\frac{1}{2}\bigg)^{0+n-1}-\binom{n}{2}\bigg(\frac{1}{2}\bigg)^{1+2(n-2)}+o(1)x_n^2\\
	&=nx_n-n(n-1)x_n^2+o(1)x_n^2\,.
	\end{align*}

	\noindent\textbf{Step 2: Establishing \eqref{eq: dis esti 2}.}\\
	Recall the events $E_S$ and $A_S$ from Definition~\ref{def: E^k, E_S and A_S}. It is straightforward to verify that
	\[
	E^{(2)}=\bigcup_{\substack{ S\subset V(K_n), \\ \card{S}=2 }} \big(E_S\cap A_S\big)\,.
	\]
	Notice that if $S\neq S'$ and $S\cap S'\neq \emptyset$, then $E_S\cap E_{S'}=\emptyset$ . 
	Consequently, by the inclusion-exclusion formula we have
	\begin{align}\label{eq: dis esti 4}
	\bbP_{1/2}\big[E^{(2)}\big]
	=\sum_{S\subset V(K_n),\card{S}=2} \bbP_{1/2}\big[E_S\cap A_S\big]
	+\sum_{m=2}^{\lfloor \frac{n}{2}\rfloor} (-1)^{m-1} \sum_{\substack{S_1,S_2,\ldots,S_m\\ S_i\cap S_j=\emptyset, \card{S_i}=2}  } \bbP_{1/2}\Big[ \bigcap_{j=1}^{m}\big(E_{S_j}\cap A_{S_j}\big) \Big]\,,
	\end{align}
	where the  inner sum runs over unordered collections of pairwise-disjoint two-element subsets $S_1,\ldots,S_m$ of $V(K_n)$. The number of ways to choose such $m$ disjoint two-element sets $S_1,\ldots,S_m$ ($m\leq \lfloor \frac{n}{2}\rfloor$) from $V(K_n)$ is 
	\[
	\frac{1}{m!}\, \binom{n}{2}\, \binom{n-2}{2}\cdots \binom{n-2(m-1)}{2}
	=\frac{1}{m!}\frac{(2m)!}{2^m}\binom{n}{2m}\,. 
	\]
	Moreover for a fixed collection of pairwise-disjoint two-element sets $S_1,S_2,\ldots,S_m$, we have 
	\[
	\bbP_{1/2}\bigg[ \bigcap_{j=1}^{m}E_{S_j} \bigg]=\bigg(\frac{1}{2}\bigg)^{m}\times \bigg(\frac{1}{2}\bigg)^{4\binom{m}{2}}\times \bigg(\frac{1}{2}\bigg)^{2m(n-2m)}\,.
	\]
	The three factors correspond respectively to: the edges inside each $S_j$ being open; the edges between different $S_i$ and $S_j$ being closed; and the edges between the vertices in $\bigcup_{j=1}^{m} S_j$ and its complement being closed. 
	
We now bound the second summation in \eqref{eq: dis esti 4} as follows:
	\begin{align*}
		&\bigg|\sum_{m=2}^{\lfloor \frac{n}{2}\rfloor} (-1)^{m-1} \sum_{\substack{S_1,S_2,\ldots,S_m\\ S_i\cap S_j=\emptyset, \card{S_i}=2}  } \bbP_{1/2}\Big[ \bigcap_{j=1}^{m}\big(E_{S_j}\cap A_{S_j}\big) \Big]\bigg|\\
		&\leq \sum_{S_1\cap S_2=\emptyset, \card{S_1}=\card{S_2}=2}\bbP_{1/2}\Big[ (E_{S_1}\cap A_{S_1}) \cap (E_{S_2}\cap A_{S_2}) \Big] \tag*{ by Lemma~\ref{lem: Bonferroni}}\\
		&\leq \sum_{S_1\cap S_2=\emptyset, \card{S_1}=\card{S_2}=2}\bbP_{1/2}\Big[ E_{S_1}\cap E_{S_2} \Big]\\
		&=\frac{1}{2!}\frac{4!}{2^2}\binom{n}{4}\cdot \bigg(\frac{1}{2}\bigg)^{2}\times \bigg(\frac{1}{2}\bigg)^{4}\times \bigg(\frac{1}{2}\bigg)^{4(n-4)}=o(1)x_n^2\,.
	\end{align*}

	Plugging this bound into \eqref{eq: dis esti 4} we obtain that 
	\begin{align*}
	\bbP_{1/2}\big[E^{(2)}\big]
	&=\binom{n}{2}\bbP_{1/2}\big[E_{\{v_1,v_2\}}\cap A_{\{v_1,v_2\}}\big]+o(1)x_n^2\\
	&=\binom{n}{2}\bigg(\frac{1}{2}\bigg)^{1+2(n-2)}\times \bbP_{1/2}\big[A_{\{v_1,v_2\}} \mid E_{\{v_1,v_2\}}\big]+o(1)x_n^2\\
	&=n(n-1)x_n^2\times \bbP_{1/2}\big[A_{\{v_1,v_2\}} \mid E_{\{v_1,v_2\}}\big]+o(1)x_n^2\,.
	\end{align*}
	Observe that, conditioned on $E_{\{v_1,v_2\}}$, the complement of  $A_{\{v_1,v_2\}}$  has the same probability as the event $E^{(1)}$ in the complete graph induced by the remaining $n-2$ vertices. Hence applying estimate \eqref{eq: dis esti 1} to the case of $K_{n-2}$  we have
	\begin{align}\label{eq: A_e conditioned on E_e}
	\bbP_{1/2}\big[A_{\{v_1,v_2\}} \mid E_{\{v_1,v_2\}}\big]
	&=1-\big[(n-2)x_{n-2}-(n-2)(n-3)x_{n-2}^2+o(1)x_{n-2}^2\big] \nonumber\\
	&=1-O(nx_n)\,.
	\end{align}
Inserting this into the previous expression we obtain the desired estimate \eqref{eq: dis esti 2}:
	\begin{align*}
		\bbP_{1/2}\big[E^{(2)}\big]&=n(n-1)x_n^2\times \big[ 1-O(nx_n) \big]+o(1)x_n^2\\
		&=n(n-1)x_n^2+o(1)x_n^2\,.
	\end{align*}

\noindent\textbf{Step 3: Establishing \eqref{eq: dis esti 3}.}\\	
	We adapt an argument from the bottom of page~164 of \cite{Bollobas2001_2nd_edition}.  If a connected component of size $k$ exists, it contains a spanning tree on those $k$ vertices, and all edges joining this tree to the rest of the graph must be closed. Therefore for each $k\ge3$,
	\[
	\bbP_{1/2}\big[E^{(k)}\big]\leq\binom{n}{k}k^{k-2}\bigg(\frac{1}{2}\bigg)^{k-1}\times \bigg(\frac{1}{2}\bigg)^{k(n-k)}\,,
	\]
	where we use Cayley's formula which states that there are $k^{k-2}$ trees on $k$ labelled vertices. Consequently, 
	\[
	\sum_{k=3}^{\lfloor \frac{n}{2}\rfloor}\bbP_{1/2}\big[E^{(k)}\big]\leq \sum_{k=3}^{\lfloor \frac{n}{2}\rfloor} \binom{n}{k}k^{k-2}\bigg(\frac{1}{2}\bigg)^{k-1}\times \bigg(\frac{1}{2}\bigg)^{k(n-k)}\,.
	\]
	There exists a constant $C\in\bbR$ such that for  every integer $k$ with $4\leq k\leq \frac{n}{2}$,
	\[
	k\log k \leq \log 2 \cdot \big[k(n-k)-3(n-1)\big]+C\,.
	\] 
	 From this inequality we deduce,
	for  $4\leq k\leq \frac{n}{2}$,
	\[
	k^{k-2}\bigg(\frac{1}{2}\bigg)^{k(n-k)} \leq 	k^{k}\bigg(\frac{1}{2}\bigg)^{k(n-k)} 
	=\exp\bigg[ k\log k-\log 2\cdot k(n-k)  \bigg] \leq e^C\bigg(\frac{1}{2}\bigg)^{3(n-1)}\,.
	\]
	We now split the sum into the term $k=3$ and the rest. For $k=3$,
	\[
	\binom{n}{3}3^{3-2}\bigg(\frac{1}{2}\bigg)^{3-1}\times \bigg(\frac{1}{2}\bigg)^{3(n-3)}=\binom{n}{3}\frac{3}{4}\bigg(\frac{1}{2}\bigg)^{3(n-3)}\,.
	\]
	For $k\ge4$, using the bound above we have 
	\[
	\binom{n}{k}k^{k-2}\bigg(\frac{1}{2}\bigg)^{k-1}\times \bigg(\frac{1}{2}\bigg)^{k(n-k)}
	\leq \binom{n}{k}e^C\bigg(\frac{1}{2}\bigg)^{k-1+3(n-1)}\,.
	\]
	Then we can obtain \eqref{eq: dis esti 3}:
	\begin{align*}
		\sum_{k=3}^{\lfloor \frac{n}{2}\rfloor}\bbP_{1/2}\big[E^{(k)}\big]
		& \leq \binom{n}{3}\frac{3}{4}\times \bigg(\frac{1}{2}\bigg)^{3(n-3)}+2e^C \bigg(\frac{1}{2}\bigg)^{3(n-1)}\sum_{k=4}^{\lfloor \frac{n}{2}\rfloor} \binom{n}{k}\bigg(\frac{1}{2}\bigg)^{k}\\
		&\leq  \binom{n}{3}\frac{3}{4}\times \bigg(\frac{1}{2}\bigg)^{3(n-3)}+2e^C \bigg(\frac{1}{2}\bigg)^{3(n-1)}\bigg(1+\frac{1}{2}\bigg)^n\\
		&=o(1)x_n^2\,.  \qedhere
	\end{align*}

\end{proof}

\begin{lemma}\label{lem: disconnected two}
	Recall that $x_n=\big(\frac{1}{2}\big)^{n-1}$, and let  $e$ be an edge of $K_n$. Then we have the following estimate:  
	\be\label{eq: disconnected 5}
	\bbP_{1/2}\big[ \omega \notin \sC \mid \omega(e)=1\big] 
	=(n-2)x_n+4x_n^2+o(1)x_n^2
	\ee
\end{lemma}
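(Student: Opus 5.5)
The plan is to mirror the proof of Lemma~\ref{lem: disconnected one}, now under the conditional measure $\bbP_{1/2}[\,\cdot\mid\omega(e)=1]$. Write $e=\{u,v\}$. Under this conditioning all edges other than $e$ remain i.i.d.\ Bernoulli$(1/2)$, so every event probability is computed as before, with the edge $e$ treated as deterministically open. I would again decompose $\{\omega\notin\sC\}=\bigcup_k E^{(k)}$ into disjoint events and establish three estimates:
\begin{align*}
\bbP_{1/2}\big[E^{(1)}\mid \omega(e)=1\big]&=(n-2)x_n-(n-2)(n-3)x_n^2+o(1)x_n^2,\\
\bbP_{1/2}\big[E^{(2)}\mid \omega(e)=1\big]&=(n-2)(n-3)x_n^2+4x_n^2+o(1)x_n^2,\\
\sum_{k\ge3}\bbP_{1/2}\big[E^{(k)}\mid \omega(e)=1\big]&=o(1)x_n^2.
\end{align*}
Summing these gives \eqref{eq: disconnected 5}: the $(n-2)(n-3)x_n^2$ terms cancel and $4x_n^2$ survives.

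For $E^{(1)}$, neither $u$ nor $v$ can be isolated, so $E^{(1)}=\bigcup_{w\neq u,v}E_w$. Inclusion--exclusion runs over subsets of these $n-2$ vertices. An $m$-set $W$ is simultaneously isolated with probability $(1/2)^{\binom m2+m(n-m)}$, because none of the required closed edges is $e$. The $m=1$ term gives $(n-2)x_n$. The $m=2$ term gives $\binom{n-2}{2}(1/2)^{2n-3}=(n-2)(n-3)x_n^2$. Lemma~\ref{lem: Bonferroni} with $k=3$ bounds the remainder by $\binom{n-2}{3}(1/2)^{3n-6}=o(1)x_n^2$.

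For $E^{(2)}$, a two-element component $S$ cannot contain exactly one of $u,v$, since $e$ would leave $S$. So $S$ is either $\{u,v\}$ or a pair disjoint from $\{u,v\}$.
\begin{itemize}
\item For $S=\{u,v\}$, the edge inside $S$ is already open, and the $2(n-2)$ boundary edges must be closed. This gives $(1/2)^{2(n-2)}=4x_n^2$.
\item Each of the $\binom{n-2}{2}$ disjoint pairs contributes $(1/2)^{1+2(n-2)}$. Together these give $(n-2)(n-3)x_n^2$.
\end{itemize}
As in Step~2 of Lemma~\ref{lem: disconnected one}, I would then restore the factor $\bbP[A_S\mid E_S]=1-O(nx_n)$, using \eqref{eq: dis esti 1} on $K_{n-2}$. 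The Bonferroni correction from pairs of disjoint two-sets is $O(n^4)(1/2)^{4n-O(1)}=o(1)x_n^2$.

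For $k\ge3$, I would use the Cayley-formula union bound from Step~3. The only change is that a component containing $e$ may use $e$ as a tree edge, which saves one factor of $1/2$; this costs at most a factor of $2$ in each term. Hence the bound $\binom nk k^{k-2}(1/2)^{k-2}(1/2)^{k(n-k)}$ still sums to $o(1)x_n^2$ by the same estimate.

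The main obstacle is bookkeeping rather than any new idea. The delicate step is $E^{(2)}$: the exact constant $4$ comes from the special pair $\{u,v\}$, so the point to handle carefully is that each relevant error term is genuinely $o(x_n^2)$ and not merely $O(x_n^2)$.
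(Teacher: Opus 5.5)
Your proposal is correct and follows essentially the same route as the paper: isolated vertices via inclusion--exclusion over the $n-2$ vertices off $e$, two-vertex components with the special pair $\{u,v\}$ supplying the $4x_n^2$, and a Cayley union bound for $k\ge 3$ that costs at most a factor $2$. One point you gloss over is that for pairs $S$ disjoint from $e$ the factor you need is $\bbP_{1/2}[A_S\mid E_S,\omega(e)=1]$ rather than $\bbP_{1/2}[A_S\mid E_S]$. The paper bridges the two with FKG, but a direct union bound over the $n-4$ possible isolated vertices already gives $1-O(nx_n)$.
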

\begin{proof}
	The proof of this lemma is pretty similar to Lemma~\ref{lem: disconnected one}. 
 Without loss of generality we assume $e=(v_{n-1},v_n)$. 
	
	\noindent\textbf{Step 1: Establishing the analogue of \eqref{eq: dis esti 1}.}\\
	Conditioned on $\omega(e)=1$, we have $\bbP_{1/2}\big[E_{v_{n-1}} \mid \omega(e)=1\big]=\bbP_{1/2}\big[E_{v_n} \mid \omega(e)=1\big]=0$. 
	Using  \eqref{eq: disc as union} together with the inclusion--exclusion formula, we obtain
	\begin{align*}
	\bbP_{1/2}\big[E^{(1)} \mid \omega(e)=1\big]
	&=\bbP_{1/2}\bigg[ \bigcup_{j=1}^{n-2}E_{v_j}\mid \omega(e)=1 \bigg]\\
	&=\sum_{m=1}^{n-2}\binom{n-2}{m}(-1)^{m-1}\bbP_{1/2}\bigg[\bigcap_{j=1}^{m}E_{v_j}  \mid \omega(e)=1\bigg]\\
	&=\sum_{m=1}^{n-2}\binom{n-2}{m}(-1)^{m-1}\bigg(\frac{1}{2}\bigg)^{\binom{m}{2}+m(n-m)}\,.
	\end{align*}
By Lemma~\ref{lem: Bonferroni}, the contribution of the terms with $m\ge 3$ is negligible:
\[
\bigg|\sum_{m=3}^{n-2}\binom{n-2}{m}(-1)^{m-1}\bigg(\frac{1}{2}\bigg)^{\binom{m}{2}+m(n-m)}  \bigg|
\leq \binom{n-2}{3}\bigg(\frac{1}{2}\bigg)^{3+3(n-3)}=o(1)x_n^2\,.
\]
Consequently, we obtain the desired analogue of \eqref{eq: dis esti 1}:
\begin{align}\label{eq:ana-dist esti 1}
\bbP_{1/2}\big[E^{(1)} \mid \omega(e)=1\big]
&=(n-2)\bigg(\frac{1}{2}\bigg)^{0+n-1}-\binom{n-2}{2}\bigg(\frac{1}{2}\bigg)^{1+2(n-2)}+o(1)x_n^2\nonumber\\
&=(n-2)x_n-(n-2)(n-3)x_n^2+o(1)x_n^2\,.
\end{align}

\noindent\textbf{Step 2: Establishing the analogue of \eqref{eq: dis esti 2}.}\\
Recall from the proof of Lemma~\ref{lem: disconnected one} that
\[
E^{(2)}=\bigcup_{\substack{ S\subset V(K_n), \\ \card{S}=2 }}\big(E_S\cap A_S\big)\,,
\]
and for two distinct $2$-element subsets $S$ and $S'$ with $S\cap S'\neq\emptyset$, we have 
$
E_S\cap E_{S'}=\emptyset\,.
$
Moreover for any $S$ such that $\card{S\cap\{v_{n-1},v_n\}}=1$, we have $\bbP_{1/2}\big[E_S\mid \omega(e)=1\big]=0$. To simplify notation, write $E_{e}$ and $A_e$ for  $E_{\{v_{n-1},v_n\}}$ and $A_{\{v_{n-1},v_n\}}$ respectively.

 Using the inclusion-exclusion formula and the observation above,  we obtain 
\begin{align}\label{eq: step 2 analogue two}
\bbP_{1/2}\big[E^{(2)} \mid \omega(e)=1\big]
&=\bbP_{1/2}[E_e\cap A_e\mid \omega(e)=1]+\sum_{ \substack{  S\subset V(K_n)\setminus\{v_{n-1} , v_n \}\\\card{S}=2  }}\bbP_{1/2}\big[E_S\cap A_S\mid \omega(e)=1\big]  \nonumber\\
&+\sum_{m=2}^{\lfloor \frac{n}{2}\rfloor} (-1)^{m-1} \sum_{\substack{S_1,S_2,\ldots,S_m\\ S_i\cap S_j=\emptyset, \card{S_i}=2}  } \bbP_{1/2}\Big[ \bigcap_{j=1}^{m}\big(E_{S_j}\cap A_{S_j}\big) \mid \omega(e)=1 \Big]\,.
\end{align}  
Similar to the last summation in \eqref{eq: dis esti 4},  the last summation in \eqref{eq: step 2 analogue two} is negligible and its proof proceeds as follows. First applying Lemma~\ref{lem: Bonferroni} to bound it, 
\begin{align*}
& \bigg| \sum_{m=2}^{\lfloor \frac{n}{2}\rfloor} (-1)^{m-1} \sum_{\substack{S_1,S_2,\ldots,S_m\\ S_i\cap S_j=\emptyset, \card{S_i}=2}  } \bbP_{1/2}\Big[ \bigcap_{j=1}^{m}\big(E_{S_j}\cap A_{S_j}\big) \mid \omega(e)=1 \Big]\bigg|\\ 
&\leq \sum_{S_1\cap S_2=\emptyset, \card{S_1}=\card{S_2}=2}\bbP_{1/2}\Big[ (E_{S_1}\cap A_{S_1}) \cap (E_{S_2}\cap A_{S_2}) \mid \omega(e)=1 \Big] \\
&\leq \sum_{S_1\cap S_2=\emptyset, \card{S_1}=\card{S_2}=2}\bbP_{1/2}\Big[ E_{S_1}\cap E_{S_2} \mid \omega(e)=1 \Big]\,.
\end{align*}
For disjoint $2$-element subsets $S_1, S_2$, we distinguish three cases:
\begin{itemize}
	\item[(i)] If $S_1,S_2\subset V(K_n)\setminus\{v_{n-1},v_n\}$, then 
	\[
	\bbP_{1/2}\Big[  E_{S_1}\cap E_{S_2} \mid \omega(e)=1 \Big]
	=\bbP_{1/2}\Big[  E_{S_1}\cap E_{S_2} \Big]\,.
	\]
	\item[(ii)] If $\card{S_j\cap \{v_{n-1},v_n\} }=1$ for some  $j\in\{1,2\}$, then $\bbP_{1/2}\Big[  E_{S_1}\cap E_{S_2} \mid \omega(e)=1 \Big]=0$.
	\item[(iii)] If $S_j=\{v_{n-1},v_n\}$ for some  $j\in\{1,2\}$, then   conditioning on $\omega(e)=1$ doubles the probability:
	\[
	\bbP_{1/2}\Big[  E_{S_1}\cap E_{S_2} \mid \omega(e)=1 \Big]
	=2\bbP_{1/2}\Big[  E_{S_1}\cap E_{S_2} \Big]\,.
	\]
\end{itemize}
Consequently, 
\[
\sum_{S_1\cap S_2=\emptyset, \card{S_1}=\card{S_2}=2}\bbP_{1/2}\Big[ E_{S_1}\cap E_{S_2} \mid \omega(e)=1 \Big] \\
\leq 2\sum_{S_1\cap S_2=\emptyset, \card{S_1}=\card{S_2}=2}\bbP_{1/2}\Big[ E_{S_1}\cap E_{S_2} \Big]\\
=o(1)x_n^2\,,
\]
where the last estimate comes from  the proof of Lemma~\ref{lem: disconnected one}.
Therefore we obtain a good upper bound of the last summation in \eqref{eq: step 2 analogue two}:
\begin{align*}
& \bigg| \sum_{m=2}^{\lfloor \frac{n}{2}\rfloor} (-1)^{m-1} \sum_{\substack{S_1,S_2,\ldots,S_m\\ S_i\cap S_j=\emptyset, \card{S_i}=2}  } \bbP_{1/2}\Big[ \bigcap_{j=1}^{m}\big(E_{S_j}\cap A_{S_j}\big) \mid \omega(e)=1 \Big]\bigg|\\ 
&\leq \sum_{S_1\cap S_2=\emptyset, \card{S_1}=\card{S_2}=2}\bbP_{1/2}\Big[ E_{S_1}\cap E_{S_2} \mid \omega(e)=1 \Big] \\
&=o(1)x_n^2\,.
\end{align*}

Now evaluate the first term in \eqref{eq: step 2 analogue two}.  Since $E_e\subset \{ \omega(e)=1 \}$, we have
\begin{align*}
\bbP_{1/2}[E_e\cap A_e\mid \omega(e)=1]
&=\bbP_{1/2}\big[E_e\mid \omega(e)=1 \big]\cdot \bbP_{1/2} \big[A_e\mid E_e\big]\\
&=\bigg(\frac{1}{2}\bigg)^{2(n-2)}\times \bbP_{1/2}\big[A_e\mid E_e\big]\stackrel{\eqref{eq: A_e conditioned on E_e}}{=}4x_n^2\big[1-O(nx_n)\big]\,.
\end{align*}
Finally we turn to the second term on the right-hand side of \eqref{eq: step 2 analogue two}. Fix an arbitrary subset  $S\subset V(K_n)\setminus\{ v_{n-1},v_n \}$ with $\card{S}=2$. Observe that $A_S$ and $\{\omega(e)=1\}$ are increasing events that depend  only on the status of  edges not incident to $S$. Applying the FKG inequality to these two events under  the conditional measure $\bbP_{1/2}\big[\cdot \mid E_S\big]$ yields 
\[
\bbP_{1/2}\big[A_S,\omega(e)=1\mid E_S\big]\geq \bbP_{1/2}\big[A_S\mid E_S\big]\cdot 
\bbP_{1/2}\big[\omega(e)=1\mid E_S\big]\,.
\]
Consequently,
\begin{align*}
1&\geq \bbP_{1/2}\big[A_S\mid E_S,\omega(e)=1\big]=\frac{\bbP_{1/2}\big[ A_S,\omega(e)=1\mid E_S \big]}{ \bbP_{1/2}\big[\omega(e)=1\mid E_S\big] }\\
&\geq \bbP_{1/2}\big[A_S\mid E_S\big]\stackrel{\eqref{eq: A_e conditioned on E_e}}{=}1-O(nx_n)\,.
\end{align*}
Therefore
\begin{align*}
\bbP_{1/2}\big[E_S\cap A_S\mid \omega(e)=1\big]
&=\bbP_{1/2}\big[E_S\mid \omega(e)=1\big]\cdot \bbP_{1/2}\big[A_S\mid E_S,\omega(e)=1\big]\\
&=\bbP_{1/2}\big[E_S\big]\cdot \big[1-O(nx_n)\big]\\
&=\bigg(\frac{1}{2}\bigg)^{1+2(n-2)}\big[1-O(nx_n)\big]=2x_n^2\big[1-O(nx_n)\big]\,,
\end{align*}
where in the second equality we use the fact that $E_S$ and $\{\omega(e)=1\}$ are independent.

Inserting these estimates into \eqref{eq: step 2 analogue two} yields the desired analogue of \eqref{eq: dis esti 2}:
\be\label{eq:ana-dis esti 2}
\bbP_{1/2}\big[E^{(2)} \mid \omega(e)=1\big]=4x_n^2+\binom{n-2}{2}2x_n^2+o(1)x_n^2=\big[(n-2)(n-3)+4\big]x_n^2+o(1)x_n^2\,.
\ee

\noindent\textbf{Step 3: Establishing the analogue of \eqref{eq: dis esti 3}.}\\
Conditioned on $\omega(e)=1$, a disconnected component of size $k$ (with $3 \le k \le \frac{n}{2}$) can arise in two mutually exclusive ways: either the two endpoints of $e$ belong to the same component of size $k$, or there exists some component of size $k$ that does not contain either endpoint of $e$. Paralleling the derivation of \eqref{eq: dis esti 3}, we obtain the bound
\[
\sum_{k=3}^{\lfloor \frac{n}{2} \rfloor }\bbP_{1/2}\big[E^{(k)} \mid \omega(e)=1\big]
\leq \sum_{k=3}^{\lfloor \frac{n}{2} \rfloor } \left[\binom{n-2}{k-2}k^{k-2}\bigg(\frac{1}{2}\bigg)^{k-2+k(n-k)}+\binom{n-2}{k}k^{k-2}\bigg(\frac{1}{2}\bigg)^{k-1+k(n-k)}\right]\,.
\]

Repeating the asymptotic analysis that led to \eqref{eq: dis esti 3} shows that each term in the sum is of order $o(1)x_n^2$, and therefore
\be\label{eq:ana-dis esti 3}
\sum_{k=3}^{\lfloor \frac{n}{2} \rfloor }\bbP_{1/2}\big[E^{(k)} \mid \omega(e)=1\big]
=o(1)x_n^2\,.
\ee
Combining the three analogues \eqref{eq:ana-dist esti 1}, \eqref{eq:ana-dis esti 2} and \eqref{eq:ana-dis esti 3} we obtain \eqref{eq: disconnected 5}. 
\end{proof}

Our goal now is to estimate the probability
\[
\bbP_{1/2}\big[ \omega \notin \sC \mid \omega(e)=\omega(f)=1\big]\,.
\]
We first treat separately the case where the two edges $e$ and $f$ do not share a vertex.

\begin{lemma}\label{lem: disconnected three}
	Recall that $x_n=\big(\frac{1}{2}\big)^{n-1}$, and let  $e,f$ be two non-adjacent edges  of $K_n$ (i.e., they share no common vertex). Then we have the following estimate:  
	\be\label{eq: disconnected 6}
	\bbP_{1/2}\big[ \omega \notin \sC \mid \omega(e)=\omega(f)=1\big] 
	=(n-4)x_n+8x_n^2+o(1)x_n^2
	\ee
\end{lemma}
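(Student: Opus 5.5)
The plan is to mirror the three-step structure of the proof of Lemma~\ref{lem: disconnected two}. Without loss of generality let $e=(v_{n-3},v_{n-2})$ and $f=(v_{n-1},v_n)$. We decompose $\{\omega\notin\sC\}$ as the disjoint union of the events $E^{(k)}$ via \eqref{eq: disc as union} and estimate $\bbP_{1/2}[E^{(1)}\mid\omega(e)=\omega(f)=1]$, $\bbP_{1/2}[E^{(2)}\mid\omega(e)=\omega(f)=1]$ and the tail $\sum_{k\ge3}$ separately. We aim to show they equal
\[
(n-4)x_n-(n-4)(n-5)x_n^2+o(1)x_n^2,\qquad \big[(n-4)(n-5)+8\big]x_n^2+o(1)x_n^2,\qquad o(1)x_n^2,
\]
respectively. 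Their sum is $(n-4)x_n+8x_n^2+o(1)x_n^2$, which is the claim \eqref{eq: disconnected 6}.

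\textbf{Step 1.} Given $\omega(e)=\omega(f)=1$, none of the four endpoints can be isolated. So $E^{(1)}$ is the union of $E_{v_j}$ for $j\le n-4$. Each intersection $\bigcap_{j=1}^m E_{v_j}$ involves only edges incident to these $m$ vertices, none of which is $e$ or $f$. Hence inclusion--exclusion gives $\sum_m\binom{n-4}{m}(-1)^{m-1}(1/2)^{\binom m2+m(n-m)}$. Lemma~\ref{lem: Bonferroni} with $k=3$ bounds the terms with $m\ge3$ by $o(1)x_n^2$, and the terms $m=1,2$ give the displayed expression.

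\textbf{Step 2.} We write $E^{(2)}=\bigcup_{|S|=2}(E_S\cap A_S)$. Under the conditioning, $\bbP[E_S\mid\cdot]=0$ whenever $S$ separates an endpoint of $e$ or of $f$ from its partner. This happens when $|S\cap e|=1$ or $|S\cap f|=1$, including $S$ with one endpoint of $e$ and one of $f$. The surviving $S$ are $S=e$, $S=f$, and the $\binom{n-4}{2}$ pairs avoiding both edges.
\begin{itemize}
\item For $S=e$, $\bbP[E_e\mid\omega(e)=\omega(f)=1]=(1/2)^{2(n-2)}=4x_n^2$, because $E_e$ is independent of $\omega(f)$. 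The conditional probability of $A_e$ given $E_e$ and $\omega(f)=1$ is $1-O(nx_n)$, by the FKG argument from Lemma~\ref{lem: disconnected two} combined with \eqref{eq: A_e conditioned on E_e}. The same holds for $S=f$, so these two sets contribute $8x_n^2$ in total.
\item For $S$ avoiding both edges, $E_S$ is independent of $\{\omega(e)=\omega(f)=1\}$. We apply FKG to the increasing events $A_S$ and $\{\omega(e)=\omega(f)=1\}$ under $\bbP[\cdot\mid E_S]$; both depend only on edges outside $S$. This gives $\bbP[A_S\mid E_S,\omega(e)=\omega(f)=1]=1-O(nx_n)$, so each such $S$ contributes $2x_n^2(1-O(nx_n))$.
\item The higher inclusion--exclusion terms are bounded via Lemma~\ref{lem: Bonferroni} by $\sum\bbP[E_{S_1}\cap E_{S_2}\mid\omega(e)=\omega(f)=1]$. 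Conditioning at most multiplies each such probability by $4$ (the case $\{S_1,S_2\}=\{e,f\}$), so the sum is at most $4$ times the unconditional one, hence $o(1)x_n^2$.
\end{itemize}

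\textbf{Step 3.} For $3\le k\le n/2$ we use the union bound over spanning trees on a $k$-set with closed boundary. We split according to whether the component contains none, one, or both of $e,f$. Containing an edge saves one factor $1/2$ in the tree, giving at most a factor $4$ gain overall. Partial intersection with $e$ or $f$ forces zero probability. The resulting sums are each at most a constant times those in Step 3 of Lemma~\ref{lem: disconnected one}, hence $o(1)x_n^2$.

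The main obstacle is the bookkeeping in Step 2: enumerating exactly which $S$ survive and verifying the monotonicity and independence hypotheses for the FKG step with two conditioned edges. Steps 1 and 3 are direct adaptations of the earlier proofs.
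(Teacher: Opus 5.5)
Your proposal is correct and follows essentially the same route as the paper's proof. You use the same three-step split into $E^{(1)}$, $E^{(2)}$ and the tail $\sum_{k\ge3}E^{(k)}$, the same surviving sets in Step~2 ($S=e$, $S=f$, and $S$ disjoint from both edges) with contributions $4x_n^2$, $4x_n^2$ and $2x_n^2$, the FKG lower bound for $A_S$, and Bonferroni and union-bound control of the error terms. Your explicit FKG justification of $\bbP_{1/2}[A_e\mid E_e,\omega(f)=1]=1-O(nx_n)$ fills in a step the paper only asserts by analogy. Your factor-$4$ bound in Step~3 reaches the same estimate $\bbP_{1/2}[E_S\mid\omega(e)=\omega(f)=1]\le 4\,\bbP_{1/2}[E_S]$ that the paper obtains directly from dividing by $\bbP_{1/2}[\omega(e)=\omega(f)=1]=1/4$.
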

\begin{proof}

	The argument follows the same lines as the proofs of Lemmas~\ref{lem: disconnected one} and \ref{lem: disconnected two}. Without loss of generality we assume $n\ge 4$ and set $e=(v_{n-1},v_n)$ and $f=(v_{n-3},v_{n-2})$; these two edges are disjoint. We establish the analogues of \eqref{eq: dis esti 1}, \eqref{eq: dis esti 2} and \eqref{eq: dis esti 3} under the conditioning $\omega(e)=\omega(f)=1$. As the estimates closely follow those in the proofs of the two lemmas, we omit the routine details.
	
	\noindent\textbf{Step 1: Isolated vertices (analogue of \eqref{eq: dis esti 1}).}\\
	Conditioned on $\omega(e)=\omega(f)=1$, a vertex can be isolated only if it belongs to ${v_1,\dots,v_{n-4}}$. Hence
	\begin{align*}
	\bbP_{1/2}\big[E^{(1)} \mid \omega(e)=\omega(f)=1\big]
	&=\bbP_{1/2}\bigg[ \bigcup_{j=1}^{n-4}E_{v_j}\mid \omega(e)=\omega(f)=1 \bigg]\\
	&=\sum_{m=1}^{n-4}\binom{n-4}{m}(-1)^{m-1}\bbP_{1/2}\bigg[\bigcap_{j=1}^{m}E_{v_j}  \mid \omega(e)=1,\omega(f)=1\bigg]\\
	&=\sum_{m=1}^{n-4}\binom{n-4}{m}(-1)^{m-1}\bigg(\frac{1}{2}\bigg)^{\binom{m}{2}+m(n-m)}\\
	&=(n-4)x_n-(n-4)(n-5)x_n^2+o(1)x_n^2\,.
	\end{align*}
	
	\noindent\textbf{Step 2: Components of size $2$ (analogue of \eqref{eq: dis esti 2}).}\\
	Write $E_e,A_e$ for $E_{{v_{n-1},v_n}},A_{{v_{n-1},v_n}}$ and $E_f,A_f$ for $E_{{v_{n-3},v_{n-2}}},A_{{v_{n-3},v_{n-2}}}$. Analogously to \eqref{eq: step 2 analogue two} we have
	\begin{align}\label{eq: step 2 analogue three}
	\bbP_{1/2}\big[E^{(2)} \mid \omega(e)=\omega(f)=1\big]
	&=\bbP_{1/2}[E_{e}\cap A_e\mid \omega(e)=\omega(f)=1]+\bbP_{1/2}[E_{f}\cap A_f\mid \omega(e)=\omega(f)=1] \nonumber\\
	&\quad +\sum_{ \substack{ S\subset V(K_n),\card{S}=2 \\ v_j\notin S,j=n-3,\ldots,n}}\bbP_{1/2}\big[E_S\cap A_S\mid \omega(e)=\omega(f)=1\big] \nonumber\\
	&\quad +\sum_{m=2}^{\lfloor \frac{n}{2}\rfloor} (-1)^{m-1} \sum_{\substack{S_1,S_2,\ldots,S_m\\ S_i\cap S_j=\emptyset, \card{S_i}=2}  } \bbP_{1/2}\Big[ \bigcap_{j=1}^{m}\big(E_{S_j}\cap A_{S_j}\big) \mid \omega(e)=\omega(f)=1 \Big]\,.
	\end{align}  
	The same analysis as in Lemma~\ref{lem: disconnected two} yields the following three facts:
		\begin{itemize}
		\item[(i)] The last sum in  \eqref{eq: step 2 analogue three} is  $o(1)x_n^2$:

		\item[(ii)] 
	
		$
		\bbP_{1/2}[E_{e}\cap A_e\mid \omega(e)=\omega(f)=1]=\bbP_{1/2}[E_{f}\cap A_f\mid \omega(e)=\omega(f)=1]=4x_n^2\big[1-O(nx_n)\big]\,.
		$
		
		\item[(iii)] 
	For $S\subset V(K_n)\setminus \{v_{n-3},v_{n-2},v_{n-1},v_n  \}$ with $\card{S}=2$, we have
		\[
		\bbP_{1/2}\big[E_S\cap A_S\mid \omega(e)=\omega(f)=1\big]=2x_n^2\big[1-O(nx_n)\big]\,.
		\]
	\end{itemize}
Consequently we obtain the analogue of \eqref{eq: dis esti 2}:
	\[
\bbP_{1/2}\big[E^{(2)} \mid \omega(e)=\omega(f)=1\big]= \big[(n-4)(n-5)+8\big]x_n^2+o(1)x_n^2\,.
\]

\noindent\textbf{Step 3: Components of size $k\ge 3$ (analogue of \eqref{eq: dis esti 3}).}\\
If $E^{(k)}$ occurs, there exists a set $S$ with $|S|=k$ for which $E_S$ holds. By the union bound,
\[
\sum_{k=3}^{\lfloor \frac{n}{2} \rfloor }\bbP_{1/2}\big[E^{(k)} \mid \omega(e)=\omega(f)=1\big]
\leq \sum_{k=3}^{\lfloor \frac{n}{2} \rfloor }\,\sum_{\substack{  S\subset V(K_n)\\ \card{S}=k  } }\bbP_{1/2}\big[E_S\mid \omega(e)=\omega(f)=1\big]\,.
\]
The subsequent case-by-case analysis enables us to bound the terms in the above summation.
\begin{itemize}
	\item If $\card{S\cap \{v_{n-1},v_n  \}}=1$ or $\card{S\cap \{v_{n-3},v_{n-2}  \}}=1$, then  $\bbP_{1/2}\big[E_S\mid \omega(e)=\omega(f)=1\big]=0$;
	
	\item If $\card{S\cap \{v_{n-1},v_n  \}}\in\{0,2\}$ and $\card{S\cap \{v_{n-3},v_{n-2}  \}}\in\{0,2\}$, then 
		\[
	\bbP_{1/2}\big[E_S\mid \omega(e)=\omega(f)=1\big]\leq 4\bbP_{1/2}\big[E_S\big]\leq 4k^{k-2}\bigg(\frac{1}{2}\bigg)^{k-3+k(n-k)}\,,
	\]
	because a spanning tree of the component on $S$ can be chosen in at  most $k^{k-2}$ ways, at least $k-3$ of its edges are distinct from $e$ and $f$ and  must be open, and all  edges connecting $S$ to its complement must be closed.

\end{itemize}

	Therefore we have the corresponding analogue of \eqref{eq: dis esti 3}:
\[
\sum_{k=3}^{\lfloor \frac{n}{2} \rfloor }\bbP_{1/2}\big[E^{(k)} \mid \omega(e)=\omega(f)=1\big]
\leq 4\sum_{k=3}^{\lfloor \frac{n}{2} \rfloor } \binom{n}{k}k^{k-2}\bigg(\frac{1}{2}\bigg)^{k-3+k(n-k)}=o(1)x_n^2\,,
\]
where the last equality follows from the estimates used in the proof of \eqref{eq: dis esti 3}.

Combining these three analogues  yields precisely \eqref{eq: disconnected 6}.
\end{proof}

The situation where $e$ and $f$ share a vertex is simpler, as we do not need to determine the $x_n^2$ coefficient with the precision required in Lemma~\ref{lem: disconnected three}.

\begin{lemma}\label{lem: disconnected four}
	Recall that $x_n=\big(\frac{1}{2}\big)^{n-1}$ and let $e,f$ be two adjacent edges  of $K_n$. Then we have the following estimates:  
	\be\label{eq: disconnected 7}
	\bbP_{1/2}\big[ \omega \notin \sC\mid \omega(e)=\omega(f)=1\big] 
	=(n-3)x_n+o(1)x_n\,.
	\ee
\end{lemma}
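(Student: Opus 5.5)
We follow the same three-step scheme as in the proofs of Lemmas~\ref{lem: disconnected one}--\ref{lem: disconnected three}, but since only an error of $o(1)x_n$ is required, each step is much cruder. Without loss of generality let $e=(v_{n-2},v_{n-1})$ and $f=(v_{n-1},v_n)$, so that under the conditioning $\omega(e)=\omega(f)=1$ the three vertices $v_{n-2},v_{n-1},v_n$ lie in a common component. By \eqref{eq: disc as union} and the disjointness of the $E^{(k)}$ it suffices to show
\[
\bbP_{1/2}\big[E^{(1)}\mid \omega(e)=\omega(f)=1\big]=(n-3)x_n+o(1)x_n,\qquad \sum_{k=2}^{\lfloor n/2\rfloor}\bbP_{1/2}\big[E^{(k)}\mid \omega(e)=\omega(f)=1\big]=o(1)x_n .
\]

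\textbf{Isolated vertices.} Conditioned on $\omega(e)=\omega(f)=1$, only $v_1,\dots,v_{n-3}$ can be isolated. Moreover, each event $\bigcap_{j\in J}E_{v_j}$ with $J\subset\{1,\dots,n-3\}$ depends only on edges incident to $\{v_j\}_{j\in J}$, hence is independent of $\omega(e),\omega(f)$, and has probability $(1/2)^{\binom{m}{2}+m(n-m)}$ for $|J|=m$. Applying inclusion--exclusion and Lemma~\ref{lem: Bonferroni} with $k=2$ gives
\[
\Big|\bbP_{1/2}\big[E^{(1)}\mid\cdot\big]-(n-3)x_n\Big|\le\binom{n-3}{2}\Big(\tfrac12\Big)^{2n-3}=O(n^2x_n^2)=o(1)x_n .
\]

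\textbf{Components of size $k\ge2$.} We use the union bound over sets $S$ with $|S|=k$, exactly as in Step~3 of Lemma~\ref{lem: disconnected three}. If $S$ separates some endpoints of $e$ or of $f$, then $\bbP_{1/2}[E_S\mid\cdot]=0$. Otherwise, a spanning tree on $S$ has at most two of its edges among $\{e,f\}$, so
\[
\bbP_{1/2}\big[E_S\mid \omega(e)=\omega(f)=1\big]\le 4k^{k-2}\Big(\tfrac12\Big)^{k-1+k(n-k)} .
\]
For $k=2$ this gives at most $4\binom n2(1/2)^{1+2(n-2)}=O(n^2x_n^2)=o(1)x_n$. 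For $3\le k\le n/2$ the sum is $O(n^3 x_n^3)+o(1)x_n^2$ by the bounds already used in proving \eqref{eq: dis esti 3}, which is $o(1)x_n$.

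\textbf{Main obstacle.} The only step needing care is checking that the constants in the range $k\ge3$ survive the extra factor $4$. This is immediate from the bound $k\log k\le\log2\,[k(n-k)-3(n-1)]+C$ established in the proof of Lemma~\ref{lem: disconnected one}, so the estimate is considerably easier than Lemma~\ref{lem: disconnected three}. Combining the two displays yields \eqref{eq: disconnected 7}.
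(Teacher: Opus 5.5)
Your proposal is correct and follows essentially the same three-step argument as the paper: inclusion--exclusion for isolated vertices among $v_1,\dots,v_{n-3}$, a union bound for components of size $2$, and the spanning-tree counting bound from Lemma~\ref{lem: disconnected three} for components of size $k\ge3$. The only differences are cosmetic: you make the Bonferroni error in the first step explicit, and you fold the size-$2$ case into the general union bound with the factor $4$.
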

\begin{proof}
	The proof is similar to that of Lemma~\ref{lem: disconnected one} and \ref{lem: disconnected two} , but simpler because we only require an estimate up to order $x_n$.
	Without loss of generality, assume $e=(v_{n-1},v_n)$ and $f=(v_{n-2},v_n)$; these two edges share the common vertex $v_n$. 
	
	\noindent\textbf{Step 1: Isolated vertices.}\\
	Conditioned on $\omega(e)=\omega(f)=1$, an isolated vertex can only lie in ${v_1,\dots,v_{n-3}}$. Hence
	\begin{align*}
	\bbP_{1/2}\big[E^{(1)} \mid \omega(e)=\omega(f)=1 \big]&=\bbP_{1/2}\bigg[\bigcup_{j=1}^{n-3}E_{v_j} \mid \omega(e)=\omega(f)=1\bigg]\\
	&=\sum_{m=1}^{n-3}\binom{n-3}{m}(-1)^{m-1}\bigg(\frac{1}{2}\bigg)^{\binom{m}{2}+m(n-m)}\\
	&=(n-3)x_n+o(1)x_n\,.
	\end{align*}
	
	\noindent\textbf{Step 2: Components of size $2$.}\\
	Conditioned on $\omega(e)=\omega(f)=1$, if $E^{(2)}$ occurs, then there must exist a $2$-element subset $S\subset V(K_n)\setminus\{v_{n-2},v_{n-1},v_n\}$ such that $E_S$ holds. (The two vertices of $S$ form an isolated component of size $2$.) Applying the union bound,
	\[
	\bbP_{1/2}\big[E^{(2)} \mid \omega(e)=\omega(f)=1 \big]\leq \binom{n-3}{2}\bigg(\frac{1}{2}\bigg)^{1+2(n-2)}=o(1)x_n\,.
	\] 
	
	\noindent\textbf{Step 3: Components of size $k\ge 3$.}
	Exactly the same reasoning as in the proof of Lemma~\ref{lem: disconnected three} shows that
	\[
	\sum_{k=3}^{\lfloor \frac{n}{2} \rfloor }\bbP_{1/2}\big[E^{(k)} \mid \omega(e)=\omega(f)=1 \big]=o(1)x_n\,.
	\]
	
	Combining the above estimates we have \eqref{eq: disconnected 7}. 
\end{proof}
\begin{remark*}
	Of course, one could refine the argument as in Lemma~\ref{lem: disconnected three} to obtain the more precise estimate
	\[
		\bbP_{1/2}\big[ \omega \notin \sC \mid \omega(e)=\omega(f)=1\big] 
	=(n-3)x_n+o(1)x_n^2\,.
	\]
However, the coarser estimate in \eqref{eq: disconnected 7} is already sufficient for the proof of Theorem~\ref{thm: p-NC for UC}.
\end{remark*}

\subsection{Proof of Theorem~\ref{thm: p-NC for UC}}

\begin{proof}[Proof of Theorem~\ref{thm: p-NC for UC}]
The inequality \eqref{eq: def p-NC} for  adjacent edges $e,f$ was already established in Subsection~\ref{sec: main idea of thm UC} using Lemma~\ref{lem: disconnected one}, \ref{lem: disconnected two} and \ref{lem: disconnected four}. We now treat the case where $e$ and $f$ are non-adjacent.

 By Lemma~\ref{lem: disconnected one} and \ref{lem: disconnected three}  we have
\begin{align*}
&\bbP_{1/2}\big[ \omega(e)=\omega(f)=1,\omega \in \sC \big] \cdot
\bbP_{1/2}\big[ \omega \in \sC \big]\\
&=\frac{1}{4}\left( 1-  \bbP_{1/2}\big[ \omega \notin \sC  \mid \omega(e)=\omega(f)=1\big] \right)\cdot \left(1-\bbP_{1/2}\big[ \omega \notin \sC \big] \right)\\
&=\frac{1}{4}\big( 1- (n-4)x_n-8x_n^2+o(1)x_n^2 \big)\cdot \big(1-nx_n+o(1)x_n^2  \big)\\
&=\frac{1}{4}\Big( 1-(2n-4)x_n+(n^2-4n-8)x_n^2+o(1)x_n^2 \Big)\,.	
\end{align*}
On the other hand,  Lemma~\ref{lem: disconnected two} yields
\begin{align*}
&\bbP_{1/2}\big[ \omega(e)=1,\omega \in \sC  \big] \cdot
\bbP_{1/2}\big[\omega(f)=1,\omega \in \sC \big]\\
&=\frac{1}{4}\bigg(1 -(n-2)x_n-4x_n^2+o(1)x_n^2 \bigg)^2\\
&=\frac{1}{4}\bigg(1 -(2n-4)x_n+(n^2-4n-4)x_n^2+o(1)x_n^2 \bigg)\,.
\end{align*}
Comparing the two expansions, we see that for all sufficiently large $n$,
\begin{align*}
&\bbP_{1/2}\big[ \omega(e)=\omega(f)=1,\omega \in \sC   \big] \cdot
\bbP_{1/2}\big[ \omega \in \sC \big]\\
&\leq \bbP_{1/2}\big[ \omega(e)=1,\omega \in \sC   \big] \cdot
\bbP_{1/2}\big[\omega(f)=1, \omega \in \sC \big]\,.	
\end{align*}
By Lemma~\ref{lem: reformulation of p-NC for UC}, this inequality is exactly the required p-NC property for non-adjacent edges $e,f$.
\end{proof}

%%%%%%%%%%%%%%%%%%%%%    Uniform connected subgraphs       %%%%%%%%%%%%%%%%%%%%%
%%%%%%%%%%%%%%%%%%%%%%%%%%%%%%%%%%%%%%%%%%%%%%%%%%%%%%%%%%%%%%%%%%%%%%%%%%%%%%%%% 

%%%%%%%%%%%%%%%%%%%%%%%%%%%%%%%%%%%%%%%%%%%%%%%%%%%%%%%%%%%%%%%%%%%%%%%%%%%%%%%%%
%%%%%%%%%%%%%%%%%%%%%    Uniform $k$-component forest       %%%%%%%%%%%%%%%%%%%%%
%%%%%%%%%%%%%%%%%%%%%%%%%%%%%%%%%%%%%%%%%%%%%%%%%%%%%%%%%%%%%%%%%%%%%%%%%%%%%%%%%
\section{Uniform $k$-component forests of complete graphs}\label{sec: p-NC for k-forests}

To prove \cref{thm: p-NC for kUF}, we compare $\kUF\big[\omega(e)=\omega(f)=1\big]$ with $\kUF\big[\omega(e)=1\big]^2$. A straightforward first-moment argument yields 
\[
\kUF\big[\omega(e)=1\big]=\frac{n-k}{\binom{n}{2}}\,,
\]
while a second-moment analysis relates the quantity $\kUF\big[\omega(e)=\omega(f)=1\big]$ for adjacent edges $e,f$ to that for a non-adjacent pair. We first present these two observations about the first- and second-moment methods. 

Next, we review the formula for counting the number of $k$-forests given by  Liu and Chow \cite{Liu_Chow1981} and apply this formula to count the number of $k$-forests of $K_n$ and the number of $k$-forests of $K_n$ that contain a pair of adjacent edges $e,f$. This will allow us to compute the desired probability $\kUF\big[\omega(e)=\omega(f)=1\big]$.

Finally, we can compare the probabilities $\kUF\big[\omega(e)=\omega(f)=1\big]$ and $\kUF\big[\omega(e)=1\big]^2$ for sufficiently large $n$, and this comparison will yield the desired conclusion of \cref{thm: p-NC for kUF}.

\subsection{Two observations using first and second moment arguments}

\begin{lemma}\label{lem: first moment}
	For the $\kUF$ measure on the complete graph $K_n$ and an edge $e\in E(K_n)$, we have 
	\[
	\kUF\big[\omega(e)=1\big]=\frac{n-k}{\binom{n}{2}}=\frac{2(n-k)}{n(n-1)}\,.
	\]
\end{lemma}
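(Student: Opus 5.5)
The plan is a double-counting (first moment) argument combined with the edge-transitivity of $K_n$. Every $k$-forest on $n$ vertices has exactly $n-k$ edges, since each tree component on $m$ vertices has $m-1$ edges and the $k$ components together cover all $n$ vertices. Let $\omega$ be sampled from $\kUF$ on $\kF(K_n)$. Summing the indicators of the open edges and taking expectations gives
\[
\sum_{e\in E(K_n)} \kUF\big[\omega(e)=1\big] \;=\; \bbE_{\kUF}\big[\card{\eta(\omega)}\big] \;=\; n-k\,.
\]
This holds because $\card{\eta(\omega)}=n-k$ deterministically on $\kF(K_n)$.

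Next we use symmetry. Any permutation $\sigma$ of $V(K_n)$ is a graph automorphism of $K_n$, so it maps $k$-forests bijectively onto $k$-forests and therefore preserves the uniform measure $\kUF$. The symmetric group acts transitively on unordered pairs of vertices, that is, on the edges of $K_n$. Hence $\kUF[\omega(e)=1]$ does not depend on $e$. Since there are $\binom{n}{2}$ edges, dividing the identity above by $\binom{n}{2}$ gives $\kUF[\omega(e)=1]=\frac{n-k}{\binom n2}=\frac{2(n-k)}{n(n-1)}$.

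There is no real obstacle. The only points to record are that $\kF(K_n)\neq\emptyset$ for $1\le k\le n$, so the measure is well defined, and the edge count $n-k$ for a $k$-forest.
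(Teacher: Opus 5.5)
Your proposal is correct and follows essentially the same argument as the paper: you sum the edge indicators, use that every $k$-forest has exactly $n-k$ edges, and then apply edge-transitivity of $K_n$ to divide by $\binom{n}{2}$. Your added justifications (the edge count of a $k$-forest, and permutation invariance of the uniform measure) are exactly what the paper leaves implicit.
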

\begin{proof}
	Notice that for each $\omega\in \kF(K_n)$, there are exactly $n-k$ edges in $\eta(\omega)$, i.e., 
	\[
	\sum_{f \in E(K_n)} \mathbf{1}_{\{\omega(f)=1\}}=n-k\,.
	\]
	Taking the expectation and using the edge-transitivity property of $K_n$, we obtain 
	\[
	\binom{n}{2}\kUF\big[\omega(e)=1\big]=n-k\,,
	\]
	whence
	\[
	\kUF\big[\omega(e)=1\big]=\frac{2(n-k)}{n(n-1)}\,.  \qedhere
	\]
\end{proof}

\begin{lemma}\label{lem: second moment}
	Let $\mathrm{Aut}(K_n)$ denote the automorphism group of the complete graph $K_n$, and recall that $\kF(K_n)$ denotes the set of $k$-forests of $K_n$. Suppose $\bP$ is an $\mathrm{Aut}(K_n)$-invariant probability measure such that that $\bP\big[(V,\eta(\omega))\in \kF(K_n)\big]=1$, where $\omega$ is a random element of $\{0,1\}^{E(K_n)}$ with law $\bP$. For $n\ge3$ define $p_1\coloneq \bP\big[\omega(e)=\omega(f)=1\big]$, where $e$ and $f$ are a pair of adjacent edges in $K_n$. For $n\ge 4$, define $p_2\coloneq \bP\big[ \omega(e)=\omega(e')=1\big]$, where $e$ and $e'$ are a pair of non-adjacent edges in $K_n$. Let $\deg(x;\omega)$ denote the degree of the vertex $x$ in the random subgraph $(V,\eta(\omega))$. Then we have
	\be\label{eq: p_1}
	p_1=\frac{\bE\Big[\sum_{x\in V(K_n)}{\deg(x;\omega)^2}\Big]-2(n-k)}{n(n-1)(n-2)}\quad \mbox{ for }n\ge3\,,
	\ee
	and
	\be\label{eq: p_2}
	p_2=\frac{4\bigg((n-k)(n-k+1)-\bE\Big[\sum_{x\in V(K_n)}{\deg(x;\omega)^2}\Big]\bigg)}{n(n-1)(n-2)(n-3)}\quad \mbox{ for }n\ge4\,.
	\ee
	In particular, for $n\ge4$,
	\be\label{eq: p_1 and p_2}
	p_2=\frac{4\big[(n-k)(n-k-1)-n(n-1)(n-2)p_1\big]}{n(n-1)(n-2)(n-3)}\,.
	\ee
\end{lemma}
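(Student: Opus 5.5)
We count ordered pairs of distinct open edges in two ways. For each configuration $\omega$ in the support of $\bP$, the open edge set has exactly $n-k$ elements, since a $k$-forest on $n$ vertices has $n-k$ edges. Ordered pairs $(e,f)$ of distinct open edges split into adjacent pairs (sharing a vertex) and non-adjacent pairs.

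\textbf{Adjacent pairs.} These are counted by summing over the common vertex $x$. Let
\[
A(\omega)=\sum_{x}\deg(x;\omega)\big(\deg(x;\omega)-1\big).
\]
Then $A(\omega)$ counts ordered pairs of distinct open edges sharing a vertex. Two distinct edges share at most one vertex, so there is no double counting. Since $\sum_x \deg(x;\omega)=2(n-k)$, we get $A(\omega)=\sum_x\deg(x;\omega)^2-2(n-k)$.

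Now take expectations. By $\mathrm{Aut}(K_n)$-invariance, every ordered adjacent pair of edges has the same probability $p_1$ of being jointly open. The number of ordered adjacent pairs in $K_n$ is $n(n-1)(n-2)$: choose the center $x$, then two distinct other endpoints in order. Hence $\bE[A]=n(n-1)(n-2)p_1$, which gives \eqref{eq: p_1}. The only thing to verify is that invariance makes $\bP[\omega(e)=\omega(f)=1]$ constant over adjacent pairs. This holds because $\mathrm{Aut}(K_n)=S_n$ acts transitively on pairs of edges sharing exactly one vertex.

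\textbf{Non-adjacent pairs.} The total number of ordered pairs of distinct open edges is deterministic and equals $(n-k)(n-k-1)$. So the number of ordered non-adjacent open pairs is $(n-k)(n-k-1)-A(\omega)$. In $K_n$ the number of ordered non-adjacent edge pairs is $\binom n2\binom{n-2}{2}=n(n-1)(n-2)(n-3)/4$. Again $S_n$ acts transitively on these, so they all share the probability $p_2$. Taking expectations gives
\[
\frac{n(n-1)(n-2)(n-3)}{4}\,p_2=(n-k)(n-k-1)-n(n-1)(n-2)p_1,
\]
which is exactly \eqref{eq: p_1 and p_2}. Substituting \eqref{eq: p_1} and using
\[
(n-k)(n-k-1)+2(n-k)=(n-k)(n-k+1)
\]
yields \eqref{eq: p_2}.

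None of these steps is a real obstacle. The only points needing care are the counting conventions (ordered versus unordered pairs) and the transitivity claims, which are immediate for $S_n$ acting on $K_n$. The restrictions $n\ge3$ and $n\ge4$ simply ensure that the relevant pairs of edges exist.
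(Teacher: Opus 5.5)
Your proof is correct and is essentially the paper's argument. The adjacent case is the same double count via $\sum_x \deg(x;\omega)(\deg(x;\omega)-1)$ over the $n(n-1)(n-2)$ ordered adjacent pairs, and your global complement $(n-k)(n-k-1)-A(\omega)$ is just the paper's per-edge count $(n-k)-[\deg(e^-;\omega)+\deg(e^+;\omega)-1]$ summed over open edges $e$. The only difference is order: you derive the $p_1$--$p_2$ relation first and deduce the formula for $p_2$ from it, while the paper does the reverse.
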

\begin{proof}
	First, consider ordered pairs $\langle e, f\rangle$ such that $e,f$ have a common vertex (denoted $e\sim f$). Observe that the number of such ordered pairs is 
	\[
	\bigg|\Big\{ \langle e, f\rangle\in E(K_n)\times E(K_n)\colon e\sim f \Big\}\bigg|=n(n-1)(n-2)\,.
	\]
	Next, for each $\omega\in \kF(K_n)$, we have 
	\[
	\sum_{x\in V(K_n)}\deg(x;\omega)=2\big|\{e\in E(K_n)\colon \omega(e)=1 \}\big|=2(n-k)\,,
	\]
	since the sum of degrees in a graph equals twice the number of edges, and $\omega$ corresponds to a $k$-forest with $n-k$ edges.
	
	Now, note that
	\begin{align*}
	\sum_{\substack{\langle e, f\rangle\in E(K_n)\times E(K_n) \\ e\sim f}}\mathbf{1}_{\{ \omega(e)=\omega(f)=1 \}}&=\sum_{x\in V(K_n)}\deg(x;\omega)\cdot \big[\deg(x;\omega)-1\big]\\
	&=\sum_{x\in V(K_n)}\deg(x;\omega)^2-2(n-k)\,.
	\end{align*}
	Taking the expectation on both sides and using the $\mathrm{Aut}(K_n)$-invariance of $\bP$ (which implies all adjacent edge pairs have the same probability $p_1$), we obtain \eqref{eq: p_1}:
	\[
	n(n-1)(n-2)p_1=\bE\Big[\sum_{x\in V(K_n)}{\deg(x;\omega)^2}\Big]-2(n-k)\,.
	\]

	We now turn to non-adjacent edges. 
	Observe that the number of ordered pairs  $\langle e,e'\rangle$ in $K_n$  such that $e\not\sim e'$ (i.e., $e$ and $e'$ are disjoint) is $6\binom{n}{4}$. Moreover for an edge $e$ with $\omega(e)=1$, let $e^-,e^+$ denote its two endpoints of $e$. The number of edges $e'\in\eta(\omega)$ (i.e., $\omega(e')=1$) that are disjoint from $e$ is
	\[
	(n-k)-\big[\deg(e^-;\omega)+\deg(e^+;\omega)-1\big]\,.
	\] 
	This follows because there are $n-k$ edges total in $\eta(\omega)$, and we subtract the edges incident to $e^-$ or $e^+$ (counting $e$ itself twice, so we add $1$ to correct for over-counting.)
	
	Summing over edges $e\in\eta(\omega)$, we have
	\begin{align*}
	\sum_{\substack{\langle e,e'\rangle\in E(K_n)\times E(K_n)\\ e\not\sim e'}}\mathbf{1}_{\{\omega(e)=\omega(f)=1\}}&=\sum_{e\in\eta(\omega)}\Big( (n-k)-\big[\deg(e^-;\omega)+\deg(e^+;\omega)-1\big] \Big)\\
	&=(n-k+1)\card{\eta(\omega)}-\sum_{e\in\eta(\omega)}\big[\deg(e^-;\omega)+\deg(e^+;\omega)\big]\\
	&=(n-k)(n-k+1)-\sum_{x\in V(K_n)}\deg(x;\omega)^2\,,
	\end{align*}
	where the last equality uses $\card{\eta(\omega)}=n-k$ and the fact that $\sum_{e\in\eta(\omega)}\big[\deg(e^-;\omega)+\deg(e^+;\omega)\big]=\sum_{x\in V(K_n)}\deg(x;\omega)^2$ (each vertex's degree is counted once for each edge incident to it).
	Taking the expectation of both sides and using the $\mathrm{Aut}(K_n)$-invariance of $\bP$ (which implies all non-adjacent edge pairs have the same probability  $p_2$), we obtain
	\[
	6\binom{n}{4}p_2=(n-k)(n-k+1)-\bE\Big[\sum_{x\in V(K_n)}\deg(x;\omega)^2\Big]\,.
	\]
	Since $6\binom{n}{4}=\frac{n(n-1)(n-2)(n-3)}{4}$, substituting this into the equation and rearranging gives \eqref{eq: p_2}.
	
Finally, substituting the expression for $\bE\Big[\sum_{x\in V(K_n)}\deg(x;\omega)^2\Big]$  from \eqref{eq: p_1} into \eqref{eq: p_2} and simplifying yields \eqref{eq: p_1 and p_2}. 
\end{proof}

\begin{corollary}\label{cor: 3.3}
	With notation as in \cref{lem: second moment} and $p_0\coloneq \bP\big[e\in\omega\big]=\frac{2(n-k)}{n(n-1)}$ (see \cref{lem: first moment}), we have
	\[
	p_1\leq p_0^2\quad \mbox{ if and only if }\quad p_1\leq \frac{4(n-k)^2}{n^2(n-1)^2}\,,
	\]
	and
	\[
	p_2\leq p_0^2\quad \mbox{ if and only if }\quad p_1\geq \frac{(n-k)\big[3n^2-(5+4k)n+6k\big]}{n^2(n-1)^2(n-2)}\,.
	\]
\end{corollary}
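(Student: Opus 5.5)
The first equivalence is just a restatement, and the second follows by substituting the linear relation \eqref{eq: p_1 and p_2} from \cref{lem: second moment} into $p_2\le p_0^2$ and solving for $p_1$. No probabilistic input is needed beyond \cref{lem: first moment} and \cref{lem: second moment}.

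\emph{First equivalence.} By \cref{lem: first moment} we have $p_0=\frac{2(n-k)}{n(n-1)}$, so $p_0^2=\frac{4(n-k)^2}{n^2(n-1)^2}$. The statement $p_1\le p_0^2$ is therefore literally the same as $p_1\le \frac{4(n-k)^2}{n^2(n-1)^2}$.

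\emph{Second equivalence.} Here we take $n\ge4$, so that $p_2$ is defined. Insert \eqref{eq: p_1 and p_2} into $p_2\le p_0^2$ and multiply both sides by the positive quantity $\frac{n(n-1)(n-2)(n-3)}{4}$. This gives the equivalent inequality
\[
(n-k)(n-k-1)-n(n-1)(n-2)p_1\le \frac{(n-k)^2(n-2)(n-3)}{n(n-1)}.
\]
This is linear in $p_1$ with negative coefficient $-n(n-1)(n-2)$. Isolating $p_1$ reverses the direction, so the inequality is equivalent to
\[
p_1\ge \frac{(n-k)(n-k-1)\,n(n-1)-(n-k)^2(n-2)(n-3)}{n^2(n-1)^2(n-2)}.
\]

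It remains to factor $(n-k)$ out of the numerator and expand what is left:
\[
(n-k-1)(n^2-n)-(n-k)(n^2-5n+6).
\]
The first product is $n^3-(k+2)n^2+(k+1)n$. The second is $n^3-(k+5)n^2+(5k+6)n-6k$. Their difference is $3n^2-(4k+5)n+6k$, which is exactly the bracket in the statement.

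All divisions are by positive quantities, so every step is reversible and both equivalences hold.
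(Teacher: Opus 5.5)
Your proposal is correct and matches the paper's intended argument. The paper gives no separate proof of this corollary; it treats it as an immediate consequence of \cref{lem: first moment} and the linear relation \eqref{eq: p_1 and p_2}, and your substitution, the sign reversal, and the expansion of the numerator to $(n-k)\big[3n^2-(4k+5)n+6k\big]$ all check out.
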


\subsection{The number of $k$-forests of complete graphs}

\subsubsection{Review of the $k$-forest formula due to Liu and Chow\cite{Liu_Chow1981}  }

Our main tool for proving \cref{thm: p-NC for kUF} is a counting scheme for the number of $k$-component forests due to Liu and Chow \cite{Liu_Chow1981}; see also Myrvold \cite{Myrvold1992} for a simplified proof of their result. We now briefly recall the $k$-forest formula from \cite{Liu_Chow1981}.

Suppose $G=(V,E)$ is a finite, connected graph, and we allow multiple parallel edges between pairs of vertices. The graphs are viewed as labelled graphs, so that  parallel edges can be distinguished if they exist. Let  $n=|V|$ and $V=\{v_1,\ldots,v_n\}$. The \textbf{adjacency matrix} $A=A(G)$ of $G$ is an $n\times n$ matrix whose entry $a_{ij}$ is the number of edges between vertex $v_i$ and $v_j$.  The \textbf{Kirchhoff matrix} is defined as $M=M(G)\coloneq D(G)-A(G)$, where $D(G)$ is the $n\times n$ diagonal matrix such that  $D_{ii}$ equals the degree of vertices $v_i$ for each $i\in\{1,\ldots,n\}$. Thus $M_{ii}=\sum_{j\neq i}a_{ij}$. In particular, loops in $G$ may contribute to $A(G)$ but not to $M(G)$. Following \cite{Liu_Chow1981}, we let $M(i)$ be the $i$-th principal submatrix of $M$, that is, the submatrix obtained from $M$ by deleting both the $i$-th row and $i$-th column. Similarly, we let $M(i_1,\ldots,i_m)$ denote the principal submatrix obtained from $M$ by deleting both the $i_j$-th row and the $i_j$-th column for each $j\in\{1,\ldots,m\}$. We adopt the same convention as in \cite{Liu_Chow1981}:
\be\label{eq: convention 1 for Kirchhoff matrix}
M(i_1,\ldots,i_n)=1\,,\qquad \mbox{ if all }i_j \mbox{ are distinct}
\ee
and 
\be\label{eq: convention 2 for Kirchhoff matrix}
M(i_1,\ldots,i_m)=0\,,\qquad \mbox{ if some }i_j=i_k \mbox{ for some } j\neq k,
\ee
where $0$ and $1$ are simply scalars (real numbers).

Fix an arbitrary vertex, say $v_*\in V(G)$, and let $n^*=V(G)\setminus \{v_*\}$ denote the set of  the remaining vertices. For a subset $S=\{v_{i_1},\ldots,v_{i_m}\}\subset n^*$, write $M(S)=M(i_1,\ldots,i_m)$ for the principal submatrix of $M$ obtained by deleting both the rows and columns corresponding to the vertices in $S$. Denote by $\nu_r(S)$ the total number of matchings (in $G$) each consisting of $r$ edges whose endpoints all lie in $S$.  Moreover we adopt the convention that $\nu_0(S)=1$. 

Recall that $\kF=\kF(G)$ denotes the set of $k$-forests of $G$, and we write $\card{\kF}$ for its cardinality. In particular, $\card{\oneF(G)}$ is just the number of spanning trees of $G$, and Kirchhoff's  famous matrix-tree theorem  \cite{Kirchhoff1847} states that
\be\label{eq: Kirchhoff matrix-tree thm}
\card{\oneF(G)}=\det M(1)=\cdots=\det M(n)\,.
\ee
Liu and Chow \cite[Equation (20) on p.~662]{Liu_Chow1981} provided the following formula for counting the number of $k$-forests of $G$: for $k\leq n$,
\be\label{eq: Liu and Chow's formula for k-forests}
\card{\kF(G)}=\sum_{r=0}^{k-1}(-1)^r\sum_{\substack{ S\subset n^*\\
		|S|=k+r-1}}\nu_r(S)\card{\oneF(G_S)}\,,
\ee
where $G_S$ is the graph obtained from $G$ by identifying all the vertices in $S\cup \{v_*\}$. 
In particular, applying \eqref{eq: Liu and Chow's formula for k-forests} to complete graphs gives \footnote{This is Equation (22) in \cite{Liu_Chow1981}, but with a minor correction: the upper limit in the sum should reflect that if $r+k > n$, then there is no subset $S \subset n^*$ with $|S| = k+r-1$, and thus we take $\nu_r(S)=0$ for such $r$.} 
\be\label{eq: number of k-forest of K_n}
\card{\kF(K_n)}=n^{n-k-1}\cdot (n-1)!\sum_{r=0}^{(k-1)\wedge(n-k)}\Big(-\frac{1}{2n}\Big)^r\frac{k+r}{r!(k-r-1)!(n-k-r)!}\,.
\ee
For example, 
\be\label{eq: number of 2-forest of K_n}
\card{\twoF(K_n)}=\frac{1}{2}(n-1)(n+6)n^{n-4}  \,,\quad \card{\threeF(K_n)}=\frac{n^{n-6}(n-1)(n-2)(n^2+13n+60)}{8}\,.
\ee

\subsubsection{Some preliminary lemmas}

To count the number of $k$-forests of $G$ that contain some fixed edges $e,f$, we instead count the number of $k$-forests in a new graph obtained from $G$ by contracting the edges $e$ and $f$. We can then apply Liu and Chow's formula in this new graph to obtain the desired result. First, we define what we mean by ``contraction".

\begin{definition}\label{def: contraction along an edge}
	Suppose $G=(V,E)$ is  a finite graph and $e\in E$ is an edge. The \textbf{contraction} $G/e$ is the graph obtained from $G$ by identifying  the endpoints of $e$ (and removing the resulting self-loops). Similarly, if $e,f$ are two edges of $G$, then $G/\{e,f\}$ is the graph obtained from $G$ by identifying all the endpoints of $e,f$. See Fig.~\ref{fig: K_n contraction along an edge} for examples of $K_n/e$ and $K_n/\{e,f\}$. 
\end{definition}

%%%%%%%%%%%%%%%%%%%%%%%%%%%%%%%%%%%%%%%%%%%%%%%%%%%%%%%%%%%%
%%%%%%%%%%%%%%%%%%%%%  Figure 1 %%%%%%%%%%%%%%%%%%%%%%%%%%%
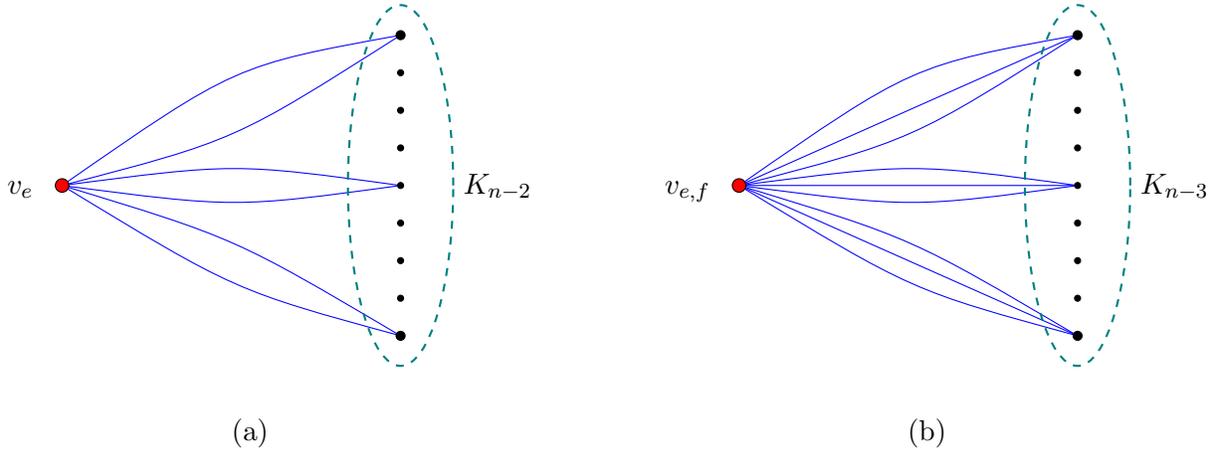
\begin{figure}[H]
	\centering
	\begin{tikzpicture}[scale=1, text height=1.5ex,text depth=.25ex] 
	%\draw [help lines] (0,0) grid (8,6);

	%%%% connect $v_e$ and the rest %%%%
	\draw[color=blue,thin] (0.5,3)..controls (2.8,3.6)..(5,5);
	\draw[color=blue,thin] (0.5,3)..controls (2.8,4.6)..(5,5);
	\draw[color=blue,thin] (0.5,3)..controls (2.8,3.3)..(5,3);
	\draw[color=blue,thin] (0.5,3)..controls (2.8,2.7)..(5,3);
	\draw[color=blue,thin] (0.5,3)..controls (2.8,2.3)..(5,1);
	\draw[color=blue,thin] (0.5,3)..controls (2.8,1.6)..(5,1);
	
	%%%%  draw $v_e$ %%%%%
	\draw[fill=red] (0.5,3) circle [radius=0.09];
	\node[left] at (0.25,3) {$v_e$};

	%%%%% draw $K_{n-2}$ %%%%%%%
	\draw[fill=black] (5,1) circle [radius=0.06];
	
	\draw[fill=black] (5,3.5) circle [radius=0.04];
	\draw[fill=black] (5,4) circle [radius=0.04];
	\draw[fill=black] (5,4.5) circle [radius=0.04];
	
	\draw[fill=black] (5,3) circle [radius=0.04];
	
	\draw[fill=black] (5,1.5) circle [radius=0.04];
	\draw[fill=black] (5,2) circle [radius=0.04];
	\draw[fill=black] (5,2.5) circle [radius=0.04];
	
	\draw[fill=black] (5,5) circle [radius=0.06];

	\draw [color=teal,thick,dashed] (5,3) ellipse (0.7 and 2.4);
	\node[right]at (5.7,3) {$K_{n-2}$};

	\node[below] at (3,0) {(a)};

	\begin{scope}[shift={(9,0)}]
	%%%% connect $v_e$ and the rest %%%%
	\draw[color=blue,thin] (0.5,3)..controls (2.8,3.6)..(5,5);
	\draw[color=blue,thin] (0.5,3)..controls (2.8,4.6)..(5,5);
	\draw[color=blue,thin] (0.5,3)..controls (2.8,3.3)..(5,3);
	\draw[color=blue,thin] (0.5,3)..controls (2.8,2.7)..(5,3);
	\draw[color=blue,thin] (0.5,3)..controls (2.8,2.3)..(5,1);
	\draw[color=blue,thin] (0.5,3)..controls (2.8,1.6)..(5,1);

	\draw[color=blue,thin](0.5,3)--(5,5);

	\draw[color=blue,thin](0.5,3)--(5,3);

	\draw[color=blue,thin](0.5,3)--(5,1);
	
	%%%%  draw $v_e$ %%%%%
	\draw[fill=red] (0.5,3) circle [radius=0.09];
	\node[left] at (0.25,3) {$v_{e,f}$};

	%%%%% draw $K_{n-2}$ %%%%%%%
	\draw[fill=black] (5,1) circle [radius=0.06];
	
	\draw[fill=black] (5,3.5) circle [radius=0.04];
	\draw[fill=black] (5,4) circle [radius=0.04];
	\draw[fill=black] (5,4.5) circle [radius=0.04];
	
	\draw[fill=black] (5,3) circle [radius=0.04];
	
	\draw[fill=black] (5,1.5) circle [radius=0.04];
	\draw[fill=black] (5,2) circle [radius=0.04];
	\draw[fill=black] (5,2.5) circle [radius=0.04];
	
	\draw[fill=black] (5,5) circle [radius=0.06];

	\draw [color=teal,thick,dashed] (5,3) ellipse (0.7 and 2.4);
	\node[right]at (5.7,3) {$K_{n-3}$};

	\node[below] at (3,0) {(b)};
	
	\end{scope}
	\end{tikzpicture}
	\caption{(a) The graph $G=K_n/e$; (b) The graph $G=K_n/\{e,f\}$ for a pair of adjacent edges $e,f$}
	\label{fig: K_n contraction along an edge}
\end{figure}
%%%%%%%%%%%%%%%%%%%%%  Figure 1 %%%%%%%%%%%%%%%%%%%%%%%%%%%
%%%%%%%%%%%%%%%%%%%%%%%%%%%%%%%%%%%%%%%%%%%%%%%%%%%%%%%%%%%%

\begin{lemma}\label{lem: bijection between forest on G containing e and G e contraction}
	Suppose $G=(V,E)$ is a finite connected graph and $e,f\in E$ are not self-loops. For $k\leq |V|$, we have 
	\be\label{eq: contract one edge}
	\card{\big\{ F\in\kF(G)\colon e\in F  \big\}} =\card{\kF(G/e)}\,,
	\ee 
	and 
	\be\label{eq: contract two edges}
	\card{\big\{ F\in\kF(G)\colon e,f\in F  \big\}} =\card{\kF(G/\{e,f\})}\,.
	\ee
\end{lemma}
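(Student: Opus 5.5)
The plan is to build an explicit bijection $\Phi\colon F\mapsto F/e$ from $\{F\in\kF(G)\colon e\in F\}$ onto $\kF(G/e)$. Here $G/e$ is treated as a labelled multigraph: every edge of $G$ other than $e$ survives as an edge of $G/e$, keeping its label. Edges parallel to $e$ become self-loops and are removed, as in Definition~\ref{def: contraction along an edge}. Thus $E(G/e)$ is identified with the set of edges of $G$ that are not parallel to $e$ and are different from $e$.

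For a forest $F$ containing $e$, I set $\Phi(F)=\eta(F)\setminus\{e\}$, regarded as an edge set in $G/e$. This is legitimate because $F$ is acyclic and contains $e$, so it cannot contain any edge parallel to $e$ (that would close a $2$-cycle). I then check three properties.

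\begin{enumerate}
\item[(i)] $\Phi(F)$ is acyclic in $G/e$. A cycle in $G/e$ using edges of $F\setminus\{e\}$ either avoids the merged vertex $v_e$, in which case it is already a cycle in $F$. Or it passes through $v_e$, in which case it lifts to a cycle in $G$, possibly after inserting $e$ to join the two endpoints. Either way this contradicts acyclicity of $F$. No loops arise, since loops were removed and $F$ contains no edge parallel to $e$.
\item[(ii)] $\Phi(F)$ has exactly $k$ components. Contracting an edge inside a component merges two vertices of the same component, so the partition into components is preserved with the same count. Equivalently, the vertex count drops by $1$, the edge count drops by $1$, and acyclicity is preserved, so the number of components, which equals vertices minus edges, is unchanged.
\item[(iii)] $\Phi$ is invertible. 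The inverse sends a $k$-forest $F'$ of $G/e$ to $F'\cup\{e\}$ in $G$. Acyclicity lifts: a cycle in $F'\cup\{e\}$ either uses $e$, and contracting $e$ turns it into a cycle of $F'$ through $v_e$ (of length at least $2$ unless it is a $2$-cycle, which would require an edge parallel to $e$, impossible since such edges are absent from $G/e$). Or it does not use $e$, and then its image is a closed walk in $F'$ that contains a cycle. The component count is again preserved by the vertices-minus-edges computation.
\end{enumerate}

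This establishes \eqref{eq: contract one edge}. For \eqref{eq: contract two edges}, I iterate the bijection. First, $\{F\in\kF(G)\colon e,f\in F\}$ corresponds under $\Phi$ to $\{F'\in\kF(G/e)\colon f\in F'\}$, provided $f$ is not a loop in $G/e$, i.e.\ $f$ is not parallel to $e$. If $f$ is parallel to $e$, the left-hand side is empty, and I will need to interpret $G/\{e,f\}$ consistently in that case. For the paper's application ($e,f$ distinct adjacent edges of $K_n$) this case does not occur. Applying the one-edge statement to $G/e$ and $f$ then gives the count $\card{\kF((G/e)/f)}$, and $(G/e)/f=G/\{e,f\}$ because both identify all endpoints of $e$ and $f$ and delete the resulting loops.

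The only delicate point is the bookkeeping of parallel edges and loops, which is where the labelled-multigraph convention matters. I would state it explicitly, including the degenerate case in which $e$ and $f$ are parallel, where both sides are $0$ if $G/\{e,f\}$ is read as identifying all endpoints and no $k$-forest of $G$ contains both edges (one would add the hypothesis that $e,f$ are not parallel).
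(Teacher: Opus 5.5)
Your argument is correct and is the same as the paper's: contraction along $e$ gives a bijection between the $k$-forests of $G$ containing $e$ and $\kF(G/e)$, and the two-edge identity follows by applying this twice. You also supply the acyclicity and component-count checks that the paper leaves implicit.

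One correction to your closing remark, which says both sides are $0$ when $e,f$ are parallel. In that case the left side is $0$, but $G/\{e,f\}=G/e$, so the right side is $\card{\kF(G/e)}$, which is positive whenever $k\le |V|-1$. The identity therefore genuinely fails for parallel edges (the paper's statement overlooks this), and your added hypothesis that $e,f$ are not parallel is necessary rather than cosmetic.
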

\begin{proof}
	Equality \eqref{eq: contract one edge} follows from the observation that  contraction along $e$ induces a bijection between the set $\big\{ F\in\kF(G)\colon e\in F  \big\}$ and the set $\kF(G/e)$. Indeed, in the case $k=1$, this corresponds to the spatial Markov property of the uniform spanning tree (UST)  conditioned on the event that $e$ is contained in the UST (see, for instance, \cite[Section 4.2, page~106]{LP2016}).
	
Equality \eqref{eq: contract two edges} can be established either via a similar bijection or by applying \eqref{eq: contract one edge} twice. 
\end{proof}

The following matrices $A_m$ and $B_m$ arise frequently in applications of  Liu and Chow's formula; we record an observation about their determinants  for later use. 
\begin{lemma}\label{lem: two elementary determinants}
	Let $1\leq m\leq n-2$. Let $A_m$ denote a $m\times m$ matrix and $B_m$  a $(m+1)\times (m+1)$ matrix with the following  forms:
		\[
	A_{m}=
	\begin{pNiceMatrix}[nullify-dots,xdots/line-style=dotted]
	n-1 &  -1 & \Cdots &\Cdots & -1 \\
	-1 & \Ddots &  \Ddots & & \Vdots \\
	\Vdots &  \Ddots & & \\
	\Vdots &  & & & -1 \\
	-1 & \Cdots &  & -1 & n-1
	\end{pNiceMatrix}\,,\,
	B_m=
	\begin{pNiceMatrix}[margin,hvlines]
	\Block{3-3}<\LARGE>{A_{m}} & & & -(n-m) \\
	& \hspace*{1cm} & & \Vdots \\
	& & & -(n-m) \\
	-(n-m) & \Cdots& -(n-m) & m(n-m)
	\end{pNiceMatrix}\,.
	\]
		Then 
	\[
	\det A_m=n^{m-1}(n-m)\,,\quad \det B_m=0\,.
	\]
	
\end{lemma}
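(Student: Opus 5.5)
The plan is to prove both determinant identities with elementary row and column operations, using that $A_m$ has the form $nI_m - J_m$, where $J_m$ is the $m\times m$ all-ones matrix. Indeed, the diagonal entries are $n-1 = n - 1$ and the off-diagonal entries are $-1$.

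For $\det A_m$, the eigenvalue approach is the quickest. The matrix $J_m$ has eigenvalue $m$ with eigenvector $\mathbf{1}$, and eigenvalue $0$ with multiplicity $m-1$ on the orthogonal complement of $\mathbf{1}$. Hence $A_m$ has eigenvalue $n-m$ once and eigenvalue $n$ with multiplicity $m-1$, which gives $\det A_m = n^{m-1}(n-m)$.

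If a purely elementary argument is preferred, I would instead add all columns to the first column. Every entry of the first column then becomes $n-1-(m-1) = n-m$, so I can factor out $(n-m)$. Next I subtract the first row from each of the other rows, which leaves an upper triangular matrix with diagonal $(1, n, \ldots, n)$. This again gives $n^{m-1}(n-m)$. The hypothesis $m \le n-2$ only ensures that the matrix fits the context of principal submatrices of the Kirchhoff matrix; it plays no role in the algebra.

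For $\det B_m = 0$, I will exhibit a nonzero vector in the kernel of $B_m$, namely $v = (1,\ldots,1,1)^T \in \bbR^{m+1}$.
\begin{itemize}
\item For each of the first $m$ rows, the row sum is the row sum of $A_m$ plus the last entry: $(n-1) - (m-1) - (n-m) = 0$.
\item For the last row, the sum is $-m(n-m) + m(n-m) = 0$.
\end{itemize}
So $B_m v = 0$ and $\det B_m = 0$. Equivalently, every row of $B_m$ sums to zero, which is the same reason a full Kirchhoff matrix is singular. This is not a coincidence: $B_m$ is the Kirchhoff matrix of the graph obtained by merging the remaining $n-m$ vertices into one.

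There is no real obstacle in this proof. The only step that needs care is the row-sum check for $B_m$, specifically confirming that the row sum of $A_m$ equals $n-m$, so that it exactly cancels the off-diagonal entry $-(n-m)$.
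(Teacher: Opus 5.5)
Your proof is correct. Writing $A_m=nI_m-J_m$ gives the spectrum $\{n-m,n,\dots,n\}$, and your column-sum and row-subtraction reduction to a triangular matrix gives the same value. For $B_m$, the all-ones vector lies in the kernel: each of the first $m$ rows sums to $(n-m)-(n-m)=0$, and the last row sums to $-m(n-m)+m(n-m)=0$.

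The paper records this lemma as elementary and gives no proof, so there is no competing argument to compare with. You are right that $B_m$ is the Kirchhoff matrix of the merged graph $G_S$, with $A_m$ its principal minor at the merged vertex. This is exactly how the lemma enters the proof of Proposition~\ref{prop: kFef}. There only $\det A_m$ is actually used, via the matrix-tree theorem, and $\det B_m=0$ is just the general singularity of Laplacians that your kernel vector exhibits.
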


\subsubsection{The number of  $k$-forests containing a pair of adjacent edges}

First, we recall that Liu and Chow's formula  \eqref{eq: number of k-forest of K_n} already counts the number of  $k$-forests of $K_n$. To compute $\kUF\big[\omega(e)=\omega(f)=1\big]$, it suffices to count the number of $k$-forests that contain both $e$ and $f$. 

\begin{proposition}\label{prop: kFef}
	Suppose $k\geq 1$, $n\ge3$, and $(e,f)$ is  a pair of adjacent edges of $K_n$. 
	Then the number of  $k$-forests of $K_n$ that contain both $e$ and $f$ satisfies
	\be\label{eq: kFef}
	\card{\big\{ F\in\kF(K_n)\colon e,f\in F \big\}} =
		\sum_{r=0}^{k-1}\frac{(-1)^r(k+r+2) (n-3)!n^{n-k-r-3}}{2^rr!(k-r-1)! (n-k-r-2)!}  \,.
	\ee
\end{proposition}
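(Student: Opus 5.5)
By \cref{lem: bijection between forest on G containing e and G e contraction}, the quantity on the left of \eqref{eq: kFef} equals $\card{\kF(G)}$ with $G=K_n/\{e,f\}$. This $G$ has $n-2$ vertices: a merged vertex $v_{e,f}$ joined to each of the $n-3$ remaining vertices by three parallel edges, and a copy of $K_{n-3}$ on those remaining vertices (see Fig.~\ref{fig: K_n contraction along an edge}(b)). I will apply Liu and Chow's formula \eqref{eq: Liu and Chow's formula for k-forests} to $G$, choosing the distinguished vertex $v_*=v_{e,f}$, so that every $S\subset n^*$ lies inside the $K_{n-3}$ part.

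For $|S|=k+r-1=s$, the contracted graph $G_S$ is obtained by merging $S\cup\{v_{e,f}\}$ into a single vertex. By Kirchhoff's theorem \eqref{eq: Kirchhoff matrix-tree thm}, $\card{\oneF(G_S)}$ is the determinant of the principal submatrix of $M(G_S)$ obtained by deleting that merged vertex. The remaining vertices are the $n-3-s$ vertices of $K_{n-3}\setminus S$, and each of them has degree $(n-4)+3=n-1$ in $G$; the merged edges are not loops from their point of view. Hence the matrix is exactly $A_{n-3-s}$ of \cref{lem: two elementary determinants}: diagonal entries $n-1$ and off-diagonal entries $-1$. That lemma gives
\[
\card{\oneF(G_S)}=n^{n-4-s}(s+3),
\]
which is valid for $n-3-s\ge1$. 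When $S$ is all of $n^*$, convention \eqref{eq: convention 1 for Kirchhoff matrix} gives the value $1$, and I must check that this agrees with the formula; it does if $n^{-1}\cdot n=1$.

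Next I count matchings. Since $S$ lies in the simple graph $K_{n-3}$, we have $\nu_r(S)=\frac{s!}{2^r r!(s-2r)!}$, independent of the choice of $S$. The number of choices of $S$ is $\binom{n-3}{s}$. Substituting $s=k+r-1$ and using $s-2r=k-r-1$, each $r$-term becomes
\[
(-1)^r\binom{n-3}{k+r-1}\frac{(k+r-1)!}{2^r r!(k-r-1)!}\,n^{n-k-r-3}(k+r+2)
=\frac{(-1)^r(k+r+2)(n-3)!\,n^{n-k-r-3}}{2^r r!(k-r-1)!(n-k-r-2)!}.
\]
This is the summand in \eqref{eq: kFef}. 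Terms with $k+r-1>n-3$ vanish, consistent with the convention $1/(\text{negative})!=0$ used in the footnote to \eqref{eq: number of k-forest of K_n}.

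The main obstacle is making sure Liu--Chow's formula applies verbatim to a multigraph, and correctly computing the Kirchhoff submatrix after identifying $S\cup\{v_*\}$. In particular I need to confirm that the edges from $S$ to $v_{e,f}$ become loops and drop out, and that the degrees of the remaining vertices stay $n-1$. I also need to check the boundary case $S=n^*$ against conventions \eqref{eq: convention 1 for Kirchhoff matrix}--\eqref{eq: convention 2 for Kirchhoff matrix}. As a sanity check, I will verify the formula for $k=1$: it gives $3n^{n-4}$, which is the number of spanning trees of $K_n$ containing a fixed path of length $2$.
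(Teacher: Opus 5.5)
Your proposal is correct and follows the paper's proof: you contract $e,f$, apply Liu--Chow's formula to $K_n/\{e,f\}$ with $v_*=v_{e,f}$, identify the reduced Kirchhoff matrix of $G_S$ as $A_m$ with $m=n-k-r-2$, and count $\nu_r(S)$ inside the simple graph $K_{n-3}$. (The paper says $G_S$ has full Kirchhoff matrix $B_m$ and then takes the minor $\det A_m$, which is the same thing.) Your checks of the boundary case $m=0$, where $n^{-1}\cdot n=1$ agrees with the one-vertex tree count, and of the vanishing terms with $k+r-1>n-3$ cover points the paper leaves implicit.
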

\begin{proof}
	By \eqref{eq: contract two edges}, it suffices to count the number of $k$-forests in $K_n/\{e,f\}$ (see part (b) in Fig.~\ref{fig: K_n contraction along an edge} for an illustration of $K_n/\{e,f\}$). Since $e,f$ are adjacent, the vertex $v_{e,f}$ (the vertex obtained by identifying the endpoints of $e$ and $f$) has three parallel edges to each of the remaining $n-3$ vertices, and these $n-3$ vertices themselves induce a complete graph $K_{n-3}$. 
	
	We  now apply Liu and Chow's formula \eqref{eq: Liu and Chow's formula for k-forests} to the graph $K_n/\{e,f\}$. First, we choose $v_*$ to be the special vertex $v_{e,f}$. Fix an index $r\in[0,k-1]$; there are $\binom{n-3}{k+r-1}$ choices for a subset  $S\subset V\big[K_n/\{e,f\}\big]\setminus \{v_*\}$ such that  $\card{S}=k+r-1$. For each such $S$,  the graph $G_S$ can be viewed as being obtained from $K_n$ by identifying all the vertices in $S$ and the endpoints of $e$ and $f$. Hence, it  has a Kirchhoff matrix of the form $B_m$ as in Lemma~\ref{lem: two elementary determinants}, where $m=n-(k+r+2)$. 
	In particular, we have $\card{\oneF(G_S)}=\det(A_m)=n^{m-1}(n-m)=n^{n-k-r-3}(k+r+2)$  using Lemma~\ref{lem: two elementary determinants}. 
	As for $\nu_r(S)$, we observe that the vertices in $S$ induce a complete graph $K_{k+r-1}$. Therefore, the number of $r$-matching $\nu_r(S)$ is given by
	\begin{align*}
	\nu_r(S)&=\frac{1}{r!}\binom{k+r-1}{2}\cdot \binom{k+r-1-2}{2}\cdots \binom{k+r-1-2(r-1)}{2}\\
	&=\frac{(k+r-1)!}{2^r \cdot  r! \cdot (k-r-1)!}\,.
	\end{align*}
	Substituting the above quantities into \eqref{eq: Liu and Chow's formula for k-forests}, we obtain the desired conclusion: 
	\begin{align*}
		\card{\big\{ F\in\kF(K_n)\colon e,f\in F \big\}} &=
		\sum_{r=0}^{k-1}(-1)^r \binom{n-3}{k+r-1}\cdot \frac{(k+r-1)!}{2^r   r!  (k-r-1)!} \cdot n^{n-k-r-3}(k+r+2)\\
		&=	\sum_{r=0}^{k-1}\frac{(-1)^r(k+r+2) (n-3)!n^{n-k-r-3}}{2^rr!(k-r-1)! (n-k-r-2)!}  \,. \qedhere
	\end{align*}
\end{proof}

\subsection{Proof of \cref{thm: p-NC for kUF}}

We are now ready to prove \cref{thm: p-NC for kUF}. We begin with some calculations involving the estimation of  $\card{\kF(K_n)}$ and $\card{\big\{ F\in\kF(K_n)\colon e,f\in F \big\}}$ for a pair of adjacent edges $(e,f)$.

\begin{lemma}\label{lem: leading coef}
	We have the following identities for $k\ge1$:
	\be\label{eq:lc1}
	\sum_{r=0}^{k-1}\frac{(-1)^r}{2^rr!(k-r-1)!}=\frac{1}{2^{k-1}(k-1)!}\,,
	\ee
	    \be\label{eq:lc2}
	\sum_{r=0}^{k-1}\frac{(-1)^rr}{2^rr!(k-r-1)!}=-\frac{1}{2^{k-1}(k-2)!}\mathbf{1}_{\{k\geq 2\}}\,,
	\ee
	\be\label{eq:lc3}
	\sum_{r=0}^{k-1}\frac{(-1)^rr^2}{2^rr!(k-r-1)!}=\frac{k-3}{2^{k-1}(k-2)!}\mathbf{1}_{\{k\geq 2\}}\,,
	\ee
	and
	\be\label{eq:lc4}
	\sum_{r=0}^{k-1}\frac{(-1)^rr^3}{2^rr!(k-r-1)!}=\frac{-k^2+8k-13}{2^{k-1}(k-2)!}\mathbf{1}_{\{k\geq 2\}}.
	\ee
\end{lemma}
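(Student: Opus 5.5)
The plan is to recognise all four sums as binomial sums. Multiply each by $(k-1)!$ and set $j=k-1$. Then
\[
(k-1)!\sum_{r=0}^{k-1}\frac{(-1)^r r^s}{2^r r!(k-r-1)!}=\sum_{r=0}^{j}\binom{j}{r}r^s\Big(-\frac12\Big)^r\,.
\]
This is the value at $x=-\tfrac12$ of $(x\tfrac{d}{dx})^s(1+x)^j$. So the whole computation reduces to evaluating $(x\partial_x)^s(1+x)^j$ for $s=0,1,2,3$.

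The first step is to write $r^s$ in the falling-factorial basis:
\[
r=r^{\underline1},\qquad r^2=r^{\underline2}+r^{\underline1},\qquad r^3=r^{\underline3}+3r^{\underline2}+r^{\underline1}.
\]
Then use the standard identity
\[
\sum_r\binom{j}{r}r^{\underline t}x^r=j^{\underline t}x^t(1+x)^{j-t}.
\]
At $x=-\tfrac12$ the term with index $t$ equals $j^{\underline t}(-\tfrac12)^t(\tfrac12)^{j-t}$, which is $(-1)^t j^{\underline t}/2^j$.

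Summing these contributions gives the four values.
\begin{itemize}
\item For $s=0$ the sum is $2^{-j}$, which gives \eqref{eq:lc1}.
\item For $s=1$ the sum is $-j/2^j$. Dividing by $(k-1)!$ gives $-\frac{1}{2^{k-1}(k-2)!}$, and the factor $j$ vanishes when $k=1$. This is \eqref{eq:lc2}.
\item For $s=2$ the sum is $\big(j(j-1)-j\big)/2^j=j(j-2)/2^j$. Dividing by $(k-1)!$ gives $\frac{k-3}{2^{k-1}(k-2)!}$, which is \eqref{eq:lc3}.
\item For $s=3$ the sum is $\big(-j(j-1)(j-2)+3j(j-1)-j\big)/2^j$. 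Factoring out $-j$ leaves $(j-1)(j-2)-3(j-1)+1=j^2-6j+6$. Substituting $j=k-1$ gives $k^2-8k+13$, so the value is $(-k^2+8k-13)\,j/2^j$. Dividing by $(k-1)!$ yields \eqref{eq:lc4}.
\end{itemize}

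In each case the indicator $\mathbf 1_{\{k\ge2\}}$ comes from the overall factor $j$: it vanishes when $k=1$, where the expression $(k-2)!$ would otherwise be undefined.

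There is no real obstacle here. The only care needed is in the algebra for $s=3$ (the Stirling-number expansion and the factorisation) and in the boundary cases $k=1,2$. In those cases the falling factorials $j^{\underline t}$ vanish automatically, so the identities hold without separate treatment.
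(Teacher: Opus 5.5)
Your proposal is correct and follows essentially the paper's route. The paper likewise reduces each sum to binomial sums at $-\tfrac12$ via $r\binom{k-1}{r}=(k-1)\binom{k-2}{r-1}$ and $r(r-1)\binom{k-1}{r}=(k-1)(k-2)\binom{k-3}{r-2}$ (and the cubic analogue), combined through $r^2=r(r-1)+r$ and $r^3=r(r-1)(r-2)+3r^2-2r$. The only difference is that the paper checks $k=1,2,3$ by hand, whereas your vanishing falling factorials $j^{\underline t}$ for $t>j$ handle those cases uniformly.
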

\begin{proof}
	The cases $k=1,2,3$ can be verified directly, so  we assume $k\ge 4$ in the following. 
The first identity \eqref{eq:lc1}  is a simple application of the binomial expansion:
	\begin{align*}
	\sum_{r=0}^{k-1}\frac{(-1)^r}{2^rr!(k-r-1)!}
	&=\frac{1}{(k-1)!}\sum_{r=0}^{k-1}\binom{k-1}{r}\Big(-\frac{1}{2}\Big)^r\\
	&=\frac{1}{(k-1)!}\Big(1-\frac{1}{2}\Big)^{k-1}=\frac{1}{2^{k-1}(k-1)!}\,.
	\end{align*}

	For \eqref{eq:lc2}, note that $r\binom{k-1}{r}=0$ when $r=0$, and $r\binom{k-1}{r}=(k-1)\binom{k-2}{r-1}$ for $r\geq 1$. Hence, 
	\begin{align*}
	\sum_{r=0}^{k-1}\frac{(-1)^rr}{2^rr!(k-r-1)!}
	&=\frac{1}{(k-1)!}\sum_{r=1}^{k-1}(k-1)\binom{k-2}{r-1}\Big(-\frac{1}{2}\Big)^r\\
	&=\frac{1}{(k-2)!}\sum_{j=0}^{k-2}\binom{k-2}{j}\Big(-\frac{1}{2}\Big)^{j+1}\\
	&=-\frac{1}{2}\cdot\frac{1}{(k-2)!}\Big(1-\frac{1}{2}\Big)^{k-2}=-\frac{1}{2^{k-1}(k-2)!}\,.
	\end{align*}

For \eqref{eq:lc3}, we use the fact that $r(r-1)\binom{k-1}{r}=0$ for $r=0,1$, and $r(r-1)\binom{k-1}{r}=(k-1)(k-2)\binom{k-3}{r-2}$ for $r\ge 2$. Thus we have
	\begin{align*}
	\sum_{r=0}^{k-1}\frac{(-1)^rr(r-1)}{2^rr!(k-r-1)!}
	&=\frac{1}{(k-1)!}\sum_{r=2}^{k-1}(k-1)(k-2)\binom{k-3}{r-2}\Big(-\frac{1}{2}\Big)^r\\
	&=\Big(-\frac{1}{2}\Big)^2\cdot\frac{(k-1)(k-2)}{(k-1)!}\cdot\Big(1-\frac{1}{2}\Big)^{k-3}\\
	&=\frac{1}{2^{k-1}(k-3)!}\,.
	\end{align*}
	Similarly we have 
	\[
	\sum_{r=0}^{k-1}\frac{(-1)^rr(r-1)(r-2)}{2^rr!(k-r-1)!}=-\frac{1}{2^{k-1}(k-4)!}\,.
    \]
	Combining these results, we obtain \eqref{eq:lc3}:
	\begin{align*}
	\sum_{r=0}^{k-1}\frac{(-1)^rr^2}{2^rr!(k-r-1)!}
	&=\sum_{r=0}^{k-1}\frac{(-1)^rr}{2^rr!(k-r-1)!}+\sum_{r=0}^{k-1}\frac{(-1)^rr(r-1)}{2^rr!(k-r-1)!}\\
	&=-\frac{1}{2^{k-1}(k-2)!}+\frac{1}{2^{k-1}(k-3)!}\\
	&=\frac{k-3}{2^{k-1}(k-2)!}\,.
	\end{align*}

	For \eqref{eq:lc4}, we use the decomposition
	\[
	r^3=r(r-1)(r-2)+3r^2-2r
	\]
and the identities established above to derive the desired conclusion:
	\begin{align*}
	\sum_{r=0}^{k-1}\frac{(-1)^rr^3}{2^rr!(k-r-1)!}
	&=\sum_{r=0}^{k-1}\frac{(-1)^rr(r-1)(r-2)}{2^rr!(k-r-1)!}+3\sum_{r=0}^{k-1}\frac{(-1)^rr^2}{2^rr!(k-r-1)!}-2\sum_{r=0}^{k-1}\frac{(-1)^rr}{2^rr!(k-r-1)!}\\
	&=-\frac{1}{2^{k-1}(k-4)!}+3\cdot\frac{k-3}{2^{k-1}(k-2)!}+2\cdot\frac{1}{2^{k-1}(k-2)!}\\
	&=\frac{-k^2+8k-13}{2^{k-1}(k-2)!}\,. \quad \qedhere
	\end{align*}
	
\end{proof}

\begin{corollary}\label{cor: leading coef}
	For $k\ge2$, we have the following identities :
	\be\label{eq: lckr}
		\sum_{r=0}^{k-1}\frac{(-1)^r(k+r)}{2^rr!(k-r-1)!}=\frac{1}{2^{k-1}(k-1)!}\,,
	\ee
		\be\label{eq: lckr2}
	\sum_{r=0}^{k-1}\frac{(-1)^r(k+r)^2}{2^rr!(k-r-1)!}=\frac{-2k+3}{2^{k-1}(k-1)!}\,,
	\ee
	and
		\be\label{eq: lckr3}
	\sum_{r=0}^{k-1}\frac{(-1)^r(k+r)^3}{2^rr!(k-r-1)!}=\frac{-12k+13}{2^{k-1}(k-1)!} \,.
	\ee
\end{corollary}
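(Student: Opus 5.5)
Write $T_j\coloneq \sum_{r=0}^{k-1}\frac{(-1)^r r^j}{2^r r!(k-r-1)!}$ for $j=0,1,2,3$; these are exactly the sums evaluated in \cref{lem: leading coef}. The plan is to expand $(k+r)^p$ by the binomial theorem, interchange the finite sums, and substitute those values. Since $k\ge2$, all the indicator functions in \eqref{eq:lc2}--\eqref{eq:lc4} equal $1$. Then
\[
\sum_{r=0}^{k-1}\frac{(-1)^r(k+r)^p}{2^rr!(k-r-1)!}=\sum_{j=0}^{p}\binom{p}{j}k^{p-j}T_j ,\qquad p=1,2,3,
\]
and each right-hand side is a linear combination of $\frac{1}{2^{k-1}(k-1)!}$ and $\frac{1}{2^{k-1}(k-2)!}$. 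To put everything over the common denominator $2^{k-1}(k-1)!$, I will use $\frac{1}{(k-2)!}=\frac{k-1}{(k-1)!}$.

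\textbf{Case $p=1$.} We get $kT_0+T_1=\frac{k-(k-1)}{2^{k-1}(k-1)!}$, which is \eqref{eq: lckr}.

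\textbf{Case $p=2$.} We get $k^2T_0+2kT_1+T_2$. Its numerator over $2^{k-1}(k-1)!$ is
\[
k^2-2k(k-1)+(k-3)(k-1)=k^2-2k^2+2k+k^2-4k+3=-2k+3,
\]
which matches \eqref{eq: lckr2}.

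\textbf{Case $p=3$.} We get $k^3T_0+3k^2T_1+3kT_2+T_3$. Its numerator is
\[
k^3-3k^2(k-1)+3k(k-3)(k-1)+(-k^2+8k-13)(k-1).
\]
Expanding term by term:
\begin{itemize}
\item $-3k^3+3k^2$;
\item $3k^3-12k^2+9k$;
\item $-k^3+9k^2-21k+13$.
\end{itemize}
Adding these to $k^3$, the cubic terms cancel ($1-3+3-1=0$) and so do the quadratic terms ($3-12+9=0$). The linear terms give $9k-21k=-12k$ and the constant is $13$. This yields $-12k+13$, which is \eqref{eq: lckr3}.

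There is no real obstacle here. The only care needed is the bookkeeping in the cubic expansion, and confirming that the case $k=2$ is covered: there \cref{lem: leading coef} already holds with the indicators equal to $1$, and $(k-2)!=0!=1$.
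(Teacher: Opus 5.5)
Your proof is correct and is essentially the paper's route: the corollary is stated as a direct consequence of \cref{lem: leading coef}, obtained by expanding $(k+r)^p$ binomially and substituting the four sums, with all indicators equal to $1$ for $k\ge2$ and $\frac{1}{(k-2)!}=\frac{k-1}{(k-1)!}$. Your numerators $1$, $-2k+3$ and $-12k+13$ all check out, including the cubic bookkeeping.
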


\begin{proof}[Proof of \cref{thm: p-NC for kUF}]
Fix $k\ge2$. We first derive the high-order expansions of the two quantities $\card{\kF(K_n)}$ (the number of $k$-forests of $K_n$) and $\card{\big\{ F\in\kF(K_n)\colon e,f\in F \big\}}$ (the number of $k$-forests of $K_n$ containing a fixed  pair of adjacent edges $(e,f)$).

As $n\to\infty$, for each $r\in[0,k-1]$, we have
\begin{align*}
\frac{(n-1)!}{(n-k-r)!}
&=\prod_{j=1}^{k+r-1}\big(n-j\big)=n^{k+r-1}\prod_{j=1}^{k+r-1}\bigg(1-\frac{j}{n}\bigg)\\
&=n^{k+r-1}\bigg[1-\frac{1+2+\cdots+(k+r-1)}{n}+O\bigg(\frac{1}{n^2}\bigg)\bigg]\\
&=n^{k+r-1}\bigg[1-\bigg(\frac{(k+r)(k+r-1)}{2}\bigg)\frac{1}{n}+O\bigg(\frac{1}{n^2}\bigg)\bigg].
\end{align*}
Similarly,
\begin{align*}
\frac{(n-3)!}{(n-k-r-2)!}
&=\prod_{j=3}^{k+r+1}\big(n-j\big)=n^{k+r-1}\prod_{j=3}^{k+r+1}\bigg(1-\frac{j}{n}\bigg)\\
&=n^{k+r-1}\bigg[1-\bigg(\frac{(k+r+4)(k+r-1)}{2}\bigg)\frac{1}{n}+O\bigg(\frac{1}{n^2}\bigg)\bigg].
\end{align*}
Using these expansions, we first compute the expansion of $\card{\kF(K_n)}$:
 \begin{align*}
 \card{\kF(K_n)}
 &\stackrel{\eqref{eq: number of k-forest of K_n}}{=} \sum_{r=0}^{k-1}\frac{(-1)^r(k+r)}{2^rr!(k-r-1)!}\cdot \frac{n^{n-k-r-1}(n-1)!}{(n-k-r)!}\\
 &=\sum_{r=0}^{k-1}\frac{(-1)^r(k+r)}{2^rr!(k-r-1)!}\cdot n^{n-2}\bigg[1-\Big(\frac{(k+r)(k+r-1)}{2}\Big)\frac{1}{n}+O\bigg(\frac{1}{n^2}\bigg)\bigg]\\
 &=\frac{1}{2^{k-1}(k-1)!}n^{n-2}+\frac{5k-5}{2^{k-1}(k-1)!}n^{n-3}+O\big(n^{n-4}\big)\,,
 \end{align*}
 where the last equality follows from \cref{cor: leading coef}, which we use to compute the coefficients of $n^{n-2}$ and $n^{n-3}$.

Next, we compute the expansion of $\card{\big\{ F\in\kF(K_n)\colon e,f\in F \big\}}$ for a fixed pair of adjacent edges $e,f$: 
 \begin{align*}
 \card{\big\{ F\in\kF(K_n)\colon e,f\in F \big\}}
 &\stackrel{\eqref{eq: kFef}}{=}	\sum_{r=0}^{k-1}\frac{(-1)^r(k+r+2) (n-3)!n^{n-k-r-3}}{2^rr!(k-r-1)! (n-k-r-2)!}  \\
 &=\sum_{r=0}^{k-1}\frac{(-1)^r(k+r+2) }{2^rr!(k-r-1)! } \cdot n^{n-4}\bigg[ 1-\bigg(\frac{(k+r+4)(k+r-1)}{2}\bigg)\frac{1}{n}+O\bigg(\frac{1}{n^2}\bigg) \bigg]\\
 &=\frac{3}{2^{k-1}(k-1)!}n^{n-4}+\frac{11k-11}{2^{k-1}(k-1)!}n^{n-5}+O\big(n^{n-6}\big)\,.
 \end{align*}

 We now compute $p_1$, the probability that both $e$ and $f$ are contained in a uniform 
$k$-forest:
 \begin{align*}
 p_1& \coloneq \frac{\card{\big\{ F\in\kF(K_n)\colon e,f\in F \big\}}}{ \card{\kF(K_n)}}\\
 &=\frac{ \frac{3}{2^{k-1}(k-1)!}n^{n-4}+\frac{11k-11}{2^{k-1}(k-1)!}n^{n-5}+O\big(n^{n-6}\big) }{ \frac{1}{2^{k-1}(k-1)!}n^{n-2}+\frac{5k-5}{2^{k-1}(k-1)!}n^{n-3}+O\big(n^{n-4}\big)  }\\
 &=\frac{ 3n^{-2}+(11k-11)n^{-3}+O(n^{-4}) }{1+(5k-5)n^{-1}+O(n^{-2})} \\
 &=3n^{-2}-4(k-1)n^{-3}+ O\big(n^{-4}\big)\,.
 \end{align*}
For fixed $k\ge 2$, we have 
\[
\frac{4(n-k)^2}{n^2(n-1)^2}=\frac{4}{n^2}-\frac{8(k-1)}{n^3}+O(n^{-4})
\]
and
\[
\frac{(n-k)\big[3n^2-(5+4k)n+6k\big]}{n^2(n-1)^2(n-2)}=3n^{-2}-7(k-1)n^{-3}+O(n^{-4})
\]
Hence,  when $n$ is sufficiently large, we have
 \[
 p_1\leq \frac{4(n-k)^2}{n^2(n-1)^2}
 \]
 and 
 \[
 p_1\geq \frac{(n-k)\big[3n^2-(5+4k)n+6k\big]}{n^2(n-1)^2(n-2)}\,.
 \]
 Combining these two inequalities with \cref{cor: 3.3} completes the proof of \cref{thm: p-NC for kUF}.
\end{proof}

\begin{remark}\label{rem: 3.10}
	In fact, for small values of $k$ such as $k=2,3,4$, one can verify that the p-NC property   holds for the uniform $k$-forest measure $\kUF$ on $\kF(K_n)$ for all $n\ge k$ (so that the measure $\kUF$ is well-defined). For instance, 
	substituting $k=2$ into \eqref{eq: kFef}, we obtain that for a pair of adjacent edges $(e,f)$, 
	\[
	\card{\big\{ F\in\twoF(K_n) \colon e,f\in F   \big\}} =\frac{(n-3)(3n+20)n^{n-6}}{2}\,.
	\]
	Recall from \eqref{eq: number of 2-forest of K_n} that 
	\[
	\card{\twoF(K_n)}=\frac{(n-1)(n+6)n^{n-4}}{2}\,.
	\]
	Hence, in this case  (assuming $n\ge 3$ so that $K_n$  contains a pair of adjacent edges), we have
	\[
	p_1 \coloneq \frac{\card{\big\{ F\in\kF(K_n)\colon e,f\in F \big\}}}{ \card{\kF(K_n)}}
	=\frac{(n-3)(3n+20)}{n^2(n-1)(n+6)}\,.
	\]
	For $k=2$, we have 
	\[
	\frac{4(n-k)^2}{n^2(n-1)^2}=\frac{4(n-2)^2}{n^2(n-1)^2} \quad \text{ and }\quad 
	\frac{(n-k)\big[3n^2-(5+4k)n+6k\big]}{n^2(n-1)^2(n-2)}=\frac{(n-3)(3n-4)}{n^2(n-1)^2}\,.
	\]
	It is straightforward to verify that the following two inequalities hold for all $n\ge3$:
	\[
	p_1\leq \frac{4(n-2)^2}{n^2(n-1)^2}
	\]
	and
	\[
	p_1\geq  \frac{(n-3)(3n-4)}{n^2(n-1)^2}\,.
	\]
	Thus, by \cref{cor: 3.3} the p-NC property  holds for $\twoUF$ on $K_n$ whenever $n\ge 3$. 
\end{remark}

%%%%%%%%%%%%%%%%%%%%%%%%%%%%%%%%%%%%%%%%%%%%%%%%%%%%%%%%%%%%%%%%%%%%%%%%%%%%%%%%%
%%%%%%%%%%%%%%%%%%%%    Uniform $k$-excess connected subgraphs     %%%%%%%%%%%%%%
%%%%%%%%%%%%%%%%%%%%%%%%%%%%%%%%%%%%%%%%%%%%%%%%%%%%%%%%%%%%%%%%%%%%%%%%%%%%%%%%%

\section{Uniform $k$-excess connected subgraphs of complete graphs} \label{sec: p-NC for kUC}

In this section we prove Theorem~\ref{thm: p-NC for kUC}. A first-moment argument similar to Lemma~\ref{lem: first moment}  readily yields that $\kUC\big[\omega(e)=1\big]=\frac{n+k}{\binom{n}{2}}$. A second-moment argument analogous to Lemma~\ref{lem: second moment} will relate the probability $\kUC\big[\omega(e)=\omega(f)=1\big]$ for a pair of adjacent edges to that for a pair of non-adjacent edges. It therefore suffices to provide a suitable estimate of  $\kUC\big[\omega(e)=\omega(f)=1\big]$ for a pair of adjacent edges $e,f$.

\subsection{Proof of Theorem~\ref{thm: p-NC for kUC} using Proposition~\ref{prop: key prop for k-excess}}\label{subsec: proof of thm kUC}

 For an integer $k\ge-1$ and a pair of adjacent edges $e,f$ in $K_n$, we write 
\[
C_{n,n+k}\coloneq \card{\sC^{(k)}(K_n)} \,\text{ and }\, C_{n,n+k}^{e,f}\coloneq \card{\{\omega\in \sC^{(k)}(K_n)\colon \omega(e)=\omega(f)=1 \}}\,
\]
for brevity. 
 We use singular analysis techniques \cite{FO1990} to derive the asymptotics of $C_{n,n+k}$ and $ C_{n,n+k}^{e,f}$, and then establish the following key estimate for Theorem~\ref{thm: p-NC for kUC}.
 In this section, we give a detailed proof of Proposition~\ref{prop: key prop for k-excess} in the base case $k=0$. The argument for $k\ge1$ is analogous; we present full details for small values, specifically 
 $k\in\{1,2,\ldots,5\}$. The general case for arbitrary $k\ge1$, which involves lengthy and tedious computations, is deferred to the appendix.
 
\begin{proposition}\label{prop: key prop for k-excess}
	Suppose $k\ge0$ is a fixed integer,  edges $e$ and $f$ are a pair of adjacent edges of $K_n$, and $p_1\coloneq \kUC\big[\omega(e)=\omega(f)=1\big]$. Then there exists constants $\big\{\alpha_{k,i},\beta_{k,i},i=1,2,3  \}$  such that as $n\to\infty$ 
	\be\label{eq: cnnk asym}
	 C_{n,n+k}=\sum_{i=1}^{3}\alpha_{k,i}n^{n+\frac{3k-i}{2}}+O\left( n^{n+\frac{3k-4}{2}} \right)\,,
	\ee
	and
	\be\label{eq: cnnkef asym}
	 C_{n,n+k}^{e,f}
	=\sum_{i=1}^{3}\beta_{k,i}n^{n+\frac{3k-4-i}{2}}+O\left( n^{n+\frac{3k-8}{2}} \right)
	\ee
	Moreover, these constants $\big\{\alpha_{k,i},\beta_{k,i},i=1,2,3  \}$ satisfy  
	\be\label{eq: p1 kUC asym}
	p_1=\frac{	 C_{n,n+k}^{e,f}}{C_{n,n+k}}=\frac{3}{n^2}+\frac{9(k+1)}{n^3}+O(n^{-\frac{7}{2}})\,.
	\ee
\end{proposition}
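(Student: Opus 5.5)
We follow the generating-function route indicated in the notation list. Let $W_k(z)=\sum_{n\ge1}C_{n,n+k}z^n/n!$ and $T(z)$ be the rooted-tree EGF with $\theta=1-T(z)$. By Wright's classical theory, $W_{-1}=T-T^2/2$, $W_0=\tfrac12\log\frac{1}{1-T}-\tfrac T2-\tfrac{T^2}{4}$, and for $k\ge1$, $W_k$ is a finite Laurent polynomial in $\theta$ of the form $W_k=\sum_{j}c_{k,j}\theta^{-3k+j}$ plus a polynomial part. This is obtained by recursively solving Wright's differential relation, or simply quoted from the literature. At the dominant singularity $z=1/e$ we have $\theta\sim\sqrt{2(1-ez)}\,(1+\ldots)$.

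\textbf{Step 1.} Expand $\theta$ in powers of $\sqrt{1-ez}$ to three orders. Substitute into $W_k$ to get its singular expansion $\sum_{i=1}^{3}a_{k,i}(1-ez)^{-(3k-i+1)/2+\ldots}+\ldots$, keeping three terms. Apply the Flajolet--Odlyzko transfer theorem \cite{FO1990} to obtain $[z^n]W_k$. Multiply by $n!$ and use Stirling's formula to get \eqref{eq: cnnk asym}. For $k=0$ the logarithm is handled by its standard transfer.

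\textbf{Step 2.} Count $C^{e,f}_{n,n+k}$ by a decomposition at the fixed path $u-v-w$ formed by $e,f$. Deleting $e,f$ from a connected excess-$k$ graph containing them leaves either one, two, or three components, containing $u,v,w$ in the various groupings, with total excess adjusted accordingly. Each component is a connected graph rooted at one, two, or three marked vertices. Its EGF is obtained from $W_j$ by differentiation: marking a vertex corresponds to $z\partial_z$, and distinguishing the labels of $u,v,w$ gives the $(n-3)!$ normalization. This expresses
$$\sum_n C^{e,f}_{n,n+k}\frac{z^{n-3}}{(n-3)!}$$
as a finite sum of products of derivatives $W_j^{(a)}$ with $\sum j$ determined by $k$. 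Each is again a Laurent polynomial in $\theta$, using $zT'=T/(1-T)=(1-\theta)/\theta$. Extract three singular orders and transfer as in Step 1 to get \eqref{eq: cnnkef asym}.

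\textbf{Step 3.} Divide the two expansions and verify that the leading ratio is $3n^{-2}$. The next integer-order correction should give $9(k+1)n^{-3}$. The intermediate half-integer term $n^{-5/2}$ must cancel identically, since the statement has no such term. This cancellation is the consistency check on the coefficients $\alpha_{k,2},\beta_{k,2}$.

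The main obstacle is Step 2 combined with the bookkeeping in Step 3. One has to enumerate the decomposition cases correctly (one block, or two or three blocks with excesses summing appropriately), identify which cases contribute at the top three orders (only the highest-excess, single-block term plus a few split terms), and then track the coefficients $c_{k,j}$ of $W_k$ for general $k$. I would first carry out everything explicitly for $k=0$, where $W_0$, $W_{-1}$ are explicit. Next, for small $k$ I would use the known $W_1=\frac{\theta^{-3}}{24}(\ldots)$, etc. For general $k$ only the top two coefficients of $W_k$ (Wright's constants, with their known recursion) should matter. I would therefore express $\alpha_{k,i},\beta_{k,i}$ through them and verify the ratio identity symbolically, deferring the lengthy algebra to an appendix.
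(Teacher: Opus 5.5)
Your architecture matches the paper's: Wright's $\theta$-Laurent expansions, singularity transfer, and splitting at the path $v$--$u$--$w$. There are, however, two concrete gaps.

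\textbf{Step 2 overcounts.} After deleting $e=uv$ and $f=uw$, the remaining graph must avoid $e$ and $f$. Yet whenever $u$ and $v$ (or $u$ and $w$) end up in the same block, the factor $W_j''$ also counts blocks containing the edge $uv$ (resp.\ $uw$) itself. Likewise, in the one-block case $W_{k-2}'''$ counts graphs containing $e$ and/or $f$. So a plain sum of products of derivatives is not exact. The paper corrects this in two places:
\begin{itemize}
\item the one-block count is $(1-\frac{4(n+k-2)}{n(n-1)})C_{n,n+k-2}+C^{e,f}_{n,n+k-2}$, by inclusion--exclusion;
\item the two-block count is $R_2-2|\widehat\Lambda_2|$, where $\widehat\Lambda_2$ is the set of excess-$(k-1)$ connected graphs containing $e,f$ in which $f$ is a bridge. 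Already at $k=0$ this appears as $I_2=N-\widehat N$.
\end{itemize}
These corrections have relative size $O(n^{-3/2})$, so they leave $\beta_{k,1},\beta_{k,2},\beta_{k,3}$ unchanged, but this has to be proved. Bounding $|\widehat\Lambda_2|$ by the number of excess-$(k-1)$ graphs containing $e$ only gives relative size $O(n^{-1/2})$, which is not enough. The paper instead proves $C^{e,f}_{n,n+k}\asymp n^{n+(3k-5)/2}$ by induction on $k$, so that $|\widehat\Lambda_2|\le C^{e,f}_{n,n+k-1}$ and $C^{e,f}_{n,n+k-2}$ become negligible. Alternatively, you can compute the two-block correction exactly: by symmetry, a fixed edge lies in a fraction $\frac{2(m+j)}{m(m-1)}$ of the connected excess-$j$ graphs on $m$ vertices.

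\textbf{Step 3 for general $k$ lacks the key mechanism.} It is not the case that only the top term plus a few split terms matter. Since $zW_j'\sim 3j\chi_j\theta^{-3j-2}$ for $j\ge1$, the contributions sort as follows.
\begin{itemize}
\item The dominant $\theta^{-3k-2}$ term comes from the three-block splittings with excesses $(-1,-1,k)$. The one-block term enters only at relative order $n^{-1}$.
\item Every splitting $(-1,j,k-1-j)$ with $0\le j\le k-1$, together with the leading two-block term, contributes at $\theta^{-3k-1}$.
\item Every splitting with all three excesses $\ge 0$, and every two-block term, contributes at $\theta^{-3k}$.
\end{itemize}
Hence $\beta_{k,2}$ and $\beta_{k,3}$ contain the full convolutions $\sum_j j(k-1-j)\chi_j\chi_{k-1-j}$ and $\sum_j j(k-2-j)\chi_j\chi_{k-2-j}$, mixed sums with $\tilde\chi_{k-1-j}$, and the triple sum $\sum_{k_1+k_2+k_3=k-2}k_1k_2k_3\chi_{k_1}\chi_{k_2}\chi_{k_3}$. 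They also contain $\chi_{k-1},\tilde\chi_{k-1},\chi_{k-2}$ and the third coefficient $\hat\chi_k$ of $W_k$, which enters both $\alpha_{k,3}$ and $\beta_{k,3}$ and cancels only in the ratio. So $\alpha_{k,i},\beta_{k,i}$ cannot simply be written through the top two coefficients of $W_k$ and checked symbolically.

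The missing idea is that these convolutions are exactly the ones produced by Wright's recurrence. Comparing the top two coefficients in Wright's integral representation of $(1-\theta)^kW_k$ through $W_0,\dots,W_{k-1}$ gives
\begin{itemize}
\item $2\chi_k=3(k-1)\chi_{k-1}+\mathbf{C}_1/(3k)$, with $\mathbf{C}_1=\sum_j 9j(k-1-j)\chi_j\chi_{k-1-j}$;
\item a companion relation for $\tilde\chi_k-k\chi_k$, involving $\tilde\chi_{k-1}$ and a mixed convolution $\mathbf{C}_2$.
\end{itemize}
The classical recursion for Wright's leading constants gives only the first of these.

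You then apply the first relation again at indices $k-1$ and $k-1-k_1$ to collapse the double and triple convolutions; the values $\chi_1=5/24$ and $\chi_2=5/16$ kill the leftover $\chi_{k-3}$ terms. After that, all lower-excess quantities cancel among the one-, two- and three-block contributions. What remains is $\beta_{k,1}=3\alpha_{k,1}$, $\beta_{k,2}=3\alpha_{k,2}$ and $\beta_{k,3}-3\alpha_{k,3}=9(k+1)\alpha_{k,1}$.

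Even the absence of an $n^{-5/2}$ term is such an identity, not a free consistency check. The $\chi_{k-1}$ terms that the three-block contributions produce after eliminating $\mathbf{C}_1$ must cancel the leading two-block term.
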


We start with an extension of Lemma~\ref{lem: first moment} and \ref{lem: second moment}  for the $\kUC$ measures on $K_n$.  The proof is similar and omitted. 
\begin{lemma}\label{lem: first and second moment for k cycles}
	Consider the $\kUC$ measure on the complete graph $K_n$ and assume that $n+k\leq \binom{n}{2}$ so that the measure $\kUC$ is well-defined. Let $e,f$ be adjacent edges in $K_n$ and $e,e'$ be non-adjacent edges in $K_n$. Define $p_1\coloneq \kUC\big[\omega(e)=\omega(f)=1\big]$ and $p_2\coloneq \kUC\big[\omega(e)=\omega(e')=1\big]$. 
	Then 
	\begin{enumerate}
		\item[(1)] $p_0\coloneq \kUC\big[\omega(e)=1\big]=\frac{n+k}{\binom{n}{2}}$;
		
		\item[(2)] for $n\ge3 $, the probability $p_1$ satisfies
		\[
		p_1=\frac{\bE\Big[\sum_{x\in V(K_n)}{\deg(x;\omega)^2}\Big]-2(n+k)}{n(n-1)(n-2)}\,,
		\]
		
		\item[(3)]  for $n\ge4$, the probability $p_2$ satisfies
		\[
		p_2=\frac{4\bigg((n+k)(n+k+1)-\bE\Big[\sum_{x\in V(K_n)}{\deg(x;\omega)^2}\Big]\bigg)}{n(n-1)(n-2)(n-3)}\,.
		\]
		In particular, 
		\[
		p_2=\frac{4\big[(n+k)(n+k-1)-p_1n(n-1)(n-2)\big]}{n(n-1)(n-2)(n-3)}\,.
		\]
	\end{enumerate}
	Accordingly,  the inequality \eqref{eq: def p-NC}  for a pair of adjacent edges under the measure $\kUC$ on $K_n$ is equivalent to 
	\[
	p_1\leq \frac{4(n+k)^2}{n^2(n-1)^2}=\frac{4}{n^2}+\frac{8(k+1)}{n^3}+O(n^{-4})\,,
	\]
	while the inequality \eqref{eq: def p-NC}  for a pair of non-adjacent edges is equivalent to
	\[
	p_1\geq \frac{(n+k)\big[2(k+1)(2n-3)+3(n-1)(n-2)\big]}{n^2(n-1)^2(n-2)}=\frac{3}{n^2}+\frac{7(k+1)}{n^3}+O(n^{-4})\,.
	\]
\end{lemma}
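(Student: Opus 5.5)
The plan is to reproduce the proofs of \cref{lem: first moment}, \cref{lem: second moment} and \cref{cor: 3.3} almost verbatim. The only change is that every $\omega\in\kC(K_n)$ has exactly $n+k$ open edges, whereas a $k$-forest has $n-k$. The measure $\kUC$ is uniform on a set that is invariant under $\mathrm{Aut}(K_n)$, so it is $\mathrm{Aut}(K_n)$-invariant, and edge-transitivity gives that all adjacent (resp.\ non-adjacent) pairs have the same joint probability.

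\textbf{Part (1).} Taking expectations in $\sum_{g\in E(K_n)}\mathbf 1_{\{\omega(g)=1\}}=n+k$ gives $\binom n2 p_0=n+k$.

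\textbf{Part (2).} I count ordered adjacent pairs of open edges: their number is $\sum_x\deg(x;\omega)(\deg(x;\omega)-1)=\sum_x\deg(x;\omega)^2-2(n+k)$. There are $n(n-1)(n-2)$ ordered adjacent pairs in $K_n$, so taking expectations gives the formula for $p_1$.

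\textbf{Part (3).} For each open edge $e$, the number of open edges disjoint from it is $(n+k)-[\deg(e^-;\omega)+\deg(e^+;\omega)-1]$. Summing over $e\in\eta(\omega)$ gives $(n+k)(n+k+1)-\sum_x\deg(x;\omega)^2$. There are $6\binom n4$ ordered disjoint pairs in $K_n$, which yields the formula for $p_2$. Eliminating $\bE[\sum_x\deg(x;\omega)^2]$ between (2) and (3) gives the stated relation between $p_1$ and $p_2$.

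\textbf{Equivalences.} The adjacent case is just $p_1\le p_0^2=4(n+k)^2/(n^2(n-1)^2)$. For the non-adjacent case I substitute the $p_1$–$p_2$ relation into $p_2\le p_0^2$. This is equivalent to
\[
n(n-1)(n-2)p_1\ \ge\ (n+k)(n+k-1)-\frac{(n+k)^2(n-2)(n-3)}{n(n-1)}.
\]
Factoring out $(n+k)$, the bracket becomes
\[
\frac{(n+k-1)n(n-1)-(n+k)(n-2)(n-3)}{n(n-1)}.
\]
Its numerator should simplify to $2(k+1)(2n-3)+3(n-1)(n-2)$. I would check this by expanding both sides as polynomials in $n$ and $k$. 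Dividing through then gives the displayed threshold.

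\textbf{Asymptotics.} The expansions $4/n^2+8(k+1)/n^3+O(n^{-4})$ and $3/n^2+7(k+1)/n^3+O(n^{-4})$ follow from expanding $(1-1/n)^{-2}$, $(1-2/n)^{-1}$ and the numerators to first order.

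The only step needing care is the polynomial identity in the non-adjacent equivalence. Everything else is a routine transcription of the $k$-forest case.
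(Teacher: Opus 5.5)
Your proposal is correct and follows the route the paper intends. The paper omits this proof as a direct adaptation of Lemmas~\ref{lem: first moment} and~\ref{lem: second moment} and Corollary~\ref{cor: 3.3}, with the edge count $n-k$ replaced by $n+k$. The polynomial identity you flagged does hold: both $(n+k-1)n(n-1)-(n+k)(n-2)(n-3)$ and $2(k+1)(2n-3)+3(n-1)(n-2)$ expand to $3n^2+(4k-5)n-6k$.
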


\begin{proof}[Proof of Theorem~\ref{thm: p-NC for kUC} assuming Proposition~\ref{prop: key prop for k-excess}]
This follows directly from  Lemma~\ref{lem: first and second moment for k cycles} and the estimate \eqref{eq: p1 kUC asym}. 
\end{proof}

\subsection{Preliminaries}
The enumeration of labelled connected graphs by number of vertices and edges was initiated in a series of papers by Wright \cite{Wright1977one,Wright1978two,Wright1980three}. A key result of Wright shows that the exponential generating function for connected graphs with a fixed excess can be expressed as a rational function in the tree generating function $T(z)$. Here $T(z)\in\bbC[[z]]$ is defined by
\[
T(z)\coloneq \sum_{n\ge1} n^{n-1}\frac{z^n}{n!}\,,
\]
where $T_n=n^{n-1}$ is the number of rooted trees on $n$ labelled vertices, and $\bbC[[z]]$ denotes the ring of formal power series in  $z$ with complex coefficients.

For $k\ge-1$, recall that $C_{n,n+k}\coloneq \card{\sC^{(k)}(K_n)}$, i.e., $C_{n,n+k}$ counts the number of connected, spanning subgraphs of $K_n$ with excess $k$. R\'{e}nyi \cite{Renyi1959b}  already showed that the number  $C_{n,n}$---the number of unicyclic, spanning subgraphs of $K_n$---satisfies $C_{n,n}\sim\sqrt{\frac{\pi}{8}}n^{n-\frac{1}{2}}$. We require a more refined version of this result. 

Let $W_k(z)\in\bbC[[z]]$ denote the exponential generating function associated with $C_{n,n+k}$, defined by
\[
W_k(z)\coloneq \sum_{n\geq1}C_{n,n+k}\frac{z^n}{n!}\,.
\]
We state Theorem 1 of \cite{FSS2004} below for future reference;  parts (i), (ii) and (iii) of this theorem are attributed to Cayley (and his contemporaries), R\'{e}nyi, and Wright, respectively. 
\begin{theorem}\label{thm: FSS2004 thm1}
	\begin{enumerate}
		\item[(i)] The exponential generating function of unrooted trees is 
		\[
		W_{-1}(z)=T(z)-\frac{1}{2}T^2(z)\,.
		\] 
		
		\item[(ii)] The exponential generating function of connected graphs with excess $0$ (unicyclic graphs) is 
		\[
		W_0(z)=\frac{1}{2}\log\frac{1}{1-T(z)}-\frac{1}{2}T(z)-\frac{1}{4}T^2(z)\,.
		\]
		
		\item[(iii)] The exponential generating function of connected graphs with excess $k\geq1$ is a rational function of $T(z)$: there exist polynomials $A_k$ such that 
		\[
		W_k(z)=\frac{A_k\big(T(z)\big)}{\big[1-T(z)\big]^{3k}}\,.
		\] 
		For instance (see page~135 of \cite{FS2009}),
		\[
		W_1(z)=\frac{1}{24}\frac{T^4(z)\big(6-T(z)\big)}{\big[1-T(z)\big]^3}\,,\quad
		W_2(z)=\frac{1}{48}\frac{T^4(z)\big[2+28T(z)-23T^2(z)+9T^3(z)-T^4(z)\big]}{\big[1-T(z)\big]^6}
		\,.
		\]
	\end{enumerate}
\end{theorem}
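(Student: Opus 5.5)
We will prove the three parts separately, using the symbolic method for labelled structures with the tree function $T(z)$ as the basic building block. The starting point is the functional equation $T=z e^{T}$, which holds because a rooted tree is a root together with a set of rooted subtrees. Differentiating it gives $\vartheta T = T/(1-T)$, where $\vartheta\coloneq z\frac{d}{dz}$ is the operator that marks a vertex. This identity lets $\vartheta$ act on rational functions of $T$, and it will be used repeatedly.

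For (i) we use the dissymmetry principle. In a tree, (number of vertices) $-$ (number of edges) $=1$. The EGF of vertex-rooted trees is $T$. The EGF of edge-rooted trees is $\frac12T^2$: cutting the marked edge leaves an unordered pair of rooted trees. Summing the identity over all trees gives $W_{-1}=T-\frac12T^2$. For (ii), a connected unicyclic graph is an undirected cycle of length $m\ge3$ with a rooted tree hanging from each cycle vertex. Each such cycle corresponds to $2m$ sequences of $m$ rooted trees (choice of starting point and direction). Hence
\[
W_0=\sum_{m\ge3}\frac{T^m}{2m}=\frac12\Bigl(\log\frac1{1-T}-T-\frac{T^2}{2}\Bigr),
\]
which is the stated formula.

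For (iii) we would derive Wright's recurrence by counting pairs (graph, distinguished edge) in connected graphs of excess $k+1$. The coefficient $(n+k+1)C_{n,n+k+1}$ corresponds to $(\vartheta+k+1)W_{k+1}$. Deleting the distinguished edge produces one of two things:
\begin{itemize}
\item a connected graph of excess $k$ together with a chosen non-adjacent vertex pair, which gives $\frac12(\vartheta^2-\vartheta)W_k-(\vartheta+k)W_k$;
\item an unordered pair of connected graphs of excesses $j$ and $k-j$ (the edge was a bridge), each with a marked vertex, which gives $\frac12\sum_{j}\vartheta W_j\,\vartheta W_{k-j}$.
\end{itemize}
The sum over $j$ runs from $-1$ to $k+1$. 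The terms with $j=-1$ or $j=k+1$ involve $\vartheta W_{-1}=T$ and $\vartheta W_{k+1}$, and we would move them to the left-hand side. This leaves
\[
(1-T)\vartheta W_{k+1}+(k+1)W_{k+1}=\text{(explicit expression in }W_0,\dots,W_k).
\]

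We then proceed by induction on $k$. The base case is $k=0$ on the right-hand side, where $W_0$ contains a logarithm; there we check directly that $\vartheta W_0=\frac{T^2}{2(1-T)^2}$ is rational. Assume $W_j=A_j(T)/(1-T)^{3j}$ for $1\le j\le k$. Rewriting $\vartheta=\frac{T}{1-T}\frac{d}{dT}$, each application of $\vartheta$ raises the pole order at $T=1$ by at most $2$. So $\vartheta^2W_k$ has pole order at most $3k+4$, and the products $\vartheta W_j\,\vartheta W_{k-j}$ have pole order at most $3k+4$. After accounting for the factor $(1-T)$ on the left-hand side, this gives a first-order linear ODE in the variable $T$ for $W_{k+1}$. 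Its integrating factor is a power of $T$ times a power of $(1-T)$, and the solution vanishing at $T=0$ is expected to be rational with denominator $(1-T)^{3(k+1)}$.

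The main obstacle is this last step. We must show that integrating the ODE produces no logarithmic term, and that the pole order is exactly $3(k+1)$ rather than something larger. We would first try to verify that the leading singular terms cancel, by computing the coefficient of $(1-T)^{-3k-4}$ on the right-hand side. Failing that, we would fall back on Wright's original argument or simply cite \cite{FSS2004}. As a sanity check, the resulting recursion with $k=0$ should reproduce the stated formula for $W_1$.
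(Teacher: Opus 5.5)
Your parts (i) and (ii) are correct and self-contained. Dissymmetry gives $W_{-1}=T-\frac12T^2$, and cycles of length $m\ge3$ decorated by rooted trees give $W_0=\sum_{m\ge3}T^m/(2m)$. This is more than the paper does: it proves nothing here and simply quotes Theorem~1 of \cite{FSS2004}. Your edge-marking count for (iii) is also right. It yields exactly Wright's relation $2(\vartheta+k+1)W_{k+1}=(\vartheta^2-3\vartheta-2k)W_k+\sum_{h=-1}^{k+1}\vartheta W_h\,\vartheta W_{k-h}$, which the paper quotes in the proof of Lemma~\ref{lem: W_k in terms of theta}. Since $(1-T)\vartheta=T\frac{d}{dT}$, your ODE is just $\frac{d}{dT}\bigl(T^{k+1}W_{k+1}\bigr)=T^kJ_k(T)$, with no $(1-T)$ in the integrating factor; this is \eqref{eq:thm2 in Wrightone} after $\theta=1-T$. (Small slip: $\vartheta W_0=\frac{T^3}{2(1-T)^2}$, not $\frac{T^2}{2(1-T)^2}$.)

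The gap is the step you flag, and your plan for it targets the wrong coefficient. The pole-order worry is empty. $T^kJ_k$ is rational with a pole of order at most $3k+4$ at $T=1$ and no other singularity. So its antiderivative is a rational function with pole order at most $3k+3$, plus $-a\log(1-T)$, where $a$ is the coefficient of $(1-T)^{-1}$ in the Laurent expansion of $T^kJ_k(T)$ at $T=1$.

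Part (iii) is equivalent to $a=0$ for every $k$. This is Wright's Theorem~3, which the paper also uses only as a black box. The coefficient of $(1-T)^{-3k-4}$ you propose to compute is $j_k>0$. It does not cancel; it just produces the leading coefficient $c_{k+1,-3k-3}=j_k/(3k+3)$ in \eqref{eq: recursive cks}.

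Nothing formal excludes the logarithm either. $\log\frac{1}{1-T}$ is a genuine power series in $z$ with nonnegative coefficients (it occurs in $W_0$). Its contribution to $[z^n]$ is of lower order than the pole terms, so neither the power-series requirement nor the leading asymptotics forces $a=0$. Your $k=0$ sanity check confirms $a=0$ once, not inductively.

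To close the gap you must either prove Wright's Theorem~3 or avoid the integration altogether. The standard route is the kernel decomposition. Prune pendant trees to reach the $2$-core, then suppress degree-$2$ vertices to get a multigraph kernel of minimum degree at least $3$ and excess $k$, hence with at most $2k$ vertices and $3k$ edges. There are finitely many kernels. Each kernel vertex carries a rooted tree (factor $T$), and each kernel edge becomes a path of rooted trees (factor $\frac{1}{1-T}$, with the usual corrections for loops and multiple edges, which only alter numerators). Thus $W_k$ is a finite sum of terms $P(T)/(1-T)^{e}$ with $e\le 3k$, which is (iii) directly.
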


The asymptotics of $C_{n,n}$ can be deduced from known results and will be presented in Section~\ref{sec: number of unicyclic subgraphs}. 
Our estimate for the asymptotics of  $C_{n,n}^{e,f}$ will be reduced to find the coefficients of $z^n$ in certain polynomials involving the derivatives of
 $W_k(z)$ (see Section~\ref{subsubsec: 4.3.2} for details). Using the relations between $W_k(z)$ and $T(z)$, these problems are further reduced to finding the coefficients of $z^n$ in formal power series of the form $\frac{T^{\alpha}(z)}{\big[1-T(z)\big]^{\beta}}$ for some nonnegative integers $\alpha,\beta$. In this section, we collect a few facts about $T(z)$ and then outline  how to 
 determine
  the coefficients of $z^n$ in a formal power series of the form $\frac{T^{\alpha}(z)}{\big[1-T(z)\big]^{\beta}}$.

\begin{lemma}\label{lem: facts of T(z)}
	The formal power series $T(z)\coloneq \sum_{n\ge1} n^{n-1}\frac{z^n}{n!}$ has the following properties:
	\begin{enumerate}
		\item[(1)] It is well-known (see, e.g., equation~(44), p.~127 of \cite{FS2009}) that $T(z)$ satisfies 
		\be\label{eq:T1}
		T(z)=ze^{T(z)}\,.
		\ee

		\item[(2)] The derivative of $T(z)$ satisfies(see, e.g., formula~(2.7) in the proof of Lemma~2.1 of \cite{Stark2011})
		\be\label{eq: T'}
		T'(z)=\frac{T(z)}{z\big[1-T(z)\big]}\,.
		\ee
		
		\item[(3)] For an integer $k\in[1,n]$, we have\footnote{Here and in the sequel, we use $\big[z^n\big]f(z)\coloneq f_n$ to denote the coefficient of $z^n$ in the formal power series $f(z)=\sum_{n\ge0}f_nz^n$.}  (see, e.g., Proposition~1 of \cite{FGKP1995})
		\be\label{eq:T3}
		\big[z^n\big]T^k(z)=\frac{kn^{n-k-1}}{(n-k)!}.
		\ee
		
		\item[(4)] For an integer $k\in[1,n]$, we have (see, e.g.,  formula~(2.13) in Lemma~2.3 of \cite{Stark2011}) 
		\be\label{eq:T4}
		\big[z^n\big]\frac{T^k(z)}{1-T(z)}=\frac{n^{n-k}}{(n-k)!}\,.
		\ee
	\end{enumerate}
\end{lemma}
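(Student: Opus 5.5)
The plan is to derive all four properties from the functional equation \eqref{eq:T1}, using Lagrange inversion as the one nontrivial tool. Items (3) and (4) will follow from (1) and (2) with almost no extra work.

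\textbf{Step 1: the functional equation \eqref{eq:T1}.} I would prove it combinatorially. A rooted labelled tree on $n$ vertices is the same as a choice of root together with an unordered set of rooted subtrees: remove the root and root each remaining component at the neighbour of the old root. In the language of exponential generating functions this decomposition reads $T(z)=z\exp(T(z))$. As a check, the coefficients $T_n=n^{n-1}$ agree with Cayley's formula, since each of the $n^{n-2}$ unrooted trees has $n$ choices of root.

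\textbf{Step 2: the derivative \eqref{eq: T'}.} Differentiate \eqref{eq:T1} formally to get $T'=e^{T}+ze^{T}T'$. By \eqref{eq:T1}, $e^{T}=T/z$ and $ze^{T}=T$, so the identity becomes $T'=T/z+TT'$. Hence $T'(1-T)=T/z$. Since $1-T$ has constant term $1$, it is invertible in $\bbC[[z]]$, and \eqref{eq: T'} follows.

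\textbf{Step 3: the coefficients \eqref{eq:T3}.} Equation \eqref{eq:T1} says $T=z\varphi(T)$ with $\varphi(u)=e^{u}$ and $\varphi(0)\neq0$. The Lagrange inversion formula applied to $H(u)=u^{k}$ then gives
\[
[z^n]T^{k}=\frac{1}{n}[u^{n-1}]\,ku^{k-1}e^{nu}=\frac{k}{n}[u^{n-k}]e^{nu}=\frac{k}{n}\cdot\frac{n^{n-k}}{(n-k)!},
\]
which equals $\frac{kn^{n-k-1}}{(n-k)!}$, as claimed.

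\textbf{Step 4: the coefficients \eqref{eq:T4}.} By \eqref{eq: T'},
\[
zT'(z)\,T^{k-1}(z)=\frac{T^{k}(z)}{1-T(z)},
\]
and the left side equals $\frac{z}{k}\frac{d}{dz}T^{k}(z)$. The operator $z\frac{d}{dz}$ multiplies the coefficient of $z^n$ by $n$, so
\[
[z^n]\frac{T^{k}}{1-T}=\frac{n}{k}[z^n]T^{k}=\frac{n}{k}\cdot\frac{kn^{n-k-1}}{(n-k)!}=\frac{n^{n-k}}{(n-k)!}.
\]

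The only step needing any care is the appeal to Lagrange inversion in Step 3. Its hypotheses (a formal series $\varphi$ with $\varphi(0)\ne0$, and $1\le k\le n$ so that the exponents are nonnegative) are clearly met here. Everything else is formal manipulation in $\bbC[[z]]$.
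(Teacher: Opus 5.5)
Your proof is correct. The paper does not prove this lemma at all: each item is simply quoted from the literature (Flajolet--Sedgewick for (1), Stark's formulas (2.7) and (2.13) for (2) and (4), and Proposition~1 of Flajolet--Grabner--Kirschenhofer--Prodinger for (3)).

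Your proposal instead derives everything from the functional equation. This makes the lemma self-contained and exposes the logical dependencies:
\begin{itemize}
\item (2) is just formal differentiation of (1);
\item (3) is a single application of Lagrange inversion;
\item (4) is not an independent fact but a one-line corollary of (2) and (3) via the operator $z\frac{d}{dz}$. This is a nicer route than citing a separate formula for it.
\end{itemize}

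One small logical point concerns Step~1. The tree decomposition shows that the exponential generating function $R$ of rooted labelled trees satisfies $R=ze^{R}$. To get the identity for the series $T$, which is defined by its coefficients $n^{n-1}/n!$, you need $R=T$, and that is exactly Cayley's formula. So your ``check'' is a required step, not a sanity check. The paper does use Cayley's formula elsewhere, so relying on it is legitimate.

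You can also avoid the combinatorics and Cayley entirely. The equation $R=ze^{R}$ has a unique solution in $z\mathbb{C}[[z]]$, because the coefficient of $z^n$ on the right depends only on lower-order coefficients of $R$. Your Step~3 with $k=1$ then shows this solution has coefficients $\frac{1}{n}[u^{n-1}]e^{nu}=n^{n-1}/n!$, so it equals $T$. This proves (1) purely algebraically, gives Cayley's formula as a byproduct, and makes the whole lemma a formal consequence of Lagrange inversion.
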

Formulas \eqref{eq:T3} and \eqref{eq:T4} provide the coefficients of $z^n$ in the power series $\frac{T^{\alpha}(z)}{\big[1-T(z)\big]^{\beta}}$ when $\beta\in\{0,1\}$. For $\beta\ge2$, we require the method of singularity analysis \cite{FO1990},  and the procedure proceeds as follows. 
\begin{itemize}
	\item[(a)] The formal power series $T(z)$ has radius of convergence $e^{-1}$. If we set $x=ez$, then  $T(z)$ can be viewed as an analytic function of $x$ in a domain $\Delta(\eta,\phi)\coloneq\{x\in\bbC\colon  |x|\leq 1+\eta,\big|\mathrm{Arg}(x-1)\big| \geq \phi \}$  for some $\eta>0,\phi\in(0,\frac{\pi}{2})$ (see Example VI.8 in \cite{FS2009}). 
	Moreover, if we write $w=\sqrt{1-ez}$ and let  $x=ez\to 1$ in $\Delta(\eta,\phi)$, then $T(z)$ admits the following asymptotic expansion at its dominant singularity $e^{-1}$ (see, e.g., formula~(4.3) in \cite{Stark2011}):
	\be\label{eq:expansion of T(z)}
	T(z)=1-\sqrt{2}w+\frac{2}{3}w^2-\frac{11}{36}\sqrt{2}w^3+\frac{43}{135}w^4-\frac{769}{4320}\sqrt{2}w^5+O(w^6)
	\ee

	\item[(b)] Using \eqref{eq:expansion of T(z)} and Newton's binomial theorem, we can derive the asymptotic expansion of $\frac{T^{\alpha}(z)}{\big[1-T(z)\big]^{\beta}}$ in powers of $w$ as $x=ez\to1$ in $\Delta(\eta,\phi)$, i.e., 
	\be\label{eq: computable coef}
	\frac{T^{\alpha}(z)}{\big[1-T(z)\big]^{\beta}}=c_0w^{-\beta}+c_1w^{-\beta+1}+c_2 w^{-\beta+2}+\cdots\,,
	\ee
	with explicitly computable constants $c_0,c_1,c_2\ldots$. For instance, 
	\[
	c_0=2^{-\frac{\beta}{2}},\quad c_1=2^{-\frac{\beta}{2}}\big(\frac{\sqrt{2}\beta}{3}-\sqrt{2}\alpha\big)\,,
	\text{ and } c_2=2^{-\frac{\beta}{2}}\frac{36\alpha^2+60\alpha+4\beta^2-7\beta-24\alpha\beta}{36}\,.
	\]
    The computation of the constants $c_i$ is deferred to the appendix. 
	\item[(c)] The singular analysis method developed in \cite{FO1990} then allows us to extract the asymptotic behavior of the coefficients of $z^n$ in $\frac{T^{\alpha}(z)}{\big[1-T(z)\big]^{\beta}}$. Indeed, formulas (2.1) and (2.2) in \cite{FO1990} show that for $s\in\bbR\setminus\{0,1,2,3,\ldots\}$,  
	\[
	\big[z^n\big](1-z)^{s}=\binom{n-s-1}{n}\sim \frac{n^{-s-1}}{\Gamma(-s)}\bigg[1+\frac{s(s+1)}{2n}+\frac{s(s+1)(s+2)(3s+1)}{24n^2}+O(n^{-3})\bigg]\,.
	\]
	Applying this with $s=\frac{k}{2}$,  we obtain for $k\in\bbR\setminus\{0,2,4,6,8,\ldots\}$, 
	\be\label{eq:coef of w}
	\big[z^n\big]w^{k}=\big[z^n\big](1-ez)^{\frac{k}{2}}
	=\frac{e^nn^{-\frac{k}{2}-1}}{\Gamma(-\frac{k}{2})}\bigg[1+\frac{k(k+2)}{8n}+O(n^{-2})\bigg]\,.
	\ee

	\item[(d)] Finally, using Corollary 3 in \cite{FO1990} we  deduce the asymptotic behavior of $\big[z^n\big]\frac{T^{\alpha}(z)}{\big[1-T(z)\big]^{\beta}}$ for integers $\alpha\ge0,\beta\ge2$:
	\begin{align}\label{eq:T_alpha_beta}
		\big[z^n\big]\frac{T^{\alpha}(z)}{\big[1-T(z)\big]^{\beta}}
		&=c_0\big[z^n\big]w^{-\beta}+c_1\big[z^n\big]w^{-\beta+1}+c_2\big[z^n\big]w^{-\beta+2}+O(e^nn^{\frac{\beta}{2}-\frac{5}{2}})\nonumber\\
		&=c_0\frac{e^{n}n^{\frac{\beta}{2}-1}}{\Gamma(\frac{\beta}{2})}\big[1+\frac{\beta(\beta-2)}{8n}+O(n^{-2})\big]+c_1\frac{e^{n}n^{\frac{\beta-1}{2}-1}}{\Gamma(\frac{\beta-1}{2})}\big[1+O(n^{-1})\big] \nonumber\\
		& \quad +c_2\frac{e^nn^{\frac{\beta-2}{2}-1}}{\Gamma(\frac{\beta-2}{2})}\big[1+O(n^{-1})\big]\cdot \mathbf{1}_{\beta>2}+O(e^nn^{\frac{\beta}{2}-\frac{5}{2}}) \nonumber\\
		&=\frac{2^{-\frac{\beta}{2}}}{\Gamma(\frac{\beta}{2})}e^nn^{\frac{\beta}{2}-1}
		+\frac{2^{-\frac{\beta}{2}}\big(\frac{\sqrt{2}\beta}{3}-\sqrt{2}\alpha\big)}{\Gamma\big(\frac{\beta-1}{2}\big)}e^nn^{\frac{\beta}{2}-\frac{3}{2}} \nonumber\\
		&\quad + \bigg[\frac{2^{-\frac{\beta}{2}} \beta(\beta-2) }{8\Gamma(\frac{\beta}{2})}+\frac{c_2}{\Gamma(\frac{\beta-2}{2})}\cdot \mathbf{1}_{\beta>2}\bigg] e^nn^{\frac{\beta}{2}-2} +O(e^nn^{\frac{\beta}{2}-\frac{5}{2}})\,.
	\end{align} 
\end{itemize}

\subsection{The case $k=0$ of Proposition~\ref{prop: key prop for k-excess}}

\subsubsection{The asymptotics of $C_{n,n}$}\label{sec: number of unicyclic subgraphs}

\begin{proposition}\label{prop: asymptotic of cnn}
	The number  of unicyclic subgraphs of $K_n$ has the following asymptotic behavior:
	\be\label{eq: cnn}
	C_{n,n}=\sqrt{\frac{\pi}{8}}n^{n-\frac{1}{2}}-\frac{7}{6}n^{n-1}+\frac{1}{24}\sqrt{\frac{\pi}{2}}\,n^{n-\frac{3}{2}}+O(n^{n-2})\,.
	\ee
\end{proposition}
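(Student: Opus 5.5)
The plan is to extract $C_{n,n}=n!\,[z^n]W_0(z)$ directly from Theorem~\ref{thm: FSS2004 thm1}(ii), treating the three summands of
\[
W_0(z)=\frac12\log\frac{1}{1-T(z)}-\frac12T(z)-\frac14T^2(z)
\]
separately, and then combining them with Stirling's formula.

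\textbf{Polynomial terms.} These are exact by \eqref{eq:T3}. We have $n![z^n]T(z)=n^{n-1}$ and $n![z^n]T^2(z)=2(n-1)n^{n-2}$. Hence
\[
n![z^n]\Big(-\tfrac12T-\tfrac14T^2\Big)=-n^{n-1}+\tfrac12n^{n-2},
\]
which contributes $-n^{n-1}+O(n^{n-2})$.

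\textbf{Logarithmic term.} I would avoid analysing the logarithm directly and differentiate instead. By \eqref{eq: T'},
\[
\frac{d}{dz}\log\frac{1}{1-T}=\frac{T'}{1-T}=\frac{T}{z(1-T)^2}.
\]
Comparing coefficients gives
\[
n[z^n]\log\frac{1}{1-T}=[z^n]\frac{T}{(1-T)^2}.
\]
This is of the form $T^\alpha/(1-T)^\beta$ with $\alpha=1$, $\beta=2$, so step (d) applies, i.e.\ \eqref{eq:T_alpha_beta}. In that formula the $n^{\beta/2-2}$ coefficient vanishes: $\beta(\beta-2)=0$, and the $c_2$ term is excluded since $\beta\not>2$. (Equivalently, $c_2w^0$ is analytic and $w^{-2}=(1-ez)^{-1}$ has coefficients exactly $e^n$.) With $c_1=\tfrac12\big(\tfrac{2\sqrt2}{3}-\sqrt2\big)=-\tfrac{\sqrt2}{6}$ and $\Gamma(\tfrac12)=\sqrt\pi$, this yields
\[
[z^n]\frac{T}{(1-T)^2}=e^n\Big[\frac12-\frac{\sqrt2}{6\sqrt\pi}n^{-1/2}+0\cdot n^{-1}+O(n^{-3/2})\Big].
\]

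\textbf{Combining.} Multiply by $\tfrac12\cdot n!/n$ and use Stirling,
\[
n!=\sqrt{2\pi n}\,n^ne^{-n}\big(1+\tfrac1{12n}+O(n^{-2})\big).
\]
The leading term is $\tfrac14\sqrt{2\pi}\,n^{n-1/2}=\sqrt{\pi/8}\,n^{n-1/2}$. The $n^{n-1}$ term is $\tfrac12\sqrt{2\pi}\cdot\big(-\tfrac{\sqrt2}{6\sqrt\pi}\big)=-\tfrac16$; added to the $-1$ from the polynomial part, this gives $-\tfrac76$. The $n^{n-3/2}$ term comes only from the Stirling correction, namely $\sqrt{\pi/8}\cdot\tfrac1{12}=\tfrac1{24}\sqrt{\pi/2}$. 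Everything else, including $\tfrac12n^{n-2}$, the $O(n^{-3/2})$ error above, and the $O(n^{-2})$ Stirling remainder, is $O(n^{n-2})$, which gives \eqref{eq: cnn}.

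\textbf{Main obstacle.} The delicate point is the precision of the singularity analysis. I need the $e^nn^{-1}$ coefficient of $[z^n]T/(1-T)^2$ exactly, not merely bounded, and the error must be $O(e^nn^{-3/2})$. I would justify both by noting that the expansion \eqref{eq:expansion of T(z)} gives $T/(1-T)^2=c_0w^{-2}+c_1w^{-1}+c_2+O(w)$ in the $\Delta$-domain. Transfer (Corollary 3 of \cite{FO1990}) then shows that the $O(w)$ remainder contributes $O(e^nn^{-3/2})$, that the constant $c_2$ contributes only to $[z^0]$, and that \eqref{eq:coef of w} with $k=-1$ has no $n^{-1}$ correction at the order needed.
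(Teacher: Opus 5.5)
Your proof is correct. It handles the logarithmic term by a different route from the paper.

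The paper applies Lagrange inversion to write $n![z^n]\log\frac{1}{1-T(z)}=Q(n)\,n^{n-1}$, where $Q(n)$ is Ramanujan's $Q$-function. It then imports the expansion $Q(n)\sim\sqrt{\pi n/2}-\frac13+\frac1{12}\sqrt{\pi/(2n)}+\cdots$ from \cite{FGKP1995}. The polynomial terms are handled exactly as you do.

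You instead differentiate, which reduces the logarithm to $[z^n]\,T/(1-T)^2$. You then run the paper's own transfer machinery \eqref{eq:T_alpha_beta} with $(\alpha,\beta)=(1,2)$. You correctly observe that the $e^n n^{-1}$ coefficient vanishes and that the remainder is $O(e^n n^{-3/2})$.

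The two routes agree. Your identity gives $[z^n]\frac{T}{(1-T)^2}=\frac{n^n}{n!}Q(n)$. Your three-term expansion of this coefficient is exactly what the quoted expansion of $Q(n)$ plus Stirling produces: the two $n^{-1}$ contributions, $\frac1{24}$ and $-\frac1{24}$, cancel. In effect, you rederive the first three terms of the $Q$-function asymptotics inline by singularity analysis.

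What each route buys:
\begin{itemize}
\item The paper's route yields the exact closed form $C_{n,n}=\frac12 n^{n-1}Q(n)-n^{n-1}+\frac12 n^{n-2}$, and gives further terms of the expansion at no extra cost. The price is reliance on an external asymptotic result.
\item Your route is self-contained and matches the singularity analysis used later for $I_3$ and for $W_k$ with $k\ge1$. Going to higher order, however, would require more terms of \eqref{eq:expansion of T(z)} and of the transfer formula.
\end{itemize}
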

	The asymptotic expansion \eqref{eq: cnn} is simply a combination of known results;  we present the details here for readers' convenience. 
\begin{proof}[Proof of Proposition~\ref{prop: asymptotic of cnn}]

Since $T(z)$ satisfies \eqref{eq:T1}, Lagrange's inversion formula gives 
\[
\log\frac{1}{1-T(z)}=\sum_{n=1}^{\infty}Q(n)n^{n-1}\frac{z^n}{n!}\,,
\]
where $Q_n$ denotes Ramanujan's $Q$-function defined by
\[
Q(n)=1+\frac{n-1}{n}+\frac{(n-1)(n-2)}{n^2}+\frac{(n-1)(n-2)(n-3)}{n^3}+\cdots\,.
\]
Moreover, $Q(n)$ admits the following asymptotic expansion (Theorem 2 in \cite{FGKP1995}):
\be\label{eq: aymp of Qn}
Q(n)\sim \sqrt{\frac{\pi n}{2}}-\frac{1}{3}+\frac{1}{12}\sqrt{\frac{\pi}{2n}}-\frac{4}{135n}+\cdots\,.
\ee
Applying \eqref{eq:T3} with $k=2$ yields
\[
\big[z^n\big]T^2(z)=\frac{2n^{n-3}}{(n-2)!}\,.
\]
We now derive \eqref{eq: cnn}. Using $	W_0(z)=\frac{1}{2}\log\frac{1}{1-T(z)}-\frac{1}{2}T(z)-\frac{1}{4}T^2(z)$ we have
\begin{align*}
C_{n,n}&=n!\big[z^n\big]W_0(z)=\frac{1}{2}n!\big[z^n\big]\log\frac{1}{1-T(z)}-\frac{1}{2}n!\big[z^n\big]T(z)-\frac{1}{4}n!\big[z^n\big]T^2(z)\\
&=\frac{1}{2}Q(n)n^{n-1}-\frac{1}{2}n^{n-1}-\frac{1}{4}n!\cdot \frac{2n^{n-3}}{(n-2)!}\\
&\stackrel{\eqref{eq: aymp of Qn}}{=}\sqrt{\frac{\pi}{8}}n^{n-\frac{1}{2}}-\frac{7}{6}n^{n-1}+\frac{1}{24}\sqrt{\frac{\pi}{2}}\,n^{n-\frac{3}{2}}+O(n^{n-2})\, . \qedhere
\end{align*}

\end{proof}

\subsubsection{The  asymptotics of $C_{n,n}^{e,f}$}
\label{subsubsec: 4.3.2}

Suppose $e$ and $f$ are a pair of adjacent edges of $K_n$. In this subsection we establish the following asymptotic formula for $C_{n,n}^{e,f}\coloneq \card{\{\omega\in \sC^{(0)}(K_n)\colon \omega(e)=\omega(f)=1 \}}$.
\begin{proposition}\label{prop: asymp of cnnef}
	For a pair of adjacent edges $e,f$ of $K_n$,  we have 
	\be\label{eq: asymp of cnnef}
	C_{n,n}^{e,f}
	=\frac{3\sqrt{2\pi}}{4}n^{n-\frac{5}{2}}-\frac{7}{2}n^{n-3}+
	\frac{37\sqrt{2\pi}}{16}n^{n-\frac{7}{2}}+O(n^{n-4})\,.
	\ee
\end{proposition}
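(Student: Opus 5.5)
The plan is to obtain an exact generating-function identity for $C_{n,n}^{e,f}$ by a deletion decomposition, reduce it to coefficients of $T^{\alpha}/(1-T)^{\beta}$, and then extract asymptotics with Lemma~\ref{lem: facts of T(z)} and \eqref{eq:T_alpha_beta}.

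\textbf{Step 1: decomposition.} Write $e=\{u,v\}$ and $f=\{v,w\}$. Take a unicyclic spanning subgraph $H$ of $K_n$ containing $e,f$ and delete these two edges. What remains is a spanning graph with $n$ vertices and $n-2$ edges. Its excess is therefore $-2$, and each of its components has excess at least $-1$. Every component of $H\setminus\{e,f\}$ contains at least one of $u,v,w$, so there are two or three components. This leaves exactly two cases.
\begin{enumerate}
\item[(A)] There are three components, containing $u$, $v$, $w$ respectively. Two are trees and one is unicyclic.
\item[(B)] There are two components, both trees. One contains two of $u,v,w$ and the other contains the third; there are three ways to choose the split.
\end{enumerate}
Conversely, re-adding $e,f$ to any configuration of type (A) or (B) gives a connected graph with excess $0$, so the correspondence is a bijection.

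\textbf{Step 2: exact formula.} Components containing prescribed labelled vertices are counted by derivatives of the exponential generating functions: $W'$ for one fixed vertex and $W''$ for two. This gives
\[
C_{n,n}^{e,f}=(n-3)!\,\big[z^{n-3}\big]\Big(3W_0'(z)\,W_{-1}'(z)^2+3W_{-1}''(z)\,W_{-1}'(z)\Big).
\]

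\textbf{Step 3: rewrite in terms of $T$.} Using Theorem~\ref{thm: FSS2004 thm1} and \eqref{eq: T'}, one computes
\[
W_{-1}'=\frac{T}{z},\qquad W_{-1}''=\frac{T^2}{z^2(1-T)},\qquad W_0'=\frac{T'}{2}\Big(\frac1{1-T}-1-T\Big)=\frac{T^3}{2z(1-T)^2}.
\]
Hence the formula becomes
\[
C_{n,n}^{e,f}=(n-3)!\,\big[z^n\big]\Big(\frac32\frac{T^5(z)}{(1-T(z))^2}+3\frac{T^3(z)}{1-T(z)}\Big).
\]
The second term is exact by \eqref{eq:T4}: it contributes $3n^{n-3}$.

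\textbf{Step 4: asymptotics of the first term.} Apply \eqref{eq:T_alpha_beta} with $\alpha=5$ and $\beta=2$. Here the $c_2$ term drops out because of the indicator $\mathbf 1_{\beta>2}$. This gives
\[
\big[z^n\big]\frac{T^5}{(1-T)^2}=e^n\Big(\frac12+\frac{\tfrac{2\sqrt2}{3}-5\sqrt2}{2\sqrt\pi}\,n^{-1/2}+0\cdot n^{-1}+O(n^{-3/2})\Big).
\]
Multiply this by Stirling's expansion
\[
(n-3)!=\sqrt{2\pi}\,n^{n-5/2}e^{-n}\Big(1+\frac{c}{n}+O(n^{-2})\Big),
\]
where $c$ is obtained from $n!/(n(n-1)(n-2))$. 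Then collect the terms of orders $n^{n-5/2}$, $n^{n-3}$ and $n^{n-7/2}$, adding the exact $3n^{n-3}$ to the middle one. As a check, the leading coefficient comes out as $\tfrac32\cdot\tfrac12\sqrt{2\pi}=\tfrac{3\sqrt{2\pi}}{4}$, which agrees with \eqref{eq: asymp of cnnef}.

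\textbf{Main obstacle.} The hard part is the bookkeeping in Step 4. The $n^{n-7/2}$ coefficient combines three contributions: the $\beta(\beta-2)/8$ correction, which vanishes at $\beta=2$; the second-order term of Stirling's formula for $(n-3)!$; and the cross term with $c_1$. The error in \eqref{eq:T_alpha_beta} is of relative order $n^{-3/2}$, which is exactly enough to leave an $O(n^{n-4})$ remainder. I expect to spend most of the effort verifying $c_1$ for $(\alpha,\beta)=(5,2)$ and confirming that the constants $-\tfrac72$ and $\tfrac{37\sqrt{2\pi}}{16}$ come out correctly.
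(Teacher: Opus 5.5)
Your argument is the paper's argument with different bookkeeping. Your case (A) is the paper's $3I_3$. The three splits in case (B) are the paper's $I_1$ (pair $\{u,w\}$, both edges on the cycle) and $2I_2$ (pairs $\{u,v\}$ and $\{v,w\}$, exactly one edge on the cycle). The reduction to $T^5/(1-T)^2$ and $T^3/(1-T)$ and the singularity analysis are then the same.

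However, the converse half of your bijection in Step~1, and with it the ``exact formula'' of Step~2, is false. For the split $\{u,v\}\,|\,\{w\}$, the factor $W_{-1}''$ counts every tree through $u$ and $v$, including those that use the edge $e=\{u,v\}$ itself. Such a forest cannot arise by deleting $e$ and $f$, and re-adding $e,f$ to it produces a spanning tree, not a unicyclic graph. The same happens for the split $\{v,w\}\,|\,\{u\}$ and the edge $f$. Removing $e$ (resp.\ $f$) puts these bad forests in bijection with $3$-forests separating $u,v,w$. So each family has $(n-3)!\,[z^{n-3}](W_{-1}')^3=(n-3)!\,[z^n]T^3=3n^{n-4}$ members, and the correct identity is
\[
C_{n,n}^{e,f}=(n-3)!\,\big[z^{n-3}\big]\Big(3W_0'(W_{-1}')^2+3W_{-1}'W_{-1}''-2(W_{-1}')^3\Big).
\]
The exact part is therefore $3n^{n-3}-6n^{n-4}$ rather than $3n^{n-3}$; this is the $\widehat N$ correction in the paper's computation of $I_2$.

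Since the overcount $6n^{n-4}$ is $O(n^{n-4})$, it does not affect the expansion you are asked to prove, and the rest of your computation is correct:
\begin{itemize}
\item $c_1=-\tfrac{13\sqrt2}{6}$;
\item the $n^{-1}$ term of $e^{-n}[z^n]T^5/(1-T)^2$ vanishes;
\item $c=\tfrac{37}{12}$.
\end{itemize}
These give $\tfrac32(n-3)!\,[z^n]\,T^5/(1-T)^2=\tfrac{3\sqrt{2\pi}}{4}n^{n-5/2}-\tfrac{13}{2}n^{n-3}+\tfrac{37\sqrt{2\pi}}{16}n^{n-7/2}+O(n^{n-4})$, and adding $3n^{n-3}$ gives $-\tfrac72$.

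Contrary to your expectation, no $c_1$ cross term enters at order $n^{n-7/2}$. The $c_1$ contribution is already of relative order $n^{-1/2}$, so its products with the Stirling correction and with its own $1/n$ correction are of relative order $n^{-3/2}$. The $n^{n-7/2}$ coefficient is just $\tfrac32\cdot\tfrac12\cdot\tfrac{37}{12}\sqrt{2\pi}$.

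To make the proof correct, either subtract the $6n^{n-4}$, or drop the word ``exact'' and state explicitly that the overcounted configurations number $O(n^{n-4})$.
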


We start with a straightforward extension of \cite[Lemma 2.2]{Stark2011}. 
\begin{lemma}\label{lem:generalisation of stark lem2.2}
	Let $p,q$ be two  integers satisfying $1\leq q\leq p$,  and let $\{m_i\}_{i=1}^{p}$  be a sequence of integers with $m_i>0$ for $1\leq i \leq q$ and $m_i=0$ for $q+1\leq i\leq p$, such that $d\coloneq \sum_{i=1}^{q}m_i\le n$. Suppose $S_i \subseteq V(K_n)$ are mutually disjoint subsets with $\card{S_i}=m_i$ for all $1\le i \le q$. Let $k_1,\ldots,k_p$ be $p$ integers at least $-1$. Then the number of subgraphs of $K_n$ with $p$ components that satisfy the following three conditions:
	\begin{itemize}
		\item for each $i\in[1,q]$, each vertex in $S_i$ is contained in the same component;
		\item for $i \neq j$, the vertices in $S_i$ and $S_j$ are contained in distinct components;
		\item for each $i\in[1,p]$, the component containing $S_i$ has excess $k_i$;
	\end{itemize}
	is
	\be\label{eq:lemma2.2 1}
	\frac{(n-d)!}{(p-q)!}\big[z^{n-d}\big]\prod_{q+1\le i\le p}W_{k_i}(z)\prod_{1\le j\le q}W_{k_j}^{(m_j)}(z),
	\ee
	where $W_{k_j}^{(m_j)}(z)$ is the $m_j$-th derivative of $W_{k_j}(z)$.
\end{lemma}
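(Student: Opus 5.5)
We prove \eqref{eq:lemma2.2 1} by the exponential formula for labelled structures, closely following \cite[Lemma 2.2]{Stark2011}. The idea is to read $W_k^{(m)}(z)$ combinatorially: the coefficient $m!\,[z^N]W_k^{(m)}(z)\cdot N!/m!$, i.e. $N!\,[z^N]W_k^{(m)}(z)=C_{N+m,N+m+k}$, counts connected graphs with excess $k$ on $N+m$ labelled vertices, $m$ of which are distinguished and fixed. Equivalently, differentiating $m$ times shifts indices so that $W_k^{(m)}(z)=\sum_{N\ge0}C_{N+m,N+m+k}\,z^N/N!$. Thus $W_{k_j}^{(m_j)}$ is the EGF, in the variable marking only the \emph{non-distinguished} vertices, of connected excess-$k_j$ graphs containing a prescribed $m_j$-set.

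\textbf{Step 1: decomposition.} Any subgraph counted by the lemma is determined by three pieces of data. The first is an ordered assignment of the $n-d$ free vertices (those outside $S_1\cup\cdots\cup S_q$) into $p$ blocks $U_1,\dots,U_p$, where $U_i$ is nonempty for $i>q$. The second is, for each $j\le q$, a connected excess-$k_j$ graph on $S_j\cup U_j$. The third is, for each $i>q$, a connected excess-$k_i$ graph on $U_i$.

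The components containing the $S_j$ are distinguishable by their $S_j$, so their labelling carries no overcount. The $p-q$ remaining components are unlabelled but carry prescribed excesses $k_{q+1},\dots,k_p$. Ordering them as the $(q+1)$-th through $p$-th blocks overcounts each configuration exactly $(p-q)!$ times when we sum over all orderings. This is the same convention as in \cite{Stark2011}, and it is exact whenever the multiset of $k_i$ is read with labelled slots. I will state explicitly that the count is of \emph{ordered} tuples of components divided by $(p-q)!$, matching the statement.

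\textbf{Step 2: product of EGFs.} By the labelled product rule, the number of ordered configurations equals
\[
(n-d)!\,[z^{n-d}]\prod_{j\le q}W_{k_j}^{(m_j)}(z)\prod_{i>q}W_{k_i}(z).
\]
Here each factor $W_{k_i}$ has no constant term, since $C_{0,k}$ is absent as the sum starts at $n\ge1$; this enforces that $U_i$ is nonempty for $i>q$. The factors $W^{(m_j)}$ allow $U_j=\emptyset$. Dividing by $(p-q)!$ gives \eqref{eq:lemma2.2 1}.

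The only delicate point, and the one I expect to require care, is the symmetry factor. Dividing by $(p-q)!$ is exact only if we regard the unlabelled components as carrying their assigned slot excess, so that permutations of components with different excess $k_i$ also get divided out. I would check this against the intended use. In the application all free components will have the same excess pattern summed over, or $p-q\le1$, where no issue arises. I would phrase the proof so that the identity holds as a count of configurations in which the excess list is assigned to the unlabelled components up to permutation, consistent with Stark's convention.
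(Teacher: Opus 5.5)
Your proposal is correct and follows essentially the same route as the paper. The paper writes the count as $\frac{1}{(p-q)!}\sum_{a_1+\cdots+a_p=n,\,a_i\ge m_i}\binom{n-d}{a_1-m_1,\dots,a_p-m_p}\prod_{i} C_{a_i,a_i+k_i}$ and identifies this with the coefficient extraction, which is exactly your labelled-product reading of $W_k^{(m)}(z)=\sum_{N\ge 0}C_{N+m,N+m+k}z^N/N!$.

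The symmetry-factor caveat you raise is genuine: dividing by $(p-q)!$ is exact only when $k_{q+1}=\cdots=k_p$ or $p-q\le 1$. The paper passes over this point silently, but it is harmless because every application of the lemma in the paper has $p=q$.
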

\begin{proof}
	 The number of subgraphs of $K_n$ that satisfy the conditions of the lemma is equal to 
	\be    \label{eq:lemma2.2 2}
	\frac{1}{(p-q)!}\sum_{\substack{a_1+\cdots +a_p=n,\\a_i\ge m_i}}\binom{n-d}{a_1-m_1,\cdots,a_p-m_p}\prod_{1\le i \le p} C_{a_i,a_i+k_i}.
	\ee
	It is straightforward to  verify that \eqref{eq:lemma2.2 1} equals \eqref{eq:lemma2.2 2}.
\end{proof}

\begin{lemma}\label{lem: decomposition into cases}
	Suppose $e=(u,v)$ and $f=(u,w)$ are a pair of adjacent edges in $K_n$ with  common vertex $u$. Let $I_1$ be the number of unicyclic subgraphs of $K_n$ such that $e$ and $f$ both lie on the cycle. Let $I_2$ be the number of unicyclic subgraphs of $K_n$ such that $e$ lies on the cycle but $f$ does not. Let $I_3$ be the number of unicyclic subgraphs of $K_n$ such that neither $e$ nor $f$ lies on the cycle and $u$ lies in the component containing the cycle after deleting $e$ and $f$ from the unicyclic subgraph. Then we have 
	\[
		C_{n,n}^{e,f}
	=I_1+2I_2+3I_3\,.
	\]
\end{lemma}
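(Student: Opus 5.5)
We would prove the identity by partitioning the unicyclic subgraphs $\omega\in\sC^{(0)}(K_n)$ with $\omega(e)=\omega(f)=1$ according to how $e$ and $f$ sit relative to the unique cycle of $\omega$. Each class is then identified with one of $I_1,I_2,I_3$, using symmetries of $K_n$ (relabellings of vertices) applied locally to pieces of the graph.

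First, the trichotomy. Either both $e,f$ lie on the cycle, or exactly one does, or neither does.
\begin{itemize}
\item The first class is by definition counted by $I_1$.
\item For the second class, the configurations with $e$ on the cycle and $f$ off it are counted by $I_2$. For those with $f$ on the cycle and $e$ off it, we apply the transposition $(v\,w)$ of $V(K_n)$ to the whole graph. This fixes $u$ and swaps $e=(u,v)$ with $f=(u,w)$, so it is a bijection onto the configurations with $e$ on the cycle and $f$ off it. Hence this class also has cardinality $I_2$, giving the term $2I_2$.
\end{itemize}

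Next, the structural step for the third class, where neither $e$ nor $f$ lies on the cycle. Then both are bridges of $\omega$. We claim that $\omega-\{e,f\}$ has exactly three components $A_u\ni u$, $A_v\ni v$, $A_w\ni w$. Since $e$ is a bridge, $u$ and $v$ are separated, and likewise $u$ and $w$. Suppose $v$ and $w$ were joined by a path in $\omega-\{e,f\}$. Together with $f$ and $e$ this path would close a cycle through $e$ and $f$. That cycle would be a second cycle of $\omega$, distinct from the unique one, a contradiction. The cycle of $\omega$ lies in exactly one of $A_u,A_v,A_w$; the other two components are trees. By definition $I_3$ counts the case where the cycle lies in $A_u$.

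Finally, the main step is to show that the case "cycle in $A_v$" is also equinumerous with $I_3$, and similarly for $A_w$. Let $\sigma=(u\,v)$ be the transposition of vertex labels. Given $\omega$ with cycle in $A_v$, define
\[
\omega'=\sigma(A_u)\cup\sigma(A_v)\cup A_w\cup\{e,f\}.
\]
Here $A_w$ is untouched, since $u,v\notin A_w$. The piece $\sigma(A_v)$ is unicyclic and contains $u$, and $\sigma(A_u)$ is a tree containing $v$; their vertex sets are disjoint and, together with $A_w$, cover $V(K_n)$. The edge $e=(u,v)$ joins $\sigma(A_v)$ to $\sigma(A_u)$, and $f=(u,w)$ joins $\sigma(A_v)$ to $A_w$. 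So $\omega'$ is a connected unicyclic spanning subgraph containing $e$ and $f$, neither of which is on its cycle, and its cycle lies in the component of $u$ after deleting $e,f$. Thus $\omega'$ is counted by $I_3$. The same recipe applied to $\omega'$ returns $\omega$, so the map is an involutive bijection between the two classes. Using $(u\,w)$ in the same way handles the case of the cycle in $A_w$. Summing the three cases gives $3I_3$, and hence $C_{n,n}^{e,f}=I_1+2I_2+3I_3$.

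The only delicate point is checking that this local relabelling is well defined: it must produce a genuine subgraph of $K_n$ with $e,f$ still present and be invertible. This holds because $K_n$ is complete, so any relabelled edge is still an edge of $K_n$.
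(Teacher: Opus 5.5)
Your proposal is correct and follows the same route as the paper: you split by whether both, exactly one, or neither of $e,f$ lies on the cycle, and obtain the factors $2$ and $3$ by symmetry. Your explicit map $\omega\mapsto\sigma(\omega-\{e,f\})\cup\{e,f\}$ with $\sigma=(u\,v)$ spells out what the paper's bare ``by symmetry'' needs in case (3), since the swap $u\leftrightarrow v$ does not preserve $\{e,f\}$ and so must be applied only after $e$ and $f$ are deleted.
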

\begin{proof}
	When counting  the number of unicyclic subgraphs that contain both $e$ and $f$,
	there are three cases to consider (see Fig.~\ref{fig:adjacentcase} for a systematic illustration):
	\begin{enumerate}
		\item[(1)] Both $e$ and $f$ lie on the unique cycle;
		\item[(2)] Exactly one of $e$ and $f$ lies on the unique cycle; 
		\item[(3)] Neither $e$ nor $f$ lies on the unique cycle. 
	\end{enumerate}
\begin{figure}[H]
		\begin{minipage}{0.45\textwidth}
		\centering
	\includegraphics[scale=0.80]{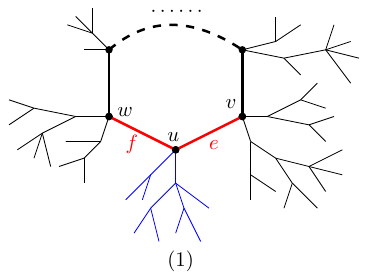}
	\end{minipage}%
\hspace{1.7cm}
\begin{minipage}{0.45\textwidth}
\centering
	\includegraphics[scale=0.80]{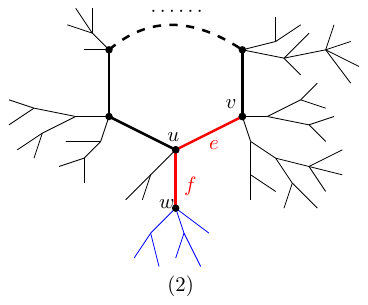}

\end{minipage}

\vskip 10mm 

\centering
\includegraphics[scale=0.8]{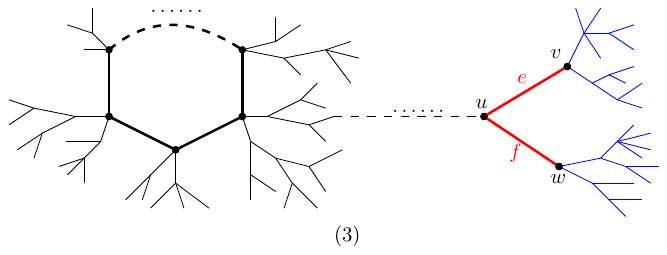}

\caption{Three Cases}
\label{fig:adjacentcase}

\end{figure}

The contribution of case (1) is just $I_1$. The contribution of case (2) arises from two subcases: (i) $e$ is on the cycle but $f$ is not; (ii) $f$ is on the cycle but $e$ is not. By symmetry the contribution of case (2) is $2I_2$. For case (3), consider the subgraphs obtained after deleting both $e$ and $f$. This deletion results in three components containing $u,v,w$ respectively, and exactly one of these components contains the cycle. The term $I_3$ counts the subcase where the component of $u$ contains the cycle. By symmetry, the contribution of case (3) is then $3I_3$. 	
\end{proof}

\begin{lemma}\label{lem:I1}
		Suppose $e=(u,v)$ and $f=(u,w)$ are a pair of adjacent edges in $K_n$ with  common vertex $u$. Let $I_1$ be the number of unicyclic subgraphs of $K_n$ in which both $e$ and $f$ lie on the cycle. Then
	\[
	I_1=(n-3)!\big[z^{n-3}\big]W_{-1}'(z)W_{-1}''(z)=n^{n-3}\,.
	\]
\end{lemma}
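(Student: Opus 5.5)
Here is the plan. Delete $e$ and $f$ from a unicyclic subgraph in which both lie on the cycle. The result is a spanning tree on $V(K_n)$, because removing two cycle edges from a connected unicyclic graph leaves a forest with two components, and the path $v\!-\!\cdots\!-\!w$ stays in one piece. More precisely, removing $e$ and $f$ (which share $u$) isolates the cycle-path from $v$ to $w$ away from $u$. So we get exactly two components: one containing $u$, and one containing both $v$ and $w$ (the rest of the cycle). Each component is a tree. Conversely, any spanning forest with two tree components, one containing $u$ and the other containing both $v$ and $w$, gives back a unique unicyclic graph with $e,f$ on its cycle once $e,f$ are added. This bijection is exactly the setting of Lemma~\ref{lem:generalisation of stark lem2.2} with $p=q=2$, $k_1=k_2=-1$, $S_1=\{u\}$ ($m_1=1$) and $S_2=\{v,w\}$ ($m_2=2$), $d=3$. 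That lemma gives
\[
I_1=(n-3)!\,\big[z^{n-3}\big]W_{-1}'(z)W_{-1}''(z).
\]

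Next I evaluate the coefficient using Theorem~\ref{thm: FSS2004 thm1}(i), $W_{-1}=T-\tfrac12T^2$. Then $W_{-1}'=T'(1-T)$, and by \eqref{eq: T'} this equals $T/z$; equivalently $W_{-1}'(z)=e^{T(z)}$ by \eqref{eq:T1}. Differentiating again gives $W_{-1}''=T'e^{T}=\frac{T}{z(1-T)}\cdot\frac{T}{z}$. Hence
\[
W_{-1}'W_{-1}''=\frac{T^3}{z^3(1-T)},
\]
so $[z^{n-3}]W_{-1}'W_{-1}''=[z^{n}]\frac{T^3}{1-T}$. By \eqref{eq:T4} with $k=3$ this is $\frac{n^{n-3}}{(n-3)!}$. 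Multiplying by $(n-3)!$ gives $I_1=n^{n-3}$.

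As a sanity check I would also count directly. By the generalized Cayley formula, the number of two-tree spanning forests in which one tree contains $u$ and the other contains $\{v,w\}$ is $n^{n-3}\cdot 3$, split according to which of the three vertices lie together. Of that total, a $1/3$ share corresponds to our case. This is consistent with the computed value, since the forests rooted at a fixed set of $3$ roots number $3n^{n-4}\cdot n$ in the appropriate count.

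The main obstacle is the bijection step: checking that the component structure after deletion is exactly as claimed, i.e.\ $u$ is separated from $\{v,w\}$ and no third component appears. The generating-function algebra is routine.
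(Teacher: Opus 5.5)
Your proof is correct and is essentially the paper's argument. You use the same bijection between unicyclic subgraphs with $e,f$ on the cycle and two-tree spanning forests separating $u$ from $\{v,w\}$. You then apply Lemma~\ref{lem:generalisation of stark lem2.2} in the same way, with $p=q=2$, $S_1=\{u\}$, $S_2=\{v,w\}$, and evaluate $W_{-1}'W_{-1}''=T^3/\bigl(z^3(1-T)\bigr)$ via \eqref{eq:T4} exactly as the paper does.

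Two small fixes. Your opening sentence has a slip: deleting $e$ and $f$ leaves a two-component spanning forest, not a spanning tree, as your own next clause says. Your closing ``sanity check'' is muddled as written, particularly the $3n^{n-4}\cdot n$ remark, and is not needed.
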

\begin{proof}
	If we delete $e$ and $f$ from a unicyclic subgraph in which both $e$ and $f$ lie on the cycle, we obtain two trees: one containing $v,w$ and the other containing $u$. Thus $I_1$ equals the number of $2$-forests of $K_n$ such that $v,w$ lie in one tree and $u$ lies in the other. Applying Lemma~\ref{lem:generalisation of stark lem2.2} with $p=q=2$, $S_1=\{u\},S_2=\{v,w\}$, and $k_1=k_2=-1$, we obtain
	\[
	I_1=(n-3)!\big[z^{n-3}\big]W_{-1}'(z)W_{-1}''(z)\,.
	\]

	Since $W_{-1}(z)=T(z)-\frac{1}{2}T^2(z)$, 
it follows that
	\begin{equation}\label{eq: W-1'}
	W_{-1}'(z)=T'(z)-T(z)T'(z)\stackrel{\eqref{eq: T'}}{=}\frac{T(z)}{z}\,,
	\end{equation}
	and
	\begin{equation}\label{eq: W_1''}
	W_{-1}''(z)=\frac{zT'(z)-T(z)}{z^2}\stackrel{\eqref{eq: T'}}{=}\frac{T^2(z)}{z^2\big[1-T(z)\big]}\,.
	\end{equation}
	Hence $W_{-1}'(z)W_{-1}''(z)=\frac{T^3(z)}{z^3\big[1-T(z)\big]}$, and therefore
	\[
	\big[z^{n-3}\big]W_{-1}'(z)W_{-1}''(z)=\big[z^n\big]\frac{T^3(z)}{1-T(z)}\,.
	\]
	Applying \eqref{eq:T4} with  $k=3$, we have for $n\ge3$,
	\[
	\big[z^{n-3}\big]W_{-1}'(z)W_{-1}''(z)=\big[z^n\big]\frac{T^3(z)}{1-T(z)}=\frac{n^{n-3}}{(n-3)!}\,.
	\]
	Therefore 
	\[
	I_1=(n-3)!\big[z^{n-3}\big]W_{-1}'(z)W_{-1}''(z)=n^{n-3}\,. \qedhere
	\]
\end{proof}

\begin{lemma}\label{lem:I2}
	Suppose $e=(u,v)$ and $f=(u,w)$ are a pair of adjacent edges in $K_n$ with  common vertex $u$. Let $I_2$ be the number of unicyclic subgraphs of $K_n$ such that $e$ lies on the cycle but $f$ does not. 
	Then 
	\[
	I_2=(n-3)!\big[z^{n-3}\big]W_{-1}'(z)W_{-1}''(z)-(n-3)!\big[z^{n-3}\big]\big(W_{-1}'(z)\big)^3=n^{n-3}-3n^{n-4}\,.
	\]
\end{lemma}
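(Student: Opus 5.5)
We prove the identity with a bijection that reduces $I_2$ to counts of forests with prescribed vertices in separate components, and then evaluate those counts with Lemma~\ref{lem:generalisation of stark lem2.2} and Lemma~\ref{lem: facts of T(z)}, as in Lemma~\ref{lem:I1}.

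\textbf{Step 1: deleting $e$ and $f$.} Take a unicyclic subgraph $H$ containing $e$ on its cycle and $f$ off the cycle.
\begin{itemize}
\item Deleting $e$ leaves a spanning tree, because $e$ lies on the unique cycle.
\item In that tree, $u$ and $v$ are still joined by the rest of the cycle. This path does not use $f$, since $f$ is not a cycle edge.
\item Deleting the tree edge $f=(u,w)$ therefore gives a $2$-forest $F$. The vertices $u,v$ lie in one tree and $w$ lies in the other.
\end{itemize}
Conversely, take any $2$-forest $F$ with $u,v$ in one tree and $w$ in the other, and with $e\notin F$. Adding $f$ joins the two trees into a spanning tree. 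Adding $e$ then creates exactly one cycle, which passes through $e$ but not through $f$, because $f$ is a bridge of that tree. The two maps are mutually inverse. Hence $I_2$ equals the number of such $2$-forests that do not contain $e$.

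\textbf{Step 2: removing the forests that contain $e$.} We count all $2$-forests with $u,v$ together and $w$ separate, then subtract those containing $e$. For the full count, apply Lemma~\ref{lem:generalisation of stark lem2.2} with $p=q=2$, $S_1=\{u,v\}$, $S_2=\{w\}$ and $k_1=k_2=-1$. This gives $(n-3)!\,[z^{n-3}]W_{-1}''(z)W_{-1}'(z)$, which by Lemma~\ref{lem:I1} equals $n^{n-3}$.

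A forest of this kind that contains $e$ corresponds bijectively, by deleting $e$, to a $3$-forest in which $u$, $v$ and $w$ lie in three distinct trees. Lemma~\ref{lem:generalisation of stark lem2.2} with $p=q=3$ and singletons $S_i$ counts these as $(n-3)!\,[z^{n-3}]\big(W_{-1}'(z)\big)^3$. This establishes the first equality in the statement.

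\textbf{Step 3: evaluating the coefficient.} By \eqref{eq: W-1'}, $(W_{-1}')^3=T^3(z)/z^3$. Therefore
\[
[z^{n-3}]\big(W_{-1}'\big)^3=[z^n]T^3(z)=\frac{3n^{n-4}}{(n-3)!}
\]
by \eqref{eq:T3}. This yields $I_2=n^{n-3}-3n^{n-4}$.

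The main point requiring care is Step 1: verifying that deleting $e$ and $f$ separates the vertices exactly as claimed, and that the re-adding map never creates a doubled edge. That is why the forests containing $e$ must be excluded. The remaining steps are direct applications of the lemmas already established.
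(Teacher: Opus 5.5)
Your proposal is correct and follows the paper's proof: write $I_2=N-\widehat N$, where $N$ counts the $2$-forests separating $\{u,v\}$ from $w$ and $\widehat N$ counts those among them that contain $e$; identify $\widehat N$ with $3$-forests separating $u,v,w$; and evaluate it via $[z^n]T^3(z)$. Your Step~1 checks the bijection more explicitly than the paper does. The one loose phrase is ``because $f$ is a bridge of that tree'' (every tree edge is a bridge); the real reason is that the $u$--$v$ path lies entirely inside the tree of $F$ containing $u$ and $v$, so it avoids $f$.
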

\begin{proof}
	If we delete $e$ and $f$ from a unicyclic subgraph of $K_n$ where $e$ lies on the cycle but $f$ does not, we obtain two trees: one containing the endpoints of $e$ (i.e., $u,v$) and the other containing the vertex $w$. Furthermore, the tree containing $u,v$ can no longer use the edge $e$. Let $N$ denote the number of $2$-forests of $K_n$ such that one tree contains $u,v$ and the other contains $w$. Let $\widehat{N}$ denote the number of $2$-forests of $K_n$ such that one tree contains $u,v$ and the edge $e=(u,v)$, and the other tree contains $w$. Then 
	$I_2=N-\widehat{N}$. Applying Lemma~\ref{lem:generalisation of stark lem2.2} with $p=q=2$, $S_1=\{u,v\}$, $S_2=\{w\}$, and $k_1=k_2=-1$, we have 
	\[
	N=(n-3)!\big[z^{n-3}\big]W_{-1}'(z)W_{-1}''(z)\,.
	\]
	From Lemma~\ref{lem:I1}, we have $N=(n-3)!\big[z^{n-3}\big]W_{-1}'(z)W_{-1}''(z)=n^{n-3}$.

	The number $\widehat{N}$ is equivalent to the number of $3$-forests of $K_n$ where each of the three trees contains exactly one of the vertices $u,v,w$. Hence, applying Lemma~\ref{lem:generalisation of stark lem2.2} with $p=q=3$, $S_1=\{u\}$, $S_2=\{v\}$, $S_3=\{w\}$, and $k_1=k_2=k_3=-1$, we have 
	\[
	\widehat{N}=(n-3)!\big[z^{n-3}\big]\big(W_{-1}'(z)\big)^3\,.
	\]
	By \eqref{eq: W-1'}, we have 
	\[
	\big[z^{n-3}\big]\big(W_{-1}'(z)\big)^3=\big[z^n\big]T^3(z)\,.
	\]

	Applying \eqref{eq:T3} with $k=3$, we obtain
	\be\label{eq:Nhat}
	\big[z^{n-3}\big]\big(W_{-1}'(z)\big)^3=\big[z^n\big]T^3(z)=\frac{3n^{n-4}}{(n-3)!}\,.
	\ee  
	Hence 
	\[
	\widehat{N}=(n-3)!\big[z^{n-3}\big]\big(W_{-1}'(z)\big)^3=3n^{n-4}\,.
	\]
	Therefore 
	\[
	I_2=N-\widehat{N}=n^{n-3}-3n^{n-4}\,.  \qedhere
	\]
\end{proof}

\begin{lemma}\label{lem:I3}
	Suppose $e=(u,v)$ and $f=(u,w)$ are a pair of adjacent edges in $K_n$ with  common vertex $u$.  Let $I_3$ be the number of unicyclic subgraphs of $K_n$ such that neither $e$ nor $f$ lies on the cycle, and $u$ lies in the component containing the cycle after deleting $e$ and $f$ from the unicyclic subgraph. 
	Then 
	\[
	I_3=(n-3)!\big[z^{n-3}\big]W_0'(z)\big[W_{-1}'(z)\big]^2=\frac{\sqrt{2\pi}}{4}n^{n-\frac{5}{2}}-\frac{13}{6}n^{n-3}+O(n^{n-\frac{7}{2}})\,.
	\]
\end{lemma}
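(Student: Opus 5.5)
We plan to prove the lemma in two stages: first identify $I_3$ combinatorially with a coefficient of the form in Lemma~\ref{lem:generalisation of stark lem2.2}, then extract its asymptotics by the singularity analysis procedure (a)--(d).

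\textbf{Combinatorial identification.} Take a unicyclic subgraph counted by $I_3$ and delete $e$ and $f$. Since neither edge lies on the cycle, the result has exactly three components, containing $u$, $v$, $w$ respectively. By assumption the component of $u$ carries the cycle, so it has excess $0$; the components of $v$ and $w$ are trees, with excess $-1$. Conversely, any such triple of components together with $e$ and $f$ gives a unicyclic subgraph of the required type, so this is a bijection. We then apply Lemma~\ref{lem:generalisation of stark lem2.2} with $p=q=3$, $S_1=\{u\}$, $S_2=\{v\}$, $S_3=\{w\}$ (so $d=3$), $k_1=0$ and $k_2=k_3=-1$. This gives
\[
I_3=(n-3)!\,\big[z^{n-3}\big]W_0'(z)\big[W_{-1}'(z)\big]^2 .
\]

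\textbf{Simplifying the generating function.} Differentiating Theorem~\ref{thm: FSS2004 thm1}(ii) gives
\[
W_0'=T'\Big[\tfrac{1}{2(1-T)}-\tfrac12-\tfrac T2\Big]=\frac{T^2T'}{2(1-T)}.
\]
By \eqref{eq: T'} this equals $\frac{T^3}{2z(1-T)^2}$. Combining with $W_{-1}'=T/z$ from \eqref{eq: W-1'}, we get
\[
W_0'(W_{-1}')^2=\frac{T^5(z)}{2z^3\,[1-T(z)]^2},\qquad\text{hence}\qquad I_3=\tfrac12 (n-3)!\,\big[z^n\big]\frac{T^5(z)}{[1-T(z)]^2}.
\]

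\textbf{Asymptotics.} We apply \eqref{eq:T_alpha_beta} with $\alpha=5$ and $\beta=2$. Here $c_0=\tfrac12$, $\Gamma(1)=1$, and the correction factor $\beta(\beta-2)/8$ vanishes. The indicator $\mathbf 1_{\beta>2}$ kills the $c_2$ term. The second coefficient is $c_1=\tfrac12\sqrt2\big(\tfrac23-5\big)=-\tfrac{13\sqrt2}{6}$, with $\Gamma(\tfrac12)=\sqrt\pi$. This yields
\[
\big[z^n\big]\frac{T^5}{(1-T)^2}=\tfrac12 e^n-\frac{13\sqrt2}{6\sqrt\pi}e^n n^{-1/2}+O(e^n n^{-1}).
\]
Next we use Stirling's formula in the form $(n-3)!\,e^n=\sqrt{2\pi}\,n^{n-5/2}\big(1+O(n^{-1})\big)$. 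Multiplying through:
\begin{itemize}
\item the leading term becomes $\tfrac12\cdot\tfrac12\sqrt{2\pi}\,n^{n-5/2}=\tfrac{\sqrt{2\pi}}4 n^{n-5/2}$;
\item the second term becomes $\tfrac12\cdot\big(-\tfrac{13\sqrt2}{6\sqrt\pi}\big)\sqrt{2\pi}\,n^{n-3}=-\tfrac{13}{6}n^{n-3}$;
\item the $O(n^{-1})$ Stirling correction and the remaining error terms are all $O(n^{n-7/2})$.
\end{itemize}

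\textbf{Main obstacle.} The step needing most care is the bookkeeping in the singularity expansion. One must confirm that the constants $c_0,c_1$ from \eqref{eq: computable coef} are used with the correct normalization, $w=\sqrt{1-ez}$ and $[z^n]w^{-2}=e^n$ exactly. One must also check that no other term of order $n^{n-3}$ arises, in particular from the Stirling correction, which only enters at relative order $n^{-1}$.
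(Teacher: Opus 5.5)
Your proposal is correct and follows the paper's proof essentially step for step. Both use the same application of Lemma~\ref{lem:generalisation of stark lem2.2} with $p=q=3$, $k_1=0$, $k_2=k_3=-1$, the same reduction to $\tfrac12(n-3)!\,[z^n]\,T^5(z)/[1-T(z)]^2$, and the same use of \eqref{eq:T_alpha_beta} with $\alpha=5$, $\beta=2$ combined with Stirling's formula. (The paper also notes that the $e^n n^{-1}$ coefficient vanishes and keeps the next term $\tfrac{37\sqrt{2\pi}}{48}n^{n-7/2}$ for later use in Proposition~\ref{prop: asymp of cnnef}; your coarser error bound is enough for the statement as given.)
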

\begin{proof}
	Applying Lemma~\ref{lem:generalisation of stark lem2.2} with $p=q=3,S_1=\{u\},S_2=\{v\},S_3=\{w\}$, and $k_1=0,k_2=k_3=-1$, we have that 
	\[
	I_3=(n-3)!\big[z^{n-3}\big]W_0'(z)\big[W_{-1}'(z)\big]^2\,.
	\]
	Since $W_0(z)=\frac{1}{2}\log\frac{1}{1-T(z)}-\frac{1}{2}T(z)-\frac{1}{4}T^2(z)$, we obtain 
	\begin{align}\label{eq: W0prime}
	W_0'(z)&=\frac{1}{2}\cdot \frac{T'(z)}{1-T(z)}-\frac{1}{2}T'(z)-\frac{1}{2}T'(z)T(z) \nonumber\\
	&\stackrel{\eqref{eq: T'}}{=} \frac{1}{2}\cdot \frac{T^3(z)}{z\big[1-T(z)\big]^2}
	\end{align}
	Combining this with \eqref{eq: W-1'} we  have
	\[
	W_0'(z)\big[W_{-1}'(z)\big]^2=\frac{1}{2}\cdot \frac{T^5(z)}{z^3\big[1-T(z)\big]^2}\,,
	\]
	and consequently
	\[
	\big[z^{n-3}\big]W_0'(z)\big[W_{-1}'(z)\big]^2=\frac{1}{2}\cdot \big[z^n\big] \frac{T^5(z)}{\big[1-T(z)\big]^2}\,.
	\]
	Applying \eqref{eq:T_alpha_beta} with $\alpha=5$ and $\beta=2$, we have 
	\[
	\big[z^n\big] \frac{T^5(z)}{\big[1-T(z)\big]^2}
	=e^n\bigg[\frac{1}{2}-\frac{13}{6}\sqrt{\frac{2}{\pi}}n^{-\frac{1}{2}}+0+O(n^{-\frac{3}{2}})\bigg]\,.
	\]
	By Stirling's formula $n!=\frac{n^n\sqrt{2\pi n}}{e^n}\big(1+\frac{1}{12n}+O(n^{-2})\big)$, we have
	\be\label{eq: n minus 3 factorial}
	(n-3)!=\frac{n!}{n(n-1)(n-2)}=\frac{n^{n-3}\sqrt{2\pi n}}{e^n}\big[1+\frac{37}{12}n^{-1}+O(n^{-2})\big]\,.
	\ee
	We thus obtain that 
	\begin{align*}
		I_3&=(n-3)!\big[z^{n-3}\big]W_0'(z)\big[W_{-1}'(z)\big]^2\\
		&=\frac{n^{n-3}\sqrt{2\pi n}}{e^n}\big[1+\frac{37}{12}n^{-1}+O(n^{-2})\big]\cdot \frac{1}{2}\cdot e^n\bigg[\frac{1}{2}-\frac{13}{6}\sqrt{\frac{2}{\pi}}n^{-\frac{1}{2}}+O(n^{-\frac{3}{2}})\bigg]\\
		&=\frac{\sqrt{2\pi}}{4}n^{n-\frac{5}{2}}-\frac{13}{6}n^{n-3}+\frac{37\sqrt{2\pi}}{48}n^{n-\frac{7}{2}}+O(n^{n-4})\,.  \qedhere
	\end{align*}
\end{proof}

\begin{proof}[Proof of Proposition~\ref{prop: asymp of cnnef}]
	By Lemma~\ref{lem: decomposition into cases}, 
	\[
		C_{n,n}^{e,f}
	=I_1+2I_2+3I_3\,.
	\]
	Substituting the estimates of $I_1,I_2,I_3$ from Lemmas~\ref{lem:I1}, \ref{lem:I2} and \ref{lem:I3}, we  obtain 
	\[
			C_{n,n}^{e,f}
		=\frac{3\sqrt{2\pi}}{4}n^{n-\frac{5}{2}}-\frac{7}{2}n^{n-3}+
		\frac{37\sqrt{2\pi}}{16}n^{n-\frac{7}{2}}+O(n^{n-4})\,. \qedhere
	\]
\end{proof}

\subsubsection{Proof of Proposition~\ref{prop: key prop for k-excess} for $k=0$ }

\begin{proof}[Proof of the $k=0$ case in Proposition~\ref{prop: key prop for k-excess}]
	The $k=0$ case of \eqref{eq: cnnk asym} and \eqref{eq: cnnkef asym} have already been proved in Propositions~\ref{prop: asymptotic of cnn} and \ref{prop: asymp of cnnef} respectively. 
	As for the $k=0$ case of \eqref{eq: p1 kUC asym}, we use these two estimates to derive it:
		\begin{align*}
		p_1&=\frac{ \frac{3\sqrt{2\pi}}{4}n^{n-\frac{5}{2}}-\frac{7}{2}n^{n-3}+
			\frac{37\sqrt{2\pi}}{16}n^{n-\frac{7}{2}}+O(n^{n-4}) }{\sqrt{\frac{\pi}{8}}n^{n-\frac{1}{2}}-\frac{7}{6}n^{n-1}+\frac{1}{24}\sqrt{\frac{\pi}{2}}\,n^{n-\frac{3}{2}}+O(n^{n-2})} \\
		&=\frac{3}{n^2}\cdot \frac{1-\frac{14}{3\sqrt{2\pi}}n^{-\frac{1}{2}} +\frac{37}{12}n^{-1}+O(n^{-\frac{3}{2}}) }{ 1-\frac{14}{3\sqrt{2\pi}}n^{-\frac{1}{2}} + \frac{1}{12}n^{-1} +O(n^{-\frac{3}{2}})  } \\
		&=\frac{3}{n^2}\cdot \big[1+\frac{3}{n}+O(n^{-\frac{3}{2}})\big]\\
		&=\frac{3}{n^2}+\frac{9}{n^3}+O(n^{-\frac{7}{2}}) \,. \qquad \qedhere
		\end{align*}
\end{proof}

\subsection{The cases $1 \le k \le 5$ of Proposition~\ref{prop: key prop for k-excess}}\label{sec: k ge 1 cases}

\subsubsection{The asymptotics of $C_{n,n+k}$ for $k\in\{1,2,\ldots,5\}$}

We first collect a few facts about the asymptotics of $C_{n,n+k}\coloneq \card{\sC^{(k)}(K_n)}$.  
\begin{proposition}\label{prop: asymptotic of cnnk}
For $k\ge-1$, we have
\be\label{eq: cnnk from Wright1977}
C_{n,n+k}=\rho_k n^{n+\frac{3k-1}{2}}\big[ 1+O(n^{-\frac{1}{2}})  \big]\,,
\ee
where $\rho_{-1}=1$, and for $k\ge0$, 
\[
\rho_k=\frac{\sqrt{\pi}\cdot 2^{(1-3k)/2}\sigma_k }{\Gamma(1+\frac{3k}{2})}\,,
\]
with the sequence $\{\sigma_i\}$ given by
\[
4\sigma_0=1, \quad 16\sigma_1=5,\quad 16\sigma_2=15,
\]
and 
\be\label{eq:recursive-sigma}
\sigma_{k+1}=\frac{3(k+1)\sigma_k}{2}+\sum_{s=1}^{k-1}\sigma_s\sigma_{k-s}\quad (k\ge2)\,.
\ee
In particular, for all $k\ge-1$, we have 
	\be\label{eq: cnnk}
C_{n,n+k}\asymp n^{n+\frac{3k-1}{2}}\,.
\ee
\end{proposition}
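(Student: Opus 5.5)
The proposition is essentially Wright's classical asymptotic formula for connected graphs of fixed excess, so the plan is to derive it from Theorem~\ref{thm: FSS2004 thm1} by singularity analysis, treating the small cases directly. Cases $k=-1$ and $k=0$ are already done: Cayley's formula gives $C_{n,n-1}=n^{n-2}$, so $\rho_{-1}=1$, and Proposition~\ref{prop: asymptotic of cnn} gives the leading term $\sqrt{\pi/8}\,n^{n-\frac12}$. This matches $\rho_0=\sqrt{\pi}\,2^{1/2}\sigma_0/\Gamma(1)$ with $\sigma_0=1/4$.

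For $k\ge1$, Theorem~\ref{thm: FSS2004 thm1}(iii) writes $W_k(z)=A_k(T(z))/[1-T(z)]^{3k}$. Expanding $A_k(T)$ as a polynomial in $T$ and applying \eqref{eq:T_alpha_beta} termwise with $\beta=3k$ gives
\[
[z^n]W_k(z)=\frac{2^{-3k/2}A_k(1)}{\Gamma(3k/2)}\,e^n n^{\frac{3k}{2}-1}\bigl[1+O(n^{-1/2})\bigr].
\]
Here the constant $c_0=2^{-\beta/2}$ does not depend on $\alpha$, so the $T^\alpha$ pieces combine into $A_k(1)$. Multiplying by Stirling's formula $n!=\sqrt{2\pi n}\,n^ne^{-n}(1+O(n^{-1}))$ yields
\[
C_{n,n+k}=\frac{\sqrt{2\pi}\,2^{-3k/2}A_k(1)}{\Gamma(3k/2)}\,n^{n+\frac{3k-1}{2}}\bigl[1+O(n^{-1/2})\bigr].
\]
It therefore remains to identify this constant with $\rho_k$. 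Using $\Gamma(1+3k/2)=\tfrac{3k}{2}\Gamma(3k/2)$, this amounts to showing $A_k(1)=2\sigma_k/(3k)$, equivalently $\sigma_k=\tfrac{3k}{2}A_k(1)$.

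Checking this is easy for $k=1,2$ using the explicit forms in Theorem~\ref{thm: FSS2004 thm1}. For $k=1$, $A_1(1)=5/24$, so $\tfrac32\cdot\tfrac5{24}=\tfrac5{16}$. For $k=2$, $A_2(1)=15/48$, so $3\cdot\tfrac{15}{48}=\tfrac{15}{16}$. Both agree with the stated initial values of $\sigma_k$.

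The main obstacle is the general recursion \eqref{eq:recursive-sigma}. Here I would invoke Wright's differential recurrence for $W_k$, obtained by counting edge insertions: adding one edge to a connected graph raises the excess by one, or merges two components. In the variable $T$ this gives a relation of the form
\[
\frac{dW_{k+1}}{d\cdots}=\cdots\Bigl(\vartheta^2 W_k-2\vartheta W_k\Bigr)+\tfrac12\sum_{s}\vartheta W_s\,\vartheta W_{k-s},\qquad \vartheta=z\tfrac{d}{dz}.
\]
Substituting $\vartheta=\frac{T}{1-T}\frac{d}{dT}$ from \eqref{eq: T'} and comparing the coefficients of the most singular term $(1-T)^{-3k-3}$ should give exactly \eqref{eq:recursive-sigma}: the linear term contributes $\tfrac{3(k+1)}{2}\sigma_k$ and the quadratic term contributes $\sum_{s}\sigma_s\sigma_{k-s}$. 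In practice I would cite \cite{Wright1977one} for this leading-coefficient recursion rather than rederive it; only the normalization $\sigma_k=\tfrac{3k}{2}A_k(1)$ needs checking.

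Finally, \eqref{eq: cnnk} follows because $\rho_k>0$, since all $\sigma_k>0$ by induction, and the error factor $1+O(n^{-1/2})$ is bounded above and below for large $n$. For small $n$, $C_{n,n+k}$ is positive whenever $n+k\le\binom n2$, and finitely many cases can be absorbed into the constants. If one prefers to start the statement at $n$ large, it suffices for the applications in this paper.
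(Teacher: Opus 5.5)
Your proposal is correct and follows essentially the same route as the paper. There too the leading constant is read off by singularity analysis plus Stirling as $c_{k,-3k}\sqrt{2\pi}\,2^{-3k/2}/\Gamma(\frac{3k}{2})$; your $A_k(1)$ is exactly $c_{k,-3k}$, since $A_k(1-\theta)=A_k(1)+\hO(\theta)$. The cases $k=1,2$ are checked from the explicit $W_1,W_2$, and $\sigma_k=\frac{3k}{2}c_{k,-3k}$. The one difference is that the paper does not just cite the recursion but derives it from Wright's integral formula \eqref{eq:thm2 in Wrightone} via $c_{k+1,-3(k+1)}=j_k/(3k+3)$ in \eqref{eq: recursive cks}; there the factor $\frac{3(k+1)}{2}\sigma_k$ comes from $\sD^2W_k$ together with the two boundary terms $2(\sD W_0)(\sD W_k)$ of the quadratic sum.
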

\begin{proof}
	The case $k=-1$ follows directly from Cayley's formula $C_{n,n-1}=n^{n-2}$. The case of $k\ge0$ in \eqref{eq: cnnk from Wright1977} appeared in Wright \cite[page~329]{Wright1977one}, and we  provide the details in the appendix for readers' convenience. Furthermore, the case  $k=0$ in \eqref{eq: cnnk from Wright1977} is a weaker form of \eqref{eq: cnn}. 
\end{proof}

\begin{proposition}\label{prop: asymptotic of cnnk for small k}
	For $k\in\{1,2,\ldots,5\}$, 
	we have the following asymptotic behavior of $C_{n,n+k}$:
	\be\label{eq: cnn1}
	C_{n,n+1}=  \frac{5}{24}n^{n+1} -\frac{7\sqrt{2\pi}}{24}n^{n+\frac{1}{2}} +\frac{25}{36}n^n+O\left(n^{n-\frac{1}{2}}\right)   \,,\nonumber
	\ee
	and
	\be\label{eq: cnn2}
	C_{n,n+2}=\frac{5\sqrt{2\pi}}{256}n^{n + \frac{5}{2}} - \frac{35}{144}n^{n +2}+\frac{1559\sqrt{2\pi}}{9216}n^{n + \frac{3}{2}}+O(n^{n+1})    \,,\nonumber
	\ee
	and 
	\be\label{eq: cnn3}
	C_{n,n+3}=\frac{221}{24192}n^{n + 4} - \frac{35\sqrt{2\pi}}{1536}n^{n + \frac{7}{2}}+\frac{25283}{181440}n^{n + 3}+O(n^{n+\frac{5}{2}})  \,,\nonumber
	\ee
	and 
	\be\label{eq: cnn4}
	C_{n,n+4}=\frac{113\sqrt{2\pi}}{196608}n^{n + \frac{11}{2}} - \frac{221}{20736}n^{n + 5}+\frac{30569\sqrt{2\pi}}{2359296}n^{n + \frac{9}{2}}+O(n^{n+4}) \,,\nonumber
	\ee
	and 
	\be\label{eq: cnn5}
	C_{n,n+5}=\frac{16565}{83026944}n^{n + 7} - \frac{791\sqrt{2\pi}}{1179648}n^{n + \frac{13}{2}}+\frac{374713}{62270208}n^{n + 6}+O(n^{n+\frac{11}{2}}) \,.\nonumber
	\ee
\end{proposition}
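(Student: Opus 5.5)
Since $C_{n,n+k}=n!\,[z^n]W_k(z)$ and Theorem~\ref{thm: FSS2004 thm1}(iii) gives $W_k=A_k(T)/(1-T)^{3k}$ with $A_k$ an explicit polynomial, I would first write $A_k(T)=\sum_{\alpha} a_{k,\alpha}T^{\alpha}$. This reduces $[z^n]W_k(z)$ to a finite linear combination of the coefficients $[z^n]\,T^{\alpha}(z)/[1-T(z)]^{3k}$. For $k=1,2$ the polynomials $A_1,A_2$ are displayed in Theorem~\ref{thm: FSS2004 thm1}. For $k=3,4,5$ I would obtain $A_k$ from Wright's recursion for $W_k$ (or from the tables in \cite{FSS2004,FS2009}); this amounts to a finite symbolic computation.

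For each monomial I would apply the singularity-analysis formula \eqref{eq:T_alpha_beta} with $\beta=3k\ge 3$. For $\beta\ge2$ this formula gives the first three terms $e^n n^{\beta/2-1}$, $e^n n^{\beta/2-3/2}$ and $e^n n^{\beta/2-2}$, with error $O(e^n n^{\beta/2-5/2})$. The coefficients are explicit in $\alpha,\beta$ through $c_0,c_1,c_2$ and the Gamma values $\Gamma(\beta/2)$, $\Gamma((\beta-1)/2)$, $\Gamma((\beta-2)/2)$. Summing over $\alpha$ gives
\[
[z^n]W_k(z)=e^n\bigl(\gamma_0 n^{3k/2-1}+\gamma_1 n^{3k/2-3/2}+\gamma_2 n^{3k/2-2}+O(n^{3k/2-5/2})\bigr),
\]
where $\gamma_0=2^{-3k/2}A_k(1)/\Gamma(3k/2)$, and $\gamma_1,\gamma_2$ involve $\sum_\alpha a_{k,\alpha}\alpha$ and $\sum_\alpha a_{k,\alpha}\alpha^2$ (that is, $A_k'(1)$ and $A_k''(1)$).

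Next I would multiply by Stirling's formula $n!=n^n\sqrt{2\pi n}\,e^{-n}\bigl(1+\tfrac{1}{12n}+O(n^{-2})\bigr)$. The $e^n$ factors cancel and the exponents shift by $n+\tfrac12$, so the three terms land on $n^{n+(3k-1)/2}$, $n^{n+(3k-2)/2}$ and $n^{n+(3k-3)/2}$, matching the displayed orders. The $\tfrac{1}{12n}$ correction multiplies $\gamma_0$ and contributes only to the third term. Half-integer Gamma values produce the factors $\sqrt{\pi}$, which combine with $\sqrt{2\pi}$ into either rational constants or rational multiples of $\sqrt{2\pi}$, alternating with the parity of $k$ as in the statement. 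As a consistency check, the leading constant should agree with $\rho_k$ from Proposition~\ref{prop: asymptotic of cnnk}; for $k=1$, $\rho_1=\tfrac{5}{24}$.

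The main obstacle is accuracy in the third-order coefficient. The closed form \eqref{eq: computable coef} for $c_2$ relies on the expansion \eqref{eq:expansion of T(z)} to order $w^2$. I would recompute $c_2$ directly by expanding $T^\alpha=(1-\sqrt2w+\tfrac23w^2+\cdots)^\alpha$ and $(1-T)^{-\beta}=(\sqrt2 w)^{-\beta}\bigl(1-\tfrac{\sqrt2}{3}w+\cdots\bigr)^{-\beta}$, and I would keep the $\beta(\beta-2)/(8n)$ correction from \eqref{eq:coef of w} in the leading term. I would verify the arithmetic for $k=1$ numerically against exact values of $C_{n,n+1}$ computed from $[z^n]T^\alpha/(1-T)^3$ for moderate $n$, and then carry out the cases $k=2,\dots,5$ the same way.
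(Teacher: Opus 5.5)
Your proposal is correct and is essentially the paper's argument. The paper writes $W_k$ as a Laurent polynomial in $\theta=1-T$ (Lemma~\ref{lem: W_k in terms of theta}, with $W_3,W_4,W_5$ generated from Wright's recursion by computer algebra) and then applies Lemma~\ref{lem: coef using theta expansion} and Stirling's formula; this is the same singularity analysis as your monomial-by-monomial use of \eqref{eq:T_alpha_beta}, only organized in the basis $\theta^{s}$ rather than $T^{\alpha}(1-T)^{-3k}$.

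Your plan to recompute $c_2$ is in fact necessary on your route. The printed $c_2$ in \eqref{eq: computable coef} is wrong when $\alpha\neq 0$: the $w^2$-coefficient of $T^{\alpha}$ is $\alpha^2-\frac{\alpha}{3}$, not $\alpha^2+\frac{5}{3}\alpha$. The paper's $\theta$-basis only ever invokes the case $\alpha=0$, where the formula is correct. With the corrected value, your computation for $k=1$ gives the third coefficient $\frac{65}{96}\cdot\frac{1}{\sqrt{2\pi}}$ of $[z^n]W_1$, and hence the stated $\frac{25}{36}n^{n}$.
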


The idea of proving Proposition~\ref{prop: asymptotic of cnnk for small k} is first to derive an expression for $W_k(z)$ in terms of $T(z)$,  then apply the procedure outlined after Lemma~\ref{lem: facts of T(z)}. Specifically,  let $\theta=\theta(z)\coloneq 1-T(z)$\footnote{In \cite{Wright1977one}, the notation $G=G(z)$ corresponds to $T(z)$  in the present paper.};  \cite[Theorem 4]{Wright1977one} shows that  for $k\ge1$, there exist constants $\{c_{k,s} \colon  s=-3k,-3k+1,\ldots,2\}$ such that 
\be\label{eq: thm4 in Wrightone}
W_k=\sum_{s=-3k}^{2}c_{k,s}\theta^s\,,
\ee
Wright \cite{Wright1977one} provided a recursive method for determining the explicit expressions  of $W_k$ in terms of $\theta$, and  the result we need are summarized in Lemma~\ref{lem: W_k in terms of theta} below. 

We frequently work with formal power series of  the form on the right-hand side of \eqref{eq: thm4 in Wrightone}. For convenience, we summarize how to extract coefficient information from such series using the aforementioned procedure in Lemma~\ref{lem: coef using theta expansion}, whose proof is deferred to  the appendix. 
\begin{lemma}\label{lem: coef using theta expansion}
	Let $\theta=\theta(z)=1-T(z)$, and let $F(z)=\sum_{s=-m}^{t}f_{s}\theta^{s}$ where $m,t$ are  positive integers and  $\{f_s\colon s=-m,\ldots,t\}$ are constants with $f_{-m}>0$. Then   
	\be\label{eq: coef using theta expansion}
	\big[z^n\big]F(z)=	f_{-m}\frac{2^{-\frac{m}{2}}}{\Gamma(\frac{m}{2})}e^nn^{\frac{m}{2}-1}
		+g_{m,1}e^nn^{\frac{m}{2}-\frac{3}{2}}+g_{m,2}e^nn^{\frac{m}{2}-2}	+O\left(e^nn^{\frac{m}{2}-\frac{5}{2}}\right)\,,	
	\ee
	where \[
	g_{m,1}=\frac{2^{-\frac{m}{2}}\cdot \frac{\sqrt{2}m}{3}}{\Gamma\left(\frac{m-1}{2}\right)}f_{-m} +\frac{2^{-\frac{m-1}{2}}}{\Gamma(\frac{m-1}{2})}f_{-m+1}
	\]
	and
	\[
	g_{m,2}=\frac{2^{-\frac{m}{2}}(2m^2+m)}{18\Gamma\left(\frac{m-2}{2}\right)}f_{-m} +\frac{2^{-\frac{m-1}{2}}\cdot \frac{\sqrt{2}(m-1)}{3}}{\Gamma\left(\frac{m-2}{2}\right)}f_{-m+1}+\frac{2^{-\frac{m-2}{2}}}{\Gamma(\frac{m-2}{2})}f_{-m+2}\,.
	\] 
	We adopt the convention that $\Gamma(0)=\infty$ and $\Gamma(-\frac{1}{2})=-2\sqrt{\pi}$.

\end{lemma}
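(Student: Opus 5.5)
The plan is to reduce the coefficient extraction for $F(z)=\sum_{s=-m}^{t} f_s\theta^s$ to the procedure (a)--(d) described after Lemma~\ref{lem: facts of T(z)}. By linearity, $[z^n]F(z)=\sum_s f_s[z^n]\theta^s$. I will separate the terms into two groups.

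\textbf{Polynomial part.} For $s\ge 0$, $\theta^s=(1-T)^s$ is a polynomial in $T$. By \eqref{eq:T3} its coefficients are of order $n^{n-k-1}/(n-k)!\asymp e^n n^{-3/2}$. This is $O(e^n n^{\frac m2-\frac52})$ as soon as $m\ge 1$, so these terms are absorbed into the error.

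\textbf{Singular part.} For $s=-j$ with $1\le j\le m$, I will use the expansion \eqref{eq:expansion of T(z)}. In the variable $w=\sqrt{1-ez}$ it gives
\[
\theta=\sqrt2\,w\Bigl(1-\tfrac{\sqrt2}{3}w+\tfrac{11}{36}w^2+O(w^3)\Bigr).
\]
Raising to the power $-j$ by Newton's binomial theorem yields
\[
\theta^{-j}=2^{-j/2}w^{-j}\Bigl(1+\tfrac{\sqrt2\, j}{3}w+d_j w^2+O(w^3)\Bigr),
\]
with an explicit constant $d_j$. Here $d_j$ combines $\frac{j(j+1)}{2}\cdot\frac{2}{9}$ and $-\frac{11j}{36}$, which gives $d_j=\frac{4j^2-7j}{36}+\dots$ up to careful bookkeeping. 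This is consistent with $c_2$ in \eqref{eq: computable coef} at $\alpha=0$.

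\textbf{Collecting by powers of $w$.} Summing over $j$, the coefficient of $w^{-m}$ is $2^{-m/2}f_{-m}$. The coefficient of $w^{-m+1}$ is $2^{-m/2}\frac{\sqrt2 m}{3}f_{-m}+2^{-(m-1)/2}f_{-m+1}$. The coefficient of $w^{-m+2}$ is $2^{-m/2}d_m f_{-m}+2^{-(m-1)/2}\frac{\sqrt2(m-1)}{3}f_{-m+1}+2^{-(m-2)/2}f_{-m+2}$. The remainder is $O(w^{-m+3})$, uniformly in the $\Delta$-domain.

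\textbf{Transfer.} Next I apply the transfer theorem (Corollary 3 of \cite{FO1990}) to this $\Delta$-analytic expansion. Each $w^{k}$ contributes according to \eqref{eq:coef of w}. The $O(w^{-m+3})$ remainder contributes $O(e^n n^{\frac m2-\frac52})$. The leading term $w^{-m}$ produces $\frac{2^{-m/2}f_{-m}}{\Gamma(m/2)}e^n n^{\frac m2-1}\bigl(1+\frac{m(m-2)}{8n}+\cdots\bigr)$. Its $1/n$ correction lands at order $e^n n^{\frac m2-2}$ and must be merged into $g_{m,2}$. This is where the term $\frac{2m^2+m}{18\Gamma((m-2)/2)}$ should come from: it combines $d_m/\Gamma(\frac{m-2}{2})$ with $\frac{m(m-2)}{8\Gamma(m/2)}$ via $\Gamma(\frac m2)=\frac{m-2}{2}\Gamma(\frac{m-2}{2})$. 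The $w^{-m+1}$ term's own $1/n$ correction is already lower order.

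\textbf{Degenerate exponents.} When a power of $w$ is a nonnegative even integer, $w^{2\ell}$ is a polynomial in $z$. It contributes nothing to large-$n$ coefficients, which matches the convention $1/\Gamma(0)=0$. The value $\Gamma(-\frac12)=-2\sqrt\pi$ handles the case $m=1$ automatically.

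The main obstacle is the algebra in the second-order coefficient $g_{m,2}$. This requires computing $d_m$ exactly from the $w^2$ term of \eqref{eq:expansion of T(z)} and combining it with the $1/n$ correction of the leading transfer term. I would verify the resulting closed form against the $\alpha=0$ specialization of $c_2$ and against the known case $k=0$ (the computation of $C_{n,n}$ via $Q(n)$) as a sanity check.
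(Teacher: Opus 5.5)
\paragraph{Verdict.} Your route is the paper's own, with one exception. The shared part is linearity, then the $w$-expansion of $\theta^{-j}$ plus Flajolet--Odlyzko transfer (the paper's negative-power lemma is exactly \eqref{eq:T_alpha_beta} at $\alpha=0$), and the same merging of the $\frac{m(m-2)}{8n}$ correction via $\Gamma(\frac m2)=\frac{m-2}{2}\Gamma(\frac{m-2}{2})$. Your $d_j=\frac{4j^2-7j}{36}$ is exact. Your collection of the $w^{-m},w^{-m+1},w^{-m+2}$ coefficients gives $g_{m,1}$ and $g_{m,2}$ correctly for $m\ge3$. The step that fails is the dismissal of the nonnegative powers ``as soon as $m\ge1$''.

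\paragraph{Where it fails.} The bound $[z^n]\theta^s=O(e^nn^{-3/2})$ is $O(e^nn^{\frac m2-\frac52})$ only when $m\ge2$. For $m=1$ the error term is $O(e^nn^{-2})$. But the $\theta^1$ term contributes
\[
[z^n]\theta=-\frac{n^{n-1}}{n!}=-\frac{1}{\sqrt{2\pi}}e^nn^{-3/2}\bigl(1+O(n^{-1})\bigr),
\]
which sits exactly at the order $e^nn^{\frac m2-2}$ of $g_{1,2}$. It is precisely the $f_{-m+2}=f_1$ term $\frac{2^{1/2}}{\Gamma(-1/2)}f_1=-\frac{f_1}{\sqrt{2\pi}}$ of $g_{1,2}$, so as written your argument drops a term of the stated formula.

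Your ``collecting by powers of $w$'' display does contain $f_{-m+2}$. However, for $m=1$ summing over $1\le j\le m$ cannot produce it, since it comes from a power you had already discarded. The argument is therefore inconsistent exactly in the case the convention $\Gamma(-\frac12)=-2\sqrt\pi$ is meant to handle. Also, for $m=1$ you need $[z^n]\theta^s=O(e^nn^{-2})$ for $s\ge2$, which your crude bound does not give.

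\paragraph{Repair.} Either fix works.
\begin{itemize}
\item Push every $\theta^s$ with $s\ge0$ through the same expansion. From $\theta=\sqrt2\,w-\frac23w^2+O(w^3)$ you get $[z^n]\theta=\frac{\sqrt2}{\Gamma(-1/2)}e^nn^{-3/2}+O(e^nn^{-5/2})$, because the $w^2$ term is a polynomial in $z$. For $s\ge2$, $\theta^s$ is a polynomial in $z$ plus $O(w^3)$, so $[z^n]\theta^s=O(e^nn^{-5/2})$.
\item Follow the paper. It computes $[z^n]\theta$ exactly and proves $[z^n]\theta^s=O(e^nn^{-5/2})$ for $s\ge2$ from \eqref{eq:T3} and the cancellation $\sum_{k=1}^{s}(-1)^k\binom sk k=0$. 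It then treats $m=1,2$ separately.
\end{itemize}
With either fix, $m=1$ yields $f_{-1}\frac{1}{\sqrt{2\pi}}e^nn^{-1/2}-\bigl(\frac{f_{-1}}{12}+f_1\bigr)\frac{1}{\sqrt{2\pi}}e^nn^{-3/2}$ up to the stated error, which matches the lemma. For $m\ge2$ your crude bound already suffices.
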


\begin{lemma}\label{lem: W_k in terms of theta}
	Let $\theta=\theta(z)= 1-T(z)$. For $k\in\{-1,0,1,2,\ldots,5\}$, $W_k$ has the following expressions:
	\be\label{eq: W_minus1 in terms of theta}
	W_{-1}=\frac{1}{2}-\frac{1}{2}\theta^2\,,
	\ee
	and 
	\be\label{eq: W_0 in terms of theta}
	W_0
	=-\frac{1}{2}\log \theta -\frac{3}{4}+\theta-\frac{1}{4}\theta^2\,,
	\ee
	and 
	\be\label{eq: W_1 in terms of theta}
	W_1=\frac{1}{24}\big[5\theta^{-3}-19\theta^{-2}+26\theta^{-1}-14+\theta+\theta^2\big]\,,
	\ee
	and
	\begin{align}\label{eq: W_2 in terms of theta}
	W_2&=\frac{1}{48}\big[15\theta^{-6}-65\theta^{-5}+108\theta^{-4}-87\theta^{-3}+42\theta^{-2}-23\theta^{-1}+12-\theta-\theta^2 \big]
	\end{align}
	and 
	\begin{align}\label{eq: W_3 in terms of theta}
	W_3&=\frac{1105}{1152 \theta^{9}} - \frac{1945}{384 \theta^{8}} + \frac{6353}{576 \theta^{7}} - \frac{233}{18 \theta^{6}} + \frac{8929}{960 \theta^{5}} - \frac{1521}{320 \theta^{4}} + \frac{719}{360 \theta^{3}} - \frac{67}{96 \theta^{2}}+ \frac{115}{384 \theta} \nonumber
	\\& - \frac{293}{1920} + \frac{13 \theta}{960} + \frac{19 \theta^{2}}{1440}
	\end{align}
	and
	\begin{align}\label{eq: W_4 in terms of theta}
	W_4&=\frac{565}{128 \theta^{12}} - \frac{21295}{768 \theta^{11}} + \frac{172337}{2304 \theta^{10}} - \frac{263105}{2304 \theta^{9}} + \frac{255437}{2304 \theta^{8}} - \frac{215153}{2880 \theta^{7}} + \frac{220763}{5760 \theta^{6}} - \frac{92893}{5760 \theta^{5}} \nonumber
	\\&+ \frac{16741}{2880 \theta^{4}} - \frac{21977}{11520 \theta^{3}} + \frac{6577}{11520 \theta^{2}} - \frac{827}{3840 \theta} + \frac{1241}{11520} - \frac{19 \theta}{1920} - \frac{3 \theta^{2}}{320}
	\end{align}
	and
	\begin{align}\label{eq: W_5 in terms of theta}
	W_5&=\frac{82825}{3072 \theta^{15}} - \frac{603965}{3072 \theta^{14}} + \frac{323385}{512 \theta^{13}} - \frac{49027387}{41472 \theta^{12}} + \frac{119862917}{82944 \theta^{11}} - \frac{171249163}{138240 \theta^{10}} + \frac{82217167}{103680 \theta^{9}} \nonumber
	\\ &- \frac{10454209}{25920 \theta^{8}} + \frac{165837761}{967680 \theta^{7}} - \frac{61001153}{967680 \theta^{6}} + \frac{10019617}{483840 \theta^{5}} - \frac{28639}{4608 \theta^{4}} + \frac{735343}{414720 \theta^{3}} - \frac{199879}{414720 \theta^{2}} \nonumber
	\\&+ \frac{361}{2160 \theta} - \frac{59761}{725760} + \frac{5611 \theta}{725760} + \frac{863 \theta^{2}}{120960}\,.
	\end{align}
\end{lemma}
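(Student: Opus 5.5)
The plan is to obtain each $W_k$ as a Laurent polynomial in $\theta=1-T(z)$ by induction on $k$. Here $W_{-1}$ and $W_0$ come directly from Theorem~\ref{thm: FSS2004 thm1}. Substituting $T=1-\theta$ into $W_{-1}=T-\frac12T^2$ gives $\frac12-\frac12\theta^2$ at once. Likewise, $W_0=\frac12\log\frac1{1-T}-\frac12T-\frac14T^2$ becomes $-\frac12\log\theta-\frac12(1-\theta)-\frac14(1-\theta)^2$, which expands to \eqref{eq: W_0 in terms of theta}. For $W_1$ and $W_2$ I will also substitute $T=1-\theta$ into the closed forms quoted from \cite{FS2009} and expand. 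For instance, $T^4(6-T)=(1-\theta)^4(5+\theta)$, and dividing by $24\theta^3$ reproduces the six coefficients in \eqref{eq: W_1 in terms of theta}. This gives a consistency check on the method before the genuinely new cases.

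For $k=3,4,5$ I will use Wright's recurrence \cite{Wright1977one}, rewritten in the variable $\theta$. The key identity is the derivative rule: from \eqref{eq: T'} we have $z\frac{d}{dz}=\frac{T}{1-T}\frac{d}{dT}=-\frac{1-\theta}{\theta}\frac{d}{d\theta}$, so the operator $\vartheta:=z\frac{d}{dz}$ sends $\theta^s$ to $-s(1-\theta)\theta^{s-1}$. Wright's recurrence comes from differentiating with respect to the edge variable in $\exp(\sum_k W_k y^k)$ and using the quadratic relation for adding an edge. It reads
\[
2(k+1)W_{k+1}=\Big(\vartheta^2-3\vartheta-2k\Big)W_k+\sum_{s=-1}^{k+1}\vartheta W_s\,\vartheta W_{k-s}
\]
up to the exact form of the lower-order linear term, which I will fix by checking against $k=0\to1$ and $1\to2$. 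The right-hand side involves $\vartheta W_{k+1}$ through the $s=-1$ and $s=k+1$ terms, with $\vartheta W_{-1}=\theta(1-\theta)$. So the recurrence is a first-order linear ODE for $W_{k+1}$ in $\theta$. I will solve it by writing $W_{k+1}=\sum_{s=-3k-3}^{2}c_{k+1,s}\theta^s$, which is the ansatz justified by \eqref{eq: thm4 in Wrightone}, and matching coefficients of each power $\theta^s$. This yields a triangular linear system, solved from the top power $\theta^{-3k-3}$ downward. The additive constant is fixed by the requirement $[z^0]W_{k+1}=0$, i.e. $W_{k+1}$ vanishes at $T=0$ ($\theta=1$), so the coefficients sum to zero.

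The step I expect to be the main obstacle is getting the recurrence exactly right (signs and the linear term) and then carrying out the rational arithmetic for $k=3,4,5$. This arithmetic is mechanical but long, and I would do it by computer algebra. As checks I will use three things. First, $\sum_s c_{k,s}=0$ must hold. Second, the leading coefficient $c_{k,-3k}$ must match Wright's $\sigma_k$ via Proposition~\ref{prop: asymptotic of cnnk} and Lemma~\ref{lem: coef using theta expansion}: for example, $5/24$ for $k=1$ gives $16\sigma_1=5$, and $1105/1152$ for $k=3$ should match $\sigma_3=\frac{9}{2}\sigma_2+\sigma_1^2$. Third, the small coefficients $n!\,[z^n]W_k$ for tiny $n$ must agree with brute-force counts of connected graphs with $n$ vertices and $n+k$ edges; for example, $C_{4,6}=1$ for $k=2$.
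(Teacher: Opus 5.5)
Your overall route is the paper's. You substitute $T=1-\theta$ into the closed forms for $k\le 2$, then run Wright's recurrence in $\theta$ by computer algebra for $k=3,4,5$. Solving the resulting first-order ODE by coefficient matching, rather than through Wright's integral formula \eqref{eq:thm2 in Wrightone}, is only a cosmetic difference. Your checks are correct: $\sum_s c_{k,s}=0$, $c_{3,-9}=\frac{2}{9}\sigma_3=\frac{1105}{1152}$, and $C_{4,6}=1$.

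The genuine problem is that both formulas driving the computation are wrong as written.

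\textbf{The $\vartheta$ rule.} From $\vartheta=-\frac{1-\theta}{\theta}\frac{d}{d\theta}$ one gets $\vartheta\theta^s=-s(1-\theta)\theta^{s-2}$, not $\theta^{s-1}$. Hence $\vartheta W_{-1}=1-\theta$ (this is just $zW_{-1}'=T$, cf.\ \eqref{eq: W-1'}), not $\theta(1-\theta)$.

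\textbf{The left side of the recurrence.} It must be $2(\vartheta+k+1)W_{k+1}$, not $2(k+1)W_{k+1}$. Write the bivariate EGF of connected graphs as $C=\sum_k y^kW_k(xy)$ and extract $[y^k]$ from $(1+y)\partial_yC=\frac12\big(x^2\partial_x^2C+(x\partial_xC)^2\big)$. Because the $W_k$ are evaluated at $xy$, one has $\partial_y\big(y^{k+1}W_{k+1}(xy)\big)=y^k(k+1+\vartheta)W_{k+1}$. The right side you wrote, including $-3\vartheta-2k$, is already correct; its $-2\vartheta-2k$ part comes from the same mechanism applied to $W_k$. So calibrating that ``lower-order linear term'' cannot repair the recurrence.

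The missing term is what makes the method work. Combined with the $s=-1$ and $s=k+1$ terms it gives
\[
2\vartheta W_{k+1}-2\,\vartheta W_{-1}\,\vartheta W_{k+1}=2\theta\,\vartheta W_{k+1}=-2(1-\theta)\tfrac{dW_{k+1}}{d\theta}.
\]
Set $2J_k=(\vartheta^2-3\vartheta-2k)W_k+\sum_{h=0}^{k}\vartheta W_h\,\vartheta W_{k-h}$. The correct equation is then
\[
(k+1)W_{k+1}-(1-\theta)\frac{dW_{k+1}}{d\theta}=J_k,
\qquad\text{i.e.}\qquad
(k+1+s)\,c_{k+1,s}-(s+1)\,c_{k+1,s+1}=[\theta^s]J_k .
\]
Equivalently $\frac{d}{d\theta}\big[(1-\theta)^{k+1}W_{k+1}\big]=-(1-\theta)^kJ_k$, which is exactly the paper's integral formula.

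In the coefficient form, the $s=-3k-4$ equation gives $c_{k+1,-3k-3}=j_k/(3k+3)$. The $s=-1$ equation does not contain $c_{k+1,0}$; it is Wright's ``no $t^{-1}$ term'' consistency condition. That is precisely where your normalization $\sum_s c_{k+1,s}=0$ supplies the missing coefficient.

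With your left side instead, even after fixing $\vartheta$, you get $2(k+1)W_{k+1}+\frac{2(1-\theta)^2}{\theta}\frac{dW_{k+1}}{d\theta}=2J_k$. Its most singular power, $\theta^{-3k-5}$, forces $c_{k+1,-3k-3}=0$. That already contradicts $W_1=\frac{5}{24}\theta^{-3}+\cdots$ at your first check. Your checks would therefore flag the failure, but the repair has to be made on the left side and in the $\vartheta$ rule.
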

\begin{proof}
	From Theorem~\ref{thm: FSS2004 thm1}, we have
\[
W_{-1}(z)=T(z)-\frac{1}{2}T^2(z)=\frac{1}{2}-\frac{1}{2}\theta^2\,,
\]
and 
\[
W_0(z)=\frac{1}{2}\log\frac{1}{1-T(z)}-\frac{1}{2}T(z)-\frac{1}{4}T^2(z)
=-\frac{1}{2}\log \theta -\frac{3}{4}+\theta-\frac{1}{4}\theta^2\,.
\]
	
	The expressions for $W_1(z)$ and $W_2(z)$ appear in \cite[p.~322]{Wright1977one} as $24W_1(z)=\theta^{-3}(1-\theta)^4(5+\theta)$ and $48W_2(z)=\theta^{-6}(1-\theta)^4(15-5\theta-2\theta^2-5\theta^3-\theta^4)$. 
	
	For $k\ge1$, 
	Wright \cite{Wright1977one} provided a recursive procedure to derive the explicit expressions for $W_k$ in terms of $\theta$, outlined as follows:  
	\begin{itemize}
		\item[(1)] Let $\sD:\bbC[[z]]\to\bbC[[z]]$ denote the operator on formal power series defined by
		\[
		\sD f(z)\coloneq \sum_{n=1}^{\infty}(nf_n)z^n\,
		\]
		for $f(z)=\sum_{n=0}^{\infty}f_nz^n\in\bbC[[z]]$, (equivalently, $\sD f(z)=zf'(z)$). Wright \cite[Eq.(7), p.~320]{Wright1977one} showed that the power series $W_k$ satisfy the following recursive relation:
		\[
		2(\sD+k+1)W_{k+1}=(\sD^2-3\sD-2k)W_k+\sum_{h=-1}^{k+1}(\sD W_h)(\sD W_{k-h})\,.
		\]
		
		\item[(2)] 
		Fix $k\ge0$ and assume we have derived the expressions for $W_h$ in terms of $\theta$ for all $h\leq k$.  Define the  formal power series  $J_k=J_k(z)$  by 
		\[
		2J_k\coloneq (\sD^2-3\sD-2k)W_k+\sum_{h=0}^{k}(\sD W_h)(\sD W_{k-h})\,.
		\]
		Wright \cite[Eq.(14), p.~322]{Wright1977one} established that the operator $\sD$ satisfies:
		\[
		\sD f(z)=\left(1-\frac{1}{\theta}\right)\frac{d f}{d\theta}\,, \quad \text{where }\frac{df}{d\theta}=\frac{f'(z)}{\theta'(z)} \text{ for all }f\in\bbC[[z]].
		\]
		In particular, since $W_0(z)=\frac{1}{2}\log\frac{1}{\theta}-\frac{3}{4}+\theta-\frac{1}{4}\theta^2$, we have
		\be\label{eq:DW0}
		\sD W_0=(1-\theta^{-1})\cdot \big[-\frac{1}{2}\theta^{-1}+1-\frac{1}{2}\theta\big]=\frac{(1-\theta)^3}{2\theta^2}\,.
		\ee
	For $h\in\{1,\ldots,k\}$, using the expression of $W_h$ from \eqref{eq: thm4 in Wrightone},   we obtain 
		\begin{align*}
		\sD W_h&=(1-\theta^{-1})\cdot\sum_{s=-3h}^{2}sc_{h,s}\theta^{s-1}\\
	     &=3hc_{h,-3h}\theta^{-3h-2}+\sum_{s=-3h-1}^{0} \big[(s+1)c_{h,s+1}-(s+2)c_{h,s+2}\big] \theta^{s}+2c_{h,2}\theta\,.
		\end{align*}
		Applying the identity $\sD f=\left(1-\frac{1}{\theta}\right)\frac{d f}{d\theta}$ to $f=\sD W_k$ gives
		\begin{align*}
		\sD^2 W_k&=(1-\theta^{-1})\cdot  \left[-3k(3k+2)c_{k,-3k}\theta^{-3k-3}\right]\\
		&\quad +(1-\theta^{-1})\cdot\bigg[ \sum_{s=-3k-1}^{0} s\big[(s+1)c_{k,s+1}-(s+2)c_{k,s+2}\big] \theta^{s-1}+2c_{k,2} \bigg]\\
		&=3k(3k+2)c_{k,-3k}\theta^{-3k-4}+\sum_{s=-3k-3}^{0}\widetilde{c}_{k,s}\theta^{s}\,,
		\end{align*}
		where $\widetilde{c}_{k,s}$ are  constants  expressible in terms of $c_{k,s}$. 
		Substituting these results into the definition of $J_k$, we find constants $\widehat{c}_{k,s}$ (computable from $c_{h,s}$ and $\widetilde{c}_{h,s}$) such that 
		\begin{align*}
		J_k(z)&=j_k\theta^{-3k-4}+\sum_{s=-3k-3}^{2}\widehat{c}_{k,s}\theta^{s}\,,
		\end{align*}
		with 
		\[
		j_k=\frac{9k(k+1)}{2}c_{k,-3k}+\sum_{h=1}^{k-1}\frac{9h(k-h)}{2}c_{h,-3h}c_{k-h,-3(k-h)}\,.
		\]
		Define the function $\cJ=\cJ_k$ by:
		\[
		\cJ_k(t)\coloneq j_kt^{-3k-4}+\sum_{s=-3k-3}^{2}\widehat{c}_{k,s}t^{s}\,.
		\]
	    \item[(3)] 
			Theorem 2 of Wright \cite{Wright1977one}\footnote{The term $J$ in \cite[Theorem 2]{Wright1977one} corresponds to the function $\cJ$ defined above.} gives the following expression for  $W_{k+1}$ in terms of $\theta$ (valid for $k\ge0$):
			\be\label{eq:thm2 in Wrightone}
			(1-\theta)^{k+1}W_{k+1}
			=\int_{\theta}^{1}(1-t)^k\cJ_k(t)dt\,.
			\ee
		Wright \cite[Theorem 3]{Wright1977one} proved that the Laurent series expansion of the integrand has no $t^{-1}$ term. A careful computation confirms that \eqref{eq: thm4 in Wrightone} holds for $k+1$ with 
		\be\label{eq: recursive cks}
		c_{k+1,-3(k+1)}=\frac{j_k}{3k+3}=\frac{3k}{2}c_{k,-3k}+\sum_{h=1}^{k-1}\frac{3h(k-h)}{2(k+1)}c_{h,-3h}c_{k-h,-3(k-h)}\,,
		\ee		
		and other constants $c_{k+1,s}$	expressible using $\{c_{h,t}\colon h\leq k\}$.
	\end{itemize}

While hand computation of  $W_k$ in terms of $\theta$ by hand is tedious, the above procedure enables straightforward derivation of the expressions in \eqref{eq: W_3 in terms of theta},   \eqref{eq: W_4 in terms of theta} and  \eqref{eq: W_5 in terms of theta} using Maple. \qedhere

\end{proof}

\begin{proof}[Proof of Proposition~\ref{prop: asymptotic of cnnk for small k}]
Combining Lemma~\ref{lem: W_k in terms of theta}, Lemma~\ref{lem: coef using theta expansion}, and  Stirling's formula \[
 n!=\sqrt{2\pi}e^{-n} n^{n+\frac{1}{2}}\Big[1+\frac{1}{12n}+O\big(n^{-2}\big)\Big]
 \]
 yields the desired asymptotic behaviors. We  illustrate the derivation with the $k=1$ case as an example.
 From \eqref{eq: W_1 in terms of theta} and  Lemma~\ref{lem: coef using theta expansion}, we obtain 
 \[
 \big[z^n\big]W_1(z)=\frac{5}{24}\frac{1}{\sqrt{2\pi}}e^nn^{\frac{1}{2}} -\frac{7}{24}e^n+\frac{195}{288}\frac{1}{\sqrt{2\pi}}e^nn^{-\frac{1}{2}}+O\big(e^nn^{-1}\big) \,.
 \]
It follows that
 \begin{align*}
 &C_{n,n+1}=n!\big[z^n\big]W_1(z)\\
 &=\sqrt{2\pi}e^{-n} n^{n+\frac{1}{2}}\Big[1+\frac{1}{12n}+O\big(n^{-2}\big)\Big]\cdot \left[ \frac{5}{24}\frac{1}{\sqrt{2\pi}}e^nn^{\frac{1}{2}} -\frac{7}{24}e^n+\frac{195}{288}\frac{1}{\sqrt{2\pi}}e^nn^{-\frac{1}{2}}+O\big(e^nn^{-1}\big)  \right] \\
 &=\frac{5}{24}n^{n+1} -\frac{7\sqrt{2\pi}}{24}n^{n+\frac{1}{2}} +\frac{25}{36}n^n+O\left(n^{n-\frac{1}{2}}\right)   \,. \qedhere
 \end{align*}
\end{proof}

\subsubsection{The asymptotics of $C_{n,n+k}^{e,f}$ for $k\in\{1,2,\ldots,5\}$}

Recall that $C_{n,n+k}^{e,f}\coloneq \card{\big\{ \omega\in\sC^{(k)}(K_n) \colon \omega(e)=\omega(f)=1  \big\}}$, where $e,f$ are a pair of adjacent edges in $K_n$.  In this subsection, we aim to show that $C_{n,n+k}^{e,f}\asymp n^{n+\frac{3k-5}{2}}$ and derive the following asymptotic behavior for $C_{n,n+k}^{e,f}$ when  $k\in\{1,2,3,4,5\}$. 

\begin{proposition}\label{prop: asymptotic of cnnk containing e and f}
	Suppose $e,f$ are a pair of adjacent edges in $K_n$. Then for  $k\in\{1,2,\ldots,5\}$, we have the following estimates for $C_{n,n+k}^{e,f}$:
	\be\label{eq: cnn1ef}
	C_{n,n+1}^{e,f}={ \frac{5}{8}n^{n-1}-\frac{7\sqrt{2\pi}}{8}n^{n-\frac{3}{2}}+\frac{35}{6}n^{n-2}+O(n^{n-\frac{5}{2}}) }  \,,\nonumber
	\ee
	and
	\be\label{eq: cnn2ef}
	C_{n,n+2}^{e,f}
	=\frac{15\sqrt{2\pi}}{256}n^{n + \frac{1}{2}} - \frac{35}{48}n^{n}+\frac{3179\sqrt{2\pi}}{3072}n^{n - \frac{1}{2}}+O(n^{n-1})\,,\nonumber
	\ee
	and
	\be\label{eq: cnn3ef}
	C_{n,n+3}^{e,f}=\frac{221}{8064}n^{n + 2} - \frac{35\sqrt{2\pi}}{512}n^{n + \frac{3}{2}}+\frac{45173}{60480}n^{n + 1}+O(n^{n+\frac{1}{2}}) \,,\nonumber
	\ee
	and
	\be\label{eq: cnn4ef}
	C_{n,n+4}^{e,f}= \frac{113\sqrt{2\pi}}{65536}n^{n + \frac{7}{2}} - \frac{221}{6912}n^{n + 3}+\frac{50909\sqrt{2\pi}}{786432}n^{n + \frac{5}{2}}+O(n^{n+2}) \,,\nonumber
	\ee
	and	
	\be\label{eq: cnn5ef}
	C_{n,n+5}^{e,f}= \frac{16565}{27675648}n^{n + 5} - \frac{791\sqrt{2\pi}}{393216}n^{n + \frac{9}{2}}+\frac{1196681}{41513472}n^{n + 4}+O(n^{n+\frac{7}{2}}) \,,\nonumber
	\ee
	
\end{proposition}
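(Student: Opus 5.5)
We would follow the decomposition used for $k=0$ in Lemma~\ref{lem: decomposition into cases}, generalized to excess $k$. Let $e=(u,v)$, $f=(u,w)$ and delete $e,f$ from a connected subgraph $\omega$ of excess $k$ containing both. The result has $p\in\{1,2,3\}$ components, and the excesses of these components must add up to account for the two deleted edges.

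\textbf{Three components.} The vertices $u,v,w$ lie in three distinct components with excesses $k_1+k_2+k_3=k-1$, each $k_i\ge -1$. By Lemma~\ref{lem:generalisation of stark lem2.2} with $S_1=\{u\},S_2=\{v\},S_3=\{w\}$, this contributes
\[
(n-3)!\,[z^{n-3}]\sum_{k_1+k_2+k_3=k-1}W_{k_1}'W_{k_2}'W_{k_3}'\,.
\]

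\textbf{Two components.} Here $\{u\}$ is separated from $\{v,w\}$, or $\{u,v\}$ from $\{w\}$ (and symmetrically $\{u,w\}$ from $\{v\}$), with excesses summing to $k-2$. The component containing both endpoints of a deleted edge must not itself use that edge. As in Lemma~\ref{lem:I2}, we subtract the configurations that contain the edge; these reduce to three-component counts again.

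\textbf{One component.} This is a connected subgraph of excess $k-2$ on $K_n$ avoiding $e$ and $f$. By inclusion--exclusion it equals $C_{n,n+k-2}-2C_{n,n+k-2}^{e}+C_{n,n+k-2}^{e,f}$. The recursion in $k$ brings in lower-order terms only, and $C^{e}_{n,n+j}=\frac{n+j}{\binom n2}C_{n,n+j}$ by Lemma~\ref{lem: first and second moment for k cycles}.

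The exact combination is easiest to organize as a generating-function identity. Marking pointed vertices by derivatives, the exponential generating function of connected graphs of excess $k$ with a distinguished cherry $\{e,f\}$ is expressible through $W_j'$, $W_j''$, $W_j'''$ for $j\le k$. A cleaner route is to express the count of pointed triples as $\sum$ over $S_1=\{u,v,w\}$ with $m=3$, i.e.\ to use $W'''$-type terms and correct for the presence of $e,f$. We would then compute $C^{e,f}_{n,n+k}$ explicitly as $(n-3)!\,[z^{n-3}]\Phi_k(z)$, where $\Phi_k$ is a finite polynomial in $\{W_j^{(i)}\}$.

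Using $\sD f=(1-\theta^{-1})\,df/d\theta$ together with $z\,d/dz=\sD$, each $z^iW_j^{(i)}$ becomes a Laurent polynomial in $\theta$ (plus explicit $\log\theta$ pieces for $j=0$, which are handled as in Proposition~\ref{prop: asymptotic of cnn}). Hence $z^3\Phi_k=\sum_{s\ge -3k-2}d_{k,s}\theta^s$. We would then apply Lemma~\ref{lem: coef using theta expansion} with $m=3k+2$ and Stirling's formula for $(n-3)!$ from \eqref{eq: n minus 3 factorial}. This gives the three-term expansion in powers $n^{n+\frac{3k-5}{2}},n^{n+\frac{3k-6}{2}},n^{n+\frac{3k-7}{2}}$, which matches the stated leading constants, e.g.\ $\frac{5}{8}=3\cdot\frac{5}{24}$ for $k=1$.

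For $k\le5$ the coefficients $d_{k,-m},d_{k,-m+1},d_{k,-m+2}$ follow from Lemma~\ref{lem: W_k in terms of theta}; only the top three coefficients of each $W_j$ are needed. We would carry out this expansion in Maple. The main obstacle is getting the combinatorial decomposition exactly right, in particular the correction terms where a component spanning two of $u,v,w$ secretly contains $e$ or $f$. Its error would show up directly in the second- and third-order coefficients, so we would check it against the $k=0$ result (Proposition~\ref{prop: asymp of cnnef}) and against brute-force counts for small $n$.
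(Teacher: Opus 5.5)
Your overall route is the paper's: split $C^{e,f}_{n,n+k}$ by the number of components left after deleting $e,f$, express each piece through Lemma~\ref{lem:generalisation of stark lem2.2}, rewrite in $\theta$, then apply Lemma~\ref{lem: coef using theta expansion} and Stirling, with computer algebra for $k\le5$. However, the decomposition as you wrote it contains an error that would break the computation. After deleting $e,f$ there are $n+k-2$ edges on $n$ vertices, and excess is additive over components. So the total excess is $k-2$ in all three cases, including the three-component one. (Sanity check: $k=-1$ gives three trees, and $-3=k-2$.) With your $k_1+k_2+k_3=k-1$, the dominant term would be $3(zW_{-1}')^2(zW_{k+1}')$, whose top power is $\theta^{-3k-5}$. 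That yields order $n^{n+\frac{3k-2}{2}}$, a factor $n^{3/2}$ too large. It also contradicts your own later choice $m=3k+2$ and the check $\frac58=3\cdot\frac5{24}$, both of which presuppose the sum $k-2$, with leading term $3\rho_k n^{n+\frac{3k-5}{2}}$ coming from $(-1,-1,k)$.

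Second, in the two-component case, the subtracted configurations (the $\{u,v\}$-component containing $e$) do not reduce to three-component counts once $k\ge1$. If $e$ lies on a cycle of that component, deleting it leaves two components, so an exact formula needs a further recursion, and your claimed identity would fail a brute-force check. The paper never computes this term exactly. Via $\Phi_f$ it is in bijection with $\widehat\Lambda_2$ (Lemma~\ref{lem: J2}), and $\card{\widehat\Lambda_2}\le C^{e,f}_{n,n+k-1}$. Together with $C^{e,f}_{n,n+k-2}$ from the one-component case, this is $O(n^{n+\frac{3k-8}{2}})$, which lies below the third term. That bound relies on the a priori order $C^{e,f}_{n,n+j}\asymp n^{n+\frac{3j-5}{2}}$, proved by induction in Lemma~\ref{lem: negligible terms}. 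Your remark that ``the recursion in $k$ brings in lower-order terms only'' tacitly assumes exactly this, and it needs proof.

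With these fixes, $C^{e,f}_{n,n+k}=R_1+R_2+R_3+O(n^{n+\frac{3k-8}{2}})$. Here $R_1=\bigl(1-\frac{4(n+k-2)}{n(n-1)}\bigr)C_{n,n+k-2}$ enters only through its leading term $\rho_{k-2}n^{n+\frac{3k-7}{2}}$. The unified $\Phi_k$ with $W'''$-type terms that you sketch is unnecessary.
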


We first decompose the set $\big\{ \omega\in\sC^{(k)}(K_n) \colon \omega(e)=\omega(f)=1  \big\}$ into subsets according to the number of connected components after deleting $e,f$ and then deal with each case separately. Suppose $k\ge1$ and $e,f$ are a pair of adjacent edges of $K_n$. Recall that $\ncc{\omega}$ denotes the number of connected components in the graph $\big(V(K_n),\eta(\omega)\big)$ for $\omega\in\{0,1\}^{E(K_n)}$. Let $\Lambda_i$ for $i\in\{1,2,3\}$ be defined by 
\be\label{eq: Ji def}
\Lambda_i=\Lambda_i(n,k)\coloneq  \left\{ \omega\in\sC^{(k)}(K_n)  \colon \omega(e)=\omega(f)=1,\text{ and } \ncc{\omega\setminus\{e,f\}} =i \right\} \,.
\ee
For $ \omega\in\sC^{(k)}(K_n)$ such that $\omega(e)=\omega(f)=1$, deleting $e,f$ from $\big(V(K_n),\eta(\omega)\big)$ yields a new subgraph with at most three connected components. Hence, we obtain 
\be\label{eq: cnnef decom into cases}
	C_{n,n+k}^{e,f}=\card{\Lambda_1}+\card{\Lambda_2}+\card{\Lambda_3}\,.
\ee
Given a subset $S\subset E(K_n)$, we define a map $\Phi_S:\{0,1\}^{E(K_n)} \to \{0,1\}^{E(K_n)}$ as follows:
\be\label{eq: def of Phi}
\Phi_S(\omega)(g)=\left\{
\begin{array}{ccc}
	\omega(g), & \text{ if } & g\in E(K_n)\setminus S\,,\\
	&&\\
	0, & \text{ if } & g\in S\,.
\end{array}
\right.
\ee
For  edges $e,f\in E(K_n)$, for simplicity we write $\Phi_{e}$ and $\Phi_{e,f}$ to denote $\Phi_{\{e\}}$ and $\Phi_{\{e,f\}}$, respectively.

\begin{lemma}\label{lem: J3}
	Suppose $k\ge1$ and let $\Lambda_3=\Lambda_3(n,k)$ denote the subset defined in \eqref{eq: Ji def} with $i=3$. Then  
	\[
	\card{\Lambda_3}=(n-3)!\big[z^{n-3}\big]\sum_{ \substack{k_1,k_2,k_3\ge-1\\ k_1+k_2+k_3=k-2} }W_{k_1}'(z)W_{k_2}'(z)W_{k_3}'(z)\,.
	\]
\end{lemma}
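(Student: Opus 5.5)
The plan is to set up a bijection between $\Lambda_3$ and ordered triples of vertex-disjoint connected graphs, and then apply Lemma~\ref{lem:generalisation of stark lem2.2}. Write $e=(u,v)$ and $f=(u,w)$. For $\omega\in\Lambda_3$, deleting $e$ and $f$, i.e.\ passing to $\Phi_{e,f}(\omega)$, leaves exactly three components.

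First I will show that $u$, $v$, $w$ lie in different components of $\Phi_{e,f}(\omega)$. Since $\omega$ is connected, every component of $\Phi_{e,f}(\omega)$ must contain an endpoint of $e$ or $f$. There are three components and only three endpoints, so each component contains exactly one of $u,v,w$. Call these components $H_u,H_v,H_w$. Next comes the excess bookkeeping. Let $H_x$ have $a_x$ vertices, $b_x$ edges and excess $k_x=b_x-a_x\ge -1$. The equation $\sum b_x+2=n+k$ together with $\sum a_x=n$ gives $k_u+k_v+k_w=k-2$.

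Conversely, take any spanning subgraph of $K_n$ with three components, containing $u,v,w$ respectively, with excesses $k_u,k_v,k_w\ge-1$ summing to $k-2$. Adding $e$ and $f$ gives a connected subgraph with excess $k$. It contains $e,f$, and removing them recovers three components, so it lies in $\Lambda_3$. These two maps ($\Phi_{e,f}$ and adding $\{e,f\}$) are mutually inverse. Hence $|\Lambda_3|$ equals the sum, over ordered triples $(k_1,k_2,k_3)$ with $k_i\ge-1$ and $k_1+k_2+k_3=k-2$, of the number of spanning subgraphs with exactly three components, where the component of $u$ has excess $k_1$, that of $v$ has $k_2$, and that of $w$ has $k_3$.

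For a fixed triple, apply Lemma~\ref{lem:generalisation of stark lem2.2} with $p=q=3$, $S_1=\{u\}$, $S_2=\{v\}$, $S_3=\{w\}$, $m_i=1$ and $d=3$. This gives the count $(n-3)!\,[z^{n-3}]W_{k_1}'W_{k_2}'W_{k_3}'$. Summing over triples yields the formula in Lemma~\ref{lem: J3}.

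The only delicate point is the first step: checking that the three components are forced to separate $u$, $v$, $w$, so that no triple is double counted and no configuration is missed. The connectivity argument above handles this. A minor check is that $k\ge1$ ensures $k-2\ge -1$ is attainable, since the $k_i\ge-1$ need $k-2\ge-3$, which always holds. There is therefore no issue with an empty sum beyond what the formula already encodes.
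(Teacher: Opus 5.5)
Your proof is correct and follows the paper's argument. The paper restricts $\Phi_{e,f}$ to $\Lambda_3$ to get a bijection onto three-component spanning subgraphs that separate $u,v,w$ and whose excesses sum to $k-2$, then applies Lemma~\ref{lem:generalisation of stark lem2.2} with $p=q=3$ and singleton $S_i$; your connectivity argument that each component contains exactly one of $u,v,w$ simply spells out a step the paper asserts without comment.
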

Lemma~\ref{lem: J3} is  analogous to Lemma~\ref{lem:I3} and we include its proof for completeness. 
\begin{proof}[Proof of Lemma~\ref{lem: J3}]
	Let $e=(u,v)$ and $f=(u,w)$ be adjacent edges with  common vertex $u$.
	For $\omega\in \Lambda_3$,  set $\omega'\coloneq \Phi_{e,f}(\omega)$ and  consider the  subgraph $\big(V(K_n),\eta(\omega')\big)$. In this subgraph, the three vertices $u,v,w$  lie in   three distinct connected components. 
	Note that restricting the map $\Phi_{e,f}$ on $\Lambda_3$ gives a bijection between $\Lambda_3$ and the set $\big\{ \omega'\in\{0,1\}^{E(K_n)}\colon \ncc{\omega'}=3, \card{\eta(\omega')}=n+k-2,\text{ and }u,v,w \text{ lie in different  connected components of } \big(V(K_n),\eta(\omega')\big) \big\}$. % (the latter condition implies that $\omega'(e)=\omega'(f)=0$). 
	
	Denote the excess  of the three connected components by  $k_1,k_2,k_3$. The subgraph $\big(V(K_n),\eta(\omega')\big)$ has excess $k-2$, so $k_1+k_2+k_3=k-2$. Moreover, since each component is connected, we have $k_1,k_2,k_3\ge-1$. Applying Lemma~\ref{lem:generalisation of stark lem2.2} with $p=q=3,S_1=\{u\},S_2=\{v\},S_3=\{w\}$, and $k_1,k_2,k_3\ge-1$ satisfying $k_1+k_2+k_3=k-2$, we obtain the desired result:
	\[
	\card{\Lambda_3}=(n-3)!\big[z^{n-3}\big]\sum_{ \substack{k_1,k_2,k_3\ge-1\\ k_1+k_2+k_3=k-2} }W_{k_1}'(z)W_{k_2}'(z)W_{k_3}'(z)\,. \quad \qedhere
	\]
\end{proof}

For $k\ge1$, define
\be\label{eq: def of R_3}
R_3=R_3(k)\coloneq (n-3)!\big[z^{n-3}\big]\sum_{ \substack{k_1,k_2,k_3\ge-1\\ k_1+k_2+k_3=k-2} }W_{k_1}'(z)W_{k_2}'(z)W_{k_3}'(z)\,.
\ee
We state the following asymptotic estimate for $R_3$; its proof is deferred to the appendix. 
\begin{lemma}\label{lem: R_3}
	Suppose $k\ge1$. Then 
	\be\label{eq:R_3}
	R_3=3\rho_kn^{n+\frac{3k-5}{2}}+O\left(  n^{n+\frac{3k-6}{2}} \right)\,.
	\ee
\end{lemma}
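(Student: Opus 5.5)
The plan is to rewrite every summand of $R_3$ as a Laurent polynomial in $\theta=1-T(z)$, find which summands have the highest-order pole at $\theta=0$, and extract their coefficients with Lemma~\ref{lem: coef using theta expansion}. The starting point is the operator identity $\sD=z\frac{d}{dz}=(1-\theta^{-1})\frac{d}{d\theta}$ used in the proof of Lemma~\ref{lem: W_k in terms of theta}. It gives $W_h'(z)=z^{-1}\sD W_h$, so
\[
R_3=(n-3)!\,\big[z^{n-3}\big]z^{-3}\!\!\sum_{k_1+k_2+k_3=k-2}\prod_{i=1}^3\sD W_{k_i}=(n-3)!\,\big[z^{n}\big]\!\!\sum_{k_1+k_2+k_3=k-2}\prod_{i=1}^3\sD W_{k_i}.
\]

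I would first record the pole orders of the three kinds of factors.
\begin{itemize}
\item For $h=-1$: by \eqref{eq: W_minus1 in terms of theta}, $\sD W_{-1}=(1-\theta^{-1})(-\theta)=1-\theta$, which has no pole.
\item For $h=0$: by \eqref{eq:DW0}, $\sD W_0=\frac{(1-\theta)^3}{2\theta^2}$, a pole of order $2$.
\item For $h\ge1$: by the computation in the proof of Lemma~\ref{lem: W_k in terms of theta}, $\sD W_h=3h\,c_{h,-3h}\theta^{-3h-2}+(\text{higher powers of }\theta)$, a pole of order $3h+2$.
\end{itemize}
Hence a triple $(k_1,k_2,k_3)$ with $j$ entries equal to $-1$ gives a product with a pole of order $\sum_{k_i\ge0}(3k_i+2)$. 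The three cases are as follows.
\begin{itemize}
\item If $j=0$, the pole has order $3(k-2)+6=3k$.
\item If $j=1$, the pole has order $3(k-1)+4=3k+1$.
\item If $j=2$, the remaining index equals $k$ and the pole has order $3k+2$.
\end{itemize}
The case $j=3$ would force $k=-1$, so it cannot occur. Only finitely many triples appear, since $k$ is fixed.

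Next I would isolate the dominant part. The three triples that are permutations of $(-1,-1,k)$ each contribute
\[
(1-\theta)^2\,\sD W_k=3k\,c_{k,-3k}\theta^{-3k-2}+\sum_{s>-3k-2}(\ldots)\,\theta^{s}.
\]
Apply Lemma~\ref{lem: coef using theta expansion} with $m=3k+2$. Keeping only the leading term, these three triples give
\[
3\cdot 3k\,c_{k,-3k}\,\frac{2^{-(3k+2)/2}}{\Gamma(\frac{3k+2}{2})}\,e^n n^{\frac{3k}{2}}\big[1+O(n^{-1/2})\big].
\]
Every other triple has a pole of order at most $3k+1$, so by the same lemma it contributes $O(e^n n^{(3k-1)/2})$. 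Multiplying by $(n-3)!=\sqrt{2\pi}\,e^{-n}n^{n-5/2}[1+O(n^{-1})]$, from \eqref{eq: n minus 3 factorial}, gives
\[
R_3=9k\,c_{k,-3k}\,\sqrt{2\pi}\,\frac{2^{-(3k+2)/2}}{\Gamma(\frac{3k}{2}+1)}\,n^{n+\frac{3k-5}{2}}+O\big(n^{n+\frac{3k-6}{2}}\big).
\]

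Finally I would identify the constant with $3\rho_k$. Applying Lemma~\ref{lem: coef using theta expansion} with $m=3k$ to $W_k$, whose leading coefficient is $c_{k,-3k}$, together with Stirling's formula for $n!$, gives
\[
C_{n,n+k}\sim\sqrt{2\pi}\,c_{k,-3k}\,\frac{2^{-3k/2}}{\Gamma(3k/2)}\,n^{n+\frac{3k-1}{2}}.
\]
Comparing with \eqref{eq: cnnk from Wright1977} shows $\rho_k=\sqrt{2\pi}\,c_{k,-3k}2^{-3k/2}/\Gamma(3k/2)$. Using $\Gamma(\frac{3k}{2}+1)=\frac{3k}{2}\Gamma(\frac{3k}{2})$, the constant in $R_3$ becomes
\[
\frac{9k}{2}\cdot\frac{2}{3k}\,\sqrt{2\pi}\,c_{k,-3k}\frac{2^{-3k/2}}{\Gamma(3k/2)}=3\rho_k .
\]

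The main point needing care is this last identification. It requires confirming that the normalisation of $\rho_k$ in Proposition~\ref{prop: asymptotic of cnnk} really equals $\sqrt{2\pi}\,c_{k,-3k}2^{-3k/2}/\Gamma(3k/2)$, with $c_{k,-3k}>0$, which is consistent with the recursion \eqref{eq: recursive cks} matching \eqref{eq:recursive-sigma}. Failing that, I would derive the constant directly from $[z^n]W_k$ as above; this is independent of how $\rho_k$ is parametrised by $\sigma_k$. The other steps are routine bookkeeping of pole orders.
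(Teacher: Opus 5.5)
Your proposal is correct and follows essentially the same route as the paper. Both arguments write each summand as $\prod_i zW_{k_i}'$ in powers of $\theta$ and show that the highest pole, of order $3k+2$, comes only from the three permutations of $(-1,-1,k)$, with leading coefficient $3k\,c_{k,-3k}$. Both then extract coefficients via Lemma~\ref{lem: coef using theta expansion}, multiply by the Stirling expansion of $(n-3)!$, and identify the constant through $\rho_k=\sqrt{2\pi}\,c_{k,-3k}2^{-3k/2}/\Gamma(3k/2)$, which is the paper's \eqref{eq: rhok in terms of ck3k}. The only cosmetic difference is that you compute the pole order $3k+j$ directly from the number $j$ of entries equal to $-1$, whereas the paper uses an exchange argument replacing $(k_1,k_2,k_3)$ by $(-1,k_1+k_2+1,k_3)$.
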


\begin{lemma}\label{lem: J1}
	Suppose $k\ge1$ and let $\Lambda_1=\Lambda_1(n,k)$ denote the subset defined in \eqref{eq: Ji def} with $i=1$. Then 
	\[
	\card{\Lambda_1}=\bigg(1-\frac{4(n+k-2)}{n(n-1)}\bigg)C_{n,n+k-2}+C_{n,n+k-2}^{e,f}\,.
	\]
\end{lemma}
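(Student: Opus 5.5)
The plan is to identify $\Lambda_1$ with a set of connected subgraphs of smaller excess that avoid both $e$ and $f$, and then count that set by inclusion--exclusion. The only inputs are the deletion map $\Phi_{e,f}$ from \eqref{eq: def of Phi} and the first-moment identity of Lemma~\ref{lem: first and second moment for k cycles}(1).

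\textbf{Step 1: a bijection.} For $\omega\in\Lambda_1$ put $\omega'\coloneq \Phi_{e,f}(\omega)$.
\begin{itemize}
\item By definition of $\Lambda_1$, the graph $\big(V(K_n),\eta(\omega')\big)$ is connected.
\item It has $n+k-2$ edges, so it has excess $k-2\ge -1$ (recall $k\ge1$).
\item It satisfies $\omega'(e)=\omega'(f)=0$.
\end{itemize}
Hence $\omega'$ lies in
\[
\mathcal{A}\coloneq \big\{\omega'\in\sC^{(k-2)}(K_n)\colon \omega'(e)=\omega'(f)=0\big\}.
\]
Conversely, for any $\omega'\in\mathcal{A}$, adding $e$ and $f$ back gives a connected configuration with $n+k$ edges containing $e$ and $f$. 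Deleting $e,f$ from it returns the connected graph $\omega'$, so this configuration lies in $\Lambda_1$. These two operations are mutually inverse, so $\Phi_{e,f}$ restricted to $\Lambda_1$ is a bijection onto $\mathcal{A}$, and $\card{\Lambda_1}=\card{\mathcal{A}}$.

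\textbf{Step 2: inclusion--exclusion.} Within $\sC^{(k-2)}(K_n)$ we have
\[
\card{\mathcal{A}}=C_{n,n+k-2}-\card{\{\omega'(e)=1\}}-\card{\{\omega'(f)=1\}}+C^{e,f}_{n,n+k-2}.
\]
The last term is exactly the number of excess-$(k-2)$ connected subgraphs containing the adjacent pair $e,f$. For $k=1$ this is the spanning-tree case $k-2=-1$, and the notation still applies.

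\textbf{Step 3: the single-edge counts.} Lemma~\ref{lem: first and second moment for k cycles}(1) is the edge-transitivity argument of Lemma~\ref{lem: first moment}: every configuration has $n+k-2$ edges, and all edges are equivalent under symmetry. Applied with $k-2$ in place of $k$, it gives
\[
\card{\{\omega'\in\sC^{(k-2)}(K_n)\colon \omega'(e)=1\}}=\frac{2(n+k-2)}{n(n-1)}\,C_{n,n+k-2},
\]
and the same count holds for $f$. Substituting into Step 2 yields the stated formula.

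\textbf{Main obstacle.} There is no real difficulty here. The only points to check are that the inverse map really lands in $\Lambda_1$, which holds because connectivity of $\omega'$ forces $\ncc{\omega\setminus\{e,f\}}=1$, and that the cases $k=1$ and $k=2$ are covered, i.e.\ $\sC^{(k-2)}$ is nonempty for large $n$.
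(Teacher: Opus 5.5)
Your proposal is correct and matches the paper's proof step for step. The paper also uses the bijection $\Phi_{e,f}\colon \Lambda_1\to\{\omega'\in\sC^{(k-2)}(K_n)\colon \omega'(e)=\omega'(f)=0\}$, then inclusion--exclusion, then the edge-transitivity count $\frac{2(n+k-2)}{n(n-1)}C_{n,n+k-2}$ for each single edge. One small correction: the nonemptiness check you list as an obstacle is unnecessary, because the double-counting identity holds trivially when $\sC^{(k-2)}(K_n)$ is empty.
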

Unlike the $k=0$ case (where deleting $e,f$ results in at least two connected components), 
Lemma~\ref{lem: J1} has no analogue for $k=0$. 
\begin{proof}[Proof of Lemma~\ref{lem: J1}]
	Restricting the map $\Phi_{e,f}$ on $\Lambda_1$ yields a bijection between $\Lambda_1$ and the set $\big\{\omega'\in\sC^{(k-2)}(K_n)  \colon \omega'(e)=\omega'(f)=0 \big\}$. Thus,
	\begin{align*}
	\card{\Lambda_1}&=\card{  \big\{\omega'\in\sC^{(k-2)}(K_n)  \colon \omega'(e)=\omega'(f)=0 \big\} }\\
	&=\card{  \sC^{(k-2)}(K_n)}-\card{  \big\{\omega'\in\sC^{(k-2)}(K_n)  \colon \omega'(e)=1 \big\} }-\card{  \big\{\omega'\in\sC^{(k-2)}(K_n)  \colon \omega'(f)=1 \big\} }\\
	&\quad +\card{  \big\{\omega'\in\sC^{(k-2)}(K_n)  \colon \omega'(e)=\omega'(f)=1 \big\} }\\
	&=\card{  \sC^{(k-2)}(K_n)}-\card{  \sC^{(k-2)}(K_n)}\cdot \bbP_{(k-2)\mathrm{UC}}\big[\omega(e)=1\big]-\card{  \sC^{(k-2)}(K_n)}\cdot \bbP_{(k-2)\mathrm{UC}}\big[\omega(f)=1\big]\\
	&\quad+C_{n,n+k-2}^{e,f}\,,
	\end{align*}
	where $\bbP_{(k-2)\mathrm{UC}}$ denotes to the uniform probability measure on  $\sC^{(k-2)}(K_n)$. 
	
	Each element in $\sC^{(k-2)}(K_n)$ contains $n+k-2$ open edges. By symmetry, we have $\bbP_{(k-2)\mathrm{UC}}\big[\omega(e)=1\big]=\frac{n+k-2}{\binom{n}{2}}=\frac{2(n+k-2)}{n(n-1)}$. Plugging this into the above expression gives the desired result. 
\end{proof}                                                                                                                                            
\begin{lemma}\label{lem: J2}
	Suppose $k\ge1$ and let $\Lambda_2=\Lambda_2(n,k)$ denote the subset defined in \eqref{eq: Ji def} with $i=2$. Then 
	\[
	\card{\Lambda_2}=3(n-3)!\big[z^{n-3}\big]\sum_{l=-1}^{k-1}W_{l}'(z)W_{k-2-l}''(z)
	-2\card{\widehat{\Lambda}_2}\,,
	\] 
	where 
	\[
	\widehat{\Lambda}_2=\widehat{\Lambda}_2(n,k)\coloneq \left\{\omega\in\sC^{(k-1)}(K_n) \colon \omega(e)=\omega(f)=1,\text{ and $f$ is not on any cycle }  \right\} \,.
	\]
\end{lemma}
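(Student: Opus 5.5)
The plan is to split $\Lambda_2$ according to which two of the three vertices $u,v,w$ end up in the same component after deleting $e$ and $f$. I would then count each case through the bijection $\omega\mapsto\omega'\coloneq\Phi_{e,f}(\omega)$, exactly as in Lemmas~\ref{lem: J3} and~\ref{lem:I2}.

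Write $e=(u,v)$ and $f=(u,w)$, and let $\omega\in\Lambda_2$. The graph $\big(V(K_n),\eta(\omega')\big)$ has exactly two components. Since $\omega$ is connected, each of these components contains at least one of $u,v,w$, so exactly one of the following holds.

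\textbf{(a)} $u,v$ lie in one component and $w$ in the other, so $f$ is a bridge of $\omega$.

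\textbf{(b)} $u,w$ lie together and $v$ is separate, which is symmetric to (a) with $e$ and $f$ swapped.

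\textbf{(c)} $v,w$ lie together and $u$ is separate.

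In every case $\omega'$ has $n+k-2$ edges and two components. If their excesses are $l$ and $k-2-l$, both at least $-1$, then $l$ ranges over $[-1,k-1]$. Conversely, adding $e,f$ back to any such two-component configuration gives a connected graph of excess $k$ lying in $\Lambda_2$.

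Case (c) is the clean one. A two-component configuration with $\{v,w\}$ in one component and $\{u\}$ in the other can never contain $e$ or $f$, since both edges join the two components. So $\Phi_{e,f}$ is a bijection onto the set of all such configurations. By Lemma~\ref{lem:generalisation of stark lem2.2} with $p=q=2$, $S_1=\{u\}$, $S_2=\{v,w\}$, and summing over the excess split, case (c) contributes
\[
(n-3)!\big[z^{n-3}\big]\sum_{l=-1}^{k-1}W_l'(z)W_{k-2-l}''(z).
\]

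In case (a), Lemma~\ref{lem:generalisation of stark lem2.2} with $S_1=\{u,v\}$, $S_2=\{w\}$ gives the same expression. However, it also counts configurations in which the component of $\{u,v\}$ already contains $e$. These are not of the form $\Phi_{e,f}(\omega)$ and must be subtracted. The key step is to identify these bad configurations with $\widehat{\Lambda}_2$. Given a bad $\omega'$, adding $f$ joins the two components into a connected graph with $n+k-1$ edges, hence of excess $k-1$. This graph contains $e$ and $f$, and $f$ is a bridge, i.e.\ $f$ lies on no cycle. Conversely, deleting the bridge $f$ from an element of $\widehat{\Lambda}_2$ splits it into the component of $u$, which contains $v$ and $e$, and the component of $w$. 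These maps are mutually inverse, so case (a) contributes the displayed sum minus $\card{\widehat{\Lambda}_2}$.

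Case (b) is handled by the same argument with $e$ and $f$ exchanged. The bad set there consists of excess-$(k-1)$ connected graphs containing $e,f$ in which $e$ is a bridge. An automorphism of $K_n$ fixing $u$ and swapping $v$ and $w$ maps this set bijectively onto $\widehat{\Lambda}_2$, so its size is also $\card{\widehat{\Lambda}_2}$. Summing the three cases gives $3(n-3)![z^{n-3}]\sum_l W_l'W_{k-2-l}''-2\card{\widehat{\Lambda}_2}$.

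The only delicate point is checking that in case (a) the bad configurations correspond exactly to bridges $f$ in excess-$(k-1)$ graphs. This includes confirming that the excess bookkeeping, $n+k-2$ edges plus one, is right and that no configuration is counted twice. The rest is direct bookkeeping via Lemma~\ref{lem:generalisation of stark lem2.2}.
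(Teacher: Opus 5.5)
Your proposal is correct and is essentially the paper's proof. Your cases (c), (a), (b) are the paper's $Q_1$, $Q_2$, $Q_3$: case (c) is counted directly via Lemma~\ref{lem:generalisation of stark lem2.2}, and the overcount in case (a) is identified with $\widehat{\Lambda}_2$ by adding or removing the bridge $f$ (the paper's map $\Phi_f$). The only difference is cosmetic: you justify the equal contribution of case (b) with an explicit automorphism swapping $v$ and $w$, where the paper simply cites symmetry.
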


Whereas the $k=0$ case relies on Lemmas~\ref{lem:I1} and \ref{lem:I2}, Lemma~\ref{lem: J2} provides a parallel result for $k\ge1$.  In its proof below, $\card{Q_1}$ corresponds to $I_1$ in  Lemma~\ref{lem:I1}, while the symmetric cases $\card{Q_2}$ and $\card{Q_3}$ correspond $I_2$ in Lemma~\ref{lem:I2}.

\begin{proof}[Proof of Lemma~\ref{lem: J2}]
Restricting the map $\Phi_{e,f}$ to $\Lambda_2$ yields a bijection between $\Lambda_2$ and the set   $Q\coloneq \left\{\omega'\in\{0,1\}^{E(K_n)}\colon \ncc{\omega'}=2,\omega'(e)=\omega'(f)=0, \text{ and } \omega'\cup\{e,f\}\in\sC^{(k)}(K_n)  \right\}$.

Let $e=(u,v)$ and $f=(u,w)$ be adjacent edges with common vertex $u$. Since  $\ncc{\omega'\cup\{e,f\}}=1$ and $\ncc{\omega'}=2$, the three vertices $u,v,w$ must lie in exactly two distinct connected components of the subgraph $\big(V(K_n),\eta(\omega')\big)$. The set $Q$ thus decomposes into three disjoint subsets:
\begin{enumerate}
	\item[(1)] $Q_1\coloneq \left\{ \omega'\in \{0,1\}^{E(K_n)}  \colon  \ncc{\omega'} =2,\, \card{\eta(\omega')}=n+k-2,\text{ and $u\not\leftrightarrow v$, $v\leftrightarrow w$}  \right\}$, where $v\leftrightarrow w$ means $v$ and $w$ lie in the same connected component, and 
	$u\not\leftrightarrow v$ means that $u$ and $v$ lie in different connected components of $\big(V(K_n),\eta(\omega')\big)$. The  conditions $\ncc{\omega'}=2$, $u\not\leftrightarrow v$ and $v\leftrightarrow w$  imply that $\omega'(e)=\omega'(f)=0$ and $\ncc{\omega'\cup\{e,f\}}=1$. 
	
	\item[(2)] $Q_2\coloneq \left\{ \omega'\in\{0,1\}^{E(K_n)}   \colon \ncc{\omega'} =2,\,\card{\eta(\omega')}=n+k-2, \omega'(e)=0,\text{ and $u\leftrightarrow v$, $u\not\leftrightarrow w$}  \right\}$. Note that for any $\omega'\in Q_2$, we have  $\omega'(f)=0$ and $\ncc{\omega'\cup\{e,f\}}=1$. 
	
	\item[(3)]  $Q_3\coloneq \left\{ \omega'\in\{0,1\}^{E(K_n)}   \colon \ncc{\omega'} =2,\,\card{\eta(\omega')}=n+k-2,\,\omega'(f)=0,\text{ and $u\leftrightarrow w$, $u\not\leftrightarrow v$}  \right\}$. Note that for any  $\omega'\in Q_3$, we have  $\omega'(e)=0$ and $\ncc{\omega'\cup\{e,f\}}=1$.

\end{enumerate}
By symmetry, $\card{Q_2}=\card{Q_3}$. Hence 
\[
\card{\Lambda_2}=\card{Q}=\card{Q_1}+\card{Q_2}+\card{Q_3}=\card{Q_1}+2\card{Q_2}\,.
\]

For $\omega'\in Q_1$, let $l$ denote the excess  of  the connected component containing $u$; the remaining component then has excess  $k-2-l$. Since $k-2-l\ge-1$, the range of $l$ is $\{-1,0,\ldots, k-1\}$. 
Applying Lemma~\ref{lem:generalisation of stark lem2.2} with $p=q=2$, $S_1=\{u\},S_2=\{v,w\}$,  $k_1=l$ and $k_2=k-2-l$, we obtain
\[
\card{Q_1}=\sum_{l=-1}^{k-1}(n-3)!\big[z^{n-3}\big]W_l'(z)W_{k-2-l}''(z)=(n-3)!\big[z^{n-3}\big]\sum_{l=-1}^{k-1}W_l'(z)W_{k-2-l}''(z)\,.
\]

To compute $\card{Q_2}$, we first note that 
\begin{align*}
	\card{Q_2}&=\card{\left\{ \omega'\in\{0,1\}^{E(K_n)}   \colon \ncc{\omega'} =2,\,\card{\eta(\omega')}=n+k-2, \text{ and $u\leftrightarrow v$, $u\not\leftrightarrow w$}  \right\}}\\
	&\quad -\card{\left\{ \omega'\in\{0,1\}^{E(K_n)}   \colon \ncc{\omega'} =2,\,\card{\eta(\omega')}=n+k-2, \omega'(e)=1,\text{ and $u\leftrightarrow v$, $u\not\leftrightarrow w$}  \right\}}
\end{align*}
An argument analogous to the calculation of $\card{Q_1}$ yields  
\begin{multline*}
	\card{\left\{ \omega'\in\{0,1\}^{E(K_n)}   \colon \ncc{\omega'} =2,\,\card{\eta(\omega')}=n+k-2, \text{ and $u\leftrightarrow v$, $u\not\leftrightarrow w$}  \right\}}\\
	=(n-3)!\big[z^{n-3}\big]\sum_{l=-1}^{k-1}W_l'(z)W_{k-2-l}''(z)\,.
\end{multline*}
Recall that $\widehat{\Lambda}_2\coloneq \left\{ \omega\in\sC^{(k-1)}(K_n) \colon \omega(e)=\omega(f)=1,\text{ and $f$ is not on any cycle } \right\}$. Restricting the map $\Phi_f$ to $\widehat{\Lambda}_2$ gives a bijection between $\widehat{\Lambda}_2$ and the set 
\[
\left\{ \omega'\in\{0,1\}^{E(K_n)}   \colon \ncc{\omega'} =2,\,\card{\eta(\omega')}=n+k-2, \omega'(e)=1,\text{ and $u\leftrightarrow v$, $u\not\leftrightarrow w$}  \right\}.
\]
Thus, 
\[
\card{Q_2}=(n-3)!\big[z^{n-3}\big]\sum_{l=-1}^{k-1}W_l'(z)W_{k-2-l}''(z)
-\card{\widehat{\Lambda}_2}\,.
\]
Combining these results, we find
\begin{align*}
	\card{\Lambda_2}&=\card{Q_1}+2\card{Q_2}\\
	&=3(n-3)!\big[z^{n-3}\big]\sum_{l=-1}^{k-1}W_l'(z)W_{k-2-l}''(z)-2\card{\widehat{\Lambda}_2}\,.\qedhere
\end{align*}
\end{proof}                                                        
                      
For $k\ge1$, define
\be\label{eq: def of R_2}
R_2=R_2(k)\coloneq 3(n-3)!\big[z^{n-3}\big]\sum_{l=-1}^{k-1}W_l'(z)W_{k-2-l}''(z)\,.
\ee                      
We state the following asymptotic estimate for $R_2$; its proof is deferred to the appendix. 
\begin{lemma}\label{lem: R_2}
         	Suppose $k\ge1$. Then
         	\be\label{eq:R_2}
         	R_2=3\rho_{k-1}n^{n+\frac{3k-6}{2}}\big[1+O(n^{-\frac{1}{2}})\big]\,.
         	\ee
\end{lemma}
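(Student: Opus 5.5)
The plan is to reduce every summand of $R_2$ to a finite Laurent polynomial in $\theta=1-T(z)$ (plus, in the cases involving $W_0$, the explicitly known rational expressions from \eqref{eq: W0prime} and \eqref{eq: W_1''}). I then locate the single summand carrying the most negative power of $\theta$ and read off its coefficient with Lemma~\ref{lem: coef using theta expansion}. Since $[z^{n-3}]F(z)=[z^n]z^3F(z)$, I rewrite
\[
z^3W_l'(z)W_{k-2-l}''(z)=\big(\sD W_l\big)\cdot\big(\sD^2W_{k-2-l}-\sD W_{k-2-l}\big),
\]
using $z^2W''=\sD^2W-\sD W$. Wright's identity $\sD f=(1-\theta^{-1})\,df/d\theta$, together with \eqref{eq: thm4 in Wrightone}, then expresses each factor in powers of $\theta$.

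The first step is the bookkeeping of leading orders. For $h\ge1$, the computation in the proof of Lemma~\ref{lem: W_k in terms of theta} gives that $\sD W_h$ has leading term $3hc_{h,-3h}\theta^{-3h-2}$ and $\sD^2W_h-\sD W_h$ has leading term $3h(3h+2)c_{h,-3h}\theta^{-3h-4}$. For the exceptional indices:
\begin{itemize}
\item $\sD W_{-1}=T=1-\theta$, which is of order $\theta^0$;
\item $z^2W_{-1}''=T^2/(1-T)$, which is of order $\theta^{-1}$;
\item $\sD W_0=(1-\theta)^3/(2\theta^2)$, which is of order $\theta^{-2}$, while $z^2W_0''$ is of order $\theta^{-4}$.
\end{itemize}
Summing exponents over $l\in\{-1,\dots,k-1\}$, every generic term has order $\theta^{-3k}$, and the term $l=k-1$ (paired with $W_{-1}''$) also has order $\theta^{-3k}$. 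Only $l=-1$ produces $\theta^{-3k-1}$, with leading coefficient
\[
\lambda_{k-1}:=3(k-1)(3k-1)c_{k-1,-3(k-1)}.
\]
For $k=1$ I would check directly that $z^2W_{-1}''$ contributes $\theta^{-1}$ and $z^2W_0''$ contributes $\frac12\cdot 2\cdot\theta^{-4}$, so that $\lambda_0=1$, consistent with $2\sigma_0$-type normalisation.

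Next I apply Lemma~\ref{lem: coef using theta expansion} with $m=3k+1$ to the full finite sum. This gives
\[
[z^n](\cdots)=\lambda_{k-1}\frac{2^{-(3k+1)/2}}{\Gamma(\frac{3k+1}{2})}e^nn^{\frac{3k-1}{2}}\big[1+O(n^{-1/2})\big],
\]
since all other terms, including the $\theta^{-3k}$ contributions, only enter at relative order $n^{-1/2}$. Multiplying by $3(n-3)!=3\sqrt{2\pi}\,e^{-n}n^{n-5/2}[1+O(n^{-1})]$, taken from \eqref{eq: n minus 3 factorial}, yields a term of order $n^{n+(3k-6)/2}$.

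It remains to identify the constant with $\rho_{k-1}$. Applying the same lemma to $W_{k-1}$ with $m=3k-3$, and using Stirling's formula, gives
\[
\rho_{k-1}=\sqrt{2\pi}\,c_{k-1,-3(k-1)}\,\frac{2^{-(3k-3)/2}}{\Gamma(\frac{3k-3}{2})}.
\]
The ratio of the two constants is
\[
\frac{(3k-3)(3k-1)}{4}\cdot\frac{\Gamma(\frac{3k-3}{2})}{\Gamma(\frac{3k+1}{2})}=1,
\]
because $\Gamma(\frac{3k+1}{2})=\frac{3k-1}{2}\cdot\frac{3k-3}{2}\,\Gamma(\frac{3k-3}{2})$. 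This proves \eqref{eq:R_2}. In the case $k=1$ the same identity is checked against $\rho_0=\sqrt{\pi/8}$ using \eqref{eq: cnn}.

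The main obstacle I expect is the exponent bookkeeping for the boundary indices $l\in\{-1,0,k-2,k-1\}$. There $W_{-1}$ and $W_0$ are not of the form \eqref{eq: thm4 in Wrightone}: $W_0$ involves $\log\theta$, although its derivatives do not. I need to make sure that no summand other than $l=-1$ reaches order $\theta^{-3k-1}$, and that the leading coefficient of $\sD^2W_{k-1}-\sD W_{k-1}$ is exactly $3(k-1)(3k-1)c_{k-1,-3(k-1)}$, since the subtracted $\sD W_{k-1}$ is of strictly lower order.
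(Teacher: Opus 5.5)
Your proof is correct and takes essentially the same route as the paper. You write each summand $z^3W_l'W_{k-2-l}''$ as a Laurent polynomial in $\theta$, show that only the $l=-1$ term $\big(zW_{-1}'\big)\big(z^2W_{k-1}''\big)$ reaches $\theta^{-3k-1}$ (with coefficient $3(k-1)(3k-1)c_{k-1,-3(k-1)}$), apply Lemma~\ref{lem: coef using theta expansion} together with \eqref{eq: n minus 3 factorial}, and match the constant to $\rho_{k-1}$ through the identity $\Gamma(\frac{3k+1}{2})=\frac{3k-1}{2}\cdot\frac{3k-3}{2}\,\Gamma(\frac{3k-3}{2})$. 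Your separate check of $k=1$ (leading coefficient $1$ coming from $z^2W_0''$, compared against $\rho_0=\sqrt{\pi/8}$) is worth keeping, because the paper's formula $\big(9(k-1)^2+6(k-1)\big)c_{k-1,-3(k-1)}$ and the relation \eqref{eq: rhok in terms of ck3k} do not literally cover the case $k-1=0$.
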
                                                                                                                                            
                                                                                                       Now we are ready to determine the asymptotic order of $C_{n,n+k}^{e,f}$. 
\begin{lemma}\label{lem: negligible terms}
 Recall that $C_{n,n+k}^{e,f}\coloneq \card{\big\{ \omega\in\sC^{(k)}(K_n) \colon \omega(e)=\omega(f)=1  \big\}}$, where  $e,f$ are a pair of adjacent edges of $K_n$. For any fixed $k\ge-1$, we have
 \be\label{eq: orders of cnnkef}
 C_{n,n+k}^{e,f}\asymp n^{n+\frac{3k-5}{2}}\,.
 \ee  
Furthermore, for $k\ge1$ we have the following slightly finer estimate:
\be\label{eq: orders of cnnkef finer}
 C_{n,n+k}^{e,f}=3\rho_kn^{n+\frac{3k-5}{2}}+O\left(  n^{n+\frac{3k-6}{2}} \right)\,,
\ee	
where $\rho_k$ is defined in \eqref{eq: cnnk from Wright1977}.                                                                                              \end{lemma}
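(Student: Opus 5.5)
We argue by induction on $k$, starting from the decomposition \eqref{eq: cnnef decom into cases}, $C_{n,n+k}^{e,f}=\card{\Lambda_1}+\card{\Lambda_2}+\card{\Lambda_3}$, together with the exact formulas for each piece in Lemmas~\ref{lem: J1}, \ref{lem: J2} and \ref{lem: J3}.

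\textbf{Base cases.} For $k=-1$, we use Lemma~\ref{lem: bijection between forest on G containing e and G e contraction} with $k=1$. The spanning trees of $K_n$ containing the adjacent pair $e,f$ correspond to spanning trees of $K_n/\{e,f\}$, and there are $3n^{n-4}$ of them (the $k=1$ case of \eqref{eq: kFef}). This is $\asymp n^{n-4}=n^{n+\frac{3(-1)-5}{2}}$. For $k=0$, Proposition~\ref{prop: asymp of cnnef} gives $C_{n,n}^{e,f}\asymp n^{n-\frac52}$. These two cases establish \eqref{eq: orders of cnnkef} for $k\in\{-1,0\}$.

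\textbf{Inductive step, $k\ge1$.} Assume \eqref{eq: orders of cnnkef} holds for all smaller excess values. We bound each piece as follows.
\begin{itemize}
\item[(i)] By Lemma~\ref{lem: J3} and Lemma~\ref{lem: R_3}, $\card{\Lambda_3}=R_3=3\rho_kn^{n+\frac{3k-5}{2}}+O(n^{n+\frac{3k-6}{2}})$. This is the main term.
\item[(ii)] By Lemma~\ref{lem: J1}, $\card{\Lambda_1}\le C_{n,n+k-2}+C_{n,n+k-2}^{e,f}$. Proposition~\ref{prop: asymptotic of cnnk} gives $C_{n,n+k-2}\asymp n^{n+\frac{3k-7}{2}}$, and the induction hypothesis gives $C^{e,f}_{n,n+k-2}\asymp n^{n+\frac{3k-11}{2}}$. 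Hence $\card{\Lambda_1}=O(n^{n+\frac{3k-7}{2}})$.
\item[(iii)] By Lemma~\ref{lem: J2}, $0\le\card{\Lambda_2}\le R_2$, since $\card{\widehat\Lambda_2}\ge0$ and $\card{\Lambda_2}\ge0$. Lemma~\ref{lem: R_2} then gives $\card{\Lambda_2}=O(n^{n+\frac{3k-6}{2}})$.
\end{itemize}
Summing (i)--(iii) yields \eqref{eq: orders of cnnkef finer}. Because $\rho_k>0$ (as $\sigma_k>0$), this also gives the two-sided bound \eqref{eq: orders of cnnkef}. In fact the induction is only needed for the crude upper bound on $C^{e,f}_{n,n+k-2}$ in (ii). There one could even avoid it, since $C^{e,f}_{n,n+k-2}\le C_{n,n+k-2}$ trivially.

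\textbf{Main obstacle.} The substantive work lies in the deferred Lemmas~\ref{lem: R_3} and \ref{lem: R_2}. These require extracting the leading singular term of the generating-function products via Lemma~\ref{lem: coef using theta expansion}. The key check there is that in $\sum W'_{k_1}W'_{k_2}W'_{k_3}$ the dominant $\theta$-power comes only from the terms with two of the $k_i$ equal to $-1$. These terms give $3\,\frac{T^2}{z^2}W_k'$, and the coefficient then matches $3\rho_k$ after Stirling.

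Given those two lemmas, the only care needed here is bookkeeping the exponents. In particular, $\Lambda_2$ is smaller than $\Lambda_3$ by a factor of exactly $n^{-1/2}$, which is precisely the error term allowed in \eqref{eq: orders of cnnkef finer}.
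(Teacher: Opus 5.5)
Your proposal is correct and follows the paper's proof. It uses the same base cases, the same decomposition $C_{n,n+k}^{e,f}=\card{\Lambda_1}+\card{\Lambda_2}+\card{\Lambda_3}$, and the same reliance on Lemmas~\ref{lem: R_3} and~\ref{lem: R_2} for the main and secondary terms. The one difference is how you control the two lower-order pieces. You use the positivity sandwich $0\le\card{\Lambda_2}\le R_2$ and the bound $C^{e,f}_{n,n+k-2}\le C_{n,n+k-2}$. The paper instead bounds $\card{\widehat\Lambda_2}\le C^{e,f}_{n,n+k-1}$ and $C^{e,f}_{n,n+k-2}$ through the induction hypothesis, so your version shows the induction can be dispensed with entirely.
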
                                       
\begin{proof}
	We  prove \eqref{eq: orders of cnnkef} by induction on $k$. First, we verify the base cases  $k=-1,0$. 
	\begin{itemize}
		\item 	For $k=-1$, the quantity $C_{n,n-1}^{e,f}$ counts the number of spanning trees of $K_n$ containing $e,f$. This  is equal to the number of $3$-forests such that  $u,v,w$ lie in distinct connected components. Applying Lemma~\ref{lem:generalisation of stark lem2.2} with $p=q=3$, $S_1=\{u\}$, $S_2=\{v\}$, $S_3=\{w\}$, and $k_1=k_2=k_3=-1$, we have 
		\[
		C_{n,n-1}^{e,f}=(n-3)!\big[z^{n-3}\big]\big(W_{-1}'(z)\big)^3\stackrel{\eqref{eq:Nhat}}{=}3n^{n-4}\,.
		\]
		
		\item 	For $k=0$,  Proposition~\ref{prop: asymp of cnnef} gives
		\[
		C_{n,n}^{e,f}=\frac{3\sqrt{2\pi}}{4}n^{n-\frac{5}{2}}-\frac{7}{2}n^{n-3}+
		\frac{37\sqrt{2\pi}}{16}n^{n-\frac{7}{2}}+O(n^{n-4})\,.
		\]
	\end{itemize}
Thus,  \eqref{eq: orders of cnnkef} holds for $k=-1,0$.
	
For the inductive step, fix $k\ge1$ and assume  \eqref{eq: orders of cnnkef} holds for all $-1,0,\ldots,k-1$. We aim to show that it holds for $k$. 

By \eqref{eq: cnnef decom into cases} and Lemmas~\ref{lem: J3}, \ref{lem: J1} and \ref{lem: J2}, we have  
\begin{align}\label{eq: cnnkef in terms of coef}
	C_{n,n+k}^{e,f}&=\bigg(1-\frac{4(n+k-2)}{n(n-1)}\bigg)C_{n,n+k-2}+C_{n,n+k-2}^{e,f} \nonumber\\
	&\quad +(n-3)!\big[z^{n-3}\big]\sum_{ \substack{k_1,k_2,k_3\ge-1\\ k_1+k_2+k_3=k-2} }W_{k_1}'(z)W_{k_2}'(z)W_{k_3}'(z)\nonumber\\
	&\quad +3(n-3)!\big[z^{n-3}\big]\sum_{l=-1}^{k-1}W_{l}'(z)W_{k-2-l}''(z)
	-2\card{\widehat{\Lambda}_2(n,k)}\nonumber\\
	&\quad =R_1+R_2+R_3+C_{n,n+k-2}^{e,f}-2\card{\widehat{\Lambda}_2(n,k)}\,,
\end{align}	
where $R_1\coloneq \bigg(1-\frac{4(n+k-2)}{n(n-1)}\bigg)C_{n,n+k-2}$. 
By the induction hypothesis, $C_{n,n+k-2}^{e,f}\asymp n^{n+\frac{3k-11}{2}}$. From the definition of of $\widehat{\Lambda}_2$ in Lemma~\ref{lem: J2} and the induction hypothesis, we have $\card{\widehat{\Lambda}_2(n,k)}\leq C_{n,n+k-1}^{e,f} \asymp n^{n+\frac{3k-8}{2}}$. To prove \eqref{eq: orders of cnnkef} for $k$, it suffices to show  $R_1\asymp n^{n+\frac{3k-7}{2}}$, $R_2\asymp n^{n+\frac{3k-6}{2}}$ and $R_3\asymp n^{n+\frac{3k-5}{2}}$. 

From \eqref{eq: cnnk} we obtain 
\begin{align}\label{eq: R1}
R_1=\bigg(1-\frac{4(n+k-2)}{n(n-1)}\bigg) C_{n,n+k-2}\asymp n^{n+\frac{3k-7}{2}}\,.
\end{align}
The estimates $R_2\asymp n^{n+\frac{3k-6}{2}}$ and $R_3\asymp n^{n+\frac{3k-5}{2}}$ follow from Lemmas~\ref{lem: R_2} and \ref{lem: R_3}, respectively.  This completes the inductive step for \eqref{eq: orders of cnnkef}.

Finally, the finer estimate \eqref{eq: orders of cnnkef finer} is obtained by substituting  $R_3=3\rho_kn^{n+\frac{3k-5}{2}}+O\left(  n^{n+\frac{3k-6}{2}} \right)$ (Lemma~\ref{lem: R_3}),   $R_2\asymp n^{n+\frac{3k-6}{2}}$ (Lemma~\ref{lem: R_2}), and the above bounds for $C_{n,n+k-2}^{e,f}$, $\card{\widehat{\Lambda}_2(n,k)}$ and $R_1$  into \eqref{eq: cnnkef in terms of coef}. 
\end{proof}

\begin{proof}[Proof of Proposition~\ref{prop: asymptotic of cnnk containing e and f}]
From the proof of Lemma~\ref{lem: negligible terms}, we know that for $k\ge1$, 
\be\label{eq: 4.37}
C_{n,n+k}^{e,f}=R_1+R_2+R_3+C_{n,n+k-2}^{e,f}-2\card{\widehat{\Lambda}_2(n,k)}\,,
\ee
with
 $C_{n,n+k-2}^{e,f}=C_{n,n+k}^{e,f}\cdot  O\big(n^{-\frac{3}{2}}\big)$ and  $\card{\widehat{\Lambda}_2(n,k)}=C_{n,n+k}^{e,f}\cdot  O\big(n^{-\frac{3}{2}}\big)$. This gives 
 \be\label{eq: 4.37new}
 C_{n,n+k}^{e,f}=R_1+R_2+R_3+C_{n,n+k}^{e,f}\cdot  O\big(n^{-\frac{3}{2}}\big)\,.
 \ee
 Using  \eqref{eq: cnnk from Wright1977}  and the asymptotic order $C_{n,n+k}^{e,f}\asymp n^{n+\frac{3k-5}{2}}$,  we derive the finer estimate for $R_1$:
\be\label{eq: R_1 finer estimate}
R_1=\rho_{k-2}n^{n+\frac{3k-7}{2}}+C_{n,n+k}^{e,f}\cdot  O\big(n^{-\frac{3}{2}}\big)\,\quad \text{ for } \quad k\ge1\,.
\ee
To obtain the precise asymptotic expansions  in Proposition~\ref{prop: asymptotic of cnnk containing e and f}, we require finer estimates of $R_2$ and $R_3$ than those provided in Lemmas~\ref{lem: R_2} and \ref{lem: R_3}. For small values of $k$ (specifically $k\in\{1,2,\ldots,5\}$), these estimates can be computed manually (or straightforwardly using Maple). We illustrate the procedure with the $k=1$ case and skip the details of other cases. 

First, from \eqref{eq: T'} we have
\be\label{eq: theta'}
\theta'(z)=-T'(z)=-\frac{1-\theta}{z\theta}\,.
\ee 
Combining \eqref{eq: theta'} with the explicit expressions for $W_k$ in Lemma~\ref{lem: W_k in terms of theta}  (see also \eqref{eq: W-1'} and \eqref{eq: W_1''}), we obtain 
\be\label{eq:W-1'''}
W_{-1}'(z)=-\theta\cdot \theta'(z)=\frac{1-\theta}{z}\,,\quad \text{and}\quad W_{-1}''(z)=\frac{(1-\theta)^2}{z^2\theta}\,,
\ee
\be\label{eq:W0'''}
W_0'(z)=\frac{1}{2}\cdot \frac{(1-\theta)^3}{z\theta^2}\,,
\quad \text{and} \quad W_0''(z)=\frac{1}{2}\cdot \frac{(1-\theta)^3(1+\theta)(2-\theta)}{z^2\theta^4}\,.
\ee
and
\[
W_1'(z)=\frac{1}{24z}\big[15\theta^{-5}-53\theta^{-4}+64\theta^{-3}-26\theta^{-2}-\theta^{-1}-1+2\theta\big]\,. 
\]
For $k=1$, we compute the sum over $k_1,k_2,k_3$ (with $k_1+k_2+k_3=-1$):
\begin{align*}
&\sum_{ \substack{k_1,k_2,k_3\ge-1\\ k_1+k_2+k_3=k-2} }z^3W_{k_1}'(z)W_{k_2}'(z)W_{k_3}'(z)\\
=&3 \left[ \big(zW_{-1}'(z)\big)\cdot \big(zW_0'(z)\big)^2+\big(zW_{-1}'(z)\big)^2\cdot\big(zW_1'(z)\big) \right]\\
=&\frac{1}{8}\cdot \left(15\theta^{-5}-77\theta^{-4}+143\theta^{-3}-81\theta^{-2}-95\theta^{-1}+185-123\theta+37\theta^2-4\theta^3\right)\,.
\end{align*}
Applying Lemma~\ref{lem: coef using theta expansion} to this expression gives, for $k=1$:
\[
\big[z^{n}\big]\sum_{ \substack{k_1,k_2,k_3\ge-1\\ k_1+k_2+k_3=k-2} }z^3W_{k_1}'(z)W_{k_2}'(z)W_{k_3}'(z)
=\frac{5}{8}\frac{1}{\sqrt{2\pi}}e^nn^{\frac{3}{2}}-\frac{13}{8}e^nn+\frac{253}{32}\frac{1}{\sqrt{2\pi}}e^nn^{\frac{1}{2}}+O\big(e^n\big)\,.
\]
Recall the expansion for $(n-3)!$ (from \eqref{eq: n minus 3 factorial}):
\[
(n-3)!=\frac{n^{n-3}\sqrt{2\pi n}}{e^n}\big[1+\frac{37}{12}n^{-1}+O(n^{-2})\big]\,.
\]
We now compute $R_3$ for $k=1$:
\begin{align*}
R_3&=(n-3)!\big[z^{n-3}\big]\sum_{ \substack{k_1,k_2,k_3\ge-1\\ k_1+k_2+k_3=k-2} }W_{k_1}'(z)W_{k_2}'(z)W_{k_3}'(z)\\
&=(n-3)!\big[z^{n}\big]\sum_{ \substack{k_1,k_2,k_3\ge-1\\ k_1+k_2+k_3=k-2} }z^3W_{k_1}'(z)W_{k_2}'(z)W_{k_3}'(z)\\
&= \frac{5}{8}n^{n-1}-\frac{13}{8}\sqrt{2\pi}n^{n-\frac{3}{2}}+\frac{59}{6}n^{n-2}+O\big(n^{n-\frac{5}{2}}\big)\,.
\end{align*}
Next, we compute the sum for $R_2$ (with $l=-1,0$ for $k=1$):
\begin{align*}
&\sum_{l=-1}^{k-1}z^3W_{l}'(z)W_{k-2-l}''(z)\\
=&\big(zW_{-1}'(z)\big)\cdot \big(z^2W_0''(z)\big) + \big(zW_0'(z)\big)\cdot \big(z^2W_{-1}''(z)\big)\\
=&\theta^{-4}-3\theta^{-3}+\theta^{-2}+6\theta^{-1}-9+5\theta-\theta^2\,.
\end{align*}
Applying Lemma~\ref{lem: coef using theta expansion} to this expression give,  for $k=1$:
 \[
 \big[z^n\big]\sum_{l=-1}^{k-1}z^3W_{l}'(z)W_{k-2-l}''(z) =
 \frac{1}{4}e^nn-\frac{5}{3}\frac{1}{\sqrt{2\pi}}e^nn^{\frac{1}{2}}-\frac{1}{2}e^n+O\big(e^nn^{-\frac{1}{2}}\big)\,.
 \]
We then calculate $R_2$ for  $k=1$: 
\begin{align*}
R_2&=3(n-3)!\big[z^{n-3}\big]\sum_{l=-1}^{k-1}W_{l}'(z)W_{k-2-l}''(z)\\
&=3(n-3)!\big[z^{n}\big]\sum_{l=-1}^{k-1}z^3W_{l}'(z)W_{k-2-l}''(z)\\
&=\frac{3}{4}\sqrt{2\pi}n^{n-\frac{3}{2}}-5n^{n-2}+\frac{13}{16}\sqrt{2\pi}n^{n-\frac{5}{2}}+O\big(n^{n-3}\big)\,.
\end{align*}
Finally, we combine the estimates for $R_1$, $R_2$ and $R_3$ to obtain the asymptotic expansion for $C_{n,n+1}^{e,f}$:
\begin{align*}
C_{n,n+1}^{e,f}&=R_1+R_2+R_3+O\big(n^{n-\frac{5}{2}}\big)\\
&=n^{n-2}+ \frac{5}{8}n^{n-1}-\frac{13}{8}\sqrt{2\pi}n^{n-\frac{3}{2}}+\frac{59}{6}n^{n-2}
+\frac{3}{4}\sqrt{2\pi}n^{n-\frac{3}{2}}-5n^{n-2}+O\big(n^{n-\frac{5}{2}}\big)\\
&=\frac{5}{8}n^{n-1}-\frac{7}{8}\sqrt{2\pi}n^{n-\frac{3}{2}}+\frac{35}{6}n^{n-2}+O\big(n^{n-\frac{5}{2}}\big)\,.  \qedhere
\end{align*}

\end{proof}

\subsubsection{Proof of Proposition~\ref{prop: key prop for k-excess} for  $k\in\{1,2,\ldots,5\}$}
\begin{proof}[Proof of Proposition~\ref{prop: key prop for k-excess} for $k\in\{1,2,\ldots,5\}$]
	For $k\in\{1,2,\ldots,5\}$, the asymptotic estimates of $C_{n,n+k}$ and $C_{n,n+k}^{e,f}$ given in \eqref{eq: cnnk asym} and \eqref{eq: cnnkef asym} have been established in Propositions~\ref{prop: asymptotic of cnnk for small k} and \ref{prop: asymptotic of cnnk containing e and f}, respectively. 
 The estimate \eqref{eq: p1 kUC asym} for $k\in\{1,2,\ldots,5\}$ then follows from a direct computation of the ratios  $\frac{C_{n,n+k}^{e,f}}{C_{n,n+k}}$.
\end{proof}

\begin{remark}\label{rem: pNC for adjacent pairs}
	From \eqref{eq: cnnk from Wright1977} and \eqref{eq: orders of cnnkef finer}, we  can obtain the p-NC property in Theorem~\ref{thm: p-NC for kUC} for a pair of adjacent edges $e,f$ for large $n$. Indeed, these two estimates imply that 
	\[
	p_1\coloneq \kUC\big[\omega(e)=\omega(f)=1\big]=\frac{C_{n,n+k}^{e,f}}{C_{n,n+k}}=\frac{3}{n^2}+O\big(n^{-\frac{5}{2}}\big)\,.
	\]
	Combined with Lemma~\ref{lem: first and second moment for k cycles}, this proves the desired p-NC property for the adjacent case when $n$ is sufficiently large.
\end{remark}

However, establish  \eqref{eq: def p-NC} for non-adjacent pairs requires a more precise estimate, which motivates the stronger bound \eqref{eq: p1 kUC asym}. Deriving \eqref{eq: p1 kUC asym} for arbitrary $k \ge 6$ in turn demands finer estimates of $R_2$ and $R_3$ than those provided in Lemma~\ref{lem: R_2} and Lemma~\ref{lem: R_3}. The required calculations are long and tedious, and are therefore deferred to the appendix. Nevertheless, the underlying ideas used to obtain these finer estimates are analogous to those in the proofs of  Lemmas~\ref{lem: R_2} and~\ref{lem: R_3}.

\section{Examples, remarks and questions} \label{sec: remarks and ques}

%%%%%%%%%%%%%%%%%%%%%%%%%%%%%%%%%%%%%%%%%%%%%%%%%%%%%%%%%%%%%%%%%%%%%%%%%%%%%%
%%%%%%%%%%%%%%%%%%%%  Relations between $\mu$ and $\mu_m$ %%%%%%%%%%%%%%%%%%%%

\subsection{Examples and remarks on truncations and the p-NC property}

Let $S$ be a finite set, and let $\mu$ be a probability measure on $\Omega_S\coloneq\{0,1\}^S$. For $\omega\in\Omega_S$, let $\card{\omega}$ denote the number of elements $s\in S$ such that $\omega(s)=1$. For any integer $k$ with $\mu(\card{\omega}=k)>0$, we define the \textbf{$k$-truncation} of $\mu$ by $\mu_k(\cdot)\coloneq \mu(\cdot \mid \card{\omega}=k)$.

A natural question is whether the p-NC property is preserved under such truncations, or conversely, whether it can be inferred from them. In general, no direct implication holds in either direction. The following two examples illustrate this phenomenon. The first exhibits a measure $\mu$ that satisfies p-NC, while some of its truncations do not. The second provides a measure for which all truncations satisfy p-NC, but $\mu$ itself does not.

\begin{example}\label{example: NC does not imply conditioned NC}
	Let $S=\{1,2,3,4\}$, and view $\omega\in\Omega_S$ as a map from $S$ to $\{0,1\}$. For brevity, write $\omega_i$ for $\omega(i)$. Consider the probability measure $\mu$ on $\{0,1\}^S$ defined by the following probabilities (all other configurations have probability $0$):
	\begin{align*}
	\mu[(1,0,0,0)] &= \frac{\sqrt{2}-1}{2}\,, \quad
	\mu[(0,1,0,0)] = \frac{\sqrt{2}-1}{2}\,, \quad
	\mu[(1,1,0,0)] = \frac{3-2\sqrt{2}}{2}\,, \\
	\mu[(0,0,1,0)] &= \frac{\sqrt{2}-1}{2}\,, \quad
	\mu[(0,0,0,1)] = \frac{\sqrt{2}-1}{2}\,, \quad
	\mu[(0,0,1,1)] = \frac{3-2\sqrt{2}}{2}\,.
	\end{align*}
	
	We first verify that $\mu$ itself satisfies the p-NC property. For the pair $\{1,2\}$,
	\[
	\mu[\omega_1=\omega_2=1] = \mu[(1,1,0,0)] = \frac{3-2\sqrt{2}}{2}.
	\]
	The marginal probabilities are
	\[
	\mu[\omega_1=1] = \mu[(1,0,0,0)] + \mu[(1,1,0,0)] = \frac{2-\sqrt{2}}{2},
	\]
	and similarly $\mu[\omega_2=1] = \frac{2-\sqrt{2}}{2}$. Observe that
	\[
	\mu[\omega_1=\omega_2=1] = \frac{3-2\sqrt{2}}{2} = \mu[\omega_1=1]\cdot\mu[\omega_2=1].
	\]
	By symmetry, the same equality holds for the pair $\{3,4\}$. For any other distinct pair $\{i,j\}$, there is no configuration with $\omega_i=\omega_j=1$, so
	\[
	\mu[\omega_i=\omega_j=1] = 0 \leq \mu[\omega_i=1]\cdot\mu[\omega_j=1].
	\]
	Thus $\mu$ has the p-NC property.
	
	However, the truncation $\mu_2$ (conditioned on $\card{\omega}=2$) fails p-NC. The measure $\mu_2$ assigns probability $1/2$ to each of the configurations $(1,1,0,0)$ and $(0,0,1,1)$. For the pair $\{1,2\}$,
	\[
	\mu_2[\omega_1=\omega_2=1] = \frac{1}{2} > \frac{1}{2}\times\frac{1}{2} = \mu_2[\omega_1=1]\cdot\mu_2[\omega_2=1],
	\]
	showing that $\mu_2$ does not satisfy p-NC.
\end{example}

\begin{example}\label{example: conditioned NC does not imply NC}
	Let $S=\{1,2,3\}$ and fix $\epsilon\in(0,\frac{1}{8})$. For $i,j,k\in\{0,1\}$, denote
	\[
	a_{ijk} = \mu[\omega_1=i,\omega_2=j,\omega_3=k].
	\]
	Define $\mu$ by setting $a_{000}=a_{001}=\frac{1-6\epsilon}{2}$, and assigning probability $\epsilon$ to each of the remaining six configurations. We claim that $\mu$ does not have the p-NC property, yet all its truncations do.
	
	\begin{claim}
		For each $k\in\{0,1,2,3\}$, the truncation $\mu_k$ satisfies p-NC.
	\end{claim}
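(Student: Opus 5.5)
The plan is to go through the four truncations one at a time. Each $\mu_k$ is supported on the configurations with exactly $k$ ones, so in every case $\mu_k$ can be written down explicitly and \eqref{eq: def p-NC} checked directly for the three pairs $\{i,j\}\subset\{1,2,3\}$. No earlier result of the paper is needed.

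\textbf{Cases $k=0$ and $k=3$.} These are point masses, at $(0,0,0)$ and at $(1,1,1)$ respectively, and both levels have positive $\mu$-probability.
\begin{itemize}
\item For $\mu_0$, every probability $\mu_0[\omega_i=\omega_j=1]$ and $\mu_0[\omega_i=1]$ equals $0$, so \eqref{eq: def p-NC} reads $0\le 0$.
\item For $\mu_3$, every such probability equals $1$, so \eqref{eq: def p-NC} reads $1\le 1\cdot 1$.
\end{itemize}
A point mass is a product measure, so p-NC holds with equality in both cases.

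\textbf{Case $k=1$.} The support of $\mu_1$ is $\{(1,0,0),(0,1,0),(0,0,1)\}$. The weights are proportional to $\epsilon,\epsilon,\frac{1-6\epsilon}{2}$, all positive since $\epsilon<\frac18$. No configuration of weight one has two coordinates equal to $1$. Hence $\mu_1[\omega_i=\omega_j=1]=0$ for every pair $i\neq j$, which is trivially at most the product of the marginals.

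\textbf{Case $k=2$.} The support is $\{(1,1,0),(1,0,1),(0,1,1)\}$, each of $\mu$-weight $\epsilon$, so $\mu_2$ is uniform on these three configurations. For each pair $i\neq j$ exactly one of the three configurations has $\omega_i=\omega_j=1$, so
\[
\mu_2[\omega_i=\omega_j=1]=\tfrac13 .
\]
Each coordinate equals $1$ in exactly two of the three configurations, so $\mu_2[\omega_i=1]=\frac23$. The required inequality is $\frac13\le\frac49$, which holds.

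The only point needing care is the $k=2$ case. It hinges on observing that the three weight-two configurations all carry the same mass $\epsilon$, so that $\mu_2$ is exactly uniform and the comparison $\frac13\le\frac49$ is explicit. With the truncations checked, the cases above prove the claim.
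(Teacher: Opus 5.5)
Your proof is correct and follows the paper's case-by-case structure. The only difference is at $k=2$. You use that the three weight-two configurations all have mass $\epsilon$, so $\mu_2$ is uniform and the check reduces to $\frac13\le\frac49$. The paper instead clears denominators to reduce the inequality to $a_{101}a_{011}\ge 0$, which shows that the $2$-truncation of any measure on $\{0,1\}^3$ satisfies p-NC whatever the weights.
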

	\begin{proof}
		The cases $k=0$ and $k=1$ are trivial because $\mu_k[\omega_i=\omega_j=1]=0$. For $k=3$, the only configuration is $(1,1,1)$, so both sides of the p-NC inequality equal $1$.
		
		For $k=2$, fix $i=1$, $j=2$. The p-NC inequality reduces to
		\[
		\frac{a_{110}}{a_{110}+a_{101}+a_{011}} \leq \frac{a_{110}+a_{101}}{a_{110}+a_{101}+a_{011}} \cdot \frac{a_{110}+a_{011}}{a_{110}+a_{101}+a_{011}}.
		\]
		Multiplying by the square of the denominator yields the equivalent inequality $a_{101}a_{011}\geq0$, which holds since all $a_{ijk}$ are positive. The same argument applies to the pairs $\{1,3\}$ and $\{2,3\}$ by symmetry. Hence $\mu_2$ satisfies p-NC.
	\end{proof}
	
	\begin{claim}
		For $\epsilon\in(0,\frac{1}{8})$, the measure $\mu$ does not satisfy p-NC.
	\end{claim}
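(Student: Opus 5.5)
We must show the measure $\mu$ of Example~\ref{example: conditioned NC does not imply NC} violates \eqref{eq: def p-NC} for some pair; by the symmetry of the construction, the natural candidate is any pair, and we will take $\{1,2\}$, since coordinates $1$ and $2$ are where the heavy masses $a_{000}$ and $a_{001}$ play no role.

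First compute the joint probability: $\mu[\omega_1=\omega_2=1]=a_{110}+a_{111}=2\epsilon$, since both configurations $(1,1,0)$ and $(1,1,1)$ carry mass $\epsilon$. Next the marginals: $\mu[\omega_1=1]=a_{100}+a_{101}+a_{110}+a_{111}=4\epsilon$, and by the same count $\mu[\omega_2=1]=a_{010}+a_{011}+a_{110}+a_{111}=4\epsilon$. Hence the product of marginals is $16\epsilon^2$.

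The p-NC inequality for the pair $\{1,2\}$ would require $2\epsilon\le 16\epsilon^2$, i.e.\ $\epsilon\ge\frac18$. Since $\epsilon\in(0,\frac18)$, we have $16\epsilon^2<2\epsilon$, so the inequality fails strictly and $\mu$ is not p-NC. Along the way one should note that $\mu$ is a genuine probability measure: the masses sum to $2\cdot\frac{1-6\epsilon}{2}+6\epsilon=1$, and the condition $\epsilon<\frac18<\frac16$ ensures $a_{000}=a_{001}>0$.

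There is no real obstacle here; the only point requiring care is bookkeeping which of the six $\epsilon$-configurations contribute to each marginal, and noting that the threshold $\frac18$ is exactly where the inequality $2\epsilon\le16\epsilon^2$ switches.
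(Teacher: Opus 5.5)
Your proof is correct and is exactly the paper's argument: compute $\mu[\omega_1=\omega_2=1]=a_{110}+a_{111}=2\epsilon$ and both marginals $4\epsilon$, and observe that $2\epsilon>16\epsilon^2$ precisely when $\epsilon<\frac18$. One small correction to your framing: the construction is not symmetric in coordinate $3$. For the pair $\{1,3\}$ one gets $\mu[\omega_3=1]=\frac12$, and then $2\epsilon=4\epsilon\cdot\frac12$ holds with equality, so choosing $\{1,2\}$ is essential rather than arbitrary.
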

	\begin{proof}
		Consider the pair $\{1,2\}$:
		\begin{align*}
		\mu[\omega_1=\omega_2=1] &= a_{110}+a_{111} = 2\epsilon \\
		&> (4\epsilon)^2 = \big(a_{100}+a_{101}+a_{110}+a_{111}\big)\big(a_{010}+a_{011}+a_{110}+a_{111}\big) \\
		&= \mu[\omega_1=1]\cdot\mu[\omega_2=1]. \qedhere
		\end{align*}
	\end{proof}
\end{example}

Having explored the relationship between a measure and its truncations in abstract settings, we now turn to concrete graph families. The following example, due to Seymour, Winkler, and Sudan (see \cite{FM1992,HSW2022}), shows that the uniform $2$-forest measure can fail the p-NC property on certain finite graphs. Moreover, by planar duality, this also yields a counterexample for the uniform unicyclic subgraph measure.

\begin{example}\label{example:HSW2022}
	Let $G$ be the graph depicted in Figure~\ref{fig: counterexample for p-NC for 2-forests}, taken from Example 2 of \cite{HSW2022}. For the pair of edges $e,f$ indicated in the figure, one computes
	\[
	\twoUF[\omega(e)=\omega(f)=1] = \frac{80}{384} > \frac{112}{384}\cdot\frac{272}{384} = \twoUF[\omega(e)=1]\cdot\twoUF[\omega(f)=1],
	\]
	where $\twoUF$ denotes the uniform probability measure on the set of $2$-forests of $G$. Thus $\twoUF$ fails the p-NC property.
	
	Observe that for any planar graph $H$ with dual $H^\dagger$, the p-NC property for a pair of edges $(e,f)$ under the uniform $2$-forest measure on $H$ is equivalent to the same property for the dual pair $(e^*,f^*)$ under the uniform measure on unicyclic connected subgraphs of $H^\dagger$. Consequently, the planar dual $G^\dagger$ of the graph in Figure~\ref{fig: counterexample for p-NC for 2-forests} provides an example where the uniform unicyclic subgraph measure fails the p-NC property for the dual edges $e^*,f^*$.
	
	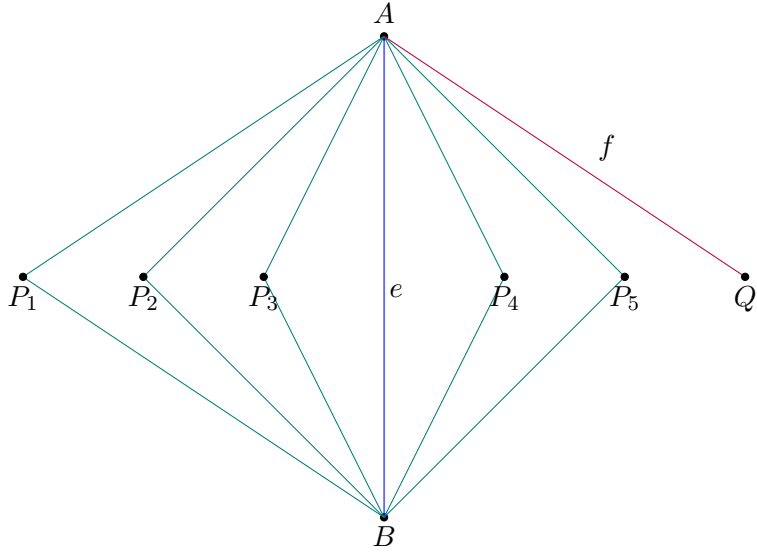
\begin{figure}[H]
		\centering
		\begin{tikzpicture}[scale=0.8, text height=1.5ex,text depth=.25ex] 
		\draw[fill=black] (6,8) circle [radius=0.06];
		\node[above] at (6,8) {$A$};
		\draw[fill=black] (6,0) circle [radius=0.06];
		\node[below] at (6,0) {$B$};
		\draw[color=blue,thin] (6,0)--(6,8);
		\node[color=black,below] at (6.2,4.15) {$e$};
		\draw[color=teal,thin] (6,0)--(0,4)--(6,8);
		\draw[color=teal,thin] (6,0)--(2,4)--(6,8);
		\draw[color=teal,thin] (6,0)--(4,4)--(6,8);
		\draw[color=teal,thin] (6,0)--(8,4)--(6,8);
		\draw[color=teal,thin] (6,0)--(10,4)--(6,8);
		\draw[fill=black] (0,4) circle [radius=0.06];
		\node[color=black,below] at (0,4) {$P_1$};
		\draw[fill=black] (2,4) circle [radius=0.06];
		\node[color=black,below] at (2,4) {$P_2$};
		\draw[fill=black] (4,4) circle [radius=0.06];
		\node[color=black,below] at (4,4) {$P_{3}$};
		\draw[fill=black] (8,4) circle [radius=0.06];
		\node[color=black,below] at (8,4) {$P_{4}$};
		\draw[fill=black] (10,4) circle [radius=0.06];
		\node[color=black,below] at (10,4) {$P_{5}$};
		\draw[color=purple,thin] (12,4)--(6,8);
		\draw[fill=black] (12,4) circle [radius=0.06];
		\node[color=black,below] at (12,4) {$Q$};
		\node[color=black,below] at (9.7,6.5) {$f$};
		\end{tikzpicture}
		\caption{A graph for which the uniform $2$-forest measure fails the p-NC property (adapted from \cite{HSW2022}).}
		\label{fig: counterexample for p-NC for 2-forests}
	\end{figure}
\end{example}

\subsection{Remarks on rank sequences and open questions}

For a probability measure $\mu$ on $\{0,1\}^S$, the rank sequence $\{\mu(\card{\omega}=k)\}_{k=0}^{|S|}$ together with the truncations $\mu_k$ completely determine $\mu$. In the context of the uniform forest measure $\UF$ on a finite connected graph $G=(V,E)$, the rank sequence is given by $b_k \coloneq  \card{\sF^{(\card{V}-k)}(G)}$, where $\sF^{(\card{V}-k)}(G)$ denotes the set of forests with exactly $\card{V}-k$ components (equivalently, with $k$ edges). These numbers are of independent interest and exhibit remarkable combinatorial properties.

\begin{definition}\label{def: ULC}
	Let $a_0,a_1,\ldots,a_m$ be a sequence of nonnegative real numbers.
	\begin{enumerate}
		\item The sequence is \textbf{unimodal} if there exists $k$ such that
		\[
		a_0 \leq a_1 \leq \cdots \leq a_k \geq \cdots \geq a_m.
		\]
		\item The sequence has \textbf{no internal zeros} if $a_i a_j > 0$ for some $i<j$ implies $a_k > 0$ for all $i<k<j$.
		\item The sequence is \textbf{log-concave} if it has no internal zeros and for all $0<k<m$,
		\[
		a_k^2 \geq a_{k-1}a_{k+1}.
		\]
		\item The sequence is \textbf{strongly log-concave} if it has no internal zeros and for all $0<k<m$,
		\[
		a_k^2 \geq \frac{k+1}{k}\,a_{k-1}a_{k+1}.
		\]
		\item The sequence is \textbf{ultra log-concave} if it has no internal zeros and for all $0<k<m$,
		\[
		a_k^2 \geq \frac{k+1}{k}\cdot\frac{m-k+1}{m-k}\,a_{k-1}a_{k+1}.
		\]
	\end{enumerate}
\end{definition}

For a graphic matroid, the sequence of numbers of independent sets of given size is known to be ultra log-concave. This was a long-standing conjecture of Mason, proved independently in \cite{ALGV2018,BH2018}. In our setting, this means that the full rank sequence $(b_0,\ldots,b_{|E|})$ for $\UF$ is ultra log-concave, where $b_i$ counts forests with $i$ edges. However, the truncated sequence $(b_0,\ldots,b_{|V|-1})$ corresponding to the support of $\UF$ need not inherit this property. For instance, on $K_n$, one can check that
\[
b_1^2 < 2\frac{n-1}{n-2}b_0b_2 \quad\text{for all } n\geq 3,
\]
so the truncated sequence fails to be ultra log-concave. Nevertheless, it is strongly log-concave (and hence unimodal). In particular, for $K_n$, the numbers $|\kF(K_n)|$ decrease with $k$, as can be seen from unimodality and the explicit values
\[
|\twoF(K_n)| = \frac{1}{2}(n-1)(n+6)n^{n-4} \leq n^{n-2} = |\oneF(K_n)|.
\]

In light of the above examples and the structural properties of rank sequences, we pose the following questions about the monotonicity of marginal probabilities.

\begin{question}
	Let $G=(V,E)$ be a finite connected graph.
	\begin{enumerate}
		\item For each fixed edge $e\in E$, does $\kUF[\omega(e)=1]$ decrease as $k$ increases?
		\item For each fixed edge $e\in E$, does $\kUC[\omega(e)=1]$ increase as $k$ increases?
		\item For each fixed edge $e\in E$, is it true that
		\[
		\twoUF[\omega(e)=1] \leq \UST[\omega(e)=1] \leq \twoUC[\omega(e)=1]?
		\]
	\end{enumerate}
\end{question}

Our main results, Theorems~\ref{thm: p-NC for UC}, \ref{thm: p-NC for kUF}, and \ref{thm: p-NC for kUC}, establish the p-NC property for sufficiently large complete graphs. A natural question is whether these properties hold for all $n$ for which the measures are defined.

\begin{itemize}
	\item For the uniform connected subgraph measure $\UC$ on $K_n$, we conjecture that p-NC holds for every $n$. The proof of Theorem~\ref{thm: p-NC for UC} can, in principle, be extended to all $n$ by carefully tracking error terms and checking finitely many small cases. However, such a verification would be tedious and offers little insight, so we do not pursue it here.
	
	\item For the truncated measures $\kUF$ and $\kUC$, our methods do not yield explicit constants $N(k)$. Moreover, Example~\ref{example:HSW2022} shows that on arbitrary graphs, these measures can fail p-NC. Nevertheless, the high symmetry of complete graphs suggests that p-NC might hold for all $n$ in these cases as well.
\end{itemize}

Finally, we expect that the approach used for Theorem~\ref{thm: p-NC for UC} can be adapted to other families of graphs with sufficient symmetry and known connectivity thresholds. In particular, the hypercube $\{0,1\}^n$ is a promising candidate, as its critical probability for connectedness is exactly $1/2$ \cite{ES1979}. We plan to investigate this direction in future work.

\appendix

\section{Deferred proofs from Section~\ref{sec: p-NC for kUC}}

\begin{proof}[Computation of the coefficients $c_i$'s in \eqref{eq: computable coef}]
		First, recall Newton's binomial theorem for negative powers:
		\[
		(x+a)^{-n}=\sum_{k=0}^{\infty}\binom{-n}{k}x^ka^{-n-k}=\sum_{k=0}^{\infty}(-1)^k\binom{n+k-1}{k}x^ka^{-n-k}\,,
		\]
		where the second equality follows from the identity:
		\begin{align*}
		\binom{-n}{k}&=\frac{(-n)(-n-1)\cdots (-n-(k-1))}{k!}\\
		&=(-1)^k\frac{(n+k-1)(n+k-2)\cdots n}{k!}\\
		&=(-1)^k\binom{n+k-1}{k}\,.
		\end{align*}
		In particular, setting $a=1$ and $x=o(1)$, we obtain the expansion:
		\[
		(1+x)^{-n}=1-nx+\frac{n(n+1)}{2}x^2+O(x^3)\,.
		\]
		Using this binomial theorem for negative powers, we compute $\big[1-T(z)\big]^{-\beta}$ for $\beta\ge2$:
		\begin{align*}
		\big[1-T(z)\big]^{-\beta}
		&\stackrel{\eqref{eq:expansion of T(z)}}{=}\big[\sqrt{2}w-\frac{2}{3}w^2+\frac{11}{36}\sqrt{2}w^3+O(w^4)\big]^{-\beta}\\
		&=2^{-\frac{\beta}{2}}w^{-\beta}\big[1-\frac{\sqrt{2}}{3}w+\frac{11}{36}w^2+O(w^3)\big]^{-\beta}\\
		&=2^{-\frac{\beta}{2}}w^{-\beta}\cdot \bigg[1+\beta\big(\frac{\sqrt{2}}{3}w-\frac{11}{36}w^2\big)+\frac{\beta(\beta+1)}{2}\big(-\frac{\sqrt{2}}{3}w+\frac{11}{36}w^2\big)^2+O(w^3)\bigg]\\
		&=2^{-\frac{\beta}{2}}w^{-\beta}\cdot\bigg[1+\frac{\sqrt{2}\beta}{3}w+\frac{4\beta^2-7\beta}{36}w^2+O(w^3)\bigg]
		\end{align*}
		On the other hand, we use Newton's binomial theorem for positive powers to expand $T^{\alpha}(z)$:
		\begin{align*}
		T^{\alpha}(z)&\stackrel{\eqref{eq:expansion of T(z)}}{=}\big[1-\sqrt{2}w+\frac{2}{3}w^2-\frac{11}{36}\sqrt{2}w^3+O(w^4)\big]^{\alpha}\\
		&=1-\sqrt{2}\alpha w+\big(\alpha^2+\frac{5}{3}\alpha\big)w^2+O(w^3)\,.
		\end{align*}
		Multiplying these two expansions together, we obtain the desired expression:
		\begin{align*}
		&\frac{T^{\alpha}(z)}{\big[1-T(z)\big]^{\beta}}\\
		=&2^{-\frac{\beta}{2}}w^{-\beta}\cdot\bigg[1+\frac{\sqrt{2}\beta}{3}w+\frac{4\beta^2-7\beta}{36}w^2+O(w^3)\bigg]\cdot \bigg[1-\sqrt{2}\alpha w+\big(\alpha^2+\frac{5}{3}\alpha\big)w^2+O(w^3)\bigg]\\
		=&2^{-\frac{\beta}{2}}w^{-\beta}+2^{-\frac{\beta}{2}}\big(\frac{\sqrt{2}\beta}{3}-\sqrt{2}\alpha\big)w^{-\beta+1}\\
		&\quad +2^{-\frac{\beta}{2}}\frac{36\alpha^2+60\alpha+4\beta^2-7\beta-24\alpha\beta}{36} w^{-\beta+2}  +O(w^{-\beta+3})\,. \qedhere
		\end{align*}
\end{proof}

To prove Lemma~\ref{lem: coef using theta expansion}, we first analyze the positive powers of $\theta$.
\begin{lemma}\label{lem: coef of theta positive powers}
	Recall that $\theta=\theta(z)\coloneq 1-T(z)$. We have 
	\be\label{eq:coef of theta}
	\big[z^n\big]\theta=-\big[z^n\big]T(z)=-\frac{n^{n-1}}{n!}=-\frac{1}{\sqrt{2\pi}}e^nn^{-\frac{3}{2}}\cdot \Big[1-\frac{1}{12n}+O\big(n^{-2}\big)\Big]\,,
	\ee
	and for any fixed  integer $m\ge2$,
	\be\label{eq:coef of theta positive powers}
	\big[z^n\big]\theta^m=O\big(e^nn^{-\frac{5}{2}}\big)\,.
	\ee
\end{lemma}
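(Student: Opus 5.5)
The first identity follows directly from \eqref{eq:T3} with $k=1$. Since $\theta=1-T(z)$, for $n\ge1$ we have $[z^n]\theta=-[z^n]T(z)=-n^{n-1}/n!$. Substituting Stirling's formula
\[
n!=\sqrt{2\pi}e^{-n}n^{n+\frac12}\Big[1+\tfrac{1}{12n}+O(n^{-2})\Big]
\]
gives $n^{n-1}/n!=\frac{1}{\sqrt{2\pi}}e^nn^{-3/2}\big[1+\frac{1}{12n}+O(n^{-2})\big]^{-1}$. Inverting the bracket yields the factor $1-\frac{1}{12n}+O(n^{-2})$, which is exactly the claimed expansion.

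For the second statement, with $m\ge2$ fixed, I expand $\theta^m=(1-T)^m=\sum_{j=0}^m\binom{m}{j}(-1)^jT^j$ binomially. Then \eqref{eq:T3} gives the exact coefficient
\[
[z^n]\theta^m=\sum_{j=1}^m\binom{m}{j}(-1)^j\frac{jn^{n-j-1}}{(n-j)!}\qquad(n\ge m).
\]
Each summand is of order $e^nn^{-3/2}$, so the real content of the estimate is cancellation among them. I would write
\[
\frac{n^{n-j-1}}{(n-j)!}=\frac{n^{n-1}}{n!}\prod_{i=0}^{j-1}\frac{n-i}{n}
=\frac{n^{n-1}}{n!}\Big[1-\frac{j(j-1)}{2n}+O(n^{-2})\Big],
\]
with the $O$ uniform because $j\le m$ is bounded. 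Summing over $j$, the leading-order contribution is
\[
\frac{n^{n-1}}{n!}\sum_{j=1}^m\binom{m}{j}(-1)^jj=-\frac{n^{n-1}}{n!}\,m(1-1)^{m-1}=0\qquad(m\ge2).
\]
This leaves $[z^n]\theta^m=O\big(\frac{n^{n-1}}{n!}\cdot n^{-1}\big)=O(e^nn^{-5/2})$ by the first part.

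The only point needing care is this cancellation of the $e^nn^{-3/2}$ terms. It holds because $\sum_j\binom mj(-1)^jj=0$ exactly when $m\ge2$, and the expansion of $\prod_{i<j}(1-i/n)$ is uniform since $j$ ranges over a bounded set.

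An alternative route uses singularity analysis. Near the singularity, $\theta^m=(\sqrt2w+O(w^2))^m$, with the leading term $w^m$ for even $m$ being a polynomial in $z$ and so contributing nothing asymptotically. The direct computation above is cleaner, however, and I would use it.
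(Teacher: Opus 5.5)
Your proposal is correct and follows essentially the same route as the paper: Stirling's formula for the first identity, and for $m\ge2$ the binomial expansion of $(1-T)^m$ combined with \eqref{eq:T3}, rewriting $(n-k)!=n!/\prod_{j<k}(n-j)$, and the exact cancellation $\sum_{k=1}^m(-1)^k\binom{m}{k}k=0$. The only difference is cosmetic: you expand $\prod_{i<j}(1-i/n)$ to the $j(j-1)/(2n)$ term, while the paper only needs $n^k+O(n^{k-1})$.
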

\begin{proof}
	We start by recalling Stirling's formula, which states
	\[
	n!=\sqrt{2\pi}e^{-n} n^{n+\frac{1}{2}}\Big[1+\frac{1}{12n}+O\big(n^{-2}\big)\Big]\,.
	\] 
	Taking the reciprocal of both sides, we obtain: 
	\[
	\frac{1}{n!}=\frac{1}{\sqrt{2\pi}}e^n n^{-n-\frac{1}{2}}\Big[1-\frac{1}{12n}+O\big(n^{-2}\big)\Big]\,.
	\]
	By the definition of $T(z)$,  we have
	\[
	\big[z^n\big]\theta=-\big[z^n\big]T(z)=-\frac{n^{n-1}}{n!}\,.
	\]
	Substituting the reciprocal Stirling expansion above into this expression gives the desired asymptotic form for $\big[z^n\big]\theta$.

	Next, we consider  $m\ge2$ and compute  $\big[z^n\big]\theta^m$. Using the binomial theorem to expand $\theta^m=\big(1-T(z)\big)^m$, we have
	\[
	\big[z^n\big]\theta^m=\sum_{k=1}^{m}\binom{m}{k}(-1)^k\big[z^n\big]T^k(z)\,
	\]
	(taking the sum from $k=1$ since the $k=0$ term is $1$, whose coefficient $\big[z^n\big]$ is $0$ for $n\ge1$). Extracting the coefficient of $z^n$ and applying \eqref{eq:T3}, we get
	\begin{align*}
		\big[z^n\big]\theta^m&=\sum_{k=1}^{m}\binom{m}{k}(-1)^k\big[z^n\big]T^k(z)\\
	&\stackrel{\eqref{eq:T3}}{=}\sum_{k=1}^{m}\binom{m}{k}(-1)^k\frac{kn^{n-k-1}}{(n-k)!} \,.
	\end{align*}
	To simplify this sum, we rewrite $(n-k)!$ using the product form $(n-k)!=\frac{n!}{\prod_{j=0}^{k-1}(n-j)}$, which gives:
	\[
	\frac{kn^{n-k-1}}{(n-k)!}=\frac{kn^{n-k-1}\prod_{j=0}^{k-1}(n-j)}{n!}\,.
	\] 
	Expanding the product $\prod_{j=0}^{k-1}(n-j)=n^k+O\big(n^{k-1}\big)$ (since it is a polynomial of degree $k$ in $n$ with leading term $n^k$), we substitute back to find:
	\begin{align*}
	\big[z^n\big]\theta^m
	&=\frac{1}{n!}\sum_{k=1}^{m}\binom{m}{k}(-1)^kkn^{n-k-1}\left(n^k+O\big(n^{k-1}\big)\right)\\
	&=\frac{1}{n!}\left[n^{n-1}\cdot \sum_{k=1}^{m}(-1)^k\binom{m}{k}k+O\big(n^{n-2}\big) \right]\\
	&=\frac{1}{n!}O(n^{n-2})=O\big(e^nn^{-\frac{5}{2}}\big)\,,
	\end{align*}
	where we use the fact that 
	\[
	\sum_{k=1}^{m}(-1)^k\binom{m}{k}k=-m\sum_{j=0}^{m-1}(-1)^{j}\binom{m-1}{j}=-m(1-1)^{m-1}=0\,. \qedhere
	\]

\end{proof}

Next, we analyze the negative powers of $\theta$.
\begin{lemma}\label{lem: coef of theta negative powers}
	For any integer $\beta\ge1$, we have 
	\be\label{eq:coef of theta negative powers}
		\big[z^n\big]\theta^{-\beta}=\frac{2^{-\frac{\beta}{2}}}{\Gamma(\frac{\beta}{2})}e^nn^{\frac{\beta}{2}-1}+\frac{2^{-\frac{\beta}{2}}\cdot \frac{\sqrt{2}\beta}{3}}{\Gamma\left(\frac{\beta-1}{2}\right)}e^n n^{\frac{\beta}{2}-\frac{3}{2}}
	+\frac{2^{-\frac{\beta}{2}}(2\beta^2+\beta)}{18\Gamma\left(\frac{\beta-2}{2}\right)}e^nn^{\frac{\beta}{2}-2}+O\left(e^nn^{\frac{\beta}{2}-\frac{5}{2}}\right)\,.
	\ee
\end{lemma}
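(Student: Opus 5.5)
Since $\theta^{-\beta}=\big[1-T(z)\big]^{-\beta}$ is exactly the case $\alpha=0$ of the series $\frac{T^\alpha(z)}{[1-T(z)]^\beta}$, I would specialise the singularity-analysis procedure (a)--(d) stated after Lemma~\ref{lem: facts of T(z)} to $\alpha=0$. First, from the expansion of $\big[1-T(z)\big]^{-\beta}$ computed in the appendix, the local expansion at $x=ez\to1$ in $\Delta(\eta,\phi)$ is
\[
\theta^{-\beta}=2^{-\frac{\beta}{2}}w^{-\beta}\Big[1+\tfrac{\sqrt2\beta}{3}w+\tfrac{4\beta^2-7\beta}{36}w^2+O(w^3)\Big],
\]
which is the same as $c_0=2^{-\beta/2}$, $c_1=2^{-\beta/2}\frac{\sqrt2\beta}{3}$, $c_2=2^{-\beta/2}\frac{4\beta^2-7\beta}{36}$. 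The $O(w^{-\beta+3})$ remainder is analytic in the $\Delta$-domain because $T$ is. By Corollary~3 of \cite{FO1990}, its coefficient contribution is $O(e^nn^{\frac{\beta-3}{2}-1})=O(e^nn^{\frac\beta2-\frac52})$.

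Next I transfer term by term with \eqref{eq:coef of w}. For $[z^n]w^{-\beta}$ I keep the $1/n$ correction $\frac{\beta(\beta-2)}{8n}$; for the other two terms only the leading order is needed. This gives the leading term $\frac{2^{-\beta/2}}{\Gamma(\beta/2)}e^nn^{\beta/2-1}$ and the second term $\frac{c_1}{\Gamma(\frac{\beta-1}{2})}e^nn^{\beta/2-3/2}$, as claimed. The third coefficient is
\[
\frac{2^{-\beta/2}\beta(\beta-2)}{8\Gamma(\beta/2)}+\frac{c_2}{\Gamma(\frac{\beta-2}{2})}.
\]
Using $\Gamma(\beta/2)=\frac{\beta-2}{2}\Gamma(\frac{\beta-2}{2})$, the first piece becomes $\frac{2^{-\beta/2}\beta}{4\Gamma(\frac{\beta-2}{2})}$. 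Adding $\frac{4\beta^2-7\beta}{36}$ gives $\frac{4\beta^2+2\beta}{36}=\frac{2\beta^2+\beta}{18}$, matching the statement.

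The main obstacle is the bookkeeping at small $\beta$, where the convention $1/\Gamma$ at a nonpositive integer equals zero must be justified. When $\beta-2\in\{0\}$, or more generally when an exponent $-\beta+j$ is a nonnegative even integer, the corresponding $w$-power is a polynomial in $z$. Its coefficients vanish for large $n$, which is consistent with $\Gamma(0)=\infty$. For $\beta=2$, the identity $\Gamma(\beta/2)=\frac{\beta-2}{2}\Gamma(\frac{\beta-2}{2})$ cannot be used; instead both sides of the third term are zero, since $\beta(\beta-2)=0$. For $\beta=1$, one has $\Gamma(-\frac12)=-2\sqrt\pi$ and $\frac{\beta-1}{2}=0$. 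Here I would check directly that the formula agrees with $[z^n]w^{-1}$ and $[z^n]w^{1}$ from \eqref{eq:coef of w}: the $w^0$ term contributes nothing, and the $w^1$ term gives $e^nn^{-3/2}/\Gamma(-\frac12)$ at order $e^nn^{\beta/2-2}$. I would verify these degenerate cases separately rather than rely on the general manipulation.
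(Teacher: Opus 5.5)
Your proposal is correct and follows the paper's proof: for $\beta\ge2$ the paper also specialises \eqref{eq:T_alpha_beta} to $\alpha=0$ and simplifies the $e^nn^{\beta/2-2}$ coefficient using $\Gamma(\frac{\beta}{2})=\frac{\beta-2}{2}\Gamma(\frac{\beta-2}{2})$, and it treats $\beta=2$ through the convention $\Gamma(0)=\infty$. The only difference is at $\beta=1$: the paper avoids singularity analysis there, using the exact identity $\theta^{-1}=1+\frac{T}{1-T}$ with \eqref{eq:T4} and Stirling's formula, whereas your direct transfer also works, since the $1/n$ correction of $c_0[z^n]w^{-1}$ and the $c_2w$ term contribute $-\frac{1}{8\sqrt{2\pi}}+\frac{1}{24\sqrt{2\pi}}=-\frac{1}{12\sqrt{2\pi}}$, as required.
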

\begin{proof}
	We proceed by considering different cases for $\beta\ge1$, starting with the simplest case $\beta=1$.
		For $\theta^{-1}$, we use the identity $\theta^{-1}=1+\frac{T(z)}{1-T(z)}$ and apply \eqref{eq:T4} to extract the coefficient of $z^n$:
	\[
	\big[z^n\big]\theta^{-1}=\big[z^n\big]\big(1+\frac{T(z)}{1-T(z)}\big)\stackrel{\eqref{eq:T4}}{=}\frac{n^{n-1}}{(n-1)!}=\frac{1}{\sqrt{2\pi}}e^nn^{-\frac{1}{2}}\cdot \Big[1-\frac{1}{12n}+O\big(n^{-2}\big)\Big]\,,
	\]
	which verifies \eqref{eq:coef of theta negative powers}  for $\beta=1$ (as we verify later using the gamma function conventions).
	
	Next, we consider the case $\beta=2$.
	Applying the expansion \eqref{eq:T_alpha_beta} with $\alpha=0$ and $\beta=2$, we obtain
	\[
	\big[z^n\big]\theta^{-2}=\frac{1}{2}e^n+\frac{\sqrt{2}}{3\sqrt{\pi}}e^nn^{-\frac{1}{2}}+O\left(e^nn^{-\frac{3}{2}}\right)\,.
	\]
	
	For $\beta\ge3$, we again use 
	 \eqref{eq:T_alpha_beta} with $\alpha=0$, which gives
	\[
	\big[z^n\big]\theta^{-\beta}=\frac{2^{-\frac{\beta}{2}}}{\Gamma(\frac{\beta}{2})}e^nn^{\frac{\beta}{2}-1}+\frac{2^{-\frac{\beta}{2}}\cdot \frac{\sqrt{2}\beta}{3}}{\Gamma\left(\frac{\beta-1}{2}\right)}e^n n^{\frac{\beta}{2}-\frac{3}{2}}
	+\left[ \frac{ 2^{-\frac{\beta}{2}}\beta(\beta-2)}{8\Gamma\left(\frac{\beta}{2}\right)} 
	+ \frac{c_2}{\Gamma\left(\frac{\beta-2}{2}\right)}\right]e^nn^{\frac{\beta}{2}-2}+O\left(e^nn^{\frac{\beta}{2}-\frac{5}{2}}\right)\,,
	\]	
	where $c_2=\frac{2^{-\frac{\beta}{2}}\cdot (4\beta^2-7\beta)}{36}$. To simplify this expression, we use the gamma function identity $\Gamma\left(\frac{\beta}{2}\right)=\frac{\beta-2}{2}\Gamma\left(\frac{\beta-2}{2}\right)$ (valide for $\beta\ge3$). Substituting this identity into the coefficient of $e^nn^{\frac{\beta}{2}-2}$, we can simplify the above expression to obtain \eqref{eq:coef of theta negative powers} for $\beta\ge3$:
	\[
	\big[z^n\big]\theta^{-\beta}=\frac{2^{-\frac{\beta}{2}}}{\Gamma(\frac{\beta}{2})}e^nn^{\frac{\beta}{2}-1}+\frac{2^{-\frac{\beta}{2}}\cdot \frac{\sqrt{2}\beta}{3}}{\Gamma\left(\frac{\beta-1}{2}\right)}e^n n^{\frac{\beta}{2}-\frac{3}{2}}
	+\frac{2^{-\frac{\beta}{2}}(2\beta^2+\beta)}{18\Gamma\left(\frac{\beta-2}{2}\right)}e^nn^{\frac{\beta}{2}-2}+O\left(e^nn^{\frac{\beta}{2}-\frac{5}{2}}\right)\,.
	\]

	Finally, we verify that  \eqref{eq:coef of theta negative powers} holds for $\beta\in\{1,2\}$ using the standard conventions for the gamma function: $\Gamma(0)=\infty$ and $\Gamma(-\frac{1}{2})=-2\sqrt{\pi}$. For $\beta=1$, substituting $\Gamma\big(\frac{1}{2}\big)=\sqrt{\pi}$, $\Gamma(0)=\infty$ and $\Gamma(-\frac{1}{2})=-2\sqrt{\pi}$ into \eqref{eq:coef of theta negative powers} recovers the asymptotic form for $\big[z^n\big]\theta^{-1}$ derived earlier. For $\beta=2$, using $\Gamma(1)=1$, $\Gamma\big(\frac{1}{2}\big)=\sqrt{\pi}$ and $\Gamma(0)=\infty$ recovers the expansion for $\big[z^n\big]\theta^{-2}$. Thus, \eqref{eq:coef of theta negative powers} holds for all $\beta\ge1$. 
\end{proof}

\begin{proof}[Proof of Lemma~\ref{lem: coef using theta expansion}]
By combining the results of Lemma~\ref{lem: coef of theta positive powers} and Lemma~\ref{lem: coef of theta negative powers}, we can directly derive the desired conclusion. We proceed by considering different cases for the value of  $m$ (the highest absolute value of the negative exponents in the theta expansion of $F(z)$).

\noindent
\textbf{Case 1: $m\ge3$}.\\
Suppose $F(z)=f_{-m}\theta^{-m}+f_{-m+1}\theta^{-m+1}+\cdots+f_t\theta^t$ (where $t>0$). By extracting the coefficient of $z^n$ from each term in $F(z)$ and substituting the asymptotic expansions from Lemmas~\ref{lem: coef of theta positive powers} and \ref{lem: coef of theta negative powers},  we obtain:
\begin{align*}
\big[z^n\big]F(z)&=f_{-m}\frac{2^{-\frac{m}{2}}}{\Gamma(\frac{m}{2})}e^nn^{\frac{m}{2}-1}
+\Big[ \frac{2^{-\frac{m}{2}}\cdot \frac{\sqrt{2}m}{3}}{\Gamma\left(\frac{m-1}{2}\right)}f_{-m} +\frac{2^{-\frac{m-1}{2}}}{\Gamma(\frac{m-1}{2})}f_{-m+1}\Big]e^nn^{\frac{m}{2}-\frac{3}{2}} \\
&\quad 
+\Big[\frac{2^{-\frac{m}{2}}(2m^2+m)}{18\Gamma\left(\frac{m-2}{2}\right)}f_{-m} +\frac{2^{-\frac{m-1}{2}}\cdot \frac{\sqrt{2}(m-1)}{3}}{\Gamma\left(\frac{m-2}{2}\right)}f_{-m+1}+\frac{2^{-\frac{m-2}{2}}}{\Gamma(\frac{m-2}{2})}f_{-m+2}\Big]e^nn^{\frac{m}{2}-2} \\
&\quad +O\left(e^nn^{\frac{m}{2}-\frac{5}{2}} \right)\,,
\end{align*}
which matches the desired expression \eqref{eq: coef using theta expansion} for $m\ge3$. 

\noindent
\textbf{Case 2: $m=1$ and $m=2$.}\\
For $m=1$ and $m=2$, the verification of \eqref{eq: coef using theta expansion}  is  straightforward, as we now show:
	\begin{itemize}
    \item[(1)]  If $m=1$, then $F(z)=f_{-1}\theta^{-1}+f_0+f_1\theta+f_2\theta^2+\cdots+f_t\theta^t$.  Using the expansions from Lemmas~\ref{lem: coef of theta positive powers} and \ref{lem: coef of theta negative powers}, we find:
	\[
	\big[z^n\big]F(z)=f_{-1}\cdot \frac{1}{\sqrt{2\pi}}e^nn^{-\frac{1}{2}}-\big[\frac{f_{-1}}{12}+f_1\big]\cdot \frac{1}{\sqrt{2\pi}}e^nn^{-\frac{3}{2}}+O\left(e^nn^{-\frac{5}{2}}\right)\,.
	\]
	\item[(2)] If $m=2$, then $F(z)=f_{-2}\theta^{-2}+f_{-1}\theta^{-1}+f_0+f_1\theta+f_2\theta^2+\cdots+f_t\theta^t$. Similarly, substituting the relevant asymptotic expansions gives:
	\[
	\big[z^n\big]F(z)=\frac{f_{-2}}{2}e^n+\big[\frac{2}{3}f_{-2}+f_{-1}\big]\cdot \frac{1}{\sqrt{2\pi}}e^nn^{-\frac{1}{2}}+O\left(e^nn^{-\frac{3}{2}}\right)\,. \qedhere
	\]
\end{itemize}	

\end{proof}

\begin{proof}[Proof of the case $k\ge0$ in Proposition~\ref{prop: asymptotic of cnnk}]
	The details of the proof of \eqref{eq: cnnk from Wright1977} in \cite{Wright1977one} were omitted; we provide these details here for readers' convenience. We proceed by considering cases based on the value of $k$, starting with the base cases $k=0,1,2$ before verifying the recursive formula for $\sigma_k$.
	
	First, the case $k=0$ has already been established in Proposition~\ref{prop: asymptotic of cnn}.
	
	\noindent
	\textbf{Case $k=1$.}\\
	From Theorem~\ref{thm: FSS2004 thm1}, we recall the expression for $W_1(z)$:
	\[
	W_1(z)=\frac{1}{24}\frac{T^4(z)\big(6-T(z)\big)}{\big[1-T(z)\big]^3}\,.
	\]
	Using the expansion \eqref{eq:T_alpha_beta} to extract the coefficient of $z^n$, we compute 
	\begin{align*}
	C_{n,n+1}&=n!\big[z^n\big]W_1(z)\\
	&=\frac{1}{24}\cdot \frac{n^n\sqrt{2\pi n}}{e^n}\big[1+\frac{1}{12n}+O(n^{-2})\big]\cdot 
	\left\{5\cdot \frac{2^{-\frac{3}{2}}}{\Gamma(3/2)}e^nn^{\frac{3}{2}-1}\big(1+O(n^{-\frac{1}{2}})\big)  \right\}\\
	&=\frac{5}{24}n^{n+1}\big(1+O(n^{-\frac{1}{2}})\big)\,.
	\end{align*}
	
	\noindent
	\textbf{Case $k=2$.}\\
	Similarly, from Theorem~\ref{thm: FSS2004 thm1}, the expression for $W_2(z)$ is:
	\[
	W_2(z)=\frac{1}{48}\frac{T^4(z)\big[2+28T(z)-23T^2(z)+9T^3(z)-T^4(z)\big]}{\big[1-T(z)\big]^6}\,.
	\]
	Applying \eqref{eq:T_alpha_beta} again, we obtain:
	\begin{align*}
	C_{n,n+2}&=n!\big[z^n\big]W_2(z)\\
	&=\frac{1}{48}\cdot \frac{n^n\sqrt{2\pi n}}{e^n}\big[1+\frac{1}{12n}+O(n^{-2})\big]\cdot 
	\left\{15\cdot \frac{2^{-\frac{6}{2}}}{\Gamma(6/2)}e^nn^{\frac{6}{2}-1}\big(1+O(n^{-\frac{1}{2}})\big)  \right\}\\
	&=\frac{5\sqrt{2\pi}}{256}n^{n+\frac{5}{2}}\big(1+O(n^{-\frac{1}{2}})\big)\,.
	\end{align*}
	
	\noindent
	\textbf{Recursive formula \eqref{eq:recursive-sigma}  for $\sigma_k$.}\\
	Next, we verify the recursive formula \eqref{eq:recursive-sigma} for $\sigma_k$. 
For $k\ge1$, applying \eqref{eq: coef using theta expansion}  to  $W_k(z)$ as in the form of \eqref{eq: thm4 in Wrightone}, we obtain 
		\begin{align*}
		C_{n,n+k}&=n!\big[z^n\big]W_k(z)\\
		&=c_{k,-3k}\cdot n! \cdot \frac{2^{-\frac{3k}{2}}}{\Gamma\big(\frac{3k}{2}\big)}e^nn^{\frac{3k}{2}-1} \cdot\big[1+O(n^{-\frac{1}{2}})\big]\\
		&=c_{k,-3k}\sqrt{2\pi}\frac{2^{-\frac{3k}{2}}}{\Gamma\big(\frac{3k}{2}\big)}n^{n+\frac{3k-1}{2}} \cdot\big[1+O(n^{-\frac{1}{2}})\big]\,.
		\end{align*}
		Hence, by the definition of $\rho_k$ in \eqref{eq: cnnk from Wright1977}, we have
		\be\label{eq: rhok in terms of ck3k}
		\rho_k=c_{k,-3k}\sqrt{2\pi}\frac{2^{-\frac{3k}{2}}}{\Gamma\big(\frac{3k}{2}\big)}\,.
		\ee
		Now, consider the given expression for $\rho_k$:
		\[
		\rho_k=\frac{\sqrt{\pi}\cdot 2^{(1-3k)/2}\sigma_k }{\Gamma(1+\frac{3k}{2})}\,.
		\] 
		From this, one can derive the initial values of $\sigma_k$ satisfying $4\sigma_0=1$,  $16\sigma_1=5$ and $16\sigma_2=15$.
		Using the gamma function identity $\Gamma\big(1+\frac{3k}{2}\big)=\frac{3k}{2}\Gamma\big(\frac{3k}{2}\big)$, we substitute $\rho_k$ from \eqref{eq: rhok in terms of ck3k} and solve for $\sigma_k$:
		\[
		\sigma_k=\frac{3k}{2}\cdot c_{k,-3k}\,.
		\]
		We now derive the recursive formula \eqref{eq:recursive-sigma} using the recursive relation for $c_{k,-3k}$ given in \eqref{eq: recursive cks}:
		\begin{align*}
		\sigma_{k+1}&=c_{k+1,-3(k+1)}\cdot \frac{3(k+1)}{2}\\
		&\stackrel{\eqref{eq: recursive cks}}{=} \frac{3(k+1)}{2}\cdot \frac{3k}{2}c_{k,-3k}+ \frac{3(k+1)}{2}\cdot \sum_{h=1}^{k-1}\frac{3h(k-h)}{2(k+1)}c_{h,-3h}c_{k-h,-3(k-h)}\\
		&=\frac{3(k+1)}{2}\cdot\sigma_k+\sum_{h=1}^{k-1}\big(  \frac{3h}{2}\cdot c_{h,-3h} \big)\cdot \big(  \frac{3(k-h)}{2}\cdot c_{k-h,-3(k-h)}\big)\\
		&=\frac{3(k+1)}{2}\cdot\sigma_k+\sum_{h=1}^{k-1}\sigma_h\sigma_{k-h}\,. \qedhere
		\end{align*}
\end{proof}

Before proving Lemmas~\ref{lem: R_2} and \ref{lem: R_3}, we first analyze the derivatives of the exponential generating functions $W_k$.  We formalize their properties in the following lemma.
\begin{lemma}\label{lem: derivatives of W_k}	
Recall $\theta=\theta(z)=1-T(z)$. For $W_{-1}'(z)$, we have by \eqref{eq: W-1'} that 
\[
W_{-1}'(z)=\frac{1-\theta}{z}\,.
\]	
For $k\ge1$, there exist constants $d_{k,s}$ such that 
\be\label{eq: form of W_k'}
zW_{k}'(z)=3kc_{k,-3k}\theta^{-3k-2}+\sum_{s=-3k+1}^{1} d_{k,s}\theta^s\,,
\ee	
where the constant $c_{k,-3k}$ is from \eqref{eq: thm4 in Wrightone}.

\noindent For $k\ge-1$, there exist constants $\widetilde{d}_{k,s}$ such that 
\be\label{eq: form of W_k''}
z^2W_{k}''(z)=\widetilde{d}_{k,-3k-4}\theta^{-3k-4}+\sum_{s=-3k-3}^{1}\widetilde{d}_{k,s}\theta^s\,,
\ee
and $\widetilde{d}_{-1,-1}=1,\widetilde{d}_{0,-4}=1$ and $\widetilde{d}_{k,-3k-4}=(9k^2+6k)c_{k,-3k}$ for $k\ge1$.
\end{lemma}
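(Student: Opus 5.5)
The plan is a direct computation. Everything reduces to the operator identity $\sD f=(1-\theta^{-1})\frac{df}{d\theta}$ quoted from \cite[Eq.~(14)]{Wright1977one} in the proof of Lemma~\ref{lem: W_k in terms of theta}, combined with the Laurent form \eqref{eq: thm4 in Wrightone} of $W_k$. Since $\sD f=zf'(z)$, we have $zW_k'=\sD W_k$ and $z^2W_k''=\sD^2W_k-\sD W_k$. So it suffices to apply $\sD$ once and twice to $\sum_{s=-3k}^{2}c_{k,s}\theta^s$ and track the most singular power of $\theta$. The case $W_{-1}'$ is already \eqref{eq: W-1'}, rewritten as $(1-\theta)/z$ using $T=1-\theta$.

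For \eqref{eq: form of W_k'} with $k\ge1$, I would compute
\[
\sD W_k=(1-\theta^{-1})\sum_{s=-3k}^{2}sc_{k,s}\theta^{s-1}.
\]
The most negative power is $\theta^{-3k-2}$, coming only from $-\theta^{-1}\cdot(-3k)c_{k,-3k}\theta^{-3k-1}$, and its coefficient is $3kc_{k,-3k}$. The highest power is $\theta^{1}$, with coefficient $2c_{k,2}$. This is precisely the expansion already displayed in the proof of Lemma~\ref{lem: W_k in terms of theta}.

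Two further points need checking.
\begin{itemize}
\item The displayed sum in \eqref{eq: form of W_k'} starts at $s=-3k+1$, so one must account for the intermediate powers $\theta^{-3k-1},\theta^{-3k}$. I would either absorb them into the $d_{k,s}$ (reading the range as running over all lower-order terms), or keep them explicitly by choosing the index range as $s\ge -3k-1$. I would first check which reading the later use in Lemmas~\ref{lem: R_2} and~\ref{lem: R_3} actually requires: only the leading coefficient matters there.
\item There is no $\theta^{-1}$ issue, and no $\log$ terms appear for $k\ge1$.
\end{itemize}

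For \eqref{eq: form of W_k''}, I would apply $\sD$ again. Differentiating the leading term gives
\[
(1-\theta^{-1})\cdot(-3k-2)\cdot 3kc_{k,-3k}\theta^{-3k-3},
\]
whose most singular part is $(3k+2)3kc_{k,-3k}\theta^{-3k-4}=(9k^2+6k)c_{k,-3k}\theta^{-3k-4}$. Subtracting $\sD W_k$ only affects powers $\ge -3k-2$. The top power stays at most $\theta^{1}$: differentiating $2c_{k,2}\theta$ gives $(1-\theta^{-1})2c_{k,2}$, which has powers $0$ and $-1$, and $-\sD W_k$ contributes $\theta^1$.

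The base cases I would do by hand.
\begin{itemize}
\item For $k=-1$, \eqref{eq:W-1'''} gives $z^2W_{-1}''=(1-\theta)^2/\theta=\theta^{-1}-2+\theta$, so $\widetilde d_{-1,-1}=1$.
\item For $k=0$, \eqref{eq:W0'''} gives $z^2W_0''=\tfrac12(1-\theta)^3(1+\theta)(2-\theta)\theta^{-4}$, whose $\theta^{-4}$ coefficient is $\tfrac12\cdot2=1$, with top power $\theta^{1}$ after dividing by $\theta^4$ a degree-5 polynomial.
\end{itemize}

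The main obstacle is purely bookkeeping: making sure the exponent ranges match the statement, especially the gap between $-3k-2$ and $-3k+1$ in \eqref{eq: form of W_k'}, and confirming the maximal positive exponent. No analytic difficulty is expected.
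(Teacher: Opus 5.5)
Your proposal is correct and is essentially the paper's argument: both differentiate Wright's Laurent polynomial $\sum_{s}c_{k,s}\theta^{s}$ by the chain rule through $\theta$, and both isolate the $s=-3k$ contributions $3kc_{k,-3k}\theta^{-3k-2}$ and $(9k^2+6k)c_{k,-3k}\theta^{-3k-4}$ (you package the chain rule as $\sD=(1-\theta^{-1})\frac{d}{d\theta}$ with $z^2W_k''=\sD^2W_k-\sD W_k$, while the paper uses $\theta'(z)=\frac{\theta-1}{z\theta}$ and the product rule), and both read off the cases $k=-1,0$ from the explicit formulas. You are also right that the lower limit $s=-3k+1$ in the formula for $zW_k'$ is a slip, repeated in the paper's proof: it should be $s=-3k-1$, as the nonzero $\theta^{-4}$ and $\theta^{-3}$ terms in the explicit $zW_1'$ show, and the slip is harmless because Lemmas~\ref{lem: R_2} and~\ref{lem: R_3} use only the leading coefficient.
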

\begin{proof}
We 	begin by recalling explicit expressions for  the first and second derivatives of $W_{-1}(z)$ and $W_0(z)$	from \eqref{eq:W-1'''} and \eqref{eq:W0'''}:
	\[
	W_{-1}'(z)=-\theta\cdot \theta'(z)=\frac{1-\theta}{z}\,,\quad \text{and}\quad W_{-1}''(z)=\frac{(1-\theta)^2}{z^2\theta}\,,
	\]
	and 
	\[
	W_0'(z)=\frac{1}{2}\cdot \frac{(1-\theta)^3}{z\theta^2}\,,
	\quad \text{and} \quad W_0''(z)=\frac{1}{2}\cdot \frac{(1-\theta)^3(1+\theta)(2-\theta)}{z^2\theta^4}\,.
	\]
	These verifies \eqref{eq: form of W_k''} for the cases $k=-1$ and $k=0$.
	
	For $m\ge1$, we use the expansion of $W_m(z)$ given in \eqref{eq: thm4 in Wrightone}:
	\[
	W_m=\sum_{s=-3m}^{2}c_{m,s}\theta^s\,.
	\]
	To find $W_m'(z)$, we differentiate term-by-term and use $\theta'=\frac{\theta-1}{z\theta}$ from \eqref{eq: theta'}:
	\begin{align}\label{eq:Wm'}
	W_m'(z)&=\sum_{s=-3m}^{2}c_{m,s}s\theta^{s-1}\cdot \theta'\nonumber\\
	&=\frac{\theta-1}{z}\sum_{s=-3m}^{2}c_{m,s}s\theta^{s-2}\,.  
	\end{align}
	Rewriting \eqref{eq:Wm'} by isolating the leading term (corresponding to $s=-3m$, which gives the highest negative power of $\theta$), we obtain:
	\be\label{eq:Wm'2}
	W_m'(z)=\frac{1}{z}\left[3mc_{m,-3m}\theta^{-3m-2}+\sum_{s=-3m+1}^{1}d_{m,s}\theta^s  \right]\,,
	\ee
	for some constants $d_{m,s}$. Multiplying both sides by $z$ confirms \eqref{eq: form of W_k'}.
	
	To derive the form of $W_m''(z)$ for $m\ge1$, we 
	differentiate \eqref{eq:Wm'} with respect to $z$ and again use $\theta'=\frac{\theta-1}{z\theta}$ from \eqref{eq: theta'}:
	\begin{align}\label{eq:W''}
	W_m''(z)&=\frac{\theta' z-(\theta-1)}{z^2}\sum_{s=-3m}^{2}c_{m,s}s\theta^{s-2}+\frac{\theta-1}{z}
	\sum_{s=-3m}^{2}c_{m,s}s(s-2)\theta^{s-3}\theta'\nonumber\\
	&\stackrel{\eqref{eq: theta'}}{=}-\frac{(1-\theta)^2}{z^2\theta}\sum_{s=-3m}^{2}c_{m,s}s\theta^{s-2}+\frac{(1-\theta)^2}{z^2\theta}\sum_{s=-3m}^{2}c_{m,s}s(s-2)\theta^{s-3}\nonumber\\
	&=\frac{(1-\theta)^2}{z^2}\sum_{s=-3m}^2c_{m,s}\theta^{s-4}\big[s^2-2s-s\theta\big]\\
	&=\frac{1}{z^2}\left[ c_{m,-3m}(9m^2+6m)\theta^{-3m-4}+\sum_{s=-3m-3}^{1}\overline{c}_{m,s}\theta^s \right]\,,\nonumber
	\end{align}
	for some constants $\overline{c}_{m,s}$. Here, the leading term coefficient $c_{m,-3m}(9m^2+6m)$ arises from substituting $s=-3m$ into the summand, as this yields the highest negative power of $\theta$, i.e., $\theta^{-3m-4}$. 
	
	Combining this result  with the explicit expressions for $W_{-1}''(z)$ and $W_0''(z)$ (which matches the form \eqref{eq: form of W_k''} with the given constants $\widetilde{d}_{k,s}$), we conclude that \eqref{eq: form of W_k''} holds for all $k\ge-1$.
\end{proof}

\begin{proof}[Proof of Lemma~\ref{lem: R_3}]
	For $k\ge1$, define $\sT_k\coloneq \left\{(k_1,k_2,k_3)\colon k_i\ge-1,\text{ and }k_1+k_2+k_3=k-2\right\}$. Note that 
	\begin{align*}
		R_3&=(n-3)!\sum_{(k_1,k_2,k_3)\in\sT_k}\big[z^{n-3}\big]W_{k_1}'(z)W_{k_2}'(z)W_{k_3}'(z)\\
		&=(n-3)!\sum_{(k_1,k_2,k_3)\in\sT_k}\big[z^{n}\big]\big(zW_{k_1}'(z)\big)\big(zW_{k_2}'(z)\big)\big(zW_{k_3}'(z)\big)\,.
	\end{align*}
 By Lemma~\ref{lem: coef using theta expansion}, the leading order of $\big[z^n\big]\big(zW_{k_1}'(z)\big)\big(zW_{k_2}'(z)\big)\big(zW_{k_3}'(z)\big)$ comes from the lowest order of $\theta$ in the expression of the product $\big(zW_{k_1}'(z)\big)\big(zW_{k_2}'(z)\big)\big(zW_{k_3}'(z)\big)$ in terms of $\theta$. Let $H:\{-1,0,1,\ldots\}\to \bbR$ be the function given by $H(-1)=0$ and $H(s)=-3s-2$ for $s\ge0$. Then by Lemma~\ref{lem: derivatives of W_k},	the lowest order of $\theta$ in the expression of the product $\big(zW_{k_1}'(z)\big)\big(zW_{k_2}'(z)\big)\big(zW_{k_3}'(z)\big)$ in terms of $\theta$ is given by $H(k_1)+H(k_2)+H(k_3)$. 
 
 If there are at least two terms in the tuple $(k_1,k_2,k_3)\in\sT_k$ that are bigger than $-1$, say $k_1>-1$ and $k_2>-1$, then we can consider the new tuple $(-1,k_1+k_2+1,k_3)\in\sT_k$. For this new tuple, we have that 
 \begin{multline*}
 H(-1)+H(k_1+k_2+1)+H(k_3)=0-3(k_1+k_2+1)-2+H(k_3)\\
 <-3k_1-2-3k_2-2+H(k_3)=H(k_1)+H(k_2)+H(k_3)\,.
 \end{multline*}
 Thus, the leading order of $\sum_{(k_1,k_2,k_3)\in\sT_k}\big[z^{n}\big]\big(zW_{k_1}'(z)\big)\big(zW_{k_2}'(z)\big)\big(zW_{k_3}'(z)\big)$ comes from the three tuples $(-1,-1,k),(-1,k,-1)$ and $(k,-1,-1)$. 
 
 Using Lemma~\ref{lem: derivatives of W_k}, there exist constants $h_{k,s}$ such that
 \[
 \big(zW_{-1}'(z)\big)^2\big(zW_{k}'(z)\big)=3kc_{k,-3k}\theta^{-3k-2}+\sum_{s=-3k-1}^{3}h_{k,s}\theta^s\,.
 \]
Using Lemma~\ref{lem: coef using theta expansion}, we have that 
\[
\big[z^n\big]\big(zW_{-1}'(z)\big)^2\big(zW_{k}'(z)\big)=\frac{c_{k,-3k}2^{-\frac{3k}{2}}}{\Gamma\left(\frac{3k}{2}\right)}e^nn^{\frac{3k}{2}}\big[1+O(n^{-\frac{1}{2}})\big]\,.
\]
Therefore, we have the desired estimate: 
\begin{align*}
	R_3&=3(n-3)! \cdot \frac{c_{k,-3k}2^{-\frac{3k}{2}}}{\Gamma\left(\frac{3k}{2}\right)}e^nn^{\frac{3k}{2}}\big[1+O(n^{-\frac{1}{2}})\big]\\
	&\stackrel{\eqref{eq: n minus 3 factorial}}{=} \frac{3c_{k,-3k}\sqrt{2\pi}}{2^{\frac{3k}{2}}\Gamma\left(\frac{3k}{2}\right)}n^{n+\frac{3k-5}{2}}\big[1+O(n^{-\frac{1}{2}})\big]\\
	&\stackrel{\eqref{eq: rhok in terms of ck3k}}{=}3\rho_kn^{n+\frac{3k-5}{2}}\big[1+O(n^{-\frac{1}{2}})\big]\,. \qedhere
\end{align*}
\end{proof}

\begin{proof}[Proof of Lemma~\ref{lem: R_2}]
	The proof is similar to that of Lemma~\ref{lem: R_3}. Note that
	\begin{align*}
	R_2&=3(n-3)!\sum_{l=-1}^{k-1}\big[z^n\big]\big(zW_l'(z)\big)\big(z^2W_{k-2-l}''(z)\big)\,.
	\end{align*}
	Let $\widetilde{H}:\{-1,0,\ldots\}\to\bbR$ be the function given by $\widetilde{H}(s)=-3s-4$. Then by Lemma~\ref{lem: derivatives of W_k}, the lowest order of $\theta$ in the expression of the product $\big(zW_l'(z)\big)\big(z^2W_{k-2-l}''(z)\big)$ in terms of $\theta$ is given by $H(l)+\widetilde{H}(k-2-l)$  (where \(H\) is the function defined in the proof of Lemma~\ref{lem: R_3}). It is straightforward to check that the minimal value of $H(l)+\widetilde{H}(k-2-l)$ is achieved at  $l=-1$. Moreover, Lemma~\ref{lem: derivatives of W_k} actually yields that there exist constants $\widetilde{h}_{k,s}$ such that 
	\[
	\big(zW_{-1}'(z)\big)\big(z^2W_{k-1}''(z)\big)
	=c_{k-1,-3(k-1)}\big[9(k-1)^2+6(k-1)\big]\theta^{-3k-1}+\sum_{s=-3k}^{2}\widetilde{h}_{k,s}\theta^s\,.
	\]
	Using Lemma~\ref{lem: coef using theta expansion}, we have that 
	\[
	\big[z^n\big]\big(zW_{-1}'(z)\big)\big(z^2W_{k-1}''(z)\big)
	=3c_{k-1,-3(k-1)}(k-1)(3k-1)\frac{2^{-\frac{3k+1}{2}}}{\Gamma\left(\frac{3k+1}{2}\right)}e^nn^{\frac{3k-1}{2}}\big[1+O(n^{-\frac{1}{2}})\big]\,.
	\]
	Therefore, for $k\ge1$, we have the desired conclusion:
	\begin{align*}
	R_2&=3(n-3)!\cdot 3c_{k-1,-3(k-1)}(k-1)(3k-1)\frac{2^{-\frac{3k+1}{2}}}{\Gamma\left(\frac{3k+1}{2}\right)}e^nn^{\frac{3k-1}{2}}\big[1+O(n^{-\frac{1}{2}})\big]\\
	&\stackrel{\eqref{eq: n minus 3 factorial}}{=}
	9(k-1)(3k-1)\frac{2^{-\frac{3k+1}{2}}}{\Gamma\left(\frac{3k+1}{2}\right)}c_{k-1,-3(k-1)}\sqrt{2\pi}
	n^{n+\frac{3k-6}{2}}\big[1+O(n^{-\frac{1}{2}})\big]\\
	&\stackrel{\eqref{eq: rhok in terms of ck3k}}{=}3\rho_{k-1}n^{n+\frac{3k-6}{2}}\big[1+O(n^{-\frac{1}{2}})\big]\,. \qedhere
	\end{align*}
\end{proof}

\section{The case $k \ge 6$ of Proposition~\ref{prop: key prop for k-excess}}

The existence of $\big\{\alpha_{k,i},\beta_{k,i},i=1,2,3  \}$ such that \eqref{eq: cnnk asym} and \eqref{eq: cnnkef asym} hold follows from arguments similar to those in Section~\ref{sec: p-NC for kUC}. The main challenge here is to show that these constants satisfy \eqref{eq: p1 kUC asym}. By Lemma~\ref{lem: coef using theta expansion}, we need the information of the first three coefficients in the expansion of $W_k$'s in terms of $\theta$. 
Thus, for simplicity, we rewrite the expression of $W_k(k\geq 1)$ given in \eqref{eq: thm4 in Wrightone} as
\be\label{eq: three terms Wk}
W_k=\kcc\theta^{-3k}+\kticc\theta^{-3k+1}+\khcc\theta^{-3k+2}+\hO(\theta^{-3k+3})\,,
\ee
where $\hO(\theta^{p})$ is an abuse of notation denoting a finite sum  of terms of the form $c\theta^s$, where $c$ is a constant and $s$ is an integer with $s\ge p$. In particular, the term $\hO(\theta^{-3k+3})$ in \eqref{eq: three terms Wk}  corresponds to  $\sum_{s=-3k+3}^{2}c_{k,s}\theta^s$ in \eqref{eq: thm4 in Wrightone}.
 
Since we have already proved the cases $k\in\{1,2\ldots,5\}$ for Proposition~\ref{prop: key prop for k-excess}, in the remainder of this section, we  assume $k\ge6$, and our main task is deriving the following asymptotic estimates.
\begin{proposition}\label{prop: finer asym of cnnk and cnnkef}
	For any fixed $k\ge6$, we have 
	\be\label{eq: finer cnnk asym}
		\begin{aligned}
		C_{n,n+k}
		&=\sqrt{2\pi}\,n^{\,n+\frac{3k-1}{2}}
		\Bigg\{
		\kcc\frac{2^{-\frac{3k}{2}}}{\Gamma(\frac{3k}{2})}
		+\frac{2^{-\frac{3k-1}{2}}}{\Gamma(\frac{3k-1}{2})}\bigl(k\kcc+\kticc\bigr)\,n^{-\frac12}
		\\
		&\quad+\Bigg[
		\frac{2^{-\frac{3k-2}{2}}}{\Gamma(\frac{3k-2}{2})}
		\left(\frac{k(6k+1)}{12}\kcc+\frac{3k-1}{3}\kticc+\khcc\right)
		+\frac{1}{12}\kcc\frac{2^{-\frac{3k}{2}}}{\Gamma(\frac{3k}{2})}
		\Bigg]n^{-1}
		+O\!\left(n^{-\frac32}\right)
		\Bigg\}.
		\end{aligned}
	\ee
	and 
		\be\label{eq: finer cnnkef asym}
	\begin{aligned}
		C_{n,n+k}^{e,f}&
		=\sqrt{2\pi}\,n^{\,n+\frac{3k-5}{2}}
		\Bigg\{
		\frac{9k\,\kcc}{2^{\frac{3k+2}{2}}\Gamma\!\left(\frac{3k+2}{2}\right)}
		+\frac{3(3k-1)\bigl(\kticc+k\kcc\bigr)}{2^{\frac{3k+1}{2}}\Gamma\!\left(\frac{3k+1}{2}\right)}\,n^{-\frac12}\\&
		+\frac{(9k-6)\khcc+\bigl(9k^2-9k+2\bigr)\kticc+\frac14\bigl(18k^3-9k^2+34k+37\bigr)\kcc}
		{2^{\frac{3k}{2}}\Gamma\!\left(\frac{3k}{2}\right)}\,n^{-1}
		+O\!\left(n^{-\frac32}\right)
		\Bigg\}.
	\end{aligned}
	\ee
\end{proposition}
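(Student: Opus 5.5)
The first estimate \eqref{eq: finer cnnk asym} follows by applying Lemma~\ref{lem: coef using theta expansion} to the three-term form \eqref{eq: three terms Wk} with $m=3k$ and $(f_{-3k},f_{-3k+1},f_{-3k+2})=(\kcc,\kticc,\khcc)$. The result is then multiplied by Stirling's formula $n!=\sqrt{2\pi}e^{-n}n^{n+\frac12}[1+\frac{1}{12n}+O(n^{-2})]$.

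The $\frac1{12}$ correction from Stirling only multiplies the leading term at the order $n^{-1}$ that we keep, which accounts for the last summand in the bracket. The other coefficients come from $g_{3k,1}$ and $g_{3k,2}$. To rewrite them, I will use $\Gamma(\frac{3k}{2})$-type identities only where needed, and use the convention $2^{-\frac{3k-1}{2}}\cdot\frac{\sqrt2\cdot 3k}{3}\cdot 2^{-\frac12}=k\,2^{-\frac{3k-1}{2}}$ to match the form $k\kcc+\kticc$. Similarly, $\frac{2^{-3k/2}(18k^2+3k)}{18}=2^{-\frac{3k-2}{2}}\frac{k(6k+1)}{12}$ and $2^{-\frac{3k-1}{2}}\frac{\sqrt2(3k-1)}{3}=2^{-\frac{3k-2}{2}}\frac{3k-1}{3}$. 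This part is routine bookkeeping.

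For \eqref{eq: finer cnnkef asym} I will use the decomposition \eqref{eq: 4.37}:
\[
C^{e,f}_{n,n+k}=R_1+R_2+R_3+C^{e,f}_{n,n+k-2}-2\card{\widehat\Lambda_2(n,k)}.
\]
By Lemma~\ref{lem: negligible terms}, the last two terms are $O(n^{n+\frac{3k-8}{2}})$, which is below the three orders needed, since the leading order is $n^{n+\frac{3k-5}{2}}$ and the error is $n^{n+\frac{3k-8}{2}}$. Likewise $R_1\asymp n^{n+\frac{3k-7}{2}}$ contributes only at relative order $n^{-1}$, through $\rho_{k-2}$. This needs care, because $\rho_{k-2}$ is expressed via $\chi_{k-2}$ and the target formula involves only $\kcc,\kticc,\khcc$. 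I therefore expect to need the recursion of Lemma~\ref{lem: W_k in terms of theta} (Wright's equation, via \eqref{eq: recursive cks} and its analogues for the next two coefficients) to express quantities like $\chi_{k-2}$-contributions and the cross terms in terms of $\kcc,\kticc,\khcc$.

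Concretely, for $R_3$ only the tuples $(-1,-1,k)$ and its permutations contribute at leading order. I will next identify which tuples contribute at relative orders $n^{-1/2}$ and $n^{-1}$. Using the function $H$ from the proof of Lemma~\ref{lem: R_3}, the tuple $(-1,0,k-1)$ has lowest power $-3k-1$, one order down, and $(0,0,k-2)$ and $(-1,1,k-2)$ are two orders down. Hence $R_3$ at three orders requires the three leading coefficients of $zW_k'$, the two leading ones of $zW_{k-1}'$, and the leading ones of $zW_{k-2}'$. All of these are computed from \eqref{eq:Wm'} in terms of $\kcc,\kticc,\khcc,\chi_{k-1},\tilde\chi_{k-1},\chi_{k-2}$. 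Similarly, $R_2$ needs $z^2W''_{k-1}$ to two orders and the $l=0$ term at leading order.

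The main obstacle is eliminating $\chi_{k-1},\tilde\chi_{k-1},\chi_{k-2}$ in favour of $\kcc,\kticc,\khcc$. I would first try substituting the $\theta$-expansion of Wright's recursion $2(\sD+k)W_k=(\sD^2-3\sD-2k+2)W_{k-1}+\sum_h(\sD W_h)(\sD W_{k-1-h})$ and reading off the coefficients of $\theta^{-3k-1},\theta^{-3k},\theta^{-3k+1}$. This should give linear relations that express $\kticc$ and $\khcc$ in terms of lower-index data plus convolution sums. The hope is that exactly the combinations appearing in $R_1+R_2+R_3$ assemble into these relations, collapsing to the stated coefficients. The convolution sums $\sum_h\sigma_h\sigma_{k-h}$-type terms must cancel between the $R_3$ tuples $(0,0,k-2)$ and $(-1,h,\cdot)$ and the recursion. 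Verifying this cancellation is the delicate step.

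Once done, multiplying by $(n-3)!$ from \eqref{eq: n minus 3 factorial} and collecting the $\frac{37}{12}$ correction gives the displayed coefficients. The constant $\frac14(18k^3-9k^2+34k+37)$ should be checked against the $k\le5$ values of Proposition~\ref{prop: asymptotic of cnnk containing e and f} as a sanity test.
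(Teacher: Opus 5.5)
Your derivation of the first estimate and the reduction $C^{e,f}_{n,n+k}=R_1+R_2+R_3+O(n^{n+\frac{3k-8}{2}})$ match the paper. Reading relations off the differential form of Wright's recursion is also equivalent to the paper's use of the integral formula, provided the $h=-1$ and $h=k$ terms are kept.

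The bookkeeping after that is wrong, though. Take $H(-1)=0$ and $H(s)=-3s-2$ for $s\ge0$. Then every tuple $(-1,j,k-1-j)$ with $0\le j\le k-1$ has lowest exponent $-3k-1$, so $(-1,1,k-2)$ is one order down, not two. Every tuple of three nonnegative indices, including those with $k_1,k_2,k_3\ge1$, has lowest exponent $-3k$. Hence $R_3$ is not built only from $zW_k'$, $zW_{k-1}'$, $zW_{k-2}'$. At relative orders $n^{-1/2}$ and $n^{-1}$ it produces:
\begin{itemize}
\item the convolutions $\mathbf{C}_1(k)$ and $\mathbf{C}_2(k)$;
\item the sum $\frac{27}{2}\sum_{j=1}^{k-3}j(k-2-j)\chi_j\chi_{k-2-j}$;
\item the genuinely triple convolution $S_3=\sum_{k_1+k_2+k_3=k-2,\ k_i\ge1}27k_1k_2k_3\chi_{k_1}\chi_{k_2}\chi_{k_3}$.
\end{itemize}
Similarly, in $R_2$ every $l\in\{-1,\dots,k-1\}$ contributes at its second order, not just $l=-1,0$. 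This includes $l=k-2$, $l=k-1$, and the double convolution $\sum_{j=1}^{k-3}3j\bigl[9(k-2-j)^2+6(k-2-j)\bigr]\chi_j\chi_{k-2-j}$.

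The substantive gap is that the cancellation, which is the entire content of the second estimate, is left as a hope, and the mechanism you sketch would not deliver it. You do not need a $\hat\chi_k$-relation. You also should not eliminate $\chi_{k-1},\tilde\chi_{k-1},\chi_{k-2}$ via the recursion, since that only reintroduces convolutions. Those lower-index terms cancel directly among $R_1,R_2,R_3$ once the convolution sums have been removed. What removes the convolutions in the paper is the following:
\begin{itemize}
\item The two leading coefficients of Wright's relation at level $k$ give $\mathbf{C}_1(k)=3k\bigl(2\chi_k-3(k-1)\chi_{k-1}\bigr)$. They also give $\mathbf{C}_2(k)$ linearly in $\chi_k,\tilde\chi_k,\chi_{k-1},\tilde\chi_{k-1},\mathbf{C}_1(k)$.
\item Symmetrizing $j\leftrightarrow k-2-j$ turns the $R_2$ double convolution into $\frac{3k-2}{2}\mathbf{C}_1(k-1)$. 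The same relation at level $k-1$ then removes it.
\item $S_3$ cannot be absorbed by any level-$k$ relation, because those contain only double convolutions. One rewrites $S_3=3\sum_{k_1}k_1\chi_{k_1}\mathbf{C}_1(k-1-k_1)$ and applies the leading relation at every level $r=k-1-k_1$. One then symmetrizes again and uses the explicit values $\chi_1=\frac{5}{24}$, $\chi_2=\frac{5}{16}$ to cancel leftover $\chi_{k-3}$ boundary terms.
\end{itemize}
Only after these steps does $R_1+R_2+R_3$ collapse to the stated coefficients. Your proposal contains none of them, and in particular misses the multi-level reduction of the triple convolution.
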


\begin{proof}[Proof of Proposition~\ref{prop: key prop for k-excess} for $k\ge6$ assuming Proposition~\ref{prop: finer asym of cnnk and cnnkef}]
Equations \eqref{eq: finer cnnk asym} and \eqref{eq: finer cnnkef asym} justify 	\eqref{eq: cnnk asym} and \eqref{eq: cnnkef asym}, respectively. 
Write
	\[
a_0=\frac{9k\,\kcc}{2^{\frac{3k+2}{2}}\Gamma\big(\frac{3k+2}{2}\big)},\quad
a_1=\frac{3(3k-1)(k\kcc+\kticc)}{2^{\frac{3k+1}{2}}\Gamma\big(\frac{3k+1}{2}\big)},
\]
\[
a_2=\frac{(9k-6)\khcc+(9k^2-9k+2)\kticc+\frac14(18k^3-9k^2+34k+37)\kcc}
{2^{\frac{3k}{2}}\Gamma\big(\frac{3k}{2}\big)}.
\]
and
\[
b_0=\frac{\kcc}{2^{\frac{3k}{2}}\Gamma\big(\frac{3k}{2}\big)},\quad
b_1=\frac{k\kcc+\kticc}{2^{\frac{3k-1}{2}}\Gamma\big(\frac{3k-1}{2}\big)},
\]
\[
b_2=
\frac{\frac{k(6k+1)}{12}\kcc+\frac{3k-1}{3}\kticc+\khcc}{2^{\frac{3k-2}{2}}\Gamma\big(\frac{3k-2}{2}\big)}
+\frac{1}{12}\frac{\kcc}{2^{\frac{3k}{2}}\Gamma\big(\frac{3k}{2}\big)}.
\]
It is straightforward to verify that 
\[
a_0=3b_0,\qquad a_1=3b_1, \quad \text{and}\quad a_2-3b_2=(9k+9)b_0.
\]
Using these identities, we can derive the asymptotic behavior of the ratio $\frac{C_{n,n+k}^{e,f}}{C_{n,n+k}}$ and confirm that
 \eqref{eq: p1 kUC asym} holds for $k\ge6$:
\begin{align*}
\frac{C_{n,n+k}^{e,f}}{C_{n,n+k}}
&=n^{-2}\cdot \frac{a_0+a_1n^{-1/2}+a_2n^{-1}+O(n^{-\frac{3}{2}})}{b_0+b_1n^{-1/2}+b_2n^{-1}+O(n^{-\frac{3}{2}})}\\
&=n^{-2}\cdot \big[3+\frac{a_2-3b_2}{b_0}n^{-1}+O(n^{-3/2})\big]\\
&=\frac{3}{n^{2}}+\frac{9(k+1)}{n^{3}}+O\!\left(n^{-\frac{7}{2}}\right). \qedhere
\end{align*}
\end{proof}

\begin{proof}[Proof of \eqref{eq: finer cnnk asym}]
This is a direct application of Lemma~\ref{lem: coef using theta expansion} and Stirling's formula. Indeed, applying Lemma~\ref{lem: coef using theta expansion} to $W_k$ in the form of   \eqref{eq: three terms Wk},  we have 
\begin{align*}
\big[z^n\big]W_k&=e^nn^{\frac{3k-2}{2}}\Bigg\{
	\kcc\frac{2^{-\frac{3k}{2}}}{\Gamma(\frac{3k}{2})}
+\frac{2^{-\frac{3k-1}{2}}}{\Gamma(\frac{3k-1}{2})}\bigl(k\kcc+\kticc\bigr)\,n^{-\frac12}
\\
&\quad+
\frac{2^{-\frac{3k-2}{2}}}{\Gamma(\frac{3k-2}{2})}
\left(\frac{k(6k+1)}{12}\kcc+\frac{3k-1}{3}\kticc+\khcc\right)
n^{-1}
+O\!\left(n^{-\frac32}\right)\Bigg\}\,.
\end{align*}
Recall Stirling's formula
\[
n!=e^{-n}\sqrt{2\pi}n^{n+\frac{1}{2}}\left[1+\frac{1}{12n}+O\big(n^{-2}\big)\right]\,.
\]
Since $C_{n,n+k}=n!\big[z^n\big]W_k$, multiplying the above two equations together yields \eqref{eq: finer cnnk asym}.
\end{proof}

The proof of \eqref{eq: finer cnnkef asym} is much more involved, and 
we begin with two recursive relations for the coefficients $\kcc$ and $\kticc$.

\begin{lemma}\label{lem:Recursive derivation of the W_k coefficient}
	For $r\ge 3$, 
	define $\bC_1(r)$ and $\bC_2(r)$ as follows:
	\be\label{eq:def-of-C-1}
	\mathbf{C}_1(r) = \sum_{j=1}^{r-2} 9j(r-1-j) \kjcc \kajccr,
	\ee
	and
	\be\label{eq:def-of-C-2}
	\mathbf{C}_2(r) = \sum_{j=1}^{r-2} \Big( 3j(3(r-1-j)-1)\kjcc \kajticcr + 3(r-1-j)(3j-1)\kjticc \kajccr \Big).
	\ee
	For simplicity, we denote $\bC_1=\bC_1(k)$ and $\bC_2=\bC_2(k)$. 
	Then, for $k\ge3$, the following recursive relations hold:
	\begin{align}\label{eq:recursive relation one}
		2\kcc=3(k-1)\kacc + \frac{\mathbf{C}_1}{3k},
	\end{align}
	and
	\begin{align}\label{eq:recursive relation two}
		2(\kticc-k\kcc)=\frac{-3(k-1)(3k^2+3k+1)\kacc + (3k-1)(3k-4)\katicc - (k+1)\mathbf{C}_1 + \mathbf{C}_2}{3k-1} .
	\end{align}

\end{lemma}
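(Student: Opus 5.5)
We will derive both relations from Wright's recursion in the form recalled in the proof of Lemma~\ref{lem: W_k in terms of theta}, applied with $k-1$ in place of $k$:
\[
2(\sD+k)W_{k}=(\sD^2-3\sD-2(k-1))W_{k-1}+\sum_{h=-1}^{k}(\sD W_h)(\sD W_{k-1-h})\,,
\]
with $\sD f=(1-\theta^{-1})\frac{df}{d\theta}$. The plan is to expand both sides as Laurent polynomials in $\theta$ and compare the coefficients of the two most singular powers. Here $\theta^{-3k-2}$ gives \eqref{eq:recursive relation one} and $\theta^{-3k-1}$ gives \eqref{eq:recursive relation two}.

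\textbf{Step 1: expansions.} Using \eqref{eq: three terms Wk}, $W_j=\kjcc\theta^{-3j}+\kjticc\theta^{-3j+1}+\hO(\theta^{-3j+2})$. We have $\frac{dW_j}{d\theta}=-3j\kjcc\theta^{-3j-1}-(3j-1)\kjticc\theta^{-3j}+\hO(\theta^{-3j+1})$. Multiplying by $(1-\theta^{-1})$ gives
\[
\sD W_j=3j\kjcc\theta^{-3j-2}+\big[(3j-1)\kjticc-3j\kjcc\big]\theta^{-3j-1}+\hO(\theta^{-3j})\,.
\]
Applying $\sD$ once more gives the first two coefficients of $\sD^2W_{k-1}$, of orders $\theta^{-3k-1}$ and $\theta^{-3k}$. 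The left side $2(\sD+k)W_k$ has leading terms $2\cdot 3k\kcc\theta^{-3k-2}+2[(3k-1)\kticc-3k\kcc]\theta^{-3k-1}$, since $kW_k$ starts only at $\theta^{-3k}$.

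\textbf{Step 2: the quadratic sum.} We split the sum into terms with $h\in\{-1,k\}$, $h\in\{0,k-1\}$, and $1\le h\le k-2$.
\begin{itemize}
\item For $h=-1$ or $h=k$: $\sD W_{-1}=(1-\theta^{-1})(-\theta)=1-\theta$ is bounded, so these terms contribute only from order $\theta^{-3k-1}$. The contribution there is $2\cdot(-1)\cdot\big[(3k-1)\kticc-3k\kcc\big]$ after careful bookkeeping of the $\theta\cdot\theta^{-3k-2}$ and $1\cdot\theta^{-3k-1}$ pieces.
\item For $h=0$ or $h=k-1$: $\sD W_0=\frac{(1-\theta)^3}{2\theta^2}$ by \eqref{eq:DW0}, and $\sD W_{k-1}$ starts at $\theta^{-3k+1}$. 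So these terms have order at most $\theta^{-3k-1}$, and at that order they contribute $2\cdot\frac12\cdot 3(k-1)\kacc$.
\item For $1\le h\le k-2$: the products start at $\theta^{-3(k-1)-4}=\theta^{-3k-1}$, with leading coefficient $9h(k-1-h)\chi_h\chi_{k-1-h}$, giving $\bC_1$. The next order gives $\bC_2$ together with $-\bC_1$-type corrections coming from the $-3j\kjcc$ parts.
\end{itemize}
The right side $\sD^2 W_{k-1}$ starts at $\theta^{-3(k-1)-4}=\theta^{-3k-1}$ as well.

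\textbf{Step 3: resolving the apparent mismatch.} At first sight the left side has a $\theta^{-3k-2}$ term while the right side has none. This mismatch shows that the correct bookkeeping is via Wright's integral formula \eqref{eq:thm2 in Wrightone} together with \eqref{eq: recursive cks}, rather than direct matching. Concretely, $(1-\theta)^{k}W_{k}=\int_\theta^1(1-t)^{k-1}\cJ_{k-1}(t)\,dt$ with $\cJ_{k-1}=j_{k-1}t^{-3k-1}+\hat c\, t^{-3k}+\cdots$. Here $j_{k-1}$ and $\hat c$ are read off from Steps 1–2: they are the $\theta^{-3k-1}$ and $\theta^{-3k}$ coefficients of $2J_{k-1}$ divided by 2. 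Integrating gives
\[
\kcc=\frac{j_{k-1}}{3k}\,,
\]
and $\kticc-k\kcc$ arises from the $t^{-3k+1}$ term after expanding $(1-t)^{k-1}$ and dividing by $(1-\theta)^k$.

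The first identity reproduces \eqref{eq: recursive cks}. Indeed, $j_{k-1}=\frac{9(k-1)k}{2}\kacc+\frac12\bC_1$, so $2\kcc=3(k-1)\kacc+\bC_1/(3k)$. For the second identity, the $t^{-3k}$ coefficient of $(1-t)^{k-1}\cJ_{k-1}$ is $\hat c-(k-1)j_{k-1}$, which integrates to $\big(\hat c-(k-1)j_{k-1}\big)/(3k-1)$. Combining this with the factor $(1-\theta)^{-k}=1+k\theta+\cdots$ shows that $\kticc-k\kcc$ equals this quantity. This explains both the denominator $3k-1$ and the shift $k\kcc$.

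\textbf{Main obstacle.} The hard step is computing $\hat c$ exactly: the second-order coefficient of $(\sD^2-3\sD-2(k-1))W_{k-1}+\sum_h(\sD W_h)(\sD W_{k-1-h})$. This requires tracking the $h=-1,0$ boundary terms, the second-order term of $\sD^2 W_{k-1}$ (which involves both $\kacc$ and $\katicc$), and the $-3j\kjcc$ corrections. Together these should produce $-3(k-1)(3k^2+3k+1)\kacc+(3k-1)(3k-4)\katicc-(k+1)\bC_1+\bC_2$ after subtracting $(k-1)j_{k-1}$. I would first verify the result against the explicit $W_3,W_4,W_5$ in Lemma~\ref{lem: W_k in terms of theta} to fix signs, then carry out the general bookkeeping.
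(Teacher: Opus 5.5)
Your eventual route is the paper's: use \eqref{eq:thm2 in Wrightone} with $k-1$ in place of $k$, read off the $t^{-3k-1}$ and $t^{-3k}$ coefficients $j_{k-1},\hat c$ of $\mathcal{J}_{k-1}$, integrate, and compare with $(1-\theta)^kW_k=\chi_k\theta^{-3k}+(\tilde\chi_k-k\chi_k)\theta^{-3k+1}+\cdots$. Your derivation of \eqref{eq:recursive relation one} is correct, and so is the reduction $(3k-1)(\tilde\chi_k-k\chi_k)=\hat c-(k-1)j_{k-1}$.

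\textbf{The gap.} \eqref{eq:recursive relation two} is not proved. Its whole content is the value of $\hat c$, and you only say it ``should'' come out right. The missing computation, which is what the paper carries out, is
\[
2\hat c=(3k-1)(3k-4)\tilde\chi_{k-1}-3(k-1)(6k+1)\chi_{k-1}+\mathbf{C}_2-2\mathbf{C}_1 .
\]
It is assembled from three contributions:
\begin{itemize}
\item the $\theta^{-3k}$ coefficient of $\sD^2W_{k-1}$, namely $(9k^2-18k+8)\tilde\chi_{k-1}-(18k^2-27k+9)\chi_{k-1}$;
\item the expansion $2\sD W_0\,\sD W_{k-1}=(1-\theta)^4\bigl[3(k-1)\chi_{k-1}\theta^{-3k-1}+(3k-4)\tilde\chi_{k-1}\theta^{-3k}+\cdots\bigr]$;
\item the middle sum $\sum_{h=1}^{k-2}\sD W_h\,\sD W_{k-1-h}=(1-\theta)^2\bigl[\mathbf{C}_1\theta^{-3k-1}+\mathbf{C}_2\theta^{-3k}+\cdots\bigr]$.
\end{itemize}
The terms $-3\sD W_{k-1}$ and $-2(k-1)W_{k-1}$ begin only at $\theta^{-3k+1}$, so they do not enter. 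Subtracting $2(k-1)j_{k-1}=(k-1)\bigl(9k(k-1)\chi_{k-1}+\mathbf{C}_1\bigr)$ and using $(6k+1)+3k(k-1)=3k^2+3k+1$ gives exactly the numerator of \eqref{eq:recursive relation two}.

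\textbf{An error in your recipe for $\hat c$.} This comes from a mistake in Steps 2--3.
\begin{itemize}
\item The $h\in\{-1,k\}$ terms equal $2(1-\theta)\sD W_k$. Since $\sD W_k$ starts with $3k\chi_k\theta^{-3k-2}$, they contribute $6k\chi_k\theta^{-3k-2}$, which cancels the left side exactly. The ``mismatch'' is an artifact of overlooking this.
\item These terms contain the unknown $W_k$ and are precisely what gets moved to the left. That turns $2(\sD+k)W_k$ into $2\theta\sD W_k+2kW_k=-2(1-\theta)\frac{dW_k}{d\theta}+2kW_k$, which is what integrates to \eqref{eq:thm2 in Wrightone}.
\item So $J_{k-1}$ contains only $0\le h\le k-1$, and the boundary terms to track in $\hat c$ are $h=0$ and $h=k-1$, not $h=-1$. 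Including $h=-1$ would bring in $\chi_k,\tilde\chi_k,\hat\chi_k$ and a $t^{-3k-2}$ term in $\mathcal{J}_{k-1}$, contradicting the form you wrote.
\end{itemize}
Once this is fixed, direct coefficient matching in the differential recursion works just as well. The $\theta^{-3k-2}$ terms cancel identically, and the $\theta^{-3k-1}$ and $\theta^{-3k}$ terms give the same two identities.
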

\begin{proof}
	Our proof is based on Formula \eqref{eq:thm2 in Wrightone} given in \cite{Wright1977one}:
	\be\label{eq:Wk-Wk-1}
	(1-\theta)^{k}W_{k}=\int_{\theta}^{1}(1-t)^{k-1}\mathcal{J}_{k-1}(t)dt,
	\ee
	where $k\geq 1$ and $\mathcal{J}_{k-1}(t)$ is derived from the relation $\mathcal{J}_{k-1}(\theta(z))=J_{k-1}(z)$, with the formal power series $J_{k-1}=J_{k-1}(z)$  defined as
	\be\label{eq:defJk-1}
	2J_{k-1}\coloneq (\sD^2-3\sD-2(k-1))W_{k-1}+\sum_{h=0}^{k-1}(\sD W_h)(\sD W_{k-1-h})\,.
	\ee
	Recall that in the proof of  \cref{lem: W_k in terms of theta},  the operator 
	$\sD$ satisfies the following relation:
	\[
	\sD f(z)=\left(1-\frac{1}{\theta}\right)\frac{d f}{d\theta}=-\theta^{-1}(1-\theta)\frac{d f}{d\theta}\,, \quad \text{where }\,\frac{df}{d\theta}=\frac{f'(z)}{\theta'(z)}\,,\,\forall\,\,f\in\bbC[[z]].
	\]
	Applying this operator to $W_k$ in the form of \eqref{eq: three terms Wk}, we obtain that 
	\begin{align*}
	\sD W_k&=-\theta^{-1}(1-\theta)\left(  -3k\kcc\theta^{-3k-1} -(3k-1)\kticc\theta^{-3k}-(3k-2)\khcc\theta^{-3k+1}+\hO\big(\theta^{-3k+2}\big)\right)\\
	&=(1-\theta)\left(3k\kcc\theta^{-3k-2}+(3k-1)\kticc\theta^{-3k-1}+(3k-2)\khcc\theta^{-3k}+\hO(\theta^{-3k+1})\right)\\
	&=3k\kcc\theta^{-3k-2}+\big((3k-1)\kticc-3k\kcc\big)\theta^{-3k-1}+\big((3k-2)\khcc-(3k-1)\kticc\big)\theta^{-3k}+\hO\big(\theta^{-3k+1}\big)\,,
	\end{align*}
	and applying this operator to $\sD W_k$, we obtain that
	\begin{align*}
		\sD^2W_k&=(1-\theta^{-1})\cdot\big(-3k(3k+2)\kcc\theta^{-3k-3}\big)\nonumber
		\\&\quad +(1-\theta^{-1})\bigg[(-3k-1)\big((3k-1)\kticc-3k\kcc\big)\theta^{-3k-2}+\hO(\theta^{-3k-1})\bigg]\\
		&=3k(3k+2)\kcc\theta^{-3k-4}+\Big[(3k+1)\big((3k-1)\kticc-3k\kcc\big)-3k(3k+2)\kcc\Big]\theta^{-3k-3}+\hO(\theta^{-3k-2})\\
		&=3k(3k+2)\kcc\theta^{-3k-4}+\big((3k+1)(3k-1)\kticc-3k(6k+3)\kcc\big)\theta^{-3k-3}+\hO(\theta^{-3k-2})\,.
	\end{align*}
	Analogously, for $\sD W_{k-1}$ and $\sD^2 W_{k-1}$, we derive:
	\begin{align}\label{eq:B7}
		\sD W_{k-1}=(1-\theta)\bigg[3(k-1)\kacc\theta^{-3k+1}+(3k-4)\katicc \theta^{-3k+2}+\hO(\theta^{-3k+3})\bigg],
	\end{align}
	and
	\begin{align}\label{eq:B8}
		\sD^2W_{k-1}
		&=(9k^2-12k+3)\kacc\,\theta^{-3k-1}+\Big(-(18k^2-27k+9)\kacc+(9k^2-18k+8)\katicc\Big)\theta^{-3k}\nonumber \\
		&\quad+\hO(\theta^{-3k+1}).
	\end{align}
	Next, we analyze the sum term in the definition of  $J_{k-1}$. We split  the sum to isolate the boundary terms:
	\[
	\sum_{h=0}^{k-1}(\sD W_h)(\sD W_{k-1-h})=\sum_{h=1}^{k-2}(\sD W_h)(\sD W_{k-1-h})+2\sD W_0\sD W_{k-1}.
	\]
	By \eqref{eq:DW0} and \eqref{eq:B7}, we obtain
	\[
	2\sD W_0\sD W_{k-1}=(1-\theta)^4\bigg[3(k-1)\kacc\theta^{-3k-1}+(3k-4)\katicc \theta^{-3k}+\hO(\theta^{-3k+1})\bigg],
	\]
	and
	\begin{align*}
		&\sum_{j=1}^{k-2}\sD W_{j}\sD W_{k-1-j}\\
		&=(1-\theta)^2\sum_{j=1}^{k-2}\big(3j\kjcc\theta^{-3j-2}+(3j-1)\kjticc\theta^{-3j-1}+\hO(\theta^{-3j})\big)\nonumber
		\\&\quad \cdot\Big(3(k-1-j)\kajcc\theta^{-3(k-1-j)-2}+(3(k-1-j)-1)\kajticc\theta^{-3(k-1-j)-1}+\hO(\theta^{-3(k-1-j)})\Big)\nonumber
		\\&=(1-\theta)^2 \Bigg[ \mathbf{C}_1 \theta^{-3k-1} + \mathbf{C}_2 \theta^{-3k} + \hO(\theta^{-3k+1}) \Bigg]\nonumber
		\\&=\mathbf{C}_1 \theta^{-3k-1} + (\mathbf{C}_2-2\mathbf{C}_1) \theta^{-3k} + \hO(\theta^{-3k+1})\,.
	\end{align*}

Substituting the above expressions for $\sD W_{k-1}$, $\sD^2 W_{k-1}$, and  the sum term into \eqref{eq:defJk-1}, we obtain
	\begin{align*}
		2J_{k-1}(z)&=\big(9k(k-1)\kacc +\bC_1\big)\theta^{-3k-1}\\
		&\quad +\bigg[(3k-1)(3k-4)\katicc-3(k-1)(6k+1)\kacc+(\bC_2-2\bC_1)\bigg]\theta^{-3k}\\
		&\quad +\hO(\theta^{-3k+1})\nonumber.
	\end{align*}
Hence deriving from the relation $\mathcal{J}_{k-1}(\theta(z))=J_{k-1}(z)$, we find:
\begin{align*}
2\mathcal{J}_{k-1}(t)&=\big(9k(k-1)\kacc +\bC_1\big)t^{-3k-1}\\
&\quad +\bigg[(3k-1)(3k-4)\katicc-3(k-1)(6k+1)\kacc+(\bC_2-2\bC_1)\bigg]t^{-3k}\\
&\quad +\hO(t^{-3k+1})\nonumber.
\end{align*}
Integrating $(1-t)^{k-1}\cJ_{k-1}(t)$, we obtain:
	$$\begin{aligned}
		\int_{\theta}^{1}(1-t)^{k-1}\mathcal{J}_{k-1}(t) dt &= \left[ \frac{3(k-1)}{2}\chi_{k-1} + \frac{\mathbf{C}_1}{6k} \right] \theta^{-3k} \\
		&\quad + \frac{1}{2(3k-1)}\bigg[ -3(k-1)(3k^2+3k+1)\chi_{k-1} \\
		&\quad\quad + (3k-1)(3k-4)\tilde{\chi}_{k-1} + \mathbf{C}_2 - (k+1)\mathbf{C}_1 \bigg] \theta^{-3k+1} \\
		&\quad + \hO(\theta^{-3k+2})\,.
	\end{aligned}$$
On the other hand, expanding $(1-\theta)^kW_k$ using \eqref{eq: three terms Wk} gives:
	\[
	(1-\theta)^kW_k=\kcc\theta^{-3k}+(\kticc-k\kcc) \theta^{-3k+1}+\hO(\theta^{-3k+2})\,.
	\]
By equating the two expressions for $(1-\theta)^kW_k$ (from \eqref{eq:Wk-Wk-1} and the expansion above) and comparing the coefficients of $\theta^{-3k}$ and $\theta^{-3k+1}$,	we  derive the desired recursive relations:
	\begin{align*} 
		2\kcc=3(k-1)\kacc + \frac{\mathbf{C}_1}{3k}, 
	\end{align*} 
	and
	\begin{align*}
		2(\kticc-k\kcc)=\frac{-3(k-1)(3k^2+3k+1)\kacc + (3k-1)(3k-4)\katicc - (k+1)\mathbf{C}_1 + \mathbf{C}_2}{3k-1} . \quad  \qedhere
	\end{align*}
\end{proof}

Lemma~\ref{lem:Recursive derivation of the W_k coefficient} has important consequences in converting convolution terms into linear combinations of the coefficients $\kcc$ and related terms. For later use in subsequent proofs, we record two such conversions in the following lemma.
\begin{lemma}\label{lem:simplify-convultion}
	For $k\ge6$, set 
    \be\label{eq:def-S-R-2}
	S \coloneq \sum_{j=1}^{k-3} 3j\bigl[9(k-2-j)^2+6(k-2-j)\bigr]\chi_j\chi_{k-2-j}
	\ee
	and
	\be\label{eq:def-S-R-3}
	S_3\coloneq \sum_{\substack{k_1,k_2,k_3\ge1\\k_1+k_2+k_3=k-2}}
	27\,k_1k_2k_3\,\chi_{k_1}\chi_{k_2}\chi_{k_3}.
	\ee
	Then the following simplifications hold: 
	\be\label{eq:S-R-2}
	S=\frac{3}{2}(k-1)(3k-2)\bigl(2\chi_{k-1}-3(k-2)\chi_{k-2}\bigr).
	\ee
	and
	\be\label{eq:S-R-3}
	 S_3
	=2\mathbf C_1-\frac{3k}{2}\mathbf C_1(k-1)-\frac{15}{4}(k-2)\chi_{k-2}.
	\ee
\end{lemma}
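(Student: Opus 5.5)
The plan is to derive both identities directly from the recursion \eqref{eq:recursive relation one}. Applied at a general index $r$, it rewrites a single quadratic convolution as a linear expression:
\[
\mathbf C_1(r)=3r\bigl(2\chi_r-3(r-1)\chi_{r-1}\bigr).
\]
To use it, we massage $S$ and $S_3$ into sums of the shape $\sum_{j+m=r-1}9jm\,\chi_j\chi_m$, which is $\mathbf C_1(r)$ from \eqref{eq:def-of-C-1}. The only other ingredient is the value $\chi_1=\tfrac{5}{24}$, read off from the leading coefficient of $W_1$ in \eqref{eq: W_1 in terms of theta}.

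For \eqref{eq:S-R-2}, write $m=k-2-j$ and $9m^2+6m=3m(3m+2)$, so that
\[
S=\sum_{j+m=k-2,\ j,m\ge1}9jm(3m+2)\chi_j\chi_m .
\]
The weight $9jm\chi_j\chi_m$ is symmetric under $j\leftrightarrow m$. Hence we may replace $3m+2$ by its symmetrization $\tfrac12\bigl[(3m+2)+(3j+2)\bigr]=\tfrac{3k-2}{2}$. This gives $S=\tfrac{3k-2}{2}\,\mathbf C_1(k-1)$. Since $k\ge6$, the index $k-1\ge3$ is in the range of Lemma~\ref{lem:Recursive derivation of the W_k coefficient}, so $\mathbf C_1(k-1)=3(k-1)\bigl(2\chi_{k-1}-3(k-2)\chi_{k-2}\bigr)$, and \eqref{eq:S-R-2} follows.

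For \eqref{eq:S-R-3}, we first sum over $k_1,k_2$ with $k_3$ fixed. With $r=k-1-k_3$, the inner sum $\sum_{k_1+k_2=r-1}9k_1k_2\chi_{k_1}\chi_{k_2}$ equals $\mathbf C_1(r)$, so $S_3=3\sum_{k_3=1}^{k-3}k_3\chi_{k_3}\,\mathbf C_1(k-1-k_3)$. Substituting the recursion for each $\mathbf C_1(r)$ splits $S_3$ into two pieces.

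The first piece is $2\sum_{k_3}9k_3r\,\chi_{k_3}\chi_r$ over $k_3+r=k-1$ with $r\ge2$. This is $2\mathbf C_1(k)$ minus the missing boundary term $j=k-2$, $r=1$, namely $9(k-2)\chi_{k-2}\chi_1$. With $\chi_1=\tfrac5{24}$, that term contributes exactly $-\tfrac{15}{4}(k-2)\chi_{k-2}$.

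The second piece is $-\sum 27k_3 r(r-1)\chi_{k_3}\chi_{r-1}$. Setting $m=r-1$, the sum runs over $k_3+m=k-2$ with $m\ge1$, and the summand is $27k_3m(m+1)\chi_{k_3}\chi_m$. Symmetrizing $m+1$ to $\tfrac{k}{2}$ as before gives $-\tfrac{3k}{2}\mathbf C_1(k-1)$. Adding the two pieces yields \eqref{eq:S-R-3}.

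The main obstacle is bookkeeping at the boundary of the index ranges. Lemma~\ref{lem:Recursive derivation of the W_k coefficient} is stated only for $r\ge3$, but the inner index $r=k-1-k_3$ descends to $r=2$. There $\mathbf C_1(2)$ is an empty sum, and I would check by hand that $2\chi_2=3\chi_1$ still holds, using $\chi_1=\tfrac5{24}$ and $\chi_2=\tfrac{15}{48}$ from \eqref{eq: W_2 in terms of theta}. Then the recursion formula may be used uniformly for every $r\ge2$. One also has to confirm that exactly one boundary term of $\mathbf C_1(k)$ is absent from the first piece, and that none is absent from $\mathbf C_1(k-1)$ in the second piece.
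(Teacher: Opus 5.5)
Your proposal is correct and follows the paper's proof. Both reduce $S$ to $\frac{3k-2}{2}\mathbf C_1(k-1)$ by symmetrization, and both write $S_3=3\sum k_1\chi_{k_1}\mathbf C_1(k-1-k_1)$, expand it via \eqref{eq:recursive relation one}, and identify the two resulting pieces with $2\mathbf C_1$ minus the $\chi_1\chi_{k-2}$ boundary term and with $-\frac{3k}{2}\mathbf C_1(k-1)$.

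The only difference is bookkeeping. You extend the recursion to $r=2$ by checking $2\chi_2=3\chi_1$. The paper keeps $r\ge 3$ and then shows that the resulting $\chi_{k-3}$ boundary terms cancel using $\chi_1=\frac{5}{24}$ and $\chi_2=\frac{5}{16}$, which is the same identity.
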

\begin{proof}
	Let $m:=k-2$. Then
\[
S=\sum_{j=1}^{m-1}\Big(27j(m-j)^2+18j(m-j)\Big)\kjcc\chi_{m-j}
=27S_2+18S_1,
\]
where
\[
S_2\coloneq \sum_{j=1}^{m-1}j(m-j)^2\kjcc\chi_{m-j},
\qquad
 S_1\coloneq \sum_{j=1}^{m-1}j(m-j)\kjcc\chi_{m-j}\,.
\]
By substituting the index $j\mapsto m-j$,   we obtain
\[
S_2=\sum_{j=1}^{m-1}(m-j)j^2\kjcc\chi_{m-j}.
\]
Adding the original expression of $S_2$ to this substituted form gives
\[
2S_2=\sum_{j=1}^{m-1}\bigl(j(m-j)^2+(m-j)j^2\bigr)\kjcc\chi_{m-j}
=\sum_{j=1}^{m-1}jm(m-j)\kjcc\chi_{m-j}=mS_1,
\]
whence  $S_2=\frac{m}{2}S_1$. Substituting this back into the expression for $S$, we obtain
\[
S=\Big(\frac{27m}{2}+18\Big)S_1=\frac{9(3m+4)}{2}S_1
=\frac{9(3k-2)}{2}\sum_{j=1}^{k-3}j(k-2-j)\chi_j\chi_{k-2-j}.
\]
Recall the definition of $\bC_1(r)$ from \eqref{eq:def-of-C-1}, which gives 
\[
\mathbf C_1(k-1)=\sum_{j=1}^{k-3}9j(k-2-j)\chi_j\chi_{k-2-j}\,.
\]
Thus 
\[
\,S=\frac{3k-2}{2}\mathbf C_1(k-1).\,
\]
Next, applying the recursive relation \eqref{eq:recursive relation one} to $k-1$ (instead of $k$), we have 
\[
\mathbf C_1(k-1)=3(k-1)\bigl(2\chi_{k-1}-3(k-2)\chi_{k-2}\bigr)\,.
\]
Substituting  this into the  expression for $S$ yields \eqref{eq:S-R-2}:
\[
S=\frac{3}{2}(k-1)(3k-2)\bigl(2\chi_{k-1}-3(k-2)\chi_{k-2}\bigr).
\]

We now turn to proving \eqref{eq:S-R-3}. 
Recall the definition of $S_3$ from \eqref{eq:def-S-R-3}:
\[
S_3\coloneq \sum_{\substack{k_1,k_2,k_3\ge1\\k_1+k_2+k_3=k-2}}
27\,k_1k_2k_3\,\chi_{k_1}\chi_{k_2}\chi_{k_3}.
\]
To simplify the triple sum, we fix $k_1$ and set  $m:=k-2-k_1$. Since $k_2,k_3\ge1$, it follows that $m\ge2$ and $1\le k_1\le k-4$. Substituting $m=k-2-k_1$ into the sum, we rewrite $S_3$ as:
\[
\begin{aligned}
S_3
&=27\sum_{k_1=1}^{k-4} k_1\chi_{k_1}
\sum_{\substack{k_2+k_3=k-2-k_1\\k_2,k_3\ge1}}
k_2k_3\,\chi_{k_2}\chi_{k_3}\\
&=27\sum_{k_1=1}^{k-4} k_1\chi_{k_1}
\sum_{k_2=1}^{m-1} k_2(m-k_2)\chi_{k_2}\chi_{m-k_2},
\qquad(m=k-2-k_1).
\end{aligned}
\]
We now relate the inner sum to $\bC_1(r)$ by recalling its definition from \eqref{eq:def-of-C-1}:
\[
\mathbf C_1(r)=\sum_{j=1}^{r-2}9j(r-1-j)\chi_j\chi_{r-1-j}.
\]
Setting $r=m+1$, this becomes
\[
\mathbf C_1(m+1)=\sum_{j=1}^{m-1}9j(m-j)\chi_j\chi_{m-j}
=9\sum_{j=1}^{m-1}j(m-j)\chi_j\chi_{m-j}.
\]
Dividing both sides by $9$, we obtain a simplified expression for the inner sum:
\[
\sum_{j=1}^{m-1}j(m-j)\chi_j\chi_{m-j}
=\frac{1}{9}\mathbf C_1(m+1).
\]
Since $m=k-2-k_1$, we have $m+1=k-1-k_1$. Substituting this back into the expression for $S_3$, we get
\[
S_3
=3\sum_{k_1=1}^{k-4}k_1\chi_{k_1}\,\mathbf C_1(k-1-k_1).
\]
To eliminate $\bC_1(k-1-k_1)$, we apply the recursive relation
\eqref{eq:recursive relation one} to $k-1-k_1$ (instead of $k$), which gives
\[
\mathbf C_1(k-1-k_1)=3(k-1-k_1)\bigl(2\chi_{k-1-k_1}-3(k-2-k_1)\chi_{k-2-k_1}\bigr),
\]
Substituting this into the expression for $S_3$ and expanding the sum, we obtain
\be\label{eq:S_3-A-B}
\begin{aligned}
S_3
&=9\sum_{k_1=1}^{k-4}k_1(k-1-k_1)\chi_{k_1}
\Bigl(2\chi_{k-1-k_1}-3(k-2-k_1)\chi_{k-2-k_1}\Bigr)\\
&=18\sum_{k_1=1}^{k-4}k_1(k-1-k_1)\chi_{k_1}\chi_{k-1-k_1}
-27\sum_{k_1=1}^{k-4}k_1(k-1-k_1)(k-2-k_1)\chi_{k_1}\chi_{k-2-k_1}\\
&=18A-27B\,,
\end{aligned}
\ee
where we define $A$ and $B$ for brevity as
\[
A:=\sum_{k_1=1}^{k-4}k_1(k-1-k_1)\chi_{k_1}\chi_{k-1-k_1},\qquad
B:=\sum_{k_1=1}^{k-4}k_1(k-1-k_1)(k-2-k_1)\chi_{k_1}\chi_{k-2-k_1}.
\]

We first express $A$ in terms of $\bC_1=\bC_1(k)$.
From  the definition of $\bC_1=\bC_1(k)$
\[
\bC_1(k)=\sum_{j=1}^{k-2}9j(k-1-j)\chi_j\chi_{k-1-j}\,,
\]
the sum $A$ is the same as in the sum in $\bC_1(k)$ but missing the terms where $k_1=k-3$ and $k_1=k-2$ (since $k_1\leq k-4$). Correcting for these boundary terms, we have 
\be\label{eq:A in S_3}
A=\frac{\mathbf C_1}{9}-2(k-3)\chi_{k-3}\chi_2-(k-2)\chi_{k-2}\chi_1.
\ee
Next we express $B$ in terms of $\bC_1(k-1)$. 
Let $m:=k-2$ and define the full sum (without the upper limit $k-4$):
\[
B_{\rm full}:=\sum_{j=1}^{m-1}j(m+1-j)(m-j)\chi_j\chi_{m-j}.
\]
Then $B$ is $B_{\rm full}$ minus the boundary term where $k_1=k-3$ (i.e., $j=m-1$):
\[
B=B_{\rm full}-2(k-3)\chi_{k-3}\chi_1.
\]
To simplify $B_{\rm full}$, we pair the term  $j$ and  $m-j$:
\begin{align*}
B_{\rm full}
&=\frac12\sum_{j=1}^{m-1}\Big(j(m+1-j)(m-j)+(m-j)(j+1)j\Big)\chi_j\chi_{m-j}\\
&=\frac{m+2}{2}\sum_{j=1}^{m-1}j(m-j)\chi_j\chi_{m-j}=\frac{k}{2}\sum_{j=1}^{k-3}j(k-2-j)\chi_j\chi_{k-2-j}\,.
\end{align*}
Using  the definition of $\bC_1(k-1)$ from \eqref{eq:def-of-C-1},  we have
\[
\sum_{j=1}^{k-3}j(k-2-j)\chi_j\chi_{k-2-j}=\frac{1}{9}\bC_1(k-1)\,,
\]
so substituting this into the expression for $B_{\rm full}$ gives
\[
B_{\rm full}=\frac{k}{18}\mathbf C_1(k-1)\,.
\]
Thus
\be\label{eq:B in S_3}
B=\frac{k}{18}\mathbf C_1(k-1)-2(k-3)\chi_{k-3}\chi_1.
\ee
Plugging \eqref{eq:A in S_3} and \eqref{eq:B in S_3} into \eqref{eq:S_3-A-B}, we expand and simplify:
\[
\begin{aligned}
S_3
&=2\mathbf C_1-\frac{3k}{2}\mathbf C_1(k-1)
-36(k-3)\chi_{k-3}\chi_2-18(k-2)\chi_{k-2}\chi_1+54(k-3)\chi_{k-3}\chi_1.
\end{aligned}
\]
From the explicit expressions of $W_1$ and $W_2$ given in Lemma~\ref{lem: W_k in terms of theta}, we have $\chi_1=\frac{5}{24}$ and $\chi_2=\frac{5}{16}$. Substituting these values, the  terms containing $\chi_{k-3}$ cancel out, yielding the desired conclusion \eqref{eq:S-R-3}:
\[
S_3=2\mathbf C_1-\frac{3k}{2}\mathbf C_1(k-1)-\frac{15}{4}(k-2)\chi_{k-2}. \qedhere
\]
\end{proof}

The key estimates for \eqref{eq: finer cnnkef asym} are given in Lemmas~\ref{lem: more refined R_3 estimate} and \ref{lem: more refined R_2 estimate}, which are refinements of Lemmas~\ref{lem: R_3} and \ref{lem: R_2}, respectively.
\begin{lemma}\label{lem: more refined R_3 estimate}
	For $R_3$, we have the following asymptotic behaviour
	\begin{align}\label{eq:R_3refined}
		R_3
		&=\sqrt{2\pi}\,n^{\,n+\frac{3k-5}{2}}
		\Bigg\{
		\frac{9k\,\kcc}{2^{\frac{3k+2}{2}}\Gamma\!\left(\frac{3k+2}{2}\right)}
		\,+\frac{3(3k-1)\big(\kticc+k\kcc-3(k-1)\kacc\big)}{2^{\frac{3k+1}{2}}\Gamma\!\left(\frac{3k+1}{2}\right)}\,n^{-\frac12}\nonumber
		\\
		&\qquad+\frac{1}{2^{\frac{3k}{2}}\Gamma\!\left(\frac{3k}{2}\right)}
		\Bigg(
		3(3k-2)\khcc+(3k-1)(3k-2)\kticc-3(3k-4)(3k-2)\katicc\nonumber\\\nonumber
		&\qquad\qquad
		-3(k-1)(3k-4)(3k-2)\kacc
		+\frac{3}{2}(k-2)(3k-4)(3k-2)\chi_{k-2}
		\\&\qquad\qquad+\frac14(18k^3-9k^2+34k+37)\kcc
		\Bigg)\,n^{-1}
	    \,+O\!\left(n^{-\frac32}\right)
		\Bigg\}.
	\end{align}

\end{lemma}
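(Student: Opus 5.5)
We start from the representation $R_3=(n-3)!\,[z^n]\,P_k(z)$ with
\[
P_k(z)\coloneq \sum_{(k_1,k_2,k_3)\in\sT_k}\big(zW_{k_1}'\big)\big(zW_{k_2}'\big)\big(zW_{k_3}'\big),
\]
as in the proof of Lemma~\ref{lem: R_3}. The plan is to compute the first three coefficients of the $\theta$-expansion of $P_k$, namely those of $\theta^{-3k-2}$, $\theta^{-3k-1}$ and $\theta^{-3k}$. We then apply Lemma~\ref{lem: coef using theta expansion} with $m=3k+2$ and multiply by the expansion \eqref{eq: n minus 3 factorial} of $(n-3)!$.

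\textbf{Step 1: expansion of each factor.} From \eqref{eq: three terms Wk} and $zW_k'=-\theta^{-1}(1-\theta)\,dW_k/d\theta$ (the computation of $\sD W_k$ in the proof of Lemma~\ref{lem:Recursive derivation of the W_k coefficient}) we get, for $k\ge1$,
\[
zW_k'=3k\kcc\theta^{-3k-2}+\big((3k-1)\kticc-3k\kcc\big)\theta^{-3k-1}+\big((3k-2)\khcc-(3k-1)\kticc\big)\theta^{-3k}+\hO(\theta^{-3k+1}).
\]
For the small indices we use the exact forms $zW_{-1}'=1-\theta$ and $zW_0'=\tfrac12(1-\theta)^3\theta^{-2}$.

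\textbf{Step 2: sort the triples by leading exponent.} With $H$ as in the proof of Lemma~\ref{lem: R_3}, the exponent $H(k_1)+H(k_2)+H(k_3)$ determines which triples reach the three top orders $\theta^{-3k-2},\theta^{-3k-1},\theta^{-3k}$.
\begin{itemize}
\item The triples $(-1,-1,k)$ and its permutations give a factor $3(1-\theta)^2\,zW_k'$. This contributes to all three orders, through $\kcc,\kticc,\khcc$.
\item Triples with exactly one $-1$ and the other two entries $k_1,k_2\ge0$, $k_1+k_2=k-1$, have leading exponent $-3(k-1)-4=-3k-1$. So they contribute at the second and third orders.
  \begin{itemize}
  \item Those with $k_1=0$ or $k_2=0$ involve $zW_0'$ together with $zW_{k-1}'$.
  \item Those with $k_1,k_2\ge1$ give a convolution which, by its leading and subleading coefficients, is exactly $\bC_1$ and $\bC_2-2\bC_1$ (compare with the proof of Lemma~\ref{lem:Recursive derivation of the W_k coefficient}), times $(1-\theta)$.
  \end{itemize}
\item Triples with no $-1$ have leading exponent $-3(k-2)-6=-3k$, so they contribute only their leading coefficients. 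These are:
  \begin{itemize}
  \item the all-positive triples, giving $S_3$ from \eqref{eq:def-S-R-3};
  \item the triples containing one zero, giving a $\bC_1(k-1)$-type convolution times $\tfrac12$;
  \item the triples containing two zeros, giving $\tfrac14\cdot 3(k-2)\chi_{k-2}\cdot 3$ up to the multiplicity count;
  \item the triple $(0,0,k-2)$.
  \end{itemize}
\end{itemize}
We then collect the coefficients $f_{-3k-2},f_{-3k-1},f_{-3k}$ of $P_k$.

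\textbf{Step 3: eliminate the convolutions and extract asymptotics.} We eliminate $\bC_1$ and $\bC_2$ using the recursions \eqref{eq:recursive relation one} and \eqref{eq:recursive relation two}, rewritten to express $\bC_1,\bC_2$ in terms of $\kcc,\kticc,\kacc,\katicc$. We eliminate $S_3$ and the $\bC_1(k-1)$ pieces with Lemma~\ref{lem:simplify-convultion}, namely \eqref{eq:S-R-3} and the identity $\bC_1(k-1)=3(k-1)(2\kacc-3(k-2)\chi_{k-2})$. After this, $f_{-3k-2},f_{-3k-1},f_{-3k}$ are linear in $\kcc,\kticc,\khcc,\kacc,\katicc,\chi_{k-2}$.

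Feeding these into the formulas for $g_{m,1},g_{m,2}$ in Lemma~\ref{lem: coef using theta expansion} gives $[z^n]P_k$. We then multiply by
\[
(n-3)!=\sqrt{2\pi}\,e^{-n}n^{n-\frac52}\big(1+\tfrac{37}{12}n^{-1}+\cdots\big).
\]
The $\tfrac{37}{12}$ correction combines with the leading term to produce part of the constant $\tfrac14(18k^3-9k^2+34k+37)$. Finally we normalize the Gamma factors, using relations such as
\[
\Gamma\big(\tfrac{3k+2}{2}\big)=\tfrac{3k}{2}\,\Gamma\big(\tfrac{3k}{2}\big),
\]
so that each order has the denominator shown in \eqref{eq:R_3refined}.

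The main obstacle is the bookkeeping in the $n^{-1}$ term. Many sources feed into it: the third coefficient of the $(-1,-1,k)$ block, the subleading parts of the one-$(-1)$ convolutions (where $\bC_2$ appears), and all the no-$(-1)$ triples with their multiplicities. Getting these to collapse exactly to the stated combination of $\katicc$, $\kacc$ and $\chi_{k-2}$ is delicate.

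I would first verify the whole computation symbolically (for instance in Maple) against the explicit $W_1,\dots,W_5$ of Lemma~\ref{lem: W_k in terms of theta} for $k=3,4,5$. This pins down the multiplicities and signs before the general identities are written out. I would also double-check separately the cancellation of the $\chi_{k-3}$ terms, which relies on $\chi_1=\tfrac5{24}$ and $\chi_2=\tfrac5{16}$.
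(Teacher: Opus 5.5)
Your proposal is correct and follows the paper's proof essentially step for step. The route is the same: the same case split of the triple sum (the paper's Claim~\ref{claim: coef of R_3}); elimination of $\mathbf{C}_1$, $\mathbf{C}_2$, $\mathbf{C}_1(k-1)$ and $S_3$ via the recursions and Lemma~\ref{lem:simplify-convultion} (Claim~\ref{claim: coef of R_3 simplification}); then Lemma~\ref{lem: coef using theta expansion} with $m=3k+2$, multiplied by the expansion of $(n-3)!$. There is one bookkeeping slip: your last bullet, the triple $(0,0,k-2)$, is the same as the two-zero case and must not be counted again. In total, the one-zero triples contribute $\frac32\mathbf{C}_1(k-1)$ and the two-zero triples contribute $\frac94(k-2)\chi_{k-2}$ to the coefficient of $\theta^{-3k}$.
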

The proof of Lemma~\ref{lem: more refined R_3 estimate} relies on the following two claims.
\begin{claim}\label{claim: coef of R_3}
	Set 
	\be\label{eq:f-3k-2}
	f_{-3k-2}\coloneq 9k\kcc\,,
	\ee
	\be\label{eq:f-3k-1}
		f_{-3k-1}\coloneq3(3k-1)\kticc - 27k\kcc + 9(k-1)\kacc + 27 \sum_{j=1}^{k-2} j(k-1-j)\kjcc \chi_{k-1-j},
	\ee
	and 
	\begin{align}\label{eq:f-3k}
		f_{-3k} &\coloneq (9k-6)\khcc - (27k-9)\kticc + 27k\kcc + (9k-12)\katicc - 45(k-1)\kacc + \frac{9}{4}(k-2)\chi_{k-2} \nonumber\\
			&+ \sum_{j=1}^{k-2} \bigg[ 9j\kjcc(3k-3j-4)\tilde{\chi}_{k-1-j} + 9(k-1-j)\chi_{k-1-j}(3j-1)\kjticc - 81j(k-1-j)\kjcc \chi_{k-1-j} \bigg]\nonumber \\
			&+ \sum_{j=1}^{k-3} \frac{27}{2} j(k-2-j)\kjcc \chi_{k-2-j} + \sum_{\substack{k_1,k_2,k_3\geq  1\\ k_1+k_2+k_3=k-2}} 27 k_1 k_2 k_3 \chi_{k_1}\chi_{k_2}\chi_{k_3}.
	\end{align}
With these definitions, the following expansion holds:
	\[
\sum_{\substack{k_1,k_2,k_3\geq - 1\\k_1+k_2+k_3=k-2}}z^3 W'_{k_1}W'_{k_2}W'_{k_3}  = f_{-3k-2}\theta^{-3k-2} +f_{-3k-1}\theta^{-3k-1}+f_{-3k}\theta^{-3k}+\hO(\theta^{-3k+1})\,.
	\]
\end{claim}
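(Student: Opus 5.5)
We will expand each product $z^3W'_{k_1}W'_{k_2}W'_{k_3}$ directly in powers of $\theta$ and keep only the three lowest-order terms $\theta^{-3k-2},\theta^{-3k-1},\theta^{-3k}$. The tool is the computation of $\sD W_k$ in the proof of Lemma~\ref{lem:Recursive derivation of the W_k coefficient}. Since $\sD f=zf'$, the expansion \eqref{eq: three terms Wk} gives, for $m\ge1$,
\[
zW_m'=3m\chi_m\theta^{-3m-2}+\big((3m-1)\tilde\chi_m-3m\chi_m\big)\theta^{-3m-1}+\big((3m-2)\hat\chi_m-(3m-1)\tilde\chi_m\big)\theta^{-3m}+\hO(\theta^{-3m+1}).
\]
For the two exceptional indices we use \eqref{eq:W-1'''} and \eqref{eq:W0'''}:
\[
zW_{-1}'=1-\theta,\qquad zW_0'=\tfrac12\theta^{-2}-\tfrac32\theta^{-1}+\tfrac32-\tfrac12\theta .
\]
Here $zW_0'$ has the same shape as the general formula if we set $3\cdot0\cdot\chi_0=0$, but it is cleaner to treat it separately.

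We split the tuples $(k_1,k_2,k_3)\in\sT_k$ according to how many entries equal $-1$ and how many equal $0$. The ordering of the sizes follows the function $H$ from the proof of Lemma~\ref{lem: R_3}: $H(-1)=0$ and $H(s)=-3s-2$ for $s\ge0$. Each entry $\ge0$ costs an extra $\theta^{-2}$ relative to its excess, so a tuple with $p$ entries $\ge0$ has leading order $\theta^{-3k+4-2p-3q}$, where $q$ counts the entries equal to $-1$ and $p+q=3$. From this only the following tuples reach order $\theta^{-3k}$:
\begin{itemize}
\item[(a)] The three permutations of $(-1,-1,k)$. These contribute $3(1-\theta)^2\,zW_k'$, which feeds all three orders. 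The cross terms $-2\theta$ and $\theta^2$ shift the $\chi_k$ and $\tilde\chi_k$ coefficients down one or two orders; this produces the $\kcc,\kticc,\khcc$ terms.
\item[(b)] Tuples $(-1,j,k-1-j)$ with $j,k-1-j\ge0$. These have leading order $\theta^{-3(k-1)-4}=\theta^{-3k-1}$, so they contribute at the second and third orders. When both entries are $\ge1$, the contribution is $3(1-\theta)\,zW_j'\,zW_{k-1-j}'$ summed over ordered pairs. Its $\theta^{-3k-1}$ coefficient is $27\sum_j j(k-1-j)\chi_j\chi_{k-1-j}$. Its $\theta^{-3k}$ coefficient is the cross $\chi\tilde\chi$ sum, the $-3j\chi_j$ shifts (together producing the $-81\,j(k-1-j)$ sum), and the $-\theta$ from $1-\theta$. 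When one entry is $0$, we get $6(1-\theta)\,zW_0'\,zW_{k-1}'$. This gives the $9(k-1)\kacc$ term, and at the next order the $\katicc$ and $-45(k-1)\kacc$ terms.
\item[(c)] Tuples with all entries $\ge0$. Here $p=3$ and $q=0$, with leading order $\theta^{-3(k-2)-6}=\theta^{-3k}$, so only leading coefficients matter. With all entries $\ge1$ we get $27k_1k_2k_3\chi_{k_1}\chi_{k_2}\chi_{k_3}$. With exactly one zero we get $3\cdot\tfrac12\cdot 9j(k-2-j)\chi_j\chi_{k-2-j}$, which gives the $\tfrac{27}{2}$ sum. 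With two zeros we get $3\cdot\tfrac14\cdot3(k-2)\chi_{k-2}=\tfrac94(k-2)\chi_{k-2}$. The all-zero tuple is impossible since $k\ge6$.
\end{itemize}
Two further cases are negligible. The tuple $(-1,-1,k)$ with a zero is already in (a). Tuples $(-1,-1,\cdot)$ other than those in (a) cannot exist.

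We then sum the contributions order by order and match them against \eqref{eq:f-3k-2}–\eqref{eq:f-3k}. For example, at order $\theta^{-3k-2}$ only (a) contributes, giving $3\cdot3k\chi_k=9k\chi_k$. At order $\theta^{-3k-1}$ we get $3\big((3k-1)\tilde\chi_k-3k\chi_k\big)-6\cdot3k\chi_k$ from (a), which equals $3(3k-1)\kticc-27k\kcc$, plus the (b) terms; this matches \eqref{eq:f-3k-1}.

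The main obstacle is the bookkeeping at order $\theta^{-3k}$. We must track the combinatorial multiplicities of ordered versus unordered tuples, the factor $\tfrac12$ in $zW_0'$, and the corrections coming from $(1-\theta)$ and $(1-\theta)^2$. We will check each coefficient symbol by symbol. As a sanity check we will compare against the explicit $k=1$ formula computed for $R_3$ earlier, after adapting the index ranges.
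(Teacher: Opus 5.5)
Your proposal is correct and follows essentially the same route as the paper: it uses the same case split by the number of entries equal to $-1$ and to $0$ (your (a), (b), (c) regroup the paper's six pieces) and the same $\theta$-expansions of $zW_m'$, $zW_0'$ and $zW_{-1}'$, and the coefficients you indicate all agree with \eqref{eq:f-3k-2}--\eqref{eq:f-3k}. One harmless slip: a tuple with $q$ entries equal to $-1$ has leading order $\theta^{-3k-q}$ (that is, $\theta^{-3k+6-2p-3q}$), not $\theta^{-3k+4-2p-3q}$, so in fact every tuple reaches order $\theta^{-3k}$; your case-by-case orders in (a)--(c) are the correct ones, and those three cases exhaust all tuples.
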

\begin{claim}\label{claim: coef of R_3 simplification}
The coefficients $f_{-3k-1}$ and $f_{-3k}$ defined in Claim~\ref{claim: coef of R_3} admit the following simplifications:
\be\label{eq: f-3k-1 simple}
	f_{-3k-1}=3(3k-1)\kticc-9k\kcc-9(k-1)(3k-1)\kacc
\ee
and
\be\label{eq: f-3k simple}
	\begin{aligned}
	f_{-3k}
	&=3(3k-2)\khcc
	-3(3k-1)\kticc
	+9k\kcc
	-3(3k-4)(3k-2)\katicc\\
	&\quad+27(k-1)(2k-1)\kacc
	+\frac{3}{2}(k-2)(3k-4)(3k-2)\chi_{k-2}.
\end{aligned}
\ee	
\end{claim}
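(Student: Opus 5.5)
The plan is to derive both simplifications directly from the definitions in Claim~\ref{claim: coef of R_3}, eliminating every convolution sum with the recursive relations of Lemma~\ref{lem:Recursive derivation of the W_k coefficient} and the identities of Lemma~\ref{lem:simplify-convultion}. No new analytic input is needed; the work is purely algebraic in the coefficients $\kcc,\kticc,\khcc$ and their lower-index analogues.

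\textbf{Simplifying $f_{-3k-1}$.} In \eqref{eq:f-3k-1} the convolution is $27\sum_{j=1}^{k-2} j(k-1-j)\kjcc\chi_{k-1-j}=3\bC_1$ by definition \eqref{eq:def-of-C-1}.
\begin{itemize}
\item By \eqref{eq:recursive relation one}, $\bC_1=3k\bigl(2\kcc-3(k-1)\kacc\bigr)$, so $3\bC_1=18k\kcc-27k(k-1)\kacc$.
\item Substituting gives $3(3k-1)\kticc+(18k-27k)\kcc+\bigl(9(k-1)-27k(k-1)\bigr)\kacc$.
\item This equals $3(3k-1)\kticc-9k\kcc-9(k-1)(3k-1)\kacc$, which is \eqref{eq: f-3k-1 simple}.
\end{itemize}

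\textbf{Simplifying $f_{-3k}$.} Group the convolution terms in \eqref{eq:f-3k} into four pieces.
\begin{itemize}
\item The first bracketed sum, $\sum_j\bigl[9j(3k-3j-4)\kjcc\tilde\chi_{k-1-j}+9(k-1-j)(3j-1)\kjticc\chi_{k-1-j}\bigr]$, is exactly $3\bC_2$ by \eqref{eq:def-of-C-2} with $r=k$. The remaining $-81\sum j(k-1-j)\kjcc\chi_{k-1-j}$ equals $-9\bC_1$.
\item The sum $\frac{27}{2}\sum_{j=1}^{k-3} j(k-2-j)\kjcc\chi_{k-2-j}$ equals $\frac32\bC_1(k-1)$.
\item The triple sum is $S_3$. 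By \eqref{eq:S-R-3}, $S_3=2\bC_1-\frac{3k}{2}\bC_1(k-1)-\frac{15}{4}(k-2)\chi_{k-2}$.
\end{itemize}
Together these give $3\bC_2-7\bC_1-\frac{3(k-1)}{2}\bC_1(k-1)-\frac{15}{4}(k-2)\chi_{k-2}$.

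Next eliminate each remaining quantity:
\begin{itemize}
\item Replace $\bC_1(k-1)$ by $3(k-1)\bigl(2\kacc-3(k-2)\chi_{k-2}\bigr)$, using \eqref{eq:recursive relation one} at $k-1$.
\item Replace $\bC_1$ by $3k(2\kcc-3(k-1)\kacc)$.
\item Solve \eqref{eq:recursive relation two} for $\bC_2$:
\[
\bC_2=2(3k-1)(\kticc-k\kcc)+3(k-1)(3k^2+3k+1)\kacc-(3k-1)(3k-4)\katicc+(k+1)\bC_1 .
\]
\end{itemize}
After these substitutions every term is linear in $\khcc,\kticc,\kcc,\katicc,\kacc,\chi_{k-2}$.

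The last step is to collect coefficients and match them against \eqref{eq: f-3k simple}. The $\khcc$ coefficient $(9k-6)$ is unchanged. For $\katicc$, $(9k-12)-3(3k-1)(3k-4)=-3(3k-4)(3k-2)$. For $\kticc$, $-(27k-9)+6(3k-1)=-3(3k-1)$.

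The main obstacle is the bookkeeping for the $\kcc$, $\kacc$ and $\chi_{k-2}$ coefficients, which each combine contributions from three or four sources. These are polynomial identities in $k$ that I would check term by term, possibly confirming with a computer-algebra check at a few values of $k$. If a mismatch appears, the likeliest culprits are the boundary conventions in $\bC_1(k-1)$, namely the range $j\le k-3$, and the $\frac94(k-2)\chi_{k-2}$ term; I would recheck those first.
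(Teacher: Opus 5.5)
Your proposal is correct and follows the paper's proof essentially step for step. You identify the convolutions as $3\bC_1$, $3\bC_2-9\bC_1$, $\tfrac32\bC_1(k-1)$ and $S_3$ via \eqref{eq:S-R-3}, then eliminate $\bC_1$, $\bC_1(k-1)$ and $\bC_2$ using \eqref{eq:recursive relation one} and \eqref{eq:recursive relation two}, and the bookkeeping you deferred does close: the $\kcc$ coefficient is $27k-6k(3k-1)+6k(3k-4)=9k$, the $\kacc$ coefficient is $9(k-1)\bigl[-5+(3k^2+3k+1)-k(3k-4)-(k-1)\bigr]=27(k-1)(2k-1)$, and the $\chi_{k-2}$ coefficient is $(k-2)\bigl[-\tfrac32+\tfrac{27}{2}(k-1)^2\bigr]=\tfrac32(k-2)(3k-4)(3k-2)$.
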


\begin{proof}[Proof of Claim~\ref{claim: coef of R_3}]
	We  first decompose the summation into  distinct cases based on the values of $k_1,k_2,k_3$ (each $\ge-1$) that satisfy $k_1+k_2+k_3=k-2$:
\begin{align}
\sum_{ \substack{k_1,k_2,k_3\ge-1\\ k_1+k_2+k_3=k-2} }W_{k_1}'(z)W_{k_2}'(z)W_{k_3}'(z)&=3W'_{-1}W'_{-1}W'_{k}+6W'_{-1}W'_{0}W'_{k-1}+3W'_{0}W'_{0}W'_{k-2}\nonumber
\\&\quad+\sum_{\substack{k_1,k_2,k_3\geq  1\\ k_1+k_2+k_3=k-2}}W'_{k_1}W'_{k_2}W'_{k_3}\nonumber
\\&\quad +3\sum_{\substack{k_1,k_2\geq  1\\k_1+k_2=k-1}}W'_{k_1}W'_{k_2}W'_{-1} \nonumber
\\&\quad +3\sum_{\substack{k_1,k_2\geq  1\\k_1+k_2=k-2}}W'_{k_1}W'_{k_2}W'_{0} .
\end{align}

Following a similar approach to the proof of Lemma~\ref{lem: R_3}, our goal is to determine the coefficients of $\theta^{-3k-2},\theta^{-3k-1}$ and $\theta^{-3k}$ in the expression of $z^3\sum_{ \substack{k_1,k_2,k_3\ge-1\\ k_1+k_2+k_3=k-2} }W_{k_1}'(z)W_{k_2}'(z)W_{k_3}'(z)$. We analyze each case of the decomposed summation separately to compute the coefficients.

First, we recall the key expressions for $zW_m'$ from previous results. 
By \eqref{eq:W0'''} we have 
\be\label{eq:zw-0'}
zW_0'=\frac{(1-\theta)^3}{2\theta^2}\,.
\ee 
By \eqref{eq:W-1'''} we have 
\be\label{eq:zw-1'}
zW_{-1}'=1-\theta\,.
\ee
Using the expansion of $W_k$ given in \eqref{eq: three terms Wk}, we rewrite \eqref{eq:Wm'}  for $k\ge1$ as 
\be\label{eq:Wm'new}
zW_k'=(1-\theta)\big[ 3k\kcc\theta^{-3k-2}+(3k-1)\kticc\theta^{-3k-1}+(3k-2)\khcc\theta^{-3k}+\hO(\theta^{-3k+1})\big]\,.
\ee 
We begin with the first case: $z^3W'_{-1}W'_{-1}W'_{k}$. 
Using \eqref{eq:zw-1'} and \eqref{eq:Wm'new}, we expand and collect like terms in $\theta$: 
\begin{align}
z^3W'_{-1}W'_{-1}W'_{k}&=(1-\theta)^{3}\cdot\big[ 3k\kcc\theta^{-3k-2}+(3k-1)\kticc\theta^{-3k-1}+(3k-2)\khcc\theta^{-3k}+\hO(\theta^{-3k+1})\big]\nonumber
\\&=3k\kcc \theta^{-3k-2} + \left[ (3k-1)\kticc - 9k\kcc \right] \theta^{-3k-1} \nonumber\\
&\quad + \left[ (3k-2)\khcc - 3(3k-1)\kticc + 9k\kcc \right] \theta^{-3k}+\hO(\theta^{-3k+1})
\end{align}

Next, we consider the case $3\sum_{\substack{k_1,k_2\geq  1\\k_1+k_2=k-1}}z^3W'_{k_1}W'_{k_2}W'_{-1}$. Expanding this sum using \eqref{eq:zw-1'} and \eqref{eq:Wm'new}, we focus on the terms involving $\theta^{-3k-1}$ and $\theta^{-3k}$ (higher-order terms are absorbed into $\hO(\theta^{-3k+1})$):
\[
\sum_{\substack{k_1,k_2\geq  1\\k_1+k_2=k-1}}z^3W'_{k_1}W'_{k_2}W'_{-1}=\sum_{j=1}^{k-2} \Bigg\{ 9j(k-1-j)\kicc \chi_{k-1-j} \theta^{-3k-1} + \Rtwotwosum_j \theta^{-3k}\Bigg\}+ \hO(\theta^{-3k+1}).
\]
where $\Rtwotwosum_j= 3j\kicc(3k-3j-4)\tilde{\chi}_{k-1-j} + 3(k-1-j)\chi_{k-1-j}(3j-1)\kiticc - 27j(k-1-j)\kicc \chi_{k-1-j}$.

For the case $6z^3W'_{-1}W'_{0}W'_{k-1}$, we use
 \eqref{eq:zw-0'}, \eqref{eq:zw-1'} and \eqref{eq:Wm'new} (applied to $k-1$) to expand and simplify:
\[
z^3W'_{-1}W'_{0}W'_{k-1}=\frac{3}{2}(k-1)\kacc \theta^{-3k-1} + \frac{1}{2}\left[ (3k-4)\katicc - 15(k-1)\kacc \right] \theta^{-3k} + \hO(\theta^{-3k+1})\,.
\]

We now analyze the remaining two cases involving $W_0'$.
By \eqref{eq:zw-0'} and \eqref{eq:Wm'new} (applied to $k-2$) we have 
\[
z^3W'_{0}W'_{0}W'_{k-2}=\frac{3}{4}(k-2)\chi_{k-2} \theta^{-3k}+ \hO(\theta^{-3k+1}),
\]
and
\[
\sum_{\substack{k_1,k_2\geq  1\\k_1+k_2=k-2}}z^3W'_{k_1}W'_{k_2}W'_{0}=\sum_{j=1}^{k-3} \frac{9}{2} j (k-2-j) \chi_j \chi_{k-2-j} \theta^{-3k}+ \hO(\theta^{-3k+1})\,.
\]

Finally, we consider the triple sum over $k_1,k_2,k_3\ge1$. Using \eqref{eq:Wm'new} for each $k_i$ and expanding the product, the leading term (in $\theta^{-3k}$) is 
\[
\sum_{\substack{k_1,k_2,k_3\geq  1\\ k_1+k_2+k_3=k-2}}z^3W'_{k_1}W'_{k_2}W'_{k_3}=\sum_{\substack{k_1,k_2,k_3\geq  1\\ k_1+k_2+k_3=k-2}} 27 k_1 k_2 k_3 \chi_{k_1}\chi_{k_2}\chi_{k_3}\theta^{-3k}+ \hO(\theta^{-3k+1}).
\]	

Combining the results from all the cases, we collect the coefficients of $\theta^{-3k-2}$, $\theta^{-3k-1}$, and $\theta^{-3k}$ (absorbing higher-order terms into $\hO(\theta^{-3k+1})$). This gives
\[
\sum_{\substack{k_1,k_2,k_3\geq - 1\\k_1+k_2+k_3=k-2}}z^3 W'_{k_1}W'_{k_2}W'_{k_3}  = f_{-3k-2}\theta^{-3k-2} +f_{-3k-1}\theta^{-3k-1}+f_{-3k}\theta^{-3k}+\hO(\theta^{-3k+1})\,.
\]
where $f_{-3k-2},f_{-3k-1}$, and $f_{-3k}$ are precisely the coefficients defined in \eqref{eq:f-3k-2}, \eqref{eq:f-3k-1}, and \eqref{eq:f-3k}, respectively. \qedhere
\end{proof}

\begin{proof}[Proof of Claim~\ref{claim: coef of R_3 simplification}]
	First, we prove \eqref{eq: f-3k-1 simple}. Recall  the definition of $\bC_1$ from Lemma~\ref{lem:Recursive derivation of the W_k coefficient}  and the definition of $f_{-3k-1}$ from \eqref{eq:f-3k-1}. Substituting the summation in $f_{-3k-1}$ with $\bC_1$, we rewrite $f_{-3k-1}$ as 
	\[
	f_{-3k-1}=3(3k-1)\kticc - 27k\kcc + 9(k-1)\kacc +3\bC_1\,.
	\]
	Next, we use the recursive relation  \eqref{eq:recursive relation one} (for $k$), which gives
	\[
	\bC_1=3k\big[2\kcc-3(k-1)\kacc\big]
	\]
	Plugging this expression for $\bC_1$ into the rewritten $f_{-3k-1}$ and simplifying the resulting terms, we obtain \eqref{eq: f-3k-1 simple}:
	\[
		f_{-3k-1}=3(3k-1)\kticc-9k\kcc-9(k-1)(3k-1)\kacc\,.
	\]
	
	Next, we prove \eqref{eq: f-3k simple}. We start by simplifying the summation terms in the definition of $f_{-3k}$ (from \eqref{eq:f-3k}) using the 
 definitions of $\bC_1$ and $\bC_2$ given in \eqref{eq:def-of-C-1} and \eqref{eq:def-of-C-2}. First, simplify the double summation involving $\kjcc$, $\kjticc$ and $\chi_{k-1-j}$:  
	we have that 
	\[
	\sum_{j=1}^{k-2} \bigg[ 9j\kjcc(3k-3j-4)\tilde{\chi}_{k-1-j} + 9(k-1-j)\chi_{k-1-j}(3j-1)\kjticc - 81j(k-1-j)\kjcc \chi_{k-1-j} \bigg]=3\bC_2-9\bC_1
	\]
	We also simplify the single summation involving $\chi_j$ an $\chi_{k-2-j}$ by relating it to $\bC_1(k-1)$:
	\[
	 \sum_{j=1}^{k-3}\frac{27}{2}j(k-2-j)\chi_j\chi_{k-2-j}=\frac{3}{2}\mathbf C_1(k-1).
	\]
Substituting these two simplified summations, along with the identity  \eqref{eq:S-R-3} (for $S_3$), into the definition of $f_{-3k}$  (from  \eqref{eq:f-3k}),  we obtain
    \[
	\begin{aligned}
	f_{-3k}
	&=(9k-6)\khcc-(27k-9)\kticc+27k\kcc+(9k-12)\katicc-45(k-1)\kacc\\
	&\quad+3\mathbf C_2-7\mathbf C_1-\frac{3}{2}(k-1)\mathbf C_1(k-1)
	-\frac{3}{2}(k-2)\chi_{k-2}.
	\end{aligned}
\]
Finally, we apply the recursive relation \eqref{eq:recursive relation two} and \eqref{eq:recursive relation one} (both for $k$ and $k-1$) to eliminate $\bC_1$, $\bC_1(k-1)$ and $\bC_2$, simplifying the expression to obtain  the desired conclusion \eqref{eq: f-3k simple}. 
\end{proof}

\begin{proof}[Proof of Lemma~\ref{lem: more refined R_3 estimate}]
	Recall \eqref{eq: def of R_3}:
	\begin{align*}
	R_3&\coloneq (n-3)!\big[z^{n-3}\big]\sum_{ \substack{k_1,k_2,k_3\ge-1\\ k_1+k_2+k_3=k-2} }W_{k_1}'(z)W_{k_2}'(z)W_{k_3}'(z)\\
	&=(n-3)!\big[z^{n}\big]\sum_{ \substack{k_1,k_2,k_3\ge-1\\ k_1+k_2+k_3=k-2} }z^3W_{k_1}'(z)W_{k_2}'(z)W_{k_3}'(z)\,.
	\end{align*}
 By combining Claim~\ref{claim: coef of R_3} (which provide the $\theta$-expansion of the sum) and  \cref{lem: coef using theta expansion} (which connects the $\theta$-expansion coefficients to the $z^n$ coefficient), we obtain
    \begin{align}
    	&\big[z^n\big]\sum_{ \substack{k_1,k_2,k_3\ge-1\\ k_1+k_2+k_3=k-2} }z^3 W'_{k_1}W'_{k_2}W'_{k_3} \nonumber\\
    	=&\, \,\frac{f_{-3k-2}}{2^{\frac{3k+2}{2}}\Gamma\left(\frac{3k+2}{2}\right)} e^n n^{\frac{3k}{2}} \nonumber + \frac{1}{2^{\frac{3k+1}{2}}\Gamma\left(\frac{3k+1}{2}\right)} \left[ \frac{3k+2}{3} f_{-3k-2} + f_{-3k-1} \right] e^n n^{\frac{3k}{2}-1} \nonumber \\
    	&\quad + \frac{1}{2^{\frac{3k+2}{2}}\Gamma\left(\frac{3k}{2}\right)} \left[ \frac{(3k+2)(6k+5)}{18} f_{-3k-2} + \frac{6k+2}{3} f_{-3k-1} + 2f_{-3k} \right] e^n n^{\frac{3k-2}{2}} \nonumber \\
    	&\quad + O\left( e^n n^{\frac{3k-3}{2}} \right).
    \end{align}
  We then apply   Stirling's formula \eqref{eq: n minus 3 factorial} to approximate $(n-3)!$, multiplying it with the above coefficient expression to refine the asymptotic expansion of $R_3$:
\begin{align}\label{eq:B.32}
R_3&=\frac{\sqrt{2\pi}f_{-3k-2}}{2^{\frac{3k+2}{2}}\Gamma\left(\frac{3k+2}{2}\right)} n^{n + \frac{3k-5}{2}} \nonumber + \frac{\sqrt{2\pi}}{2^{\frac{3k+1}{2}}\Gamma\left(\frac{3k+1}{2}\right)} \left[ \frac{3k+2}{3} f_{-3k-2} + f_{-3k-1} \right] n^{n + \frac{3k-6}{2}} \nonumber \\
&+ \frac{\sqrt{2\pi}}{2^{\frac{3k+2}{2}}\Gamma\left(\frac{3k}{2}\right)} \left[ \frac{(3k+2)(6k+5)}{18} f_{-3k-2} + \frac{6k+2}{3} f_{-3k-1} + 2f_{-3k} +\frac{37}{18k}f_{-3k-2}\right] n^{n + \frac{3k-7}{2}} \nonumber \\
&+ O\left(n^{n + \frac{3k-8}{2}} \right).
\end{align}
Finally, we substitute  $f_{-3k-2}=9k\kcc$, along with the simplified expression of $f_{-3k-1}$ and $f_{-3k}$ given in \eqref{eq: f-3k-1 simple} and \eqref{eq: f-3k simple}, into \eqref{eq:B.32}. Simplifying the resulting terms yields the desired conclusion of Lemma~\ref{lem: more refined R_3 estimate}:
\begin{align*}
R_3
&=\sqrt{2\pi}\,n^{\,n+\frac{3k-5}{2}}
\Bigg\{
\frac{9k\,\kcc}{2^{\frac{3k+2}{2}}\Gamma\!\left(\frac{3k+2}{2}\right)}
\,+\frac{3(3k-1)\big(\kticc+k\kcc-3(k-1)\kacc\big)}{2^{\frac{3k+1}{2}}\Gamma\!\left(\frac{3k+1}{2}\right)}\,n^{-\frac12}\nonumber
\\
&\qquad+\frac{1}{2^{\frac{3k}{2}}\Gamma\!\left(\frac{3k}{2}\right)}
\Bigg(
3(3k-2)\khcc+(3k-1)(3k-2)\kticc-3(3k-4)(3k-2)\katicc\nonumber\\\nonumber
&\qquad\qquad
-3(k-1)(3k-4)(3k-2)\kacc
+\frac{3}{2}(k-2)(3k-4)(3k-2)\chi_{k-2}
\\&\qquad\qquad+\frac14(18k^3-9k^2+34k+37)\kcc
\Bigg)\,n^{-1}
\,+O\!\left(n^{-\frac32}\right)
\Bigg\}. \qedhere
\end{align*}

\end{proof}

\begin{lemma}\label{lem: more refined R_2 estimate}
For $R_2$, 	we have the following asymptotic behavior:
		\begin{align}\label{eq:R_2refined}
		R_2
		&=3\sqrt{2\pi}\,n^{\,n+\frac{3k}{2}-3}
		\Bigg\{
		\frac{3(k-1)(3k-1)\kacc}{2^{\frac{3k+1}{2}}\Gamma\!\left(\frac{3k+1}{2}\right)}\nonumber
		\\
		&\qquad\qquad
		+\frac{(3k-4)(3k-2)}{2^{\frac{3k}{2}}\Gamma\!\left(\frac{3k}{2}\right)}
		\Bigg((k-1)\kacc+\katicc-\frac{3}{2}(k-2)\chi_{k-2}\Bigg)\,n^{-\frac12}
		+O(n^{-1})
		\Bigg\}.
		\end{align}.
\end{lemma}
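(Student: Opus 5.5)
The plan mirrors the proof of Lemma~\ref{lem: more refined R_3 estimate}. We need the first two coefficients (those of $\theta^{-3k-1}$ and $\theta^{-3k}$) in the $\theta$-expansion of
\[
\sum_{l=-1}^{k-1}\big(zW_l'(z)\big)\big(z^2W_{k-2-l}''(z)\big),
\]
and then we apply Lemma~\ref{lem: coef using theta expansion} together with \eqref{eq: n minus 3 factorial}. Since $R_2$ is only required to relative order $n^{-1/2}$, two coefficients $g_{-3k-1},g_{-3k}$ suffice.

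\textbf{Step 1: expand $z^2W_m''$.} For $m\ge1$, I will use the formula
\[
z^2W_m''=(1-\theta)^2\sum_s c_{m,s}\theta^{s-4}(s^2-2s-s\theta)
\]
from the proof of Lemma~\ref{lem: derivatives of W_k}, with $W_m$ written as in \eqref{eq: three terms Wk}. The term $s=-3m$ contributes $(9m^2+6m)\chi_m\theta^{-3m-4}$. The next order, $\theta^{-3m-3}$, collects three pieces: the $-s\theta$ part of the $s=-3m$ term, the cross term with $-2\theta$ from $(1-\theta)^2$, and the $s=-3m+1$ term. This gives
\[
(3m-1)(3m+1)\tilde\chi_m-3m(6m+3)\chi_m ,
\]
consistent with the $\sD^2W_k$ computation in Lemma~\ref{lem:Recursive derivation of the W_k coefficient}.

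\textbf{Step 2: identify the contributing pairs.} Using the exponents $H(l)+\widetilde H(k-2-l)$, only three pairs reach $\theta^{-3k}$:
\begin{itemize}
\item $l=-1$, whose lowest order is $\theta^{-3k-1}$ and which contributes to both orders;
\item $l=0$, which pairs $zW_0'=(1-\theta)^3/(2\theta^2)$ with $z^2W_{k-2}''\sim(9(k-2)^2+6(k-2))\chi_{k-2}\theta^{-3k+2}$, giving a leading $\theta^{-3k}$ term;
\item $l=k-1$, namely $zW_{k-1}'\cdot z^2W_{-1}''$ with $z^2W_{-1}''=(1-\theta)^2/\theta$, giving $3(k-1)\chi_{k-1}\theta^{-3k}$.
\end{itemize}
The intermediate terms $1\le l\le k-2$ give $3l\chi_l\,(9(k-2-l)^2+6(k-2-l))\chi_{k-2-l}\,\theta^{-3k}$, whose sum is exactly the quantity $S$ of Lemma~\ref{lem:simplify-convultion}. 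Combining \eqref{eq:S-R-2} with the $l=0$ and $l=k-1$ contributions leaves an expression in $\chi_{k-1},\tilde\chi_{k-1},\chi_{k-2}$ only.

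\textbf{Step 3: extract the coefficient.} Apply Lemma~\ref{lem: coef using theta expansion} with $m=3k+1$. It gives
\[
[z^n]=\frac{g_{-3k-1}\,2^{-\frac{3k+1}{2}}}{\Gamma(\frac{3k+1}{2})}\,e^nn^{\frac{3k-1}{2}}+g_{3k+1,1}\,e^nn^{\frac{3k-2}{2}}+\cdots,
\qquad
g_{3k+1,1}=\frac{2^{-\frac{3k+1}{2}}\frac{\sqrt2(3k+1)}{3}g_{-3k-1}+2^{-\frac{3k}{2}}g_{-3k}}{\Gamma(\frac{3k}{2})}.
\]
Multiplying by $3(n-3)!$ via \eqref{eq: n minus 3 factorial} (whose $n^{-1}$ correction does not reach this order) yields \eqref{eq:R_2refined}. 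In the leading term $g_{-3k-1}=3(k-1)(3k-1)\chi_{k-1}$, matching Lemma~\ref{lem: R_2}.

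\textbf{Main obstacle.} The hard part is the bookkeeping that turns the combined $\theta^{-3k}$ coefficient, after substituting $S$ and adding the extra $\tfrac{\sqrt2(3k+1)}{3}$ contribution from $g_{-3k-1}$, into the factorized form
\[
(3k-4)(3k-2)\Big((k-1)\chi_{k-1}+\tilde\chi_{k-1}-\tfrac32(k-2)\chi_{k-2}\Big).
\]
If the factorization does not appear directly, I would first eliminate $\chi_{k-1}$ terms using \eqref{eq:recursive relation one} at level $k-1$, i.e. $2\chi_{k-1}=3(k-2)\chi_{k-2}+\mathbf C_1(k-1)/(3k-3)$, which is how the $\chi_{k-2}$ term enters. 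I would then check the identity numerically for $k=6$ against Maple-computed $W_4,W_5$.
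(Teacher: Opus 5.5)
Your route is the paper's: split $\sum_{l=-1}^{k-1}(zW_l')(z^2W_{k-2-l}'')$ into boundary and bulk terms, keep the $\theta^{-3k-1}$ and $\theta^{-3k}$ coefficients, apply Lemma~\ref{lem: coef using theta expansion} with $m=3k+1$ together with \eqref{eq: n minus 3 factorial}, and collapse the bulk convolution with \eqref{eq:S-R-2}. Your Step~1 and your contributions from $l=-1$, $l=0$ and $l=k-1$ are correct.

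The gap is in Step~2: you drop the boundary pair $l=k-2$, i.e.\ $z^3W_{k-2}'W_0''$. You place it in the ``intermediate'' range and use the generic coefficient $3l\chi_l(9m^2+6m)\chi_m$ with $m=k-2-l$, which is $0$ at $m=0$. But $W_0$ is not of the form \eqref{eq: three terms Wk}: it contains $-\tfrac12\log\theta$, and $\chi_0$ is not even defined. In fact $z^2W_0''=\tfrac12(1-\theta)^3(1+\theta)(2-\theta)\theta^{-4}=\theta^{-4}+\cdots$. Hence $z^3W_{k-2}'W_0''=3(k-2)\chi_{k-2}\theta^{-3k}+\hO(\theta^{-3k+1})$, and this term is not part of $S$, whose sum runs only over $1\le j\le k-3$. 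Relatedly, the claim that ``only three pairs reach $\theta^{-3k}$'' is wrong. Since $H(l)+\widetilde H(k-2-l)=-3k$ for every $l\ge 0$, all these pairs contribute at that order. The pairs involving $W_{-1}$ or $W_0$ need separate treatment, and there are four of them ($l=-1,0,k-2,k-1$), not three.

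This omission changes the answer. Without the $l=k-2$ term, the $\chi_{k-2}$-coefficient inside the $n^{-1/2}$ bracket comes out as $\tfrac32(k-2)\bigl[(3k-4)-3(k-1)(3k-2)\bigr]=-\tfrac32(k-2)(9k^2-18k+10)$. The statement requires $-\tfrac32(k-2)(3k-4)(3k-2)=-\tfrac32(k-2)(9k^2-18k+8)$, and the two differ by exactly the omitted $3(k-2)\chi_{k-2}$. Your fallback of eliminating $\chi_{k-1}$ through \eqref{eq:recursive relation one} at level $k-1$ cannot restore a missing term (and the target keeps $\chi_{k-1}$ anyway). A numerical check at $k=6$ would only expose the mismatch.

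Once you add the $l=k-2$ term, the $l=0$ and $l=k-2$ pieces combine to $\tfrac32(3k-2)(k-2)\chi_{k-2}$. Together with $S=\tfrac32(k-1)(3k-2)\bigl(2\chi_{k-1}-3(k-2)\chi_{k-2}\bigr)$ and the extra $\tfrac{3k+1}{3}g_{-3k-1}$, the factorization is immediate and no further recursion is needed:
the $\chi_{k-1}$-coefficient is $(k-1)\bigl[(9k^2-1)-3(9k-5)+3(3k-2)\bigr]=(k-1)(3k-4)(3k-2)$;
the $\tilde\chi_{k-1}$-coefficient is $9(k-1)^2-1=(3k-4)(3k-2)$;
the $\chi_{k-2}$-coefficient is $\tfrac32(k-2)(3k-2)\bigl(1-3(k-1)\bigr)=-\tfrac32(k-2)(3k-4)(3k-2)$.
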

\begin{proof}
	Recall the definition of $R_2$ from \eqref{eq: def of R_2}:
	\begin{align*}
	R_2&=3(n-3)!\big[z^{n-3}\big]\sum_{j=-1}^{k-1}W_j'W_{k-2-j}''\\
	&=3(n-3)!\big[z^n\big]\sum_{j=-1}^{k-1}z^3W_j'W_{k-2-j}''\,.
	\end{align*}
	Due to the special forms of $W_0$ and $W_{-1}$ (which differ from the general form of $W_m$ for $m\ge1$ as in \eqref{eq: three terms Wk}), we decompose the summation into distinct cases to do the computation:
	\begin{align}\label{eq:R-2-sum-decomp}
	\sum_{j=-1}^{k-1}z^3W_j'W_{k-2-j}''
	&=\sum_{j=1}^{k-3}z^3W_j'W_{k-2-j}''+z^3W_{-1}'W_{k-1}''+z^3W_0'W_{k-2}''+z^3W_{k-2}'W_0''+z^3W_{k-1}'W_{-1}''\,.
	\end{align}
We first	recall the explicit expressions for $W_{-1}'$, $W_{-1}''$, $W_0'$ and $W_0''$ from  \eqref{eq:W-1'''} and  \eqref{eq:W0'''}:
	\[
	W_{-1}'(z)=-\theta\cdot \theta'(z)=\frac{1-\theta}{z}\,,\quad \text{and}\quad W_{-1}''(z)=\frac{(1-\theta)^2}{z^2\theta}\,,
	\]
	and 
	\[
	W_0'(z)=\frac{1}{2}\cdot \frac{(1-\theta)^3}{z\theta^2}\,,
	\quad \text{and} \quad W_0''(z)=\frac{1}{2}\cdot \frac{(1-\theta)^3(1+\theta)(2-\theta)}{z^2\theta^4}\,.
	\]
For $m\ge1$, we use the expansion of $W_m$ given in \eqref{eq: three terms Wk} and the expression for $W_m''$ from \eqref{eq:W''}  to derive a $\theta$-expansion form of $z^2W_m''$:
	\begin{align}\label{eq:z2W''}
	z^2W''_{m}&=(1-\theta)^2\Bigg[ \chi_m\big(9m^2+6m+3m\theta\big)\theta^{-3m-4}+\widetilde{\chi}_m\big((3m-1)^2+2(3m-1)+(3m-1)\theta\big)\theta^{-3m-3}\nonumber
	\\ &+\widehat{\chi}_m\big((3m-2)^2+2(3m-2)+(3m-2)\theta\big)\theta^{-3m-2}+\hO(\theta^{-3m-1})\Bigg]\nonumber\\
	&=(1-\theta)^2\Bigg[\chi_m(9m^2+6m)\theta^{-3m-4}+\Big((3m-1)(3m+1)\widetilde{\chi}_m+3m\chi_m\Big)\theta^{-3m-3}+\hO(\theta^{-3m-2})\Bigg].
	\end{align}
	We now analyze each term of the decomposed summation \eqref{eq:R-2-sum-decomp} separately, starting with $z^3W'_{-1}W''_{k-1}$.
	Applying \eqref{eq:z2W''} to $m=k-1$ and multiplying by $zW_{-1}'=1-\theta$, we expand and simplify to obtain
	\begin{align}
		z^3W'_{-1}W''_{k-1} &= (9k^2-12k+3)\kacc \theta^{-3k-1} \nonumber \\
		& + \Big[ (9k^2-18k+8)\katicc - (27k^2-39k+12)\kacc \Big] \theta^{-3k}  + \hO(\theta^{-3k+1}) .
	\end{align}
	Next, we compute $z^3W'_{0}W''_{k-2}$ by
	applying \eqref{eq:z2W''} to $m=k-2$ and  multiplying by $zW_{0}'=\frac{(1-\theta)^3}{2\theta^2}$:
	\begin{align}
	z^3W'_{0}W''_{k-2} = \frac{1}{2}(9k^2-30k+24)\chi_{k-2} \theta^{-3k}  + \hO(\theta^{-3k+1})\,.
	\end{align}
	For $	z^3W'_{k-2}W''_{0}$, we use 
	\eqref{eq:Wm'new} (for $m=k-2$) and the expression $z^2W_0''=\frac{1}{2}\cdot \frac{(1-\theta)^3(1+\theta)(2-\theta)}{\theta^4}$ to obtain  
	\begin{align}
	z^3W'_{k-2}W''_{0} = (3k-6)\chi_{k-2}\theta^{-3k}+\hO(\theta^{-3k+1})\,.
	\end{align}
	We then compute $z^3W'_{k-1}W''_{-1}$ using 
	\eqref{eq:Wm'new} (for $m=k-1$) and $z^2W_{-1}''=\frac{(1-\theta)^2}{\theta}$:
	\begin{align}
	z^3W'_{k-1}W''_{-1} = 3(k-1)\kacc \theta^{-3k} 
	+ \hO(\theta^{-3k+1})\,.
	\end{align}
	Finally, we handle the summation over $j=1$ to $k-3$ by 
	applying \eqref{eq:Wm'new} (for $zW_j'$) and \eqref{eq:z2W''} (for $z^2W_{k-2-j}''$). focusing on the leading term in $\theta^{-3k}$:
	\begin{align}
	\sum_{j=1}^{k-3}z^3W_j'W_{k-2-j}''&=\sum_{j=1}^{k-3} 3j\big[9(k-2-j)^2+6(k-2-j)\big]\chi_j \chi_{k-2-j} \theta^{-3k}+ \hO(\theta^{-3k+1})\,.
	\end{align}
	Substituting all these computed terms into the decomposed summation  \eqref{eq:R-2-sum-decomp}, we collect the coefficients of $\theta^{-3k-1}$ and $\theta^{-3k}$ (absorbing higher-order terms into $\hO(\theta^{-3k+1})$):
	\begin{align}
		\sum_{j=-1}^{k-1}z^3W_j'W_{k-2-j}'' &= (9k^2-12k+3)\kacc \theta^{-3k-1} \nonumber \\
		& \quad + \Bigg\{ (9k^2-18k+8)\katicc - (27k^2-42k+15)\kacc + \frac{3}{2}(3k^2-8k+4)\chi_{k-2} \nonumber \\
		& \qquad \quad + \sum_{j=1}^{k-3} 3j\big[9(k-2-j)^2+6(k-2-j)\big]\kjcc \chi_{k-2-j} \Bigg\} \theta^{-3k} \nonumber + \hO(\theta^{-3k+1}).
	\end{align}

To find the coefficient of $z^n$ of the above sum, we apply
 Lemma~\ref{lem: coef using theta expansion} to obtain 	
	\begin{align}
		\left[z^n\right] \sum_{j=-1}^{k-1}z^3W_j'W_{k-2-j}'' &= \frac{(9k^2-12k+3)\kacc}{2^{\frac{3k+1}{2}}\Gamma\left(\frac{3k+1}{2}\right)} e^n n^{\frac{3k-1}{2}} \nonumber \\
		& + \frac{1}{2^{\frac{3k}{2}}\Gamma\left(\frac{3k}{2}\right)} \Bigg\{ (9k^3 - 36k^2 + 41k - 14)\kacc \nonumber \\
		& \quad + (9k^2-18k+8)\katicc + \frac{3}{2}(3k^2-8k+4)\chi_{k-2} \nonumber \\
		& \quad + \sum_{j=1}^{k-3} 3j\big[9(k-2-j)^2+6(k-2-j)\big]\kjcc \chi_{k-2-j} \Bigg\} e^n n^{\frac{3k-2}{2}} \nonumber  + O\left(e^n n^{\frac{3k-3}{2}}\right)\,.
	\end{align}
We then multiply this coefficient with $3(n-3)!$, using  Stirling's formula \eqref{eq: n minus 3 factorial} to approximate $(n-3)!$, resulting in the refined asymptotic expansion of $R_2$:  
	\begin{align}
		R_2&= 3\sqrt{2\pi} n^{n + \frac{3k}{2} - 3} \Bigg\{ \nonumber  \quad \frac{(9k^2-12k+3)\kacc}{2^{\frac{3k+1}{2}}\Gamma\left(\frac{3k+1}{2}\right)} \nonumber \\
		& + \frac{1}{2^{\frac{3k}{2}}\Gamma\left(\frac{3k}{2}\right)} \Bigg[ (9k^3 - 36k^2 + 41k - 14)\kacc \nonumber \\
		& \quad + (9k^2-18k+8)\katicc + \frac{3}{2}(3k^2-8k+4)\chi_{k-2} \nonumber \\
		& \quad + \sum_{j=1}^{k-3} 3j\big[9(k-2-j)^2+6(k-2-j)\big]\kjcc \chi_{k-2-j}\Bigg] n^{-\frac{1}{2}} \nonumber  + O(n^{-1}) \Bigg\}.
	\end{align}
Recall that we already established the identity  \eqref{eq:S-R-2},	 which simplifying the remaining summation:
\[
\sum_{j=1}^{k-3} 3j\big[9(k-2-j)^2+6(k-2-j)\big]\kjcc \chi_{k-2-j}=\frac{3}{2}(k-1)(3k-2)\bigl(2\chi_{k-1}-3(k-2)\chi_{k-2}\bigr)\,.
\]
Plugging this simplified summation into the previous expression and simplifying the resulting terms, we obtain the desired conclusion \eqref{eq:R_2refined}:
	\[
		\begin{aligned}
			R_2
			&=3\sqrt{2\pi}\,n^{\,n+\frac{3k}{2}-3}
			\Bigg\{
			\frac{3(k-1)(3k-1)\kacc}{2^{\frac{3k+1}{2}}\Gamma\!\left(\frac{3k+1}{2}\right)}
			\\
			&\qquad\qquad
			+\frac{(3k-4)(3k-2)}{2^{\frac{3k}{2}}\Gamma\!\left(\frac{3k}{2}\right)}
			\Bigg((k-1)\kacc+\katicc-\frac{3}{2}(k-2)\chi_{k-2}\Bigg)\,n^{-\frac12}
			+O(n^{-1})
			\Bigg\}.
	\end{aligned} \qedhere
	\]
\end{proof}

\begin{proof}[Proof of Proposition~\ref{prop: finer asym of cnnk and cnnkef}]
	We have already proved \eqref{eq: finer cnnk asym};  it now remains to establish the refined asymptotic expansion \eqref{eq: finer cnnkef asym}. 
	
	Recall from \eqref{eq: orders of cnnkef} that $C_{n,n+k}^{e,f}\asymp n^{\,n+\frac{3k-5}{2}}$, and  from \eqref{eq: 4.37new} we have the estimate
	\[
	C_{n,n+k}^{e,f}=R_1+R_2+R_3+O\big(n^{\,n+\frac{3k-8}{2}}\big).
	\]
	We first compute the asymptotic expansion of $R_1$ using 
	 \cref{lem: coef using theta expansion} and the expression of $W_{k-2}$ in the form of \eqref{eq: three terms Wk}:
	\begin{align}
		R_1&=(1-\frac{4(n+k-2)}{n(n-1)})n![z^n]W_{k-2}(z)\nonumber
		\\&=(1-\frac{4(n+k-2)}{n(n-1)})\frac{\kbcc n!e^nn^{\frac{3k-8}{2}}}{\Gamma(\frac{3k-6}{2})2^{\frac{3k-6}{2}}}\left[1+O(n^{-\frac{1}{2}})\right]\nonumber
		\\&=(3k-6)(3k-4)(3k-2)\frac{\sqrt{2\pi}\kbcc n^{n+\frac{3k-7}{2}}}{\Gamma(\frac{3k}{2})2^{\frac{3k}{2}}}+O(n^{n+\frac{3k-8}{2}})\,.
	\end{align}
	Next, we sum the asymptotic expansions of $R_1$, $R_2$ (from \eqref{eq:R_2refined}), and $R_3$ (from \eqref{eq:R_3refined}). Notably, the coefficients of the terms involving  $\kacc$, $\kbcc$ and $\katicc$ cancel out completely---an unexpected but key simplification.  This summation yields 
	the desired refined asymptotic expansion \eqref{eq: finer cnnkef asym}:
	\[
		\begin{aligned}
			C_{n,n+k}^{e,f}&
			=\sqrt{2\pi}\,n^{\,n+\frac{3k-5}{2}}
			\Bigg\{
			\frac{9k\,\kcc}{2^{\frac{3k+2}{2}}\Gamma\!\left(\frac{3k+2}{2}\right)}
			+\frac{3(3k-1)\bigl(\kticc+k\kcc\bigr)}{2^{\frac{3k+1}{2}}\Gamma\!\left(\frac{3k+1}{2}\right)}\,n^{-\frac12}
			\\&
			\quad +\frac{(9k-6)\khcc+\bigl(9k^2-9k+2\bigr)\kticc+\frac14\bigl(18k^3-9k^2+34k+37\bigr)\kcc}
			{2^{\frac{3k}{2}}\Gamma\!\left(\frac{3k}{2}\right)}\,n^{-1}
			+O\!\left(n^{-\frac32}\right)
			\Bigg\}\,. \qedhere
	\end{aligned}
	\]
\end{proof}

\bibliography{NCUS_ref}

\begin{thebibliography}{10}

\bibitem{ALGV2018}
Nima Anari, Kuikui Liu, Shayan~Oveis Gharan, and Cynthia Vinzant.
\newblock Log-concave polynomials iii: Mason's ultra-log-concavity conjecture
  for independent sets of matroids, 2018.

\bibitem{BCHS2021CMP}
Roland Bauerschmidt, Nicholas Crawford, Tyler Helmuth, and Andrew Swan.
\newblock Random spanning forests and hyperbolic symmetry.
\newblock {\em Comm. Math. Phys.}, 381(3):1223--1261, 2021.

\bibitem{Bollobas2001_2nd_edition}
B\'ela Bollob\'as.
\newblock {\em Random graphs}, volume~73 of {\em Cambridge Studies in Advanced
  Mathematics}.
\newblock Cambridge University Press, Cambridge, second edition, 2001.

\bibitem{Borcea_Branden_Liggett2009}
Julius Borcea, Petter Br\"and\'en, and Thomas~M. Liggett.
\newblock Negative dependence and the geometry of polynomials.
\newblock {\em J. Amer. Math. Soc.}, 22(2):521--567, 2009.

\bibitem{BH2018}
Petter Br\"and\'en and June Huh.
\newblock Hodge-riemann relations for potts model partition functions, 2019.

\bibitem{Branden_Huh2020}
Petter Br\"and\'en and June Huh.
\newblock Lorentzian polynomials.
\newblock {\em Ann. of Math. (2)}, 192(3):821--891, 2020.

\bibitem{ER1959}
P.~Erd\H{o}s and A.~R\'enyi.
\newblock On random graphs. {I}.
\newblock {\em Publ. Math. Debrecen}, 6:290--297, 1959.

\bibitem{ES1979}
Paul Erd{\H{o}}s and Joel Spencer.
\newblock Evolution of the {{\(n\)}}-cube.
\newblock {\em Comput. Math. Appl.}, 5:33--39, 1979.

\bibitem{FM1992}
Tom{\'a}s Feder and Milena Mihail.
\newblock Balanced matroids.
\newblock In {\em Proceedings of the twenty-fourth annual ACM symposium on
  Theory of computing}, pages 26--38, 1992.

\bibitem{FGKP1995}
Philippe Flajolet, Peter~J. Grabner, Peter Kirschenhofer, and Helmut Prodinger.
\newblock On {Ramanujan}'s {{\(Q\)}}-function.
\newblock {\em J. Comput. Appl. Math.}, 58(1):103--116, 1995.

\bibitem{Flajolet_Knuth_Pittel1989}
Philippe Flajolet, Donald~E. Knuth, and Boris Pittel.
\newblock The first cycles in an evolving graph.
\newblock {\em Discrete Math.}, 75(1-3):167--215, 1989.

\bibitem{FO1990}
Philippe Flajolet and Andrew Odlyzko.
\newblock Singularity analysis of generating functions.
\newblock {\em SIAM J. Discrete Math.}, 3(2):216--240, 1990.

\bibitem{FSS2004}
Philippe Flajolet, Bruno Salvy, and Gilles Schaeffer.
\newblock Airy phenomena and analytic combinatorics of connected graphs.
\newblock {\em Electron. J. Combin.}, 11(1):Research Paper 34, 30, 2004.

\bibitem{FS2009}
Philippe Flajolet and Robert Sedgewick.
\newblock {\em Analytic combinatorics}.
\newblock Cambridge: Cambridge University Press, 2009.

\bibitem{Fortuin_Kasteleyn_Ginibre1971}
C.~M. Fortuin, P.~W. Kasteleyn, and J.~Ginibre.
\newblock Correlation inequalities on some partially ordered sets.
\newblock {\em Comm. Math. Phys.}, 22:89--103, 1971.

\bibitem{Grimmett_Winkler2004}
G.~R. Grimmett and S.~N. Winkler.
\newblock Negative association in uniform forests and connected graphs.
\newblock {\em Random Structures Algorithms}, 24(4):444--460, 2004.

\bibitem{Grimmett2006}
Geoffrey Grimmett.
\newblock {\em The random-cluster model}, volume 333 of {\em Grundlehren der
  mathematischen Wissenschaften [Fundamental Principles of Mathematical
  Sciences]}.
\newblock Springer-Verlag, Berlin, 2006.

\bibitem{HSW2022}
June Huh, Benjamin Schr\"oter, and Botong Wang.
\newblock Correlation bounds for fields and matroids.
\newblock {\em J. Eur. Math. Soc. (JEMS)}, 24(4):1335--1351, 2022.

\bibitem{Dev_Proschan1983}
Kumar Joag-Dev and Frank Proschan.
\newblock Negative association of random variables, with applications.
\newblock {\em Ann. Statist.}, 11(1):286--295, 1983.

\bibitem{Kahn2000}
Jeff Kahn.
\newblock A normal law for matchings.
\newblock {\em Combinatorica}, 20(3):339--391, 2000.

\bibitem{Kassel_Kenyon_Wu2015AIHP}
Adrien Kassel, Richard Kenyon, and Wei Wu.
\newblock Random two-component spanning forests.
\newblock {\em Ann. Inst. Henri Poincar\'e{} Probab. Stat.}, 51(4):1457--1464,
  2015.

\bibitem{Kassel_Wilson2016}
Adrien Kassel and David~B. Wilson.
\newblock The looping rate and sandpile density of planar graphs.
\newblock {\em Am. Math. Mon.}, 123(1):19--39, 2016.

\bibitem{Kirchhoff1847}
Gustav Kirchhoff.
\newblock Ueber die aufl{\"o}sung der gleichungen, auf welche man bei der
  untersuchung der linearen vertheilung galvanischer str{\"o}me gef{\"u}hrt
  wird.
\newblock {\em Annalen der Physik}, 148(12):497--508, 1847.

\bibitem{Liu_Chow1981}
C.~J. Liu and Yutze Chow.
\newblock Enumeration of forests in a graph.
\newblock {\em Proc. Amer. Math. Soc.}, 83(3):659--662, 1981.

\bibitem{LP2016}
Russell Lyons and Yuval Peres.
\newblock {\em Probability on Trees and Networks}, volume~42 of {\em Cambridge
  Series in Statistical and Probabilistic Mathematics}.
\newblock Cambridge University Press, New York, 2016.
\newblock Available at \url{http://rdlyons.pages.iu.edu/}.

\bibitem{Myrvold1992}
Wendy Myrvold.
\newblock Counting {$k$}-component forests of a graph.
\newblock {\em Networks}, 22(7):647--652, 1992.

\bibitem{Pemantle2000}
Robin Pemantle.
\newblock Towards a theory of negative dependence.
\newblock volume~41, pages 1371--1390. 2000.
\newblock Probabilistic techniques in equilibrium and nonequilibrium
  statistical physics.

\bibitem{Renyi1959b}
Alfr{\'e}d R{\'e}nyi.
\newblock On connected graphs. {I}.
\newblock {\em Publ. Math. Inst. Hung. Acad. Sci.}, 4:385--388, 1959.

\bibitem{Stark2011}
Dudley Stark.
\newblock The edge correlation of random forests.
\newblock {\em Ann. Comb.}, 15(3):529--539, 2011.

\bibitem{Sun_Wilson2016}
Xin Sun and David~B. Wilson.
\newblock Sandpiles and unicycles on random planar maps.
\newblock {\em Electron. Commun. Probab.}, 21:12, 2016.
\newblock Id/No 57.

\bibitem{Wright1977one}
E.~M. Wright.
\newblock The number of connected sparsely edged graphs.
\newblock {\em J. Graph Theory}, 1(4):317--330, 1977.

\bibitem{Wright1978two}
E.~M. Wright.
\newblock The number of connected sparsely edged graphs. {II}. {S}mooth graphs
  and blocks.
\newblock {\em J. Graph Theory}, 2(4):299--305, 1978.

\bibitem{Wright1980three}
E.~M. Wright.
\newblock The number of connected sparsely edged graphs. {III}. {A}symptotic
  results.
\newblock {\em J. Graph Theory}, 4(4):393--407, 1980.

\end{thebibliography}
\bibliographystyle{plain}

\end{document}